\DeclareMathOperator*{\colim}{colim}
\DeclareMathOperator*{\Glue}{Glue}
\newtheorem{cor}[subsubsection]{Corollary}
\newtheorem{lem}[subsubsection]{Lemma}
\newtheorem{prop}[subsubsection]{Proposition}
\newtheorem{thm}[subsubsection]{Theorem}
\newtheorem{mainthm}{Theorem}
\newenvironment{mainthmbis}[1]
  {%
   \addtocounter{mainthm}{-1}%
   \begin{mainthm}}
  {\end{mainthm}}
\newtheorem{defn}[subsubsection]{Definition}
\theoremstyle{remark}
\newtheorem{rem}[subsubsection]{Remark}
\newtheorem{example}[subsubsection]{Example}
\theoremstyle{remark}
\newtheorem{remark}[subsubsection]{Remark}
\numberwithin{equation}{section}
\newcommand{\nc}{\newcommand}
\nc{\renc}{\renewcommand}
\nc{\ssec}{\subsection}
\nc{\sssec}{\subsubsection}
\nc{\on}{\operatorname}
\nc\ol{\overline}
\nc\wt{\widetilde}
\nc\tboxtimes{\wt{\boxtimes}}
\nc\tstar{\wt{\star}}
\nc{\alp}{\alpha}
\nc{\ZZ}{{\mathbb Z}}
\nc{\NN}{{\mathbb N}}
\nc{\OO}{{\mathbb O}}
\renc{\SS}{{\mathbb S}}
\nc{\DD}{{\mathbb D}}
\nc{\GG}{{\mathbb G}}
\renewcommand{\AA}{{\mathbb A}}
\nc{\Fq}{{\mathbb F}_q}
\nc{\Fqb}{\ol{{\mathbb F}_q}}
\nc{\Ql}{\ol{{\mathbb Q}_\ell}}
\nc{\id}{\on{id}}
\nc\X{\mathcal X}
\nc{\Hom}{\on{Hom}}
\nc{\Lie}{\on{Lie}}
\nc{\Loc}{\on{Loc}}
\nc{\Pic}{\on{Pic}}
\nc{\Bun}{\on{Bun}}
\nc{\IC}{\on{IC}}
\nc{\Aut}{\on{Aut}}
\nc{\rk}{\on{rk}}
\nc{\Sh}{\on{Sh}}
\nc{\Perv}{\on{Perv}}
\nc{\pos}{{\on{pos}}}
\nc{\Conv}{\on{Conv}}
\nc{\Sph}{\on{Sph}}
\nc{\Sym}{\on{Sym}}
\nc{\BunBb}{\overline{\Bun}_B}
\nc{\BunNb}{\overline{\Bun}_N}
\nc{\BunTb}{\overline{\Bun}_T}
\nc{\BunBbm}{\overline{\Bun}_{B^-}}
\nc{\BunBbel}{\overline{\Bun}_{B,el}}
\nc{\BunBbmel}{\overline{\Bun}_{B^-,el}}
\nc{\Buno}{\overset{o}{\Bun}}
\nc{\BunPb}{{\overline{\Bun}_P}}
\nc{\BunBM}{\Bun_{B(M)}}
\nc{\BunBMb}{\overline{\Bun}_{B(M)}}
\nc{\BunPbw}{{\widetilde{\Bun}_P}}
\nc{\BunBP}{\widetilde{\Bun}_{B,P}}
\nc{\GUb}{\overline{G/U}}
\nc{\GUPb}{\overline{G/U(P)}}
\nc\syminfty{\on{Sym}^{\infty}}
\nc\lal{\ol{\lambda}}
\nc\xl{\ol{x}}
\nc\thl{\ol{\theta}}
\nc\nul{\ol{\nu}}
\nc\mul{\ol{\mu}}
\nc{\oX}{\overset{\circ}{X}{}}
\nc{\hl}{\overset{\leftarrow}h{}}
\nc{\hr}{\overset{\rightarrow}h{}}
\nc{\M}{{\mathcal M}}
\nc{\N}{{\mathcal N}}
\nc{\Nch}{{\check{\mathcal N}}}
\nc{\F}{{\mathcal F}}
\nc{\D}{{\mathcal D}}
\nc{\Y}{{\mathcal Y}}
\nc{\G}{{\mathcal G}}
\nc{\E}{{\mathcal E}}
\nc{\CalC}{{\mathcal C}}
\nc\Dh{\widehat{\D}}
\renewcommand{\O}{{\mathcal O}}
\nc{\K}{{\mathcal K}}
\renewcommand{\S}{{\mathcal S}}
\nc{\T}{{\mathcal T}}
\nc{\V}{{\mathcal V}}
\renc{\P}{{\mathcal P}}
\nc{\A}{{\AA}}
\nc{\U}{{\mathcal U}}
\nc{\frn}{{\check{\mathfrak u}(P)}}
\nc{\fC}{\mathfrak C}
\nc\f{{\mathfrak f}}
\nc{\qo}{{\mathfrak q}}
\nc{\po}{{\mathfrak p}}
\nc{\s}{{\mathfrak s}}
\nc\w{\text{w}}
\renewcommand{\r}{{\mathfrak r}}
\renewcommand{\mod}{{\on{-}\mathsf{mod}}}
\nc\Spec{\on{Spec}}
\nc\Mod{\on{Mod}}
\nc{\tw}{\widetilde{\mathfrak t}}
\nc{\pw}{\widetilde{\mathfrak p}}
\nc{\qw}{\widetilde{\mathfrak q}}
\nc{\jw}{\widetilde j}
\nc{\grb}{\overline{\Gr_{X^{\fset}}}}
\nc{\I}{\mathcal I}
\renewcommand{\i}{\mathfrak i}
\renewcommand{\j}{\mathfrak j}
\nc{\lambdach}{{\check\lambda}}
\nc{\Lambdach}{{\check\Lambda}{}}
\nc{\much}{{\check\mu}}
\nc{\omegach}{{\check\omega}}
\nc{\nuch}{{\check\nu}}
\nc{\etach}{{\check\eta}}
\nc{\alphach}{{\check\alpha}}
\nc{\rhoch}{{\check\rho}}
\nc{\Hb}{\overline{\H}}
\nc{\BA}{{\mathbb{A}}}
\nc{\BB}{\mathbb{B}}
\nc{\BC}{{\mathbb{C}}}
\nc{\BD}{{\mathbb{D}}}
\nc{\BE}{{\mathbb{E}}}
\nc{\BF}{{\mathbb{F}}}
\nc{\BG}{{\mathbb{G}}}
\nc{\BH}{{\mathbb{H}}}
\nc{\BI}{{\mathbb{I}}}
\nc{\BM}{{\mathbb{M}}}
\nc{\BN}{{\mathbb{N}}}
\nc{\BO}{{\mathbb{O}}}
\nc{\BP}{{\mathbb{P}}}
\nc{\BQ}{{\mathbb{Q}}}
\nc{\BR}{{\mathbb{R}}}
\nc{\BS}{{\mathbb{S}}}
\nc{\BT}{{\mathbb{T}}}
\nc{\BV}{{\mathbb{V}}}
\nc{\BZ}{{\mathbb{Z}}}
\nc{\bbone}{\mathbbm{1}}
\nc{\bbA}{{\mathbb{A}}}
\nc{\bbB}{\mathbb{B}}
\nc{\bbC}{{\mathbb{C}}}
\nc{\bbD}{{\mathbb{D}}}
\nc{\bbE}{{\mathbb{E}}}
\nc{\bbF}{{\mathbb{F}}}
\nc{\bbG}{{\mathbb{G}}}
\nc{\bbH}{{\mathbb{H}}}
\nc{\bbI}{{\mathbb{I}}}
\nc{\bbL}{{\mathbb{L}}}
\nc{\bbM}{{\mathbb{M}}}
\nc{\bbN}{{\mathbb{N}}}
\nc{\bbO}{{\mathbb{O}}}
\nc{\bbP}{{\mathbb{P}}}
\nc{\bbQ}{{\mathbb{Q}}}
\nc{\bbR}{{\mathbb{R}}}
\nc{\bbS}{{\mathbb{S}}}
\nc{\bbT}{{\mathbb{T}}}
\nc{\bbU}{{\mathbb{U}}}
\nc{\bbV}{{\mathbb{V}}}
\nc{\bbW}{{\mathbb{W}}}
\nc{\bbX}{{\mathbb{X}}}
\nc{\bbY}{{\mathbb{Y}}}
\nc{\bbZ}{{\mathbb{Z}}}
\nc{\CA}{{\mathcal{A}}}
\nc{\CB}{{\mathcal{B}}}
\nc{\CE}{{\mathcal{E}}}
\nc{\CF}{{\mathcal{F}}}
\nc{\CH}{{\mathcal{H}}}
\nc{\CL}{{\mathcal{L}}}
\nc{\CC}{{\mathcal{C}}}
\nc{\CG}{{\mathcal{G}}}
\nc{\CM}{{\mathcal{M}}}
\nc{\CN}{{\mathcal{N}}}
\nc{\CK}{{\mathcal{K}}}
\nc{\CO}{{\mathcal{O}}}
\nc{\CP}{{\mathcal{P}}}
\nc{\CQ}{{\mathcal{Q}}}
\nc{\CR}{{\mathcal{R}}}
\nc{\CS}{{\mathcal{S}}}
\nc{\CU}{{\mathcal{U}}}
\nc{\CV}{{\mathcal{V}}}
\nc{\CW}{{\mathcal{W}}}
\nc{\CX}{{\mathcal{X}}}
\nc{\CY}{{\mathcal{Y}}}
\nc{\CZ}{{\mathcal{Z}}}
\nc{\CI}{{\mathcal{I}}}
\nc{\csM}{{\check{\mathcal A}}{}}
\nc{\oM}{{\overset{\circ}{\mathcal M}}{}}
\nc{\obM}{{\overset{\circ}{\mathbf M}}{}}
\nc{\oCA}{{\overset{\circ}{\mathcal A}}{}}
\nc{\obA}{{\overset{\circ}{\mathbf A}}{}}
\nc{\ooM}{{\overset{\circ}{M}}{}}
\nc{\osM}{{\overset{\circ}{\mathsf M}}{}}
\nc{\vM}{{\overset{\bullet}{\mathcal M}}{}}
\nc{\nM}{{\underset{\bullet}{\mathcal M}}{}}
\nc{\oD}{{\overset{\circ}{\mathcal D}}{}}
\nc{\obC}{{\overset{\circ}{\mathbf C}}{}}
\nc{\obD}{{\overset{\circ}{\mathbf D}}{}}
\nc{\oA}{{\overset{\circ}{\mathbb A}}{}}
\nc{\op}{{\overset{\bullet}{\mathbf p}}{}}
\nc{\oU}{{\overset{\bullet}{\mathcal U}}{}}
\nc{\oZ}{{\overset{\circ}{\mathcal Z}}{}}
\nc{\ofZ}{{\overset{\circ}{\mathfrak Z}}{}}
\nc{\oF}{{\overset{\circ}{\fF}}}
\nc{\fa}{{\mathfrak{a}}}
\nc{\fb}{{\mathfrak{b}}}
\nc{\fc}{{\mathfrak{c}}}
\nc{\fd}{{\mathfrak{d}}}
\nc{\ff}{{\mathfrak{f}}}
\nc{\fg}{{\mathfrak{g}}}
\nc{\fgl}{{\mathfrak{gl}}}
\nc{\fh}{{\mathfrak{h}}}
\nc{\fj}{{\mathfrak{j}}}
\nc{\fl}{{\mathfrak{l}}}
\nc{\fm}{{\mathfrak{m}}}
\nc{\fn}{{\mathfrak{n}}}
\nc{\fu}{{\mathfrak{u}}}
\nc{\fp}{{\mathfrak{p}}}
\nc{\fr}{{\mathfrak{r}}}
\nc{\fs}{{\mathfrak{s}}}
\nc{\ft}{{\mathfrak{t}}}
\nc{\fz}{{\mathfrak{z}}}
\nc{\fsl}{{\mathfrak{sl}}}
\nc{\hsl}{{\widehat{\mathfrak{sl}}}}
\nc{\hgl}{{\widehat{\mathfrak{gl}}}}
\nc{\hg}{{\widehat{\mathfrak{g}}}}
\nc{\chg}{{\widehat{\mathfrak{g}}}{}^\vee}
\nc{\hn}{{\widehat{\mathfrak{n}}}}
\nc{\chn}{{\widehat{\mathfrak{n}}}{}^\vee}
\nc{\fA}{{\mathfrak{A}}}
\nc{\fB}{{\mathfrak{B}}}
\nc{\fD}{{\mathfrak{D}}}
\nc{\fE}{{\mathfrak{E}}}
\nc{\fF}{{\mathfrak{F}}}
\nc{\fG}{{\mathfrak{G}}}
\nc{\fK}{{\mathfrak{K}}}
\nc{\fL}{{\mathfrak{L}}}
\nc{\fM}{{\mathfrak{M}}}
\nc{\fN}{{\mathfrak{N}}}
\nc{\fP}{{\mathfrak{P}}}
\nc{\fU}{{\mathfrak{U}}}
\nc{\fV}{{\mathfrak{V}}}
\nc{\fX}{{\mathfrak{X}}}
\nc{\fY}{{\mathfrak{Y}}}
\nc{\fZ}{{\mathfrak{Z}}}
\nc{\bb}{{\mathbf{b}}}
\nc{\bc}{{\mathbf{c}}}
\nc{\bd}{{\mathbf{d}}}
\nc{\bbf}{{\mathbf{f}}}
\nc{\be}{{\mathbf{e}}}
\nc{\bg}{{\mathbf{g}}}
\nc{\bi}{{\mathbf{i}}}
\nc{\bj}{{\mathbf{j}}}
\nc{\bn}{{\mathbf{n}}}
\nc{\bo}{{\mathbf{o}}}
\nc{\bp}{{\mathbf{p}}}
\nc{\bq}{{\mathbf{q}}}
\nc{\bt}{{\mathbf{t}}}
\nc{\bu}{{\mathbf{u}}}
\nc{\bv}{{\mathbf{v}}}
\nc{\bx}{{\mathbf{x}}}
\nc{\bs}{{\mathbf{s}}}
\nc{\by}{{\mathbf{y}}}
\nc{\bw}{{\mathbf{w}}}
\nc{\bA}{{\mathbf{A}}}
\nc{\bK}{{\mathbf{K}}}
\nc{\bB}{{\mathbf{B}}}
\nc{\bC}{{\mathbf{C}}}
\nc{\bG}{{\mathbf{G}}}
\nc{\bD}{{\mathbf{D}}}
\nc{\bH}{{\mathbf{H}}}
\nc{\bM}{{\mathbf{M}}}
\nc{\bN}{{\mathbf{N}}}
\nc{\bO}{{\mathbf{O}}}
\nc{\bT}{{\mathbf{T}}}
\nc{\bV}{{\mathbf{V}}}
\nc{\bW}{{\mathbf{W}}}
\nc{\bX}{{\mathbf{X}}}
\nc{\bZ}{{\mathbf{Z}}}
\nc{\bS}{{\mathbf{S}}}
\nc{\sA}{{\mathsf{A}}}
\nc{\sB}{{\mathsf{B}}}
\nc{\sC}{{\mathsf{C}}}
\nc{\sD}{{\mathsf{D}}}
\nc{\sF}{{\mathsf{F}}}
\nc{\sG}{{\mathsf{G}}}
\nc{\sK}{{\mathsf{K}}}
\nc{\sM}{{\mathsf{M}}}
\nc{\sO}{{\mathsf{O}}}
\nc{\sW}{{\mathsf{W}}}
\nc{\sQ}{{\mathsf{Q}}}
\nc{\sP}{{\mathsf{P}}}
\nc{\sV}{{\mathsf{V}}}
\nc{\sS}{{\mathsf{S}}}
\nc{\sT}{{\mathsf{T}}}
\nc{\sZ}{{\mathsf{Z}}}
\nc{\sfp}{{\mathsf{p}}}
\nc{\sll}{{\mathsf{l}}}
\nc{\sr}{{\mathsf{r}}}
\nc{\bk}{{\mathsf{k}}}
\nc{\sg}{{\mathsf{g}}}
\nc{\sff}{{\mathsf{f}}}
\nc{\sfb}{{\mathsf{b}}}
\nc{\sfc}{{\mathsf{c}}}
\nc{\sd}{{\mathsf{d}}}
\nc{\se}{{\mathsf{e}}}
\nc{\BK}{{\bar{K}}}
\nc{\tA}{{\widetilde{\mathbf{A}}}}
\nc{\tB}{{\widetilde{\mathcal{B}}}}
\nc{\tg}{{\widetilde{\mathfrak{g}}}}
\nc{\tG}{{\widetilde{G}}}
\nc{\TM}{{\widetilde{\mathbb{M}}}{}}
\nc{\tO}{{\widetilde{\mathsf{O}}}{}}
\nc{\tU}{{\widetilde{\mathfrak{U}}}{}}
\nc{\TZ}{{\tilde{Z}}}
\nc{\tx}{{\tilde{x}}}
\nc{\tbv}{{\tilde{\bv}}}
\nc{\tfP}{{\widetilde{\mathfrak{P}}}{}}
\nc{\tz}{{\tilde{\zeta}}}
\nc{\tmu}{{\tilde{\mu}}}
\nc{\urho}{\underline{\rho}}
\nc{\uB}{\underline{B}}
\nc{\uC}{{\underline{\mathbb{C}}}}
\nc{\ui}{\underline{i}}
\nc{\uj}{\underline{j}}
\nc{\ofP}{{\overline{\mathfrak{P}}}}
\nc{\oB}{{\overline{\mathcal{B}}}}
\nc{\og}{{\overline{\mathfrak{g}}}}
\nc{\oI}{{\overline{I}}}
\nc{\eps}{\varepsilon}
\nc{\hrho}{{\hat{\rho}}}
\nc{\one}{{\mathbf{1}}}
\nc{\two}{{\mathbf{t}}}
\nc{\Rep}{{\mathop{\operatorname{\rm Rep}}}}
\nc{\Tot}{{\mathop{\operatorname{\rm Tot}}}}
\nc{\Ker}{{\mathop{\operatorname{\rm Ker}}}}
\nc{\Hilb}{{\mathop{\operatorname{\rm Hilb}}}}
\nc{\Ext}{{\mathop{\operatorname{\rm Ext}}}}
\nc{\CHom}{{\mathop{\operatorname{{\mathcal{H}}\it om}}}}
\nc{\GL}{{\mathop{\operatorname{\rm GL}}}}
\nc{\gr}{{\mathop{\operatorname{\rm gr}}}}
\nc{\Id}{{\mathop{\operatorname{\rm Id}}}}
\nc{\de}{{\mathop{\operatorname{\rm def}}}}
\nc{\length}{{\mathop{\operatorname{\rm length}}}}
\nc{\supp}{{\mathop{\operatorname{\rm supp}}}}
\nc{\Cliff}{{\mathsf{Cliff}}}
\nc{\Fl}{\on{Fl}}
\nc{\Fib}{{\mathsf{Fib}}}
\nc{\Coh}{{\on{Coh}}}
\nc{\coh}{{\on{coh}}}
\nc{\QCoh}{{\on{QCoh}}}
\nc{\IndCoh}{{\on{IndCoh}}}
\nc{\FCoh}{{\mathsf{FCoh}}}
\nc{\reg}{{\text{\rm reg}}}
\nc{\cplus}{{\mathbf{C}_+}}
\nc{\cminus}{{\mathbf{C}_-}}
\nc{\cthree}{{\mathbf{C}_*}}
\nc{\Qbar}{{\bar{Q}}}
\nc\Eis{\on{Eis}}
\nc\CT{\on{CT}}
\nc\Eisb{\ol\Eis{}}
\nc\Eisr{\on{Eis}^{rat}{}}
\nc\wh{\widehat}
\nc{\Def}{\on{Def_{\check{\fb}}(E)}}
\nc{\barZ}{\overline{Z}{}}
\nc{\barbarZ}{\overline{\barZ}{}}
\nc{\barpi}{\overline\pi}
\nc{\barbarpi}{\overline\barpi}
\nc{\barpip}{\overline\pi{}^+}
\nc{\barpim}{\overline\pi{}^-}
\nc{\fq}{\mathfrak q}
\nc{\fqb}{\ol{\fq}{}}
\nc{\fpb}{\ol{\fp}{}}
\nc{\fpr}{{\fp^{rat}}{}}
\nc{\fqr}{{\fq^{rat}}{}}
\nc{\hattimes}{\wh\otimes}
\nc{\bh}{{\bar{h}}}
\nc{\bOmega}{{\overline{\Omega(\check \fn)}}}
\nc{\seq}[1]{\stackrel{#1}{\sim}}
\nc{\cT}{{\check{T}}}
\nc{\cG}{{\check{G}}}
\nc{\cM}{{\check{M}}}
\nc{\cB}{{\check{B}}}
\nc{\cP}{{\check{P}}}
\nc{\ct}{{\check{\mathfrak t}}}
\nc{\cg}{{\check{\fg}}}
\nc{\cb}{{\check{\fb}}}
\nc{\cn}{{\check{\fn}}}
\nc{\cp}{{\check{\fp}}}
\nc{\cm}{{\check{\fm}}}
\nc{\cLambda}{{\check\Lambda}}
\nc{\cla}{{\check\lambda}}
\nc{\cmu}{{\check\mu}}
\nc{\cnu}{{\check\nu}}
\nc{\ceta}{{\check\eta}}
\nc{\DefbE}{{\on{Def}_{\cB}(E_\cT)}}
\nc{\imathb}{{\ol{\imath}}}
\nc{\rlr}{\overset{\longrightarrow}{\underset{\longrightarrow}\longleftarrow}}
\nc{\oBun}{\overset{\circ}\Bun}
\nc{\BunBbb}{\ol{\ol{Bun}}_B}
\nc{\BunBr}{\Bun_B^{rat}}
\nc{\BunBrsg}{\Bun_B^{rat,\on{s.g.}}}
\nc{\BunBrp}{\Bun_B^{rat,polar}}
\nc{\BunBrpbg}{\Bun_B^{rat,polar,\on{b.g.}}}
\nc{\BunBrpsg}{\Bun_B^{rat,polar,\on{s.g.}}}
\nc{\BunTrp}{\Bun_T^{rat,polar}}
\nc{\BunTrpbg}{\Bun_T^{rat,polar,\on{b.g.}}}
\nc{\BunTrpsg}{\Bun_T^{rat,polar,\on{s.g.}}}
\nc{\BunNr}{\Bun_N^{rat}}
\nc{\BunNre}{\Bun_N^{enh,rat}}
\nc{\BunTr}{\Bun_T^{rat}}
\nc{\Vect}{\on{Vect}}
\nc{\Whit}{\on{Whit}}
\nc{\bTb}{\ol{\on{CT}}}
\nc{\bTr}{\on{CT}^{rat}{}}
\nc\jmathr{\jmath^{rat}{}}
\nc{\ux}{\underline{x}}
\nc{\clambda}{{\check\lambda}}
\nc{\calpha}{{\check\alpha}}
\nc{\inftyGrpd}{{\mathsf{Grpd}_\infty}}
\nc{\fset}{\mathsf{fSet}}
\nc{\fSet}{\fset}
\nc{\LocSysG}{\LocSys_{\cG}}
\nc{\Sing}{{\on{Sing}}}
\nc{\dr}{{\on{dR}}}
\nc{\Ind}{\on{Ind}}
\nc{\Sat}{\on{Sat}}
\nc{\Ho}{\on{Ho}}
\nc{\Res}{\on{Res}}
\nc{\sotimes}{\overset{!}\otimes}
\nc{\mmod}{{\on{-}}{\mathbf{mod}}}
\nc{\Maps}{\on{Maps}}
\nc{\CMaps}{{\mathcal Maps}}
\nc{\bMaps}{{\mathbf{Maps}}}
\nc{\dgSch}{\on{DGSch}}
\nc{\dgindSch}{\on{DGindSch}}
\nc{\indSch}{\on{indSch}}
\nc{\Sch}{\mathsf{Sch}}
\nc{\affdgSch}{\on{DGSch}^{\on{aff}}}
\nc{\affSch}{\on{Sch}^{\on{aff}}}
\nc{\Groupoids}{\on{Grpd}}
\nc{\inftypic}{\infty\on{-PicGrpd}}
\nc{\inftyCat}{{\mathsf{Cat}_{\infty}}}
\nc{\MoninftyCat}{\infty\on{-Cat}^{Mon}}
\nc{\SymMoninftyCat}{\infty\on{-Cat}^{\on{SymMon}}}
\nc{\SymMonStinftyCat}{\on{DGCat}^{\on{SymMon}}}
\nc{\MonStinftyCat}{\on{DGCat}^{Mon}}
\nc{\inftystack}{\on{Stk}}
\nc{\inftystackalg}{Stk^{1\text{-}alg}}
\nc{\inftyprestack}{\on{PreStk}}
\nc{\inftydgnearstack}{\on{NearStk}}
\nc{\inftydgstack}{\on{Stk}}
\nc{\inftydgstackalg}{DGStk^{1\text{-}alg}}
\nc{\inftydgprestack}{\on{PreStk}}
\nc{\HC}{\CH\bC}
\nc{\csupp}{\supp}
\nc{\Arth}{\on{Arth}}
\nc{\ArthG}{{\on{Arth}_\cG}}
\nc{\ul}{\underline}
\nc{\Z}{\mathcal{Z}}
\nc{\calN}{\N}
\nc{\calW}{\mathcal{W}}
\nc{\calF}{\mathcal{F}}
\nc{\calH}{\mathcal{H}}
\nc{\calO}{\mathcal{O}}
\nc{\calK}{\mathcal{K}}
\nc{\Ran}{\mathsf{Ran}}
\nc{\Jets}{\on{Jets}}
\nc{\act}{\mathsf{act}}
\nc{\Av}{\mathsf{Av}}
\nc{\Ad}{\on{Ad}}
\nc{\BGRan}{BG_{\Ran}}
\nc{\codim}{\on{codim}}
\nc{\cpt}{{\on{cpt}}}
\nc{\dR}{{\on{dR}}}
\nc{\DGCat}{\mathsf{DGCat}}
\nc{\DGCatcont}{\on{DGCat}_{cont}}
\nc{\glob}{{\on{glob}}}
\nc{\loc}{{\on{loc}}}
\renewcommand{\op}{{\on{op}}}
\nc{\pt}{{\on{pt}}}
\nc{\PreStk}{{\mathsf{PreStk}}}
\nc{\Cat}{{\mathsf{Cat}}}
\nc{\ShvCat}{{\mathsf{ShvCat}}}
\nc{\restr}[2]{\left. #1 \right |_{#2}}
\nc{\uprestr}[2]{\left. #1 \right |^{#2}}
\nc{\bLoc}{{\mathbf{Loc}}}
\nc{\bGamma}{{\mathbf{\Gamma}}}
\nc{\bLocA}{\mathbf{Loc}^\A}
\nc{\bGammaA}{\mathbf{\Gamma}^\A}
\nc{\bLocB}{\mathbf{Loc}^\B}
\nc{\bGammaB}{\mathbf{\Gamma}^\B}
\nc{\bLocH}{\mathbf{Loc}^\H}
\nc{\bGammaH}{\mathbf{\Gamma}^\H}
\nc{\gen}{{\on{gen}}}
\nc{\ggen}{\on{-gen}}
\nc{\hto}{\hookrightarrow}
\nc{\ext}{\mathsf{ext}}
\nc{\ev}{\mathsf{ev}}
\nc{\rat}{\mathsf{rat}}
\nc{\usotimes}[1]{\underset{#1}{\otimes}}
\nc{\ustimes}[1]{\underset{#1}{\times}}
\nc{\uscolim}[1]{\underset{#1}{\colim}}
\nc{\ch}{{\mathfrak{ch}}}
\renc{\fD}{{\Dmod}}
\nc{\fH}{{\mathfrak{H}}}
\nc{\p}{{\mathfrak{p}}}
\renc{\r}{{\mathfrak{r}}}
\nc{\xto}{\xrightarrow}
\renc{\sec}{\section}
\nc{\enh}{{\on{enh}}}
\nc{\BunGBgen}{\Bun_G^{B\ggen}}
\nc{\BunGHgen}{\Bun_G^{H\ggen}}
\nc{\BunGNgen}{\Bun_G^{N\ggen}}
\nc{\Fun}{\mathsf{Fun}}
\nc{\End}{\mathsf{End}}
\nc{\lr}{\xymatrix{ \ar@<-0.4ex>[r] \ar@<.5ex>[l]  & } }
\nc{\rr}{\xymatrix{ \ar@<-0.2ex>[r] \ar@<.7ex>[r]  & } }
\nc{\rrr}{\xymatrix{ \ar@<.0ex>[r] \ar@<.7ex>[r] \ar@<-0.7ex>[r] & } }
\nc{\Stab}{\mathsf{Stab}}
\nc{\Orb}{\mathsf{Orb}}
\renc{\exp}{\mathit{exp}}
\renc{\q}{\mathfrak{q}}
\nc{\virg}[1]{``#1"}
\nc{\QA}[2]
{\textbf{Question:} {#1} 
\\
\textbf{Answer:} {#2}}
\renc{\bold}[1]{\boldsymbol{#1}}
\nc{\bigt}[1]{\big( #1 \big) }
\nc{\Bigt}[1]{\Big( #1 \Big) }
\nc{\extwhit}{{\CW h}(G,\mathsf{ext})}
\nc{\footcite}{\footnote}
\nc{\GA}{{G(\AA)}}
\nc{\GO}{{G(\OO)}}
\nc{\GK}{G(\KK)}
\nc{\Shv}{\mathsf{Shv}}
\nc{\inc}{\mathsf{inc}}
\nc{\Par}{\mathsf{Par}}
\renc{\i}{\mathfrak{i}}
\nc{\NA}{N(\AA)}
\nc{\VA}{V(\AA)}
\nc{\laxlim}{\text{laxlim}}
\nc{\FT}{\mathsf{FT}}
\nc{\out}{\mathsf{out}}
\nc{\hol}{\mathsf{hol}}
\nc{\Hol}{\on{Hol}}
\nc{\add}{\mathsf{add}}
\nc{\sto}{\rightsquigarrow}
\nc{\squigto}{\rightsquigarrow}
\nc{\fW}{\mathfrak{W}}
\nc{\vrho}{\varrho}
\nc{\counit}{\mathsf{counit}}
\nc{\unit}{\mathsf{unit}}
\nc{\corr}{\mathsf{corr}}
\nc{\Corr}{\mathsf{Corr}}
\nc{\IndSch}{\mathsf{IndSch}}
\nc{\Tate}{{\mathsf{Tate}}}
\nc{\surjto}{\twoheadrightarrow}
\renc{\j}{\mathfrak{j}}
\nc{\J}{\mathcal{J}}
\nc{\pro}{\mathsf{pro}}
\nc{\fty}{\mathsf{ft}}
\nc{\Pro}{\mathsf{Pro}}
\nc{\coact}{\mathsf{coact}}
\nc{\aff}{\mathsf{aff}}
\nc{\Nilp}{\on{Nilp}}
\nc{\Gch}{{\check{G}}}
\nc{\Pch}{{\check{P}}}
\nc{\Mch}{{\check{M}}}
\nc{\Qch}{{\check{Q}}}
\nc{\LL}{\mathbb{L}}
\nc{\LS}{{\on{LS}}}
\nc{\x}{\varkappa} 
\nc{\Otimes}{\boldsymbol{\otimes}}
\nc{\Times}{\boldsymbol{\times}}
\nc{\flip}{\text{<}}
\nc{\coeffRan}{\mathsf{coeff}^{\Ran}}
\nc{\Ha}{H(\sA)}
\nc{\Groups}{\mathsf{Groups}}
\nc{\Groth}{\mathsf{Groth}}
\nc{\rlto}{\rightleftarrows}
\nc{\DGCatRan}{\ShvCatCrys(\Ran)}
\nc{\longto}{\longrightarrow}
\renc{\Jets}{\mathsf{Jets}}
\nc{\mer}{\mathsf{mer}}
\nc{\W}{\mathcal{W}}
\nc{\Sect}{\mathsf{Sect}}
\renc{\Maps}{\mathsf{Maps}}
\renc{\bf}{\mathbf{f}}
\nc{\y}{\mathtt{y}}
\renc{\x}{\mathtt{x}}
\nc{\un}{{\it un}}
\nc{\indep}{\mathsf{indep}}
\nc{\CoAlg}{\mathsf{CoAlg}}
\nc{\coeff}{\mathsf{coeff}}
\nc{\R}{\mathcal{R}}
\renc{\hat}{\widehat}
\nc{\KK}{\mathbb{K}}
\nc{\Dmod}{\mathfrak{D}}
\nc{\Bshv}{\bold{\B}}
\nc{\Bind}{\H_{\indep}}
\nc{\BRan}{\H_{\Ran}}
\nc{\ARan}{\A_{\Ran}}
\nc{\Aind}{\A_{\indep}}
\nc{\GrRan}{\Gr}
\nc{\Gr}{\mathsf{Gr}}
\nc{\GrGRan}{\Gr_{G}}
\nc{\GrGind}{\Gr_{G}^{\indep}}
\nc{\Grind}[1]{\Gr_{#1}^{\indep} }
\nc{\GrGdom}{\curs{Gr}_G}
\nc{\GMapsRan}[1]{\mathsf{GMaps}(X,{#1})}
\nc{\GSectRan}[1]{\mathsf{GSect}({#1}/X)}
\nc{\GMapsind}[1]{\mathsf{GMaps}(X,{#1})^\indep}
\nc{\GSectind}[1]{\mathsf{GSect}({#1}/X)^\indep}
\nc{\GMapsdom}[1]{\curs{GMaps}(X,{#1})}
\nc{\GSectdom}[1]{\curs{GSect}({#1}/X)}
\nc{\chind}{\ch^{\indep}}
\nc{\chdom}{\curs{ch}}
\nc{\QSect}[1]{\curs{QSect}(#1/X)} 
\nc{\QMaps}[1]{\curs{QMaps}(X,#1)} 
\nc{\Zar}{\mathit{Zar}}
\nc{\loccit}{\textit{loc.$\,$cit.}}
\nc{\Crys}{\on{Crys}}
\nc{\ShvCatCrys}{\ShvCat^{\Crys}}
\nc{\BPE}{{\BP E}}
\nc{\BVE}{{\BV E}}
\nc{\BBE}{{\BB E}}
\nc{\Wh}{{{\CW}h}}
\nc{\ChiralCat}{\mathsf{ChiralCat}}
\nc{\RRep}{\mathfrak{R}ep}
\nc{\SSph}{\mathfrak{S}ph}
\nc{\tto}{\twoheadrightarrow}
\nc{\disj}{{\mathsf{disj}}}
\nc{\C}{\CC}
\nc{\Tch}{{\check{T}}}
\nc{\good}{\mathsf{good}}
\nc{\triv}{\mathsf{triv}}
\nc{\Alg}{\mathsf{Alg}}
\nc{\CAlg}{\mathsf{CAlg}}
\nc{\Spread}{\mathsf{Spread}}
\nc{\Dom}{\mathsf{Dom}}
\nc{\Jac}{\on{Jac}}
\renc{\CD}[1]{{#1}^{\on{CD}}}
\nc{\String}{\on{String}}
\renc{\min}{{\mathit{min}}}
\nc{\rrep}{\on-\!\mathbf{rep}}
\nc{\WWh}{\mathfrak{W}h}
\nc{\Grpd}{\mathsf{Grpd}}
\nc{\timesdisj}{\overset{\circ}\times}
\renc{\NA}{N(\sA)}
\nc{\chiral}{\mathsf{chiral}}
\nc{\Hopf}{\mathsf{Hopf}}
\nc{\heart}{\heartsuit}
\nc{\kk}{\mathbbm{k}} 
\nc{\HHom}{\CH{om}} 
\nc{\Cone}{\on{Cone}}
\nc{\EE}{\mathbb{E}}
\renc{\HC}{{\on{HC}}}
\nc{\HH}{{\on{HH}}}
\nc{\even}{{\on{even}}}
\nc{\SingSupp}{\on{SingSupp}}
\nc{\Supp}{\on{Supp}}
\nc{\temp}{{\on{temp}}}
\nc{\cusp}{{\on{cusp}}}
\nc{\geom}{{\on{geom}}}
\nc{\ren}{{\on{ren}}}
\nc{\naive}{{\on{naive}}}
\nc{\spec}{{\on{spec}}}
\nc{\gch}{\mathfrak{\check{g}}}
\nc{\Hecke}{\on{Hecke}}
\nc{\LSGch}{{\LS_\Gch}}
\nc{\LSPch}{{\LS_\Pch}}
\nc{\LSMch}{{\LS_\Mch}}
\nc{\Hsx}[2]{\H_{{#1} \leftarrow {#2}}}
\nc{\Hdx}[2]{\H_{{#1} \to {#2}}}
\nc{\Hcorr}[3]{ \H_{{#1} \leftarrow {#2} \to {#3}} }
\nc{\Hopcorr}[3]{ \H_{{#1} \to {#2} \leftto {#3}} }
\nc{\ICohsx}[2]{\ICohW_{{#1} \leftarrow {#2}}}
\nc{\ICohdx}[2]{\ICohW_{{#1} \to {#2}}}
\nc{\ICohcorr}[3]{ \ICohW_{{#1} \leftarrow {#2} \to {#3}} }
\nc{\ICohopcorr}[3]{ \ICohW_{{#1} \to {#2} \leftto {#3}} }
\nc{\QCohsx}[2]{\QCohW_{{#1} \leftarrow {#2}}}
\nc{\QCohdx}[2]{\QCohW_{{#1} \to {#2}}}
\nc{\QCohcorr}[3]{ \QCohW_{{#1} \leftarrow {#2} \to {#3}} }
\nc{\QCohopcorr}[3]{ \QCohW_{{#1} \to {#2} \leftto {#3}} }
\renc{\AA}{\bbA}
\nc{\Asx}[2]{\AA_{{#1} \leftarrow {#2}}}
\nc{\Adx}[2]{\AA_{{#1} \to {#2}}}
\nc{\Acorr}[3]{ \AA_{{#1} \leftarrow {#2} \to {#3}} }
\nc{\Aopcorr}[3]{ \AA_{{#1} \to {#2} \leftto {#3}} }
\nc{\Bsx}[2]{\B_{{#1} \leftarrow {#2}}}
\nc{\Bdx}[2]{\B_{{#1} \to {#2}}}
\nc{\Bcorr}[3]{ \B_{{#1} \leftarrow {#2} \to {#3}} }
\nc{\Bopcorr}[3]{ \B_{{#1} \to {#2} \leftto {#3}} }
\nc{\ICohzero}[3]{\ICoh_0 \bigt{#1 \times_{{#2}_\dR} #3}}
\nc{\IndCohzero}{\ICohzero}
\nc{\form}[3]{#1 \times_{{#2}_\dR} #3 }
\nc{\ind}{{\mathsf{ind}}}
\nc{\oblv}{{\mathsf{oblv}}}
\nc{\Aff}{\mathsf{Aff}}
\nc{\dgAff}{\Aff}
\nc{\deloop}{\mathsf{deloop}}
\renc{\loop}{\mathsf{loop}}
\nc{\coev}{\mathsf{coev}}
\nc{\bE}{\mathbf{E}}
\nc{\ShvCatH}{{\ShvCat^{\bbH}}}
\nc{\ShvCatQW}{\ShvCat^{\QCohW}}
\nc{\bbimod}{\on{-}\mathbf{bimod}}
\nc{\Tw}{\mathsf{Tw}}
\nc{\Arr}{\mathsf{Arr}}
\nc{\bDelta}{\bold\Delta}
\nc{\BiCat}{\mathsf{BiCat}}
\nc{\Seg}{\mathsf{Seg}}
\nc{\Cart}{\mathsf{Cart}}
\nc{\Bimod}{\mathsf{Bimod}}
\nc{\lax}{\mathit{lax}}
\nc{\pr}{\mathsf{pr}}
\nc{\zero}{ \{ 0 \}   }
\nc{\Perf}{\on{Perf}}
\nc{\leftto}{\leftarrow}
\nc{\lto}{\leftto}
\nc{\xlto}[1]{\xleftarrow{#1}}
\nc{\ltemp}{{}^\temp}
\nc{\lcusp}{{}^\cusp}
\nc{\TwCorr}{\mathsf{TwCorr}}
\nc{\Affover}[1]{{\Aff_{/#1}}}
\nc{\Affoverop}[1]{{( \Affover{#1})^\op}}
\nc{\AffOver}[2]{{(\Aff_{#1})_{/#2}}}
\nc{\AffOverop}[2]{{( \AffOver{#1}{#2})^\op}}
\nc{\aft}{{\mathit{aft}}}
\renc{\vert}{{\mathit{vert}}}
\nc{\horiz}{{\mathit{horiz}}}
\nc{\type}{{\mathit{type}}}
\nc{\adm}{{\mathit{adm}}}
\nc{\g}{\mathfrak{g}}
\nc{\free}{\mathsf{free}}
\nc{\Sform}{{S \times_{S_\dR} S}}
\nc{\Yform}{{\Y \times_{\Y_\dR} \Y}}
\nc{\SdR}{ {S_{\dR}}}
\nc{\laft}{{\mathit{laft}}}
\nc{\Affevcocaft}{\Aff_{\aft}^{< \infty}}
\nc{\Affaftevcoc}{\Aff_{\aft}^{< \infty}}
\nc{\Affevcoclfp}{\Aff_{\lfp}^{< \infty}}
\nc{\Schevcoclfp }{\Sch_{\lfp}^{< \infty}}
\nc{\Schevcocaft}{\Sch_{\aft}^{< \infty}}
\nc{\Schaftevcoc}{\Sch_{\aft}^{< \infty}}
\nc{\Stkevcoc}{\Stk^{< \infty}}
\nc{\Stkevcoclfp}{\Stk_{\lfp}^{< \infty}}
\nc{\Stkperfevcoclfp}{\Stk_{\mathit{perf},\lfp}^{< \infty}}
\nc{\Stkperflfp}{\Stk_{\mathit{perf},\lfp}}
\nc{\Stklfp}{\Stk_{\lfp}}
\nc{\evcoc}{\mathit{e.c.}}
\nc{\ICoh}{\IndCoh}
\nc{\citep}{\cite}
\renc{\H}{\bbH}
\nc{\uno}{\mathbbm{1}}
\nc{\CohBig}{{\Coh^{-\infty}}}
\nc{\Tang}{\mathbb{T}}
\nc{\LieAlg}{\mathsf{LieAlg}}
\nc{\Serre}{{\mathsf{Serre}}}
\nc{\MPreStk}{\mathsf{MPreStk}}
\nc{\all}{{\on{all}}}
\nc{\QCohwedge}{\bbQ^\wedge}
\nc{\ICohwedge}{\bbI^\wedge}
\nc{\ICohW}{\ICohwedge}
\nc{\QCohW}{\QCohwedge}
\nc{\ShvCatA}{\ShvCat^{\AA}}
\nc{\ShvCatB}{{\ShvCat^\B}}
\nc{\naiveto}{{\xto{\naive}}}
\nc{\conaiveto}{{\xto{\conaive}}}
\nc{\strong}{\mathit{strong}}
\nc{\costrong}{\mathit{costrong}}
\nc{\conv}{\mathit{conv}}
\nc{\Q}{\bbQ}
\nc{\bY}{\mathbf{Y}}
\nc{\Loop}{\mathsf{LOOP}}
\nc{\DG}{{\on{DG}}}
\nc{\coind}{\mathsf{coind}}
\nc{\co}{{\on{co}}}
\nc{\laftdef}{{\mathit{laft-def}}}
\nc{\qsmooth}{{\mathit{qs.smooth}}}
\nc{\smooth}{{\mathit{smooth}}}
\nc{\LKE}{\on{LKE}}
\nc{\RKE}{\on{RKE}}
\nc{\ShvCatAco}{\ShvCatA_{\co}}
\nc{\ShvCatHco}{\ShvCatH_{\co}}
\nc{\Stk}{\mathsf{Stk}}
\nc{\doubleCat}{\mathsf{doubleCat}}
\nc{\Spaces}{\mathcal{S}\!\mathit{paces}}
\nc{\ALG}{\mathsf{ALG}}
\nc{\MAPS}{\mathsf{MAPS}}
\nc{\CAT}{\mathsf{CAT}}
\nc{\oneCat}{{\Cat_{\1}}}
\nc{\oneCAT}{{\CAT_{\1}}}
\nc{\twoCat}{{\Cat_{\2}}}
\nc{\twoCAT}{{\CAT_{\2}}}
\nc{\DGCAT}{\mathsf{DGCAT}}
\nc{\twoCatDG}{{\CAT_{\2}^\DG}}
\nc{\twoCATDG}{{\CAT_{\2}^\DG}}
\nc{\twoCATDGw}{{\CAT_{\2, w*}^\DG}}
\nc{\twoCATDGww}{{\CAT_{\2, ww*}^\DG}}
\nc{\AlgBimod}{\Alg^{\mathit{bimod}}}
\nc{\AlgBimodDGCat}{\AlgBimod(\DGCat)}
\nc{\ALGBimod}{\ALG^{\mathit{bimod}}}
\nc{\twoAlgBimod}{\ALGBimod}
\nc{\rev}{{\on{rev}}}
\nc{\lfp}{{\mathit{lfp}}}
\nc{\RBeck}{{\on{R-BC}}}
\nc{\LBeck}{{\on{L-BC}}}
\nc{\schem}{\mathit{schem}}
\nc{\proper}{\mathit{proper}}
\nc{\res}{{\mathit{res}}}
\nc{\UQCoh}{\U^{\QCoh}}
\nc{\UQ}{\UQCoh}
\nc{\LieAlgbd}{{\on{Lie-algbd}}}
\nc{\LY}{{L\Y}}
\nc{\TangQ}{\Tang^{\QCoh}}
\nc{\Fil}{{\on{Fil}}}
\nc{\AssGr}{\on{assoc-gr}}
\nc{\Cech}{\on{Cech}}
\nc{\FormMod}{\mathsf{FormMod}}
\nc{\FormModunderStkevcoc} {\FormMod_{\Stk^{< \infty}/}^\lfp }
\nc{\vDmod}{\virg{\Dmod}}
\nc{\LSG}{\LS_G}
\nc{\LSM}{\LS_M}
\nc{\LSP}{\LS_P}
\nc{\LST}{\LS_T}
\nc{\LSB}{\LS_B}
\nc{\Gm}{{\GG_m}}
\nc{\GIT}{/\!/}
\renc{\t}{\ft}
\nc{\PP}{\bbP}
\nc{\Nglob}{\check{\N}_\glob}
\nc{\Psid}{\on{Ps-Id}}
\nc{\PsId}{\Psid}
\nc{\unshift}{\Rightarrow}
\nc{\coker}{\on{cone}}
\nc{\irred}{{\on{irred}}}
\nc{\red}{{\on{red}}}
\nc{\Spr}{{\on{Spr}}}
\nc{\DL}{\mathsf{DL}}
\nc{\St}{{\on{St}}}
\nc{\Glued}{{\mathsf{Glued}}}
\nc{\LSGchLeviirred}{\LS_\Gch^{\on{Levi-irred}}}
\nc{\LSLeviirred}{\LS_G^{\on{Levi-irred}}}
\nc{\olBun}{\ol{\Bun}}
\nc{\cl}{\on{cl}}
\nc{\stargen}{{*\on{-gen}}}
\nc{\Whitgen}{{\Whit\on{-gen}}}
\nc{\RHom}{\CH\!\on{om}}
\nc{\Poinc}{\on{Poinc}}
\nc{\Betti}{\on{Betti}}
\nc{\LSGcoarse}{\LS_{G,\on{coarse}}}
\nc{\mon}{{\Gm\on{-mon}}}
\nc{\TBunG}{\sT_{\Bun_G}}
\nc{\lperp}{{}^\perp}
\nc{\Levi}{\on{Levi}}
\nc{\Verdier}{{\on{Verdier}}}
\nc{\pscpt}{{\on{ps-cpt}}}
\nc{\Calk}{\on{Calk}}
\nc{\ICohNSt}{\ICoh_{\N + \St}(\LSG)}
\nc{\NSt}{\ICohNSt}
\nc{\DmodBig}{\Dmod_{!+*}}
\nc{\WSph}{\on{WS}}
\nc{\backsl}{\backslash}
\nc{\AJ}{\on{AJ}}
\nc{\GR}{G(R)}
\nc{\B}{{\bbone_{\Sph_G}^\temp}}
\nc{\unittempG}{{\bbone_{\Sph_G}^\temp}}
\nc{\unittempM}{{\bbone_{\Sph_M}^\temp}}
\nc{\loccpt}{\on{loc-cpt}}
\nc{\loccoh}{\loccpt}
\nc{\dom}{\on{dom}}
\nc{\Mat}{\mathit{Mat}}
\nc{\adj}{\on{adj}}
\nc{\bigcell}[1]{\wt{#1}}
\nc{\Hren}{H_{\BM}}
\renc{\BM}{\on{BM}}
\nc{\univ}{\on{univ}}
\nc{\sing}{\on{sing}}
\nc{\new}{\on{new}}
\nc{\Gcirc}{G^\circ}
\nc{\ppart}{(\!(t)\!)}
\nc{\equivariant}{\on{-equiv}}
\newcommand\blfootnote[1]{%
  \begingroup
  \renewcommand\thefootnote{}\footnote{#1}%
  \addtocounter{footnote}{-1}%
  \endgroup
}
\nc{\NchGlob}{\Nch^{\glob}}
\nc{\sR}{\mathsf{R}}
\begin{document}


\title{Tempered D-modules and Borel-Moore homology vanishing}
\author{Dario Beraldo}

\begin{abstract}
We characterize the tempered part of the automorphic Langlands category $\Dmod(\Bun_G)$ using the geometry of the big cell in the affine Grassmannian. 
We deduce that, for $G$ non-abelian, tempered D-modules have no de Rham cohomology with compact supports.
The latter fact boils down to a concrete statement, which we prove using the Ran space and some explicit t-structure estimates: for $G$ non-abelian and $\Sigma$ a smooth affine curve, the Borel-Moore homology of the indscheme $\Maps(\Sigma,G)$ vanishes.
\end{abstract}

\maketitle

\blfootnote{\address{UCL, London, UK; \textit{email}: d.beraldo@ucl.ac.uk}. \\
ORCID: 0000-0002-4881-9846. \\
MSC 2010: \subjclass{14D24, 14F05, 18F99, 22E57.}}

\sec{Introduction}

\ssec{Overview}

The present paper is devoted to the study of the tempered condition appearing on the automorphic side of the geometric Langlands conjecture.
In this overview, we recall the statement of the geometric Langlands conjecture, review how the tempered condition comes about and explain why this condition is important. We will then state the goals of the paper.

\sssec{}

Let $G$ be a connected reductive group and $X$ a smooth complete curve, both defined over an algebraically closed field $\kk$ of characteristic zero.
Denote by $\Bun_G:=\Bun_G(X)$ the stack of $G$-bundles on $X$ and by $\Dmod(\Bun_G)$ the differential graded (DG, from now on) category of D-modules on it.
This is the DG category appearing on the automorphic side of the geometric Langlands correspondence.

For details on the geometry of $\Bun_G$ and on the DG category of D-modules on a stack, we recommend the papers \cite{DG-cptgen}, \cite{finiteness} and \cite{contract}.

\sssec{}

The geometric Langlands conjecture calls for an equivalence between $\Dmod(\Bun_G)$ and a different-looking DG category whose definition explicitly involves $\Gch$, the Langlands dual group of $G$.
At first approximation, the candidate is $\QCoh(\LSGch)$, the DG category of quasi-coherent sheaves on the stack $\LSGch := \LSGch(X)$ of de Rham $\Gch$-local systems on $X$. 
This is the so-called spectral side of the conjecture.
For details on the definition of $\LSGch$, and in particular for its derived nature, we refer to \cite[Section 2.11]{BD-quantization} and \cite[Section 10]{AG1}.

\sssec{}

As in \cite[Section 1]{AG1}, the \virg{best hope} form of the geometric Langlands conjecture is the statement that these two DG categories are equivalent:
\begin{equation} \label{eqn:best hope}
\Dmod(\Bun_G) 
\stackrel ? \simeq
\QCoh(\LSGch).
\end{equation}
While this is known to be true for $G$ abelian (see \cite[Remark 11.2.7]{AG1}, as well as \cite{Laumon1, Laumon2}, \cite{Roth1, Roth2}), it is false for more general groups: two reasons for this failure are explained in \cite[Section 1.1.2]{AG1} and \cite[Section 0.2.1]{Outline}. Both reasons point at the fact that the spectral side of \eqref{eqn:best hope} is too small to match the automorphic side. 
 To fix this discrepancy, one either enlarges the spectral side or shrinks the automorphic one.
 
 \sssec{}

A viable candidate for the enlarged spectral DG category was introduced in \cite{AG1}; to define it, one needs the theory of ind-coherent sheaves (developed in \cite{ICoh}) and the theory of singular support for coherent sheaves on quasi-smooth stacks (see \cite[Sections 2-9]{AG1}). We will briefly discuss this material in Section \ref{sssec:review of singular support}; for now, we just need to know that:
\begin{itemize}
\item
 $\IndCoh(\LSGch)$, the DG category of ind-coherent sheaves on $\LSGch$, contains $\QCoh(\LSGch)$ as a full subcategory;
\item
ind-coherent sheaves on $\LSGch$ can be classified according to their singular support, with possible singular supports being closed subsets of the space of \emph{geometric Arthur parameters} 
\begin{equation} \label{eqn:Arthur params}
\Arth_\Gch := \{(\sigma, A) | \sigma \in \LSGch, A \in H^0(X_\dR, \gch_{\sigma})\}.
\end{equation}
\end{itemize}

\sssec{}

Among such closed subsets, consider the \emph{global nilpotent cone} 
$$
\NchGlob := \{(\sigma, A) \in \Arth_\Gch \ | \, \mbox{$A$ is nilpotent}\}
$$
and let $\ICoh_{\NchGlob}(\LSGch)$ be the full subcategory of $\ICoh(\LSGch)$ spanned by objects with singular support contained in $\NchGlob$.
This is a DG category that sits between $\QCoh(\LSGch)$ and $\ICoh(\LSGch)$.

\sssec{}

The corrected version of the geometric Langlands conjecture states that
\begin{equation} \label{eqn:official geom Langlands }
\Dmod(\Bun_G) \stackrel ? \simeq
\ICoh_{\NchGlob}(\LSGch).
\end{equation}
After \cite{AG1}, the above is the \emph{official form} of the conjecture. It should come with a series of compatibilities (with the Hecke action, with the Whittaker normalization, with Eisenstein series) that we do not discuss here.

\sssec{}

The alternative way to correct \eqref{eqn:best hope} is to shrink the automorphic side. This form of the conjecture, to be called the \emph{tempered geometric Langlands conjecture}, was also introduced in \cite{AG1}. It calls for an equivalence
\begin{equation} \label{eqn:temp geom Langlands }
\ltemp\Dmod(\Bun_G) 
\stackrel ? \simeq
\QCoh(\LSGch),
\end{equation}
where $\ltemp\Dmod(\Bun_G)$ is the full-subcategory of $\Dmod(\Bun_G)$ consisting of \emph{tempered} D-modules. 
%
We will recall the definition of the tempered condition in Section \ref{ssec:intro-temp}; meanwhile, let us explain why the latter form of the conjecture is more fundamental than the official one. This has to do with the gluing statements appearing on the two sides of the conjecture.

\sssec{}

In \cite{AG2} and \cite{strong-gluing}, it is proven that $\ICoh_{\NchGlob}(\LSGch)$ can be reconstructed using $\QCoh(\LSGch)$ as well as similar DG categories for smaller Levi subgroups of $\Gch$. The details of those two papers are rather technical, but luckily we do not need them here; very informally%
\footnote{Warning: the usage of the symbol $\Glue$ here does not match with the one adopted in \cite{AG2}.}%
, we have
\begin{equation} \label{eqn:spectral gluing}
\ICoh_{\NchGlob}(\LSGch)
\simeq
\Glue_{\Mch \subseteq \Gch}
\Bigt{
\QCoh(\LSMch)
}
,
\end{equation}
where $\Mch$ runs through the poset of standard Levi subgroups of $\Gch$, while the symbol $\Glue$ means \virg{glue the DG categories $\QCoh(\LSMch)$ in a certain precise and explicit way that we do not explain in this paper}.
This is the content of the \emph{spectral gluing theorem}.
\sssec{}

The official version of the geometric Langlands conjecture then predicts that a similar decomposition must occur on the automorphic side; in other words, we expect the following \emph{automorphic gluing} statement:
\begin{equation} \label{eqn:automorphic gluing}
\Dmod(\Bun_G)
\simeq
\Glue_{M \subseteq G}
\Bigt{
\ltemp\Dmod(\Bun_M)
}
.
\end{equation}
For more details on the latter, we refer to \cite[Section 1.11]{shvcatHH} and \cite[Section 1.4]{strong-gluing}.
While \eqref{eqn:spectral gluing} is settled, the equivalence \eqref{eqn:automorphic gluing} is still in progress. Namely, it is possible to properly define the glued DG category and it is easy to write down a functor from $\Dmod(\Bun_G)$ to that glued DG category. The delicate part is proving that such a functor is an equivalence: this requires a good understanding of the tempered condition. The present paper (in particular, Theorem \ref{mainthm:B as unit of Sph-temp} below) is the first step towards our ongoing proof of \eqref{eqn:automorphic gluing}.

\sssec{}

These two gluing results are crucial for the current developments of the geometric Langlands program. Indeed, at the time of writing, the only publicly known strategy%
\footnote{More precisely, the strategy outline here is a slight modification, proposed by the author, of Gaitsgory's one. In our version, we highlight the role played by tempered categories, while keeping Whittaker categories hidden as much as possible. For instance, the automorphic glued DG category is a substitute for the extended Whittaker category.} to prove the official equivalence (outlined in \cite{Outline}) consists roughly of two parts: first, prove tempered geometric Langlands for all Levi subgroups of $G$, including $G$ itself; second, assemble these equivalences to match the right-hand-sides of \eqref{eqn:spectral gluing} and \eqref{eqn:automorphic gluing}.
In particular, both parts of this strategy require working with tempered objects.
One obstacle with this approach is that the definition of the tempered condition, to be reviewed in Section \ref{ssec:intro-temp}, is clumsy and hard to deal with: it uses the derived Satake equivalence, and in particular the geometry of the spectral side, in an essential way.

\sssec{}

The \emph{first goal} of the present work is to remove this obstacle: we will provide a novel characterization of the tempered condition that is purely automorphic (in other words, a characterization that only involves the geometry of $\Bun_G$, not the one of $\LSGch$).
This is the content of Theorem \ref{mainthm:B as unit of Sph-temp}.

The \emph{second goal} is to illustrate that this new characterization is useful in practice: we use it to settle a conjecture of D. Gaitsgory and V. Lafforgue concerning the anti-temperedness of the dualizing sheaf $\omega_{\Bun_G}$. This is the content of Theorem \ref{mainthm: omega antitemp}, which in turn hinges on the Borel-Moore homology computation of Theorem \ref{mainthm:Hren vanishing}.
However, we could not prove the latter directly (except when $G= GL_n$ or $SL_n$).

The \emph{third goal} of the paper is the proof of Theorem \ref{mainthm:omega-inf-connective}, a t-structure estimate that quickly yields Theorem \ref{mainthm:Hren vanishing}. We also prove Theorem \ref{mainthm:estimate with equations}, which is another (easier) t-structure estimate of similar type.

\ssec{The first two main results}

Let us proceed to a more precise account of our results. As mentioned, the following statement was a conjecture of D. Gaitsgory, also hinted at by V. Lafforgue in \cite[Section 5 and Page 1]{VLaff}.

\begin{mainthm} \label{mainthm: omega antitemp}
For $G$ a reductive group with semisimple rank $\geq 1$, the dualizing sheaf $\omega_{\Bun_G} \in \Dmod(\Bun_G)$ is \emph{anti-tempered}, that is, right orthogonal to $\ltemp\Dmod(\Bun_G)$.
\end{mainthm}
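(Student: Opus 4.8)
\emph{Step 1: reformulation.} The subcategory $\ltemp\Dmod(\Bun_G)\subseteq\Dmod(\Bun_G)$ is coreflective: the inclusion admits a right adjoint, the temperization functor $(-)^\temp$, which is computed by convolving with the tempered unit $\unittempG\in\Sph_G$ inserted at any point $x\in X$ (the outcome being independent of $x$), and which carries a natural counit $\F^\temp=\unittempG\star_x\F\to\delta\star_x\F=\F$ coming from the map $\unittempG\to\delta$. For a coreflective subcategory, lying in its right orthogonal is the same as being annihilated by the coreflector; hence $\omega_{\Bun_G}$ is anti-tempered if and only if $\unittempG\star_x\omega_{\Bun_G}\simeq 0$. (Equivalently, dualizing $\Hom_{\Dmod(\Bun_G)}(\F,\omega_{\Bun_G})\simeq\Gamma_{\dR,c}(\Bun_G,\F)^\vee$, this says that every tempered D-module has vanishing compactly supported de Rham cohomology, the statement announced in the abstract.)

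\emph{Step 2: big cell, uniformization, and conclusion.} Theorem~\ref{mainthm:B as unit of Sph-temp} enters here in an essential way: it describes the tempered unit $\unittempG$, hence the endofunctor $\unittempG\star_x(-)$, through the big cell $\bigcell{\Gr}_{G,x}\subseteq\Gr_{G,x}$ of the affine Grassmannian. Plugging this into the Hecke correspondence $\Bun_G\xlto{h^{\leftarrow}}\Hecke_{G,x}\xto{h^{\rightarrow}}\Bun_G$, and using $(h^{\rightarrow})^{!}\omega_{\Bun_G}\simeq\omega_{\Hecke_{G,x}}$ together with the fact that $\omega$ is the unit for $\sotimes$, one rewrites $\unittempG\star_x\omega_{\Bun_G}$ as $(h^{\leftarrow})_{!}$ of the Hecke kernel of $\unittempG$, which by Theorem~\ref{mainthm:B as unit of Sph-temp} is a renormalized, $G(\O_x)$-equivariant extension of the dualizing sheaf of $\bigcell{\Gr}_{G,x}$. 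Now apply the uniformization $\pi\colon\Gr_{G,x}\to\Bun_G$: over the open substack of $G$-bundles trivial away from $x$ it is a torsor under the group indscheme $\Maps(X\setminus x,G)$ — the fiber over the trivial bundle being $\Maps(X\setminus x,G)$ itself — and the Hecke translates of this locus cover $\Bun_G$. Base change along $\pi$ then identifies the obstruction to the vanishing of $\unittempG\star_x\omega_{\Bun_G}$ with the Borel–Moore homology $\Hren(\Maps(X\setminus x,G))$, Hecke-equivariance propagating the conclusion from the open locus to all of $\Bun_G$. Since $X\setminus x$ is a smooth affine curve and $G$, having semisimple rank $\ge 1$, is non-abelian, Theorem~\ref{mainthm:Hren vanishing} gives $\Hren(\Maps(X\setminus x,G))\simeq 0$; therefore $\unittempG\star_x\omega_{\Bun_G}\simeq 0$, i.e. $\omega_{\Bun_G}$ is anti-tempered.

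\emph{Main obstacle.} Step 1 is formal. The substance is in Step 2: converting the characterization of $\unittempG$ supplied by Theorem~\ref{mainthm:B as unit of Sph-temp} into the explicit $(h^{\leftarrow})_{!}$-formula, and correctly matching the fibers of the uniformization with $\Maps(X\setminus x,G)$, all while keeping track of the non-quasi-compactness of $\Bun_G$ and the infinite dimensionality of $\Gr_{G,x}$ and $\Maps(X\setminus x,G)$ — so that $(h^{\leftarrow})_{!}$, $\pi_!$ and $\Hren$ are the correctly renormalized functors. Beyond that, the decisive input is Theorem~\ref{mainthm:Hren vanishing} itself, which the paper establishes — via the Ran space — through the t-structure estimates of Theorems~\ref{mainthm:omega-inf-connective} and~\ref{mainthm:estimate with equations}.
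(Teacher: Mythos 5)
Step 1 of your proposal is correct and coincides with the reduction the paper itself makes in Section~\ref{sssec:intro-reformulate-thmA}: anti-temperedness of $\omega_{\Bun_G}$ is equivalent to $\B\star\omega_{\Bun_G}\simeq 0$, and this is the fact to be proved.

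Step 2 has a genuine gap, and it is worth pinpointing exactly where it is. You conflate two different indschemes. The big cell $\Gr_G^\circ\subset\Gr_{G,x}$, which is what controls the Hecke kernel of $\B$ via Theorem~\ref{mainthm:B as unit of Sph-temp}, is a \emph{local} object: $\Gr_G^\circ\simeq G\backslash\GR$, and $\GR=G[t^{-1}]=\Maps(\A^1,G)$ depends only on the formal neighbourhood of $x$, not on the global curve $X$. By contrast, the fibers of the uniformization map $\pi\colon\Gr_{G,x}\to\Bun_G$ are the \emph{global} object $\Maps(X\setminus x,G)$, which visibly depends on the genus of $X$. When you base-change the formula $\B\star\omega_{\Bun_G}\simeq (h^{\leftarrow})_!K_\B$ along $\pi$, the relevant fiber product involves both the support of $K_\B$ (a sub-indscheme of the Hecke stack whose fibers over $\Bun_G$ are twisted big cells, hence governed by $\GR$) and the fibers of $\pi$; it does not collapse to $\Maps(X\setminus x,G)$ alone, so the claimed identification of the obstruction with $\Hren(\Maps(X\setminus x,G))$ does not hold. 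Nor does ``Hecke equivariance'' propagate a pointwise vanishing over $\Bun_G$: the Hecke action is a convolution, not a spatial translation, and the image of $\pi$ is in general only a constructible locus rather than a Zariski-open cover.

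The paper avoids this entirely by working in the formalism of strong loop group actions (Section~\ref{ssec:Hecke action of B}). From Theorem~\ref{mainthm:B as unit of Sph-temp} one extracts the exact formula
$$
\B\star c \simeq \Av_*^{G\to\GO}\bigt{\omega_{G\backslash\GR/G}\ast\oblv^{G\to\GO}(c)},
$$
applies it to $c=\omega_{\Bun_G}$, and after forgetting $G$-invariance factors out the scalar $\Hren(G\backslash\GR)$. Using $\GR\simeq G\ltimes(G\backslash\GR)$ and the conservativity of tensoring by the nonzero vector space $\Hren(G)$, the statement reduces to $\Hren(\GR)=\Hren(G[\A^1])\simeq 0$, i.e. Theorem~\ref{mainthm:Hren vanishing} for $\Sigma=\A^1$. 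Incidentally, since Theorem~\ref{mainthm:Hren vanishing} is stated for an arbitrary smooth affine curve, a correct reduction to $\Hren(\Maps(X\setminus x,G))$ would also have sufficed to finish; but the route through uniformization that you sketch does not produce such a reduction, because the Hecke action at $x$ and the uniformization at $x$ are independent constructions.
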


Explicitly, the theorem states that $\RHom_{\Dmod(\Bun_G)}(\F,\omega_{\Bun_G}) \simeq 0$ whenever $\F$ is tempered. Recall from \citep[Corollary 5.3.2]{contract} that the $!$-pushforward along the structure projection $p_{\Bun_G}: \Bun_G \to \pt := \Spec(\kk)$ is well-defined on the entire $\Dmod(\Bun_G)$. Thus, by adjunction, Theorem \ref{mainthm: omega antitemp} is equivalent to:

\begin{mainthmbis}{mainthm: omega antitemp} \label{mainthm: antitemp-reformulation}
If $\F \in \Dmod(\Bun_G)$ is tempered, then $(p_{\Bun_G})_!(\F) \simeq 0$.
\end{mainthmbis}

\begin{rem}
We immediately deduce that, for $G$ as above and any $G$-bundle $i_E: \Spec(\kk) \to \Bun_G$, the object $(i_E)_!(\kk)$ is not tempered. 
On the other hand, we conjecture that the D-modules $(i_E)_*(\kk)$ are all tempered. The latter statement does not follow easily from the results of this paper and it will be treated elsewhere.
\end{rem}

\begin{rem}
The implication of Theorem \ref{mainthm: antitemp-reformulation} is not reversible in general: $(p_{\Bun_G})_!(\F) \simeq 0$ does not imply that $\F$ is tempered. 
However, we expect that $\F \in \ltemp\Dmod(\Bun_G) \iff (p_{\Bun_G})_!(\F) \simeq 0$ for $G$ of semisimple rank equal to one.
If this is true, then Theorem \ref{mainthm: antitemp-reformulation} provides a very pleasant characterization of tempered D-modules in semisimple rank $1$, and thus a very pleasant correction of the best hope conjecture \eqref{eqn:best hope} in that case.
\end{rem}

\sssec{}

We will prove Theorem \ref{mainthm: omega antitemp} by establishing two other main results, Theorems \ref{mainthm:Hren vanishing} and \ref{mainthm:B as unit of Sph-temp}, which are possibly more interesting than Theorem \ref{mainthm: omega antitemp} itself. 
The first of these results is the following concrete statement.

\begin{mainthm} \label{mainthm:Hren vanishing} 
Let $\Sigma$ be a smooth affine curve. For $G$ a reductive group with semisimple rank $\geq 1$, the Borel-Moore homology $\Hren(G[\Sigma])$ of the mapping indscheme $G[\Sigma] :=\Maps(\Sigma, G)$ vanishes.
\end{mainthm}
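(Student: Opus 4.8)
\emph{Proof proposal.} The plan is to realize $G[\Sigma]$ as an ind-affine indscheme, reduce the vanishing to an $\infty$-connectivity estimate for its dualizing complex, and prove that estimate locally over the Ran space $\Ran(\Sigma)$. Fix a closed embedding $G \hookrightarrow \mathbb{A}^N$ and use it to write $G[\Sigma] = \colim_n Y_n$, where $Y_n$ is the finite-type affine scheme of maps $\Sigma \to G$ whose composite with $G \hookrightarrow \mathbb{A}^N$ has coordinates in a fixed exhausting family of finite-dimensional subspaces of $\O(\Sigma)$. Since closed immersions are proper, Borel--Moore homology is functorial for the transition maps $Y_n \hookrightarrow Y_{n+1}$ (via Gysin/corestriction) and
\begin{equation*}
\Hren(G[\Sigma]) \;\simeq\; \colim_n \Hren(Y_n).
\end{equation*}
Hence it suffices to prove that $\omega_{G[\Sigma]}$ is \emph{$\infty$-connective}: for then its derived de Rham global sections $\Hren(G[\Sigma])$, being an $\infty$-connective object of $\Vect$, must vanish. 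This $\infty$-connectivity is the content of Theorem~\ref{mainthm:omega-inf-connective}, and the implication just described is why it "quickly yields" the theorem.

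The geometry behind the estimate is as follows. By Artin vanishing, $\Hren(W)$ is highly connective when $W$ is an affine scheme whose singularities are mild and whose dimension is large; concretely, if one could arrange the $Y_n$ to be, say, complete intersections of dimension tending to infinity — as one can for $G = GL_n$ or $SL_n$, which is essentially the direct argument available in those cases — the claim would follow at once. For a general reductive $G$ this fails: a map of degree close to $n$ typically has only few deformations inside $Y_n$, so $Y_n$ has low-dimensional pieces and $\Hren(Y_n)$ need not be highly connective. The saving observation, and the only place the hypothesis of positive semisimple rank enters, is that fixing a root $\alpha$ yields a free action of the vector group $\g_\alpha[\Sigma] \cong \O(\Sigma)$ on $G[\Sigma]$ by $\phi \mapsto \exp(p\, e_\alpha)\cdot\phi$; consequently any finite-dimensional closed subscheme $Z \subseteq G[\Sigma]$ — in particular the support of a Borel--Moore class, say in homological degree $k$, on some $Y_n$ — sits inside $Y_m$, for $m \gg 0$, together with an $\mathbb{A}^M$-worth of such deformations, $M \to \infty$. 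Heuristically the Gysin image of the class is then pushed into homological degree $\ge k + 2M$ and so dies in the colimit.

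Turning this heuristic into the actual t-structure estimate is where $\Ran(\Sigma)$ and the two estimates of the paper do the work: one cannot simply count dimensions, because the root-subgroup directions interact with the defining equations of $G$ and the subschemes in play are singular (and far from lci), so naive Gysin bookkeeping does not apply. Instead I would organize the "bounded order of pole" exhaustion of $G[\Sigma]$ over $\Ran(\Sigma)$ — stratifying by the locus of maps with controlled behaviour away from a finite subscheme $\ux \subseteq \Sigma$ — establish the connectivity estimate locally, using the explicit equations of $G$ together with the factorization of the $\g_\alpha$-directions (so that, locally, the relevant normal geometry contains a copy of $\mathbb{A}^\infty$, which is $\infty$-connective for $\omega$), and then propagate it to $G[\Sigma]$ via the factorization structure over $\Ran(\Sigma)$. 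The local, equation-based input is Theorem~\ref{mainthm:estimate with equations}; its globalization over $\Ran(\Sigma)$ is Theorem~\ref{mainthm:omega-inf-connective}, which as above gives $\Hren(G[\Sigma]) \simeq 0$.

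The hard part is precisely this failure of uniformity: there is no uniform lower bound on the local dimensions of the $Y_n$, so the estimate has to be carried out on the colimit diagram rather than term by term, and it must stay compatible with the singular defining equations of $G[\Sigma]$ — which is what forces the detour through $\Ran(\Sigma)$ and the explicit t-structure bookkeeping. For $GL_n$ and $SL_n$ the equidimensionality shortcut already suffices; for a torus the construction genuinely breaks down, since then every connected component of $G[\Sigma]$ stays finite-dimensional, confirming that the hypothesis of positive semisimple rank is indispensable.
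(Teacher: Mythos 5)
Your reduction of the theorem to Theorem~\ref{mainthm:omega-inf-connective} is correct and matches the paper: the point is that $(p_\Y)_{*,\ren}$ is right t-exact on an ind-affine indscheme $\Y$ of ind-finite type, so it carries infinitely connective objects to infinitely connective objects of $\Vect$, and the latter DG category has no nonzero infinitely connective objects by left-completeness. Your "Gysin/corestriction" heuristic for why the colimit of $\Hren(Y_n)$ vanishes is not the mechanism that is actually used; the argument is a t-structure estimate, not a degree-shifting argument.

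The main misreading is in the relationship between Theorems~\ref{mainthm:estimate with equations} and~\ref{mainthm:omega-inf-connective}. You propose that Theorem~\ref{mainthm:estimate with equations} is the local, equation-based input which the Ran-space argument then globalizes to prove Theorem~\ref{mainthm:omega-inf-connective}. That is not what happens: the two theorems are logically independent. Theorem~\ref{mainthm:estimate with equations} is a self-contained Riemann--Roch-and-dimension-count argument that handles $GL_n$ and $SL_n$ directly, with no Ran space at all. The proof of Theorem~\ref{mainthm:omega-inf-connective} for general $G$ never uses the equations of $G$; it uses the big Bruhat cell $\Gcirc = N^-\times T\times N$. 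The crucial observations are: (i) $\Gcirc$ is a \emph{basic open} affine subset of $G$, so $G[\Sigma]$ is covered by Zariski opens isomorphic to $G[\Sigma]^{\Gcirc\ggen}$, and the t-structure on $\Dmod$ of an indscheme is Zariski-local; (ii) $\Gcirc[\Sigma] \simeq \A^{|\sR|}[\Sigma]\times T[\Sigma]$ has an $\A^\infty$ factor when the semisimple rank is positive, and its dualizing sheaf is therefore infinitely connective by Lemma~\ref{lem:Ainfty times something inf connective}; (iii) the passage from $\Gcirc[\Sigma]$ (which is \emph{not} open in $G[\Sigma]$) to the open $G[\Sigma]^{\Gcirc\ggen}$ is handled by the Ran space with marked points, using ind-properness of the forgetful maps and homological contractibility of the Ran space of a curve; and (iv) the key geometric ingredient is the representation-theoretic pole bound of Lemma~\ref{lemma:key-rep-thry}, which lets one replace the fiber product with $\Gr_G$ by a fiber product with a \emph{scheme} $Y_\lambda$, so that Lemma~\ref{lem:Ainfty times something inf connective} applies.

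Your suggested free action of a single root subgroup $\g_\alpha[\Sigma]$ on $G[\Sigma]$ is an interesting thought, but it is not what the paper does and it is not clear it can be made to work: a single root direction does not give a Zariski-open subset of $G$ (so it cannot feed into the Zariski-locality of the t-structure), and the translation action does not interact transparently with the exhaustion $Y_n$ you fixed. The role of "semisimple rank $\geq 1$" is exactly that $|\sR|>0$, so that the $N^-\times N$ factor of the big cell produces the $\A^\infty$ direction.
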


\begin{rem}[Related results]
The ordinary homology of $G[\Sigma]$ was computed by C. Teleman in \cite{CT}. The homology of $G[\Sigma]^\gen$, the space of rational (alias: generic) maps from $\Sigma$ to $G$, was computed by D. Gaitsgory in \cite{contract}.
\end{rem}

\sssec{}

Let us look at $G[\Sigma]$ and at its Borel-Moore homology more closely. First off, note that $\A^N[\Sigma]$ is an indscheme isomorphic to $\A^\infty := \colim_{m \geq 0} \A^m$.
Indeed,
$$
\A^N[\Sigma] \simeq \colim_{d \geq 0} {\A^N[\Sigma]}_{\leq d},
$$
where $\A^N[\Sigma]_{\leq d}$ is the finite dimensional vector space of maps whose poles at the points at infinity of $\Sigma$ have order bounded by $d$; by Riemann-Roch, the dimension of $\A^N[\Sigma]_{\leq d}$ increases to $\infty$ with $d$.

Next, recall that any affine scheme $Y$ of finite type (and in particular $G$) can be realized as a closed subscheme of $\A^N$. Since the induced map $Y[\Sigma] \hto \A^N[\Sigma] \simeq \A^\infty$ is a closed embedding, it follows that $Y[\Sigma]$ is an ind-affine indscheme of ind-finite type.

\sssec{}

The Borel-Moore homology of a scheme $Y$ of finite type can be defined using D-modules: we set 
$$
\Hren(Y) :=(p_Y)_{*,\dR} (\omega_Y),
$$
where $\omega_Y \in \Dmod(Y)$ is the dualizing D-module and $(p_Y)_{*,\dR} $ the functor of de Rham cohomology.
It follows formally that $\Hren$ is covariant with respect to proper maps, hence it is well-defined on indschemes (of ind-finite type).
For example, for $\A^\infty$ we have 
$$
\Hren(\A^\infty) \simeq
 \uscolim{n \geq 0} \; \Hren(\A^n) 
 \simeq 
 \uscolim{n \geq 0} \; \kk[2n] 
 \simeq 0.
$$

\sssec{}

The proof of Theorem \ref{mainthm:Hren vanishing} will be discussed in Section \ref{ssec:intro-BM vanishing}. Meanwhile, let us explain how to deduce Theorem \ref{mainthm: omega antitemp} from Theorem \ref{mainthm:Hren vanishing}. For this, we must first digress and recall the definition of the tempered condition.

\ssec{Tempered objects} \label{ssec:intro-temp}

The phenomenon of temperedness (and non-temperedness) was first observed in \cite[Sections 1.1.10 and 12.1]{AG1}.
It arises as a consequence of three facts: the Hecke action on $\Dmod(\Bun_G)$, the derived Satake theorem, the discrepancy between ind-coherent sheaves and quasi-coherent sheaves on a quasi-smooth stack.
Let us review these facts in order.

\sssec{}

Let $\GK := G (\!( t)\!)$ and $\GO := G[[t]]$ be the loop group and the arc group of $G$, see for instance \cite[Definition 1]{Faltings}.
The Hecke action is a certain natural (once a point $x \in X$ and a local coordinate at $x$ have been chosen) action of the spherical monoidal DG category 
$$
\Sph_G := \Dmod(\GO \backsl \GK/\GO)
$$
on $\Dmod(\Bun_G)$.
The actual definition of the Hecke action is recalled and used in Section \ref{ssec:Hecke action of B}.

To fix the conventions, we regard $\Dmod(\Bun_G)$ as acted upon by $\Sph_G$ from the left. 
Similarly, for $\Gr_G := \GO \backsl \GK$ the affine Grassmannian, we regard $\Dmod(\Gr_G)$ as acted on by $\Sph_G$ from the left (and compatibly by $\GK$ from the right).
We denote by $\bbone_{\Sph_G}$ the unit object of $\Sph_G$, described explicitly in Section \ref{sssec:unit of Sph concrete} below.

\sssec{}

The next ingredient is the derived Satake theorem (see \cite{BF} and \cite[Section 12]{AG1}), that is, the description of $\Sph_G$ in Langlands dual terms. To appreciate this theorem, a certain familiarity with ind-coherent sheaves on quasi-smooth stacks and with the theory of singular support is desirable: we refer to Section \ref{sssec:review of singular support} for the main tenets of these theories and to \cite[Section 12]{AG1} for the full treatment.

\begin{thm}[Derived Satake] \label{thm:derived satake}
There exists a canonical monoidal\footnote{The monoidal structure on $ \ICoh_\Nch(\Omega \gch / \Gch)$ is described in \cite[Section 12.5]{AG1} and reviewed in Section \ref{sssec:monoidal str on Sph spectral}.}
equivalence
\begin{equation} \label{intro:derived Sat}
 \Sat_G :
 \ICoh_\Nch(\Omega \gch / \Gch)
 \xto{\simeq}
 \Sph_G.
\end{equation}
\end{thm}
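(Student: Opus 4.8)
This is the Derived Satake theorem of Bezrukavnikov--Finkelberg \cite{BF}, in the ind-coherent, singular-support-sensitive reformulation of \cite[Section 12]{AG1}; in the body of the paper it is used as a black box, so strictly speaking the proof is the citation. Still, let me indicate the shape of the argument the cited works carry out, and where the real work lies. The starting point is classical geometric Satake: the category $\Perv_{\GO}(\Gr_G)$ of $\GO$-equivariant perverse sheaves on $\Gr_G$, equipped with the fusion (convolution) tensor product and the Mirkovi\'c--Vilonen commutativity constraint, is equivalent as a symmetric monoidal category to $\Rep(\Gch)$. Passing to the (renormalized) derived category and ind-completing yields a symmetric monoidal functor $\Rep(\Gch) = \QCoh(\pt/\Gch) \to \Sph_G$ whose image --- the $\IC$-sheaves of $\GO$-orbit closures, up to shifts --- generates $\Sph_G$ under colimits. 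In particular $\Sph_G$ becomes a module category over $\QCoh(\pt/\Gch)$, so by descent along $\pt \to \pt/\Gch$ together with Barr--Beck it must be $\ICoh$ (or $\QCoh$) of some prestack affine over $\pt/\Gch$; all the content is in identifying that prestack.

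One pins it down by computing the derived, $\Gch$-equivariant convolution algebra of the generating objects. Ginzburg's computation identifies the relevant graded algebra with (a shift of the) functions on $\gch$, the cohomological grading forcing the homological shift --- this is why one lands on the \emph{quasi-smooth} dg-scheme $\Omega\gch$ rather than on $\gch$ itself --- and Bezrukavnikov--Finkelberg upgrade this from the unit $\bbone_{\Sph_G}$ to the whole monoidal category, matching convolution with the product described in \cite[Section 12.5]{AG1}. The outcome is an equivalence of $\Sph_G$ with a category of $\Gch$-equivariant coherent complexes on $\Omega\gch/\Gch$. The final step, which is where \cite{AG1} improves on \cite{BF}, is a compatibility check for compact generation: one verifies that under this equivalence the compact generators go precisely to coherent sheaves whose singular support lies in the nilpotent cone $\Nch$, so that ind-completing their category produces exactly $\ICoh_\Nch(\Omega\gch/\Gch)$ rather than all of $\ICoh(\Omega\gch/\Gch)$.

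The genuinely non-formal ingredients are thus Ginzburg's cohomology computation, the Mirkovi\'c--Vilonen commutativity constraint (needed to make the monoidal structure \emph{symmetric}, with the Koszul signs correct), and the renormalization and singular-support estimates of \cite{AG1}; everything else --- the Tannakian/Barr--Beck reconstruction and the descent along $\pt \to \pt/\Gch$ --- is formal. If one wanted to reconstruct the proof from scratch, I would expect the singular-support and renormalization bookkeeping, rather than the cohomology computation, to be the main obstacle. For the purposes of the present paper none of this is reproved: we simply invoke \cite{BF} and \cite[Section 12]{AG1}.
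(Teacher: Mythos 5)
Your treatment matches the paper's in the essential respect: Theorem~\ref{thm:derived satake} is a citation to \cite{BF} and \cite[Section 12]{AG1}, and the paper likewise gives no proof, only a remark locating the content. But the sketch you attach of what those references do omits the step the paper's remark flags as central, and mischaracterizes how the singular support bound arises.

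The remark following the theorem states that the second equivalence of \cite[Theorem 5]{BF} is related to \eqref{intro:derived Sat} by \emph{renormalization and Koszul duality}. You mention renormalization but not Koszul duality, which is the device that converts the Bezrukavnikov--Finkelberg formulation (modules over a shift of $\Sym(\gch[-2])$) into $\ICoh$ on $\Omega\gch = \Spec\Sym(\gch^*[1])$, and at the same time converts ordinary set-theoretic support into singular support. This is exactly the mechanism used later in the paper, e.g.\ in Lemma~\ref{lem:Koszul-pippo} via \cite[Proposition 12.4.2]{AG1}. Consequently the restriction to $\Nch$ is not obtained afterwards by a ``compatibility check for compact generation,'' as you suggest; it is the image under Koszul duality of a support condition already present on the Bezrukavnikov--Finkelberg side. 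Two smaller points: the theorem asserts only a monoidal equivalence, not a symmetric monoidal one, so appealing to the Mirkovi\'c--Vilonen commutativity constraint to make the structure symmetric overstates the claim being used here; and the Barr--Beck/descent-along-$\pt \to \pt/\Gch$ reconstruction you outline is not the route \cite[Section 12]{AG1} actually takes to identify the prestack.
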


\begin{rem}
We should stress the fact that the core of the proof of the geometric Satake is the second equivalence of \cite[Theorem 5]{BF}. As explained in \cite[Section 12.5]{AG1}, this equivalence is related to 
\eqref{intro:derived Sat} by renormalization and Koszul duality.
\end{rem}

\sssec{}

In the above formula, $\Omega \gch$ denotes the self-intersection of the origin of the vector space $\gch$, that is, the derived scheme $\pt \times_{\gch} \pt \simeq \Spec \Sym(\gch^*[1])$. It is equipped with a $\Gch$-action induced by the usual (co)adjoint action. The quotient stack $\Omega \gch/\Gch$ is quasi-smooth with space of singularities equal to $\gch^*/\Gch$.
Hence, we can consider ind-coherent sheaves on $\Omega \gch/\Gch$ with singular support contained in any chosen closed conical $G$-invariant subset of $\gch^*$.
In particular, the choice of the nilpotent cone $\Nch \subseteq \gch^*$ yields the DG category appearing on the LHS of Theorem \ref{thm:derived satake}.
On the other hand, the choice of $0 \in \gch^*$ yields the DG category $\QCoh(\Omega \gch/\Gch)$.
These two DG categories are related by a natural colocalization (that is, an adjunction with fully faithful left adjoint)
$$ 
\begin{tikzpicture}[scale=1.5]
\node (a) at (0,1) {$\QCoh(\Omega \gch/\Gch)$};
\node (b) at (2.7,1) {$\ICoh_\Nch(\Omega \gch/\Gch)$.};
\path[right hook ->,font=\scriptsize,>=angle 90]
([yshift= 1.5pt]a.east) edge node[above] {$\Xi_{0 \to \Nch}$} ([yshift= 1.5pt]b.west);
\path[->>,font=\scriptsize,>=angle 90]
([yshift= -1.5pt]b.west) edge node[below] {$\Psi_{0 \to \Nch}$} ([yshift= -1.5pt]a.east);
\end{tikzpicture}
$$

\sssec{}

Define $\ltemp\Sph_G$ to be the full subcategory of $\Sph_G$ corresponding to $\QCoh(\Omega \gch / \Gch)$ under derived Satake.
By construction, there is a colocalization 
$$ 
\begin{tikzpicture}[scale=1.5]
\node (a) at (0,1) {$\ltemp\Sph_G$};
\node (b) at (2,1) {$\Sph_G$,};
\path[right hook ->,font=\scriptsize,>=angle 90]
([yshift= 1.5pt]a.east) edge node[above] {$\Xi_{0 \to \Nch}$} ([yshift= 1.5pt]b.west);
\path[->>,font=\scriptsize,>=angle 90]
([yshift= -1.5pt]b.west) edge node[below] {$\Psi_{0 \to \Nch}$} ([yshift= -1.5pt]a.east);
\end{tikzpicture}
$$
where, abusing notation, we have denoted the two adjoint functors with the same symbols as above.

\sssec{}

For $\C$ a DG category with a left action of $\Sph_G$, we set
$$
\ltemp\C := \ltemp\Sph_G \usotimes{\Sph_G} \C.
$$
As above, and abusing notation again, there is a colocalization
\begin{equation} \label{diag:temp-C}
\begin{tikzpicture}[scale=1.5]
\node (a) at (0,1) {$\ltemp\C$};
\node (b) at (1.5,1) {$\C$.};
\path[right hook ->,font=\scriptsize,>=angle 90]
([yshift= 1.5pt]a.east) edge node[above] {$\Xi_{0 \to \Nch}$} ([yshift= 1.5pt]b.west);
\path[->>,font=\scriptsize,>=angle 90]
([yshift= -1.5pt]b.west) edge node[below] {$\Psi_{0 \to \Nch}$} ([yshift= -1.5pt]a.east);
\end{tikzpicture}
\end{equation}
We always regard $\ltemp\C$ as a full subcategory of $\C$ via the functor $\Xi_{0 \to \Nch}$.

\begin{defn}

We say that an object of $\C$ is \emph{tempered} if it belongs to $\ltemp\C$. We say that an object of $\C$ is \emph{anti-tempered} iff it is annihilated by the projection $\Psi_{0 \to \Nch}: \C \tto \ltemp\C$.
Equivalently, by adjunction, $c \in \C$ is anti-tempered iff $\RHom_\C(t,c) \simeq 0 $ for all $t \in \ltemp\C$.
\end{defn}

\begin{rem}

The endofunctor $\Xi_{0 \to \Nch} \circ \Psi_{0 \to \Nch} : \C \to \C$ will be often called the \emph{temperization functor}, since it is the projector onto the tempered subcategory. 
\end{rem}

\sssec{}

The above construction, applied to the Hecke action of $\Sph_G$ on $\Dmod(\Bun_G)$ at a chosen point $x \in X$, yields the DG category $\ltemp\Dmod(\Bun_G)$ we are interested in.

\medskip

In principle, a different choice of $x \in X$ might yield a different DG category. Thus, to be precise, we should write ${}^{x \on{-temp}}\Dmod(\Bun_G)$ in place of $\ltemp\Dmod(\Bun_G)$. However, \cite[Conjecture 12.8.5]{AG1} states that  ${}^{x \on{-temp}}\Dmod(\Bun_G)$ ought to be independent of the choice of the point $x \in X$. See \cite[Section 1.4.2]{shvcatHH} for a sketch of the proof of this statement.
Regardless of this conjecture and of its solution, our proof of Theorem \ref{mainthm: omega antitemp} will show that $\omega_{\Bun_G}$ is right-orthogonal to ${}^{x \on{-temp}}\Dmod(\Bun_G)$ \emph{for any $x$}.

\sssec{}

Denote by $\B$ the temperization of the unit $\bbone_{\Sph_G}$, that is, the object
$$
\B 
:=
\Xi_{0 \to \Nch} \circ \Psi_{0 \to \Nch} 
(
\bbone_{\Sph_G}
).
$$
We emphasize that this object is not very explicit, since the functors $\Xi_{0 \to \Nch}$ and $\Psi_{0 \to \Nch} $ are defined using the derived Satake equivalence. (Our theorem below will make it explicit.)

For $\C$ a DG category endowed with a $\Sph_G$-action, indicate by the symbol $\star$ the action of $\Sph_G$ on $\C$.
By construction, the temperization functor coincides with the functor $\B \star -: \C \to \C$.
Hence, we immediately deduce that:
\begin{itemize}
\item
an object $c \in \C$ is tempered iff it is isomorphic to $\B \star c$;

\smallskip

\item
an object $d \in \C$ is anti-tempered iff $\B \star d \simeq 0$.
\end{itemize}

\sssec{} \label{sssec:intro-reformulate-thmA}

In view of the second item above, the idea of the proof of Theorem \ref{mainthm: omega antitemp} is clear: as a first step, we should describe $\B$ explicitly (that is, only in terms of $\Sph_G$, without appealing to geometric Satake at all) and then, as a second step, we should prove that 
$$
\B \star \omega_{\Bun_G} \simeq 0.
$$
The first step is exactly the content of Theorem \ref{mainthm:B as unit of Sph-temp} below, while the second one will turn out to be a quick consequence of Theorem \ref{mainthm:Hren vanishing} in the special case of $\Sigma = \A^1$.

\ssec{The tempered unit of the spherical category}

The explicit description of $\B$ is the subject of our next main result. 

\sssec{} \label{sssec:unit of Sph concrete}

Let us first describe the monoidal unit $\bbone_{\Sph_G} \in \Sph_G$. It is given by the de Rham pushforward of $\omega_{\pt/\GO}$ along the closed embedding
$$
\pt/\GO
\simeq
\GO \backsl \GO/\GO
\hto
\GO \backslash \GK /\GO.
$$
Note that $\GO \simeq G \ltimes H$, where $H$ the first congruence subgroup of $\GO$. Since $H$ is pro-unipotent, pullback along $\pt/\GO \to \pt/G$ induces an equivalence $\Dmod(\pt/\GO) \simeq\Dmod(\pt/G)$ that sends $\omega_{\pt/\GO}$ to $\omega_{\pt/G} \in \Dmod(\pt/G)$.

\sssec{}

Now let $\GR \subseteq \GK$ denote the negative part of the loop group $\GK := G \ppart$, that is, the group indscheme $\GR := G[t^{-1}]$. 
Consider the tautological map 
$$
f:
G \backsl \GR / G
\longto
\GO \backslash \GK /\GO,
$$
and the associated pullback at the level of D-modules:
$$
f^!:
\Sph_G 
\longto
\Dmod
(
G \backsl \GR / G).
$$

\begin{mainthm} \label{mainthm:B as unit of Sph-temp}
There is a canonical isomorphism
$$
\B
\simeq
(f^!)^R
\bigt{
\omega_{G \backslash G(R) /G}
}
$$
in $\Sph_G$, where $(f^!)^R$ is the right adjoint to $f^!$.
\end{mainthm}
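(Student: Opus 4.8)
The plan is to identify $\B$ by characterizing it via a universal property with respect to the $\Sph_G$-action, and then checking that $(f^!)^R(\omega_{G\backslash G(R)/G})$ satisfies that same universal property. Recall that $\B$ is the temperization of the unit, so $\B\star-$ is the temperization projector on any $\Sph_G$-module category; in particular $\B$ is characterized as the unique object such that $\B\star\bbone_{\Sph_G}\simeq\B$ and $\B$ generates $\ltemp\Sph_G$ as a module, or more operationally: for all $\mathcal F\in\Sph_G$, $\RHom_{\Sph_G}(\B,\mathcal F)$ computes the tempered part of $\mathcal F$, i.e. $\RHom_{\Sph_G}(\B,\mathcal F)\simeq\RHom_{\ltemp\Sph_G}(\bbone^\temp,\Psi_{0\to\Nch}\mathcal F)$. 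Thus it suffices to show that the functor $\mathcal F\mapsto \RHom_{\Dmod(G\backslash G(R)/G)}(\omega_{G\backslash G(R)/G}, f^!\mathcal F)$ — which by adjunction is $\RHom_{\Sph_G}((f^!)^R\omega_{G\backslash G(R)/G},\mathcal F)$ — agrees with the temperization functional.

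First I would pass through derived Satake. Under $\Sat_G$, the map $f: G\backslash G(R)/G \to \GO\backslash\GK/\GO$ should correspond, on the spectral side, to the natural map attached to the big cell: $G(R)=G[t^{-1}]$ is the "negative" part, and $G\backslash G(R)/G$ is, up to the relevant completions, a model for the big cell in the affine Grassmannian whose spectral counterpart is (a version of) the point $\pt/\Gch$ sitting inside $\Omega\gch/\Gch$, or more precisely the open locus where the Koszul-dual class is set to zero. The key computation is therefore: the de Rham homology/cohomology of $G\backslash G(R)/G$ — equivalently, by Theorem~\ref{mainthm:Hren vanishing} applied with $\Sigma=\A^1$ (so $G[\Sigma]=G[t]$, with $G(R)=G[t^{-1}]$ the mirror copy) and some bookkeeping with the $G\times G$-quotient — is controlled, and $f^!$ followed by $\RHom(\omega,-)$ realizes precisely the colocalization $\Psi_{0\to\Nch}$. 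Concretely I expect $f^!(\Sat_G(\mathcal G))$ to be computed by restricting $\mathcal G\in\ICoh_\Nch(\Omega\gch/\Gch)$ along $\pt/\Gch\hookrightarrow\Omega\gch/\Gch$ (or its formal/completed variant), and that the pairing against $\omega_{G\backslash G(R)/G}$ then extracts exactly the image under $\Psi_{0\to\Nch}$, because the dualizing sheaf on the big-cell model is the unit for the relevant convolution and its spectral avatar is the structure sheaf $\mathcal O_{\Omega\gch/\Gch}$, whose image under $\Psi_{0\to\Nch}$ is $\bbone^\temp$ — i.e. the defining generator of $\QCoh(\Omega\gch/\Gch)=\ltemp\Sph_G$.

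Assembling: one gets a chain of natural isomorphisms $\RHom_{\Sph_G}((f^!)^R\omega,\mathcal F)\simeq\RHom(\omega_{G\backslash G(R)/G},f^!\mathcal F)\simeq \RHom_{\ltemp\Sph_G}(\bbone^\temp,\Psi_{0\to\Nch}\mathcal F)\simeq\RHom_{\Sph_G}(\B,\mathcal F)$, natural in $\mathcal F$, whence $(f^!)^R(\omega_{G\backslash G(R)/G})\simeq\B$ by Yoneda. The monoidal and $\Sph_G$-linear structures should be tracked along the way so that the isomorphism is one of objects of $\Sph_G$, not just of plain complexes.

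\medskip

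\textbf{Main obstacle.} The delicate point is the spectral identification of $f$ and the proof that $f^!$ composed with $\RHom(\omega_{G\backslash G(R)/G},-)$ is \emph{exactly} $\Psi_{0\to\Nch}$ rather than merely a functor with the right vanishing behavior on a generating set. This requires genuinely understanding the geometry of $G\backslash G(R)/G$ inside the Hecke stack (the "big cell" heuristic from the abstract), controlling the passage between the non-completed indscheme $G(R)$ and whatever completion is forced by the $\GO$-orbit picture, and checking that no higher-cohomological discrepancy is introduced by the double quotient by $G$. I expect this to rest on a careful contraction-principle argument in the style of \cite{contract}, together with the explicit form of the Koszul duality in \cite{BF}, \cite[Section 12.5]{AG1}; the Borel–Moore vanishing of Theorem~\ref{mainthm:Hren vanishing} enters precisely to guarantee that the only surviving contribution is the tempered (quasi-coherent) one.
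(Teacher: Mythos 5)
Your Yoneda-style setup is correct as far as it goes: $\RHom_{\Sph_G}(\B,\F)\simeq\RHom_{\ltemp\Sph_G}(\bbone^\temp_{\Sph_G},\Psi_{0\to\Nch}\F)$ by the $\Xi\dashv\Psi$ adjunction, and $\RHom_{\Sph_G}((f^!)^R\omega,\F)\simeq\RHom(\omega_{G\backsl G(R)/G},f^!\F)$ by adjunction, so identifying these two cofunctors of $\F$ would indeed pin down $\B$. But the step that does all the work --- showing that $\F\mapsto\RHom(\omega_{G\backsl G(R)/G},f^!\F)$ realizes the temperization functional, i.e.\ that $f^!$ ``is'' restriction to $\pt/\Gch\hookrightarrow\Omega\gch/\Gch$ on the spectral side and that pairing against $\omega$ then extracts $\Psi_{0\to\Nch}$ --- is asserted, not proved, and you yourself flag it as the main obstacle. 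There is no direct dictionary under derived Satake between the map $f$ (a $!$-pullback along an open embedding of indschemes followed by forgetting pro-unipotent invariance) and a map of spectral stacks; carrying this out head-on would require redoing most of the hard geometry of \cite{BF} and \cite[Section 12]{AG1} in a non-equivariant form. This is a genuine gap.

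The paper takes a different route precisely to sidestep this. It never attempts a spectral description of $f$. Instead it exploits the Lafforgue-type equivalence $\gamma:\Sph_G\xto{\sim}\Dmod(\Bun_G(\PP^1))$ together with the observation that $(f^!)^R$ is exactly $\gamma^{-1}\circ j_{*,\dR}$, where $j:BG\hookrightarrow\Bun_G(\PP^1)$ is the inclusion of the trivial bundle, while $\gamma(\bbone_{\Sph_G})\simeq j_!(\omega_{BG})$. The theorem then reduces to showing that the Serre functor of $\Dmod(\Bun_G(\PP^1))$ sends $j_!(\omega_{BG})$ to $j_*(\omega_{BG})$ (up to a shift), a fact established via Drinfeld's miraculous duality (Lemma \ref{lem:T on P1 is Serre}); and, on the spectral side, that the Serre functor of $\ICoh_\Nch(\Omega\gch/\Gch)$ is the temperization functor up to the same shift (Theorem \ref{thm:Serre on ICOH_N (LSG)}, proved by a concrete computation with Kostant's theorem, rational singularities of $\N$, and shearing). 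Serre functors intertwine under equivalences, which closes the argument. So the mechanism is ``Serre functor $=$ temperization up to shift,'' not ``big cell $=$ $\pt/\Gch$ spectrally.''

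Finally, you misplace the role of Theorem \ref{mainthm:Hren vanishing}: the Borel--Moore homology vanishing is \emph{not} an input to Theorem \ref{mainthm:B as unit of Sph-temp}. It enters only afterwards, in Section \ref{sec:characterize temp via ThmC, prove ThmA}, to deduce Theorem \ref{mainthm: omega antitemp} by showing $\B\star\omega_{\Bun_G}\simeq 0$. Keeping these two theorems logically separate is important --- Theorem \ref{mainthm:B as unit of Sph-temp} is unconditional representation-theoretic input, while Theorem \ref{mainthm:Hren vanishing} supplies the geometric vanishing that makes the application work.
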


\sssec{}

It turns out that $(f^!)^R$ agrees with the \emph{renormalized pushforward} $f_{*,\ren}$ along $f$. The latter notion will be discussed in Section \ref{ssec:self-duality}; in any case, the functor $(f^!)^R$ can be described really explicitly as follows. 
Recall, \cite{Faltings}, that the map 
$$
\Gr_G^\circ :=
G \backsl G(R) \simeq \GO \backsl \GO G(R) 
\longto
 \GO \backsl \GK =: \Gr_G
$$
is an open embedding, the inclusion of the \virg{big cell} of the affine Grassmannian.
Hence $f$ is the composition of an open embedding with a quotient by a pro-unipotent group (the first congruence subgroup of $\GO$):
$$
\Gr_G^\circ/G 
=
G \backsl G(R) / G
\stackrel j \longhookrightarrow
\Gr_G / G
\tto
\Gr_G / \GO
=
\GO \backslash \GK /\GO.
$$
Thus, $f^! \simeq j^! \circ \oblv^{G \to \GO}$ and its right adjoint is the composition $(f^!)^R \simeq \Av_*^{G \to \GO} \circ j_{*,\dR}$. 
Here we have used the notation of \cite{thesis} for group actions on DG categories. This notation is reviewed in Section \ref{ssec:group actions on cats}: in short, $\oblv^{G \to \GO}$ is the functor that forgets the $\GO$-invariance while retaining only the residual $G$-invariance; $\Av_*^{G \to \GO}$ (the $*$-averaging functor) is its continuous right adjoint.

\begin{example}
Suppose for a moment that $G=T$ is a torus. In this case, the nilpotent cone equals the origin: this implies that every object of $\Sph_T$ is tempered. In particular, $\bbone^{\temp}_{\Sph_T}$ must agree with $\bbone_{\Sph_T}$. Let us verify this fact using the formula of Theorem \ref{mainthm:B as unit of Sph-temp}. 
The key observation is that, at the reduced level, $T(R) \simeq T$. 
It follows that $\Dmod(T \backsl T(R) /T) \simeq \Dmod(\pt/T)$ and that $j$ coincides with the closed embedding $i$ induced by the unit point of $\Gr_T$. 
We obtain that
$$
\bbone_{\Sph_T}^\temp \simeq (f^!)^R(\omega_{\pt/T}) \simeq i_{*,\dR}(\Av_*^{T \to T(\OO)}(\omega_{\pt/T}))
\simeq
i_{*,\dR}(\omega_{\pt/T(\OO)}),
$$
which is indeed the unit $\bbone_{\Sph_T}$.
\end{example}

\sssec{}

Even though the statement of Theorem \ref{mainthm:B as unit of Sph-temp} involves only the automorphic version of the spherical category, most of the work takes place on the spectral side: it amounts to computing the Serre functor of the DG category $\ICoh_\Nch(\Omega \gch/\Gch)$. This computation will be then transferred to $\Sph_G$ using derived Satake and its relation with geometric Langlands for $X=\PP^1$.
A detailed outline of the proof of Theorem \ref{mainthm:B as unit of Sph-temp} appears in Section \ref{ssec:outline of proof of theorem C}.

\sssec{}

In Section \ref{sec:characterize temp via ThmC, prove ThmA}, we will use Theorem \ref{mainthm:B as unit of Sph-temp} to see that Theorem \ref{mainthm: omega antitemp} is a quick corollary of the following Borel-Moore homology vanishing result: $\Hren(\GR) \simeq 0$, as soon as $G$ is not a torus. Since $\GR \simeq G[\A^1]$, such a statement is the simplest instance of Theorem \ref{mainthm:Hren vanishing}.

\ssec{Borel-Moore homology vanishing} \label{ssec:intro-BM vanishing}

Let us now comment on the proof of Theorem \ref{mainthm:Hren vanishing}.

\sssec{}

The DG category of D-modules on an indscheme of ind-finite type $\Y$ admits a natural t-structure, reviewed in Section \ref{defn t-structure}.
We say that an object of $\Dmod(\Y)$ is \emph{infinitely connective} if it belongs to the full subcategory
$$
\Dmod(\Y)^{\leq -\infty}:= \bigcap_{m \gg 0} \Dmod(\Y)^{\leq - m}.
$$
When $Y$ is a scheme of finite type, one easily proves that $\Dmod(Y)^{\leq -\infty} \simeq 0$. On the other hand, the theorems below will exhibit several indschemes $\Y$ for which $\Dmod(\Y)^{\leq -\infty}$ is nontrivial.

\sssec{}

We will deduce the vanishing of $\Hren(G[\Sigma])$ from the ind-affineness of $G[\Sigma]$ and the following t-structure estimate.

\begin{mainthm}\label{mainthm:omega-inf-connective}
For $G$ a reductive group of semisimple rank $\geq 1$, the dualizing sheaf $\omega_{G[\Sigma]} \in \Dmod(G[\Sigma])$ is infinitely connective.
 \end{mainthm}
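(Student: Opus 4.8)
The strategy is to present $G[\Sigma]$ as a colimit of finite-dimensional affine schemes along closed embeddings and to track the connectivity of the dualizing sheaf through this presentation. Fix an embedding of $G$ as a closed subscheme of some affine space $\A^N$. As explained in the excerpt, $G[\Sigma]$ becomes a closed ind-subscheme of $\A^N[\Sigma]\simeq\A^\infty=\colim_m \A^m$, and $G[\Sigma]=\colim_{d\ge 0} G[\Sigma]_{\le d}$ where $G[\Sigma]_{\le d}$ is the (honest, finite-type, affine) scheme of maps with poles bounded by $d$ at the points of $\bar\Sigma\setminus\Sigma$. Since $\omega_{G[\Sigma]}$ is computed as the colimit of the $*$-pushforwards of $\omega_{G[\Sigma]_{\le d}}$ along the closed embeddings $G[\Sigma]_{\le d}\hookrightarrow G[\Sigma]$, the claim that $\omega_{G[\Sigma]}\in\Dmod(G[\Sigma])^{\le -m}$ for every $m$ reduces to showing that for each fixed $m$ and each sufficiently large $d$, the cone of the transition map $\omega_{G[\Sigma]_{\le d}}\to \omega_{G[\Sigma]_{\le d'}}|$ (pushed into the colimit) becomes $m$-connective, i.e.\ the structure maps in the colimit are ``increasingly connective''. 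In other words, I want to compare $\omega_{G[\Sigma]_{\le d}}$ with $\omega_{G[\Sigma]_{\le d'}}$ for $d'>d$ via the closed embedding between these two finite-type schemes.

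The key geometric input is that the transition maps are not arbitrary closed embeddings: $G[\Sigma]_{\le d'}$ is, up to the relevant order of vanishing, a vector-bundle-like thickening of $G[\Sigma]_{\le d}$ in the directions of higher-order Taylor coefficients at the points at infinity, and the relative dimension $\dim G[\Sigma]_{\le d'}-\dim G[\Sigma]_{\le d}$ grows linearly in $d'-d$ by Riemann--Roch (this uses $\dim G\ge 1$, which holds since $G$ has semisimple rank $\ge 1$, hence is non-abelian of positive dimension; more precisely one needs the fibers of the projection $G[\Sigma]_{\le d'}\to G[\Sigma]_{\le d}$ to have growing dimension). For a closed embedding $i\colon Z\hookrightarrow W$ of smooth schemes (or more generally when one has good control of the normal geometry), $i^!\omega_W\simeq\omega_Z[-2\operatorname{codim}]$, and dually the natural map relating $\omega_Z$ pushed forward to $\omega_W$ acquires a shift governed by the codimension; iterating along the tower and passing to the colimit, the $[-2\cdot(\text{codim})]$ shifts send everything off to $-\infty$ in cohomological degree, exactly as in the model computation $\Hren(\A^\infty)\simeq\colim_n\kk[2n]\simeq 0$ recalled in the excerpt. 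The real content is to upgrade this model computation — where the schemes are literally affine spaces — to the case of $G[\Sigma]_{\le d}$, whose geometry is not that of a vector space.

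The main obstacle, and where most of the work will go, is precisely that $G[\Sigma]_{\le d}$ is not smooth a priori and the transition maps are not regular embeddings in an obvious way, so the clean formula $i^!\omega_W\simeq\omega_Z[-2c]$ is not directly available; one must instead obtain a connectivity \emph{estimate} for $i^!\omega_W$ (equivalently, an estimate on $\Dmod$ of the formal/punctual neighborhood, or on the $*$-restriction of $\omega_W$ to $Z$) that degrades linearly with the codimension, uniformly enough to survive the colimit. The plan is to reduce this to the corresponding statement for the ambient $\A^N[\Sigma]_{\le d}\simeq\A^{r(d)}$ (where everything \emph{is} a vector space) together with the closed embedding $G[\Sigma]_{\le d}\hookrightarrow\A^N[\Sigma]_{\le d}$, whose defining equations come from those of $G\subseteq\A^N$ spread out over $\Sigma$; here Theorem~\ref{mainthm:estimate with equations} (the ``easier t-structure estimate with equations'' advertised in the excerpt) should supply the needed comparison between $\omega_{G[\Sigma]_{\le d}}$ and the restriction of $\omega_{\A^{r(d)}}$, controlling the error by the number of equations, which grows only linearly in $d$ — slowly enough to be swamped by the quadratically-growing ambient shift. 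Combining these two linear/quadratic growth rates gives that $\omega_{G[\Sigma]}$ lies in $\Dmod(G[\Sigma])^{\le -m}$ for all $m$, which is the assertion. Finally, once $\omega_{G[\Sigma]}$ is infinitely connective, ind-affineness of $G[\Sigma]$ forces $(p_{G[\Sigma]})_{*,\dR}$ to preserve (up to a shift independent of the term, or at worst mildly growing) this infinite connectivity, yielding $\Hren(G[\Sigma])\simeq 0$ and hence Theorem~\ref{mainthm:Hren vanishing}.
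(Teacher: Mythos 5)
The proposal has a genuine gap: the reduction to Theorem~\ref{mainthm:estimate with equations} cannot work for general reductive $G$, which is precisely why the paper devotes a separate section (Section~\ref{sec: prove ThmD in general}) to the general case. Theorem~\ref{mainthm:estimate with equations} requires an embedding $G\subseteq\A^N$ in which the defining equations have \emph{total} degree $\sum_i n_i$ strictly less than $N$. For $SL_n$ (one degree-$n$ equation in $\A^{n^2}$) and $GL_n$ this is easy to arrange, but for, say, $SO_n$ the standard presentation by $A^TA=I$ and $\det A=1$ has total degree $n(n+1)+n$ in $\A^{n^2}$, violating the hypothesis; the paper explicitly says Theorem~\ref{mainthm:estimate with equations} only settles types $A$, and ``for more general groups, we take a completely different route.'' Your plan rests on the hope that the equation-counting estimate always wins, and gives no argument that a favorable embedding exists for general $G$.

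A secondary error, which may have created the illusion that the argument would go through, is the claim that the number of equations defining $G[\Sigma]_{\le d}$ ``grows only linearly in $d$ -- slowly enough to be swamped by the quadratically-growing ambient shift.'' Both quantities grow \emph{linearly} in $d$: by Riemann--Roch the ambient space $\A^N[\Sigma]_{\le d}$ has dimension $N(dh+1-g)$ and the equation count in the proof of Theorem~\ref{mainthm:estimate with equations} is $dh\sum n_i + r(1-g)$; what matters is which of these two linear rates is larger, and that is exactly the inequality $\sum n_i < N$. There is no quadratic headroom to absorb an unfavorable embedding. (Your earlier remark that $i^!\omega_W\simeq\omega_Z[-2\operatorname{codim}]$ for a closed embedding of smooth schemes is also off: $i^!\omega_W\simeq\omega_Z$ with no shift; the estimate one uses is the connectivity of $\omega_{\A^m}$ itself, as in Lemma~\ref{lem:silly estimate}.)

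What the paper does instead, in Section~\ref{sec: prove ThmD in general}, is structurally different and worth internalizing: it never tries to control the equations of $G\subseteq\A^N$. The key is to exploit the big Bruhat cell $G^\circ\simeq N^-\times T\times N$, so that $G^\circ[\Sigma]\simeq\A^\infty\times T[\Sigma]$ has manifestly infinitely connective dualizing sheaf. The passage from $G^\circ$ to $G$ is not Zariski-local on $G[\Sigma]$, so the paper introduces the open sub-indscheme $G[\Sigma]^{G^\circ\ggen}$ of maps generically landing in $G^\circ$, uses the Ran space with marked points to fibre this over configuration spaces, and then the crucial geometric input is Lemma~\ref{lemma:key-rep-thry}: a pole-order bound for elements of $G^\circ\ppart$ whose product lies in $G[[t]]$, proved by reducing to $GL_d$ via a faithful representation. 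This uniform pole bound is what replaces your ``equations cut out a controllable subvariety'' step, and it is available for every reductive group.
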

 
 Theorem \ref{mainthm:omega-inf-connective} has the following immediate consequence:

\begin{cor} \label{cor:GrG infinitely connective}
For $G$ as above, the dualizing sheaves of $\Gr_G$ and of $\Gr_G^\circ$ are infinitely connective.
\end{cor}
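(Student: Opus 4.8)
The plan is to deduce Corollary~\ref{cor:GrG infinitely connective} from Theorem~\ref{mainthm:omega-inf-connective} applied to the specific curve $\Sigma = \A^1$. First I would recall that, as explained in Section~\ref{ssec:intro-BM vanishing}, the negative loop group indscheme $\GR = G[t^{-1}]$ is precisely $G[\A^1]$ (up to the harmless identification of $\A^1$ with $\Spec \kk[t^{-1}]$), so Theorem~\ref{mainthm:omega-inf-connective} with $\Sigma = \A^1$ already tells us that $\omega_{\GR} \in \Dmod(\GR)^{\leq -\infty}$. The content of the corollary is then to propagate this infinite connectivity along the two geometric maps relating $\GR$ to $\Gr_G^\circ$ and $\Gr_G$.

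The key step is the observation recorded in Section~\ref{sssec:intro-reformulate-thmA} and in the discussion following Theorem~\ref{mainthm:B as unit of Sph-temp}: one has $\Gr_G^\circ \simeq G \backslash \GR$, i.e. $\Gr_G^\circ$ is the quotient of $\GR$ by the (left, say) action of the group $G$, and $\Gr_G^\circ \hookrightarrow \Gr_G$ is an open embedding (the big cell), by \cite{Faltings}. So I would argue in two moves. For the passage from $\GR$ to $\Gr_G^\circ$: the projection $q\colon \GR \to G\backslash \GR = \Gr_G^\circ$ is a $G$-torsor, in particular smooth of relative dimension $\dim G$, hence $q^!$ is $t$-exact up to the shift by $\dim G$ and, crucially, conservative; since $q^!(\omega_{\Gr_G^\circ}) \simeq \omega_{\GR}[-2\dim G]$ (or the appropriate shift) is infinitely connective and $q^!$ commutes with the truncation functors up to shift, $\omega_{\Gr_G^\circ}$ must itself lie in $\Dmod(\Gr_G^\circ)^{\leq -\infty}$. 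Equivalently, one can phrase this via the equivalence $\Dmod(\Gr_G^\circ) \simeq \Dmod(\GR)^{G}$ and the $t$-exactness (up to shift) of the forgetful functor $\oblv^G$, which detects infinite connectivity. For the passage from $\Gr_G^\circ$ to $\Gr_G$: letting $j\colon \Gr_G^\circ \hookrightarrow \Gr_G$ be the open embedding, one has $j^!(\omega_{\Gr_G}) \simeq \omega_{\Gr_G^\circ}$ because $\omega$ is compatible with $!$-restriction. But $j^!$ is $t$-exact, so this only controls the restriction of $\omega_{\Gr_G}$ to the big cell, not $\omega_{\Gr_G}$ globally.

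The main obstacle is therefore this last point: infinite connectivity is \emph{not} detected by restriction to an open subset, so to conclude for $\Gr_G$ I need a genuinely global argument and not merely the open embedding $j$. The natural fix is to use instead the $\GK$-equivariance (or at least $\GO$-equivariance) of $\omega_{\Gr_G}$ together with transitivity-type properties of the $\GK$-action on $\Gr_G$: the big cell orbit is open and its translates cover $\Gr_G$, and the dualizing sheaf is equivariant, so an estimate on one orbit upgrades to an estimate everywhere. Concretely, I would exploit that $\Dmod(\Gr_G)$ carries a $\GK$-action under which $\omega_{\Gr_G}$ is (the pullback of) the unit-type object, together with the fact that the big cell $\Gr_G^\circ$ is a single $G(R)$-orbit mapping to $\Gr_G$; since translating by elements of $\GK$ preserves the $t$-structure up to a fixed bounded shift (the action is by "continuous" equivalences that are $t$-exact up to a shift controlled by the relative dimension), and since $\Gr_G$ is covered by such translates, one gets that $\omega_{\Gr_G}$ is infinitely connective as soon as its restriction to one big cell is. Alternatively — and this is probably the cleanest route to cite — one observes that the closed embedding $i\colon \pt \hookrightarrow \Gr_G$ of the base point factors the big cell structure and that $\Hren(\Gr_G^\circ) \simeq 0$ already gives, by the ind-properness/contraction results of \cite{contract} applied to the $\Gm$-action on $\Gr_G$ contracting everything onto the point stratum, that $\omega_{\Gr_G}$ is infinitely connective globally. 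I would present the $\Gr_G^\circ$ case first as the immediate torsor/equivariance argument from Theorem~\ref{mainthm:omega-inf-connective}, and then reduce $\Gr_G$ to it via the $\Gm$-contraction, flagging that the $\Gm$-action on $\Gr_G$ with attractor the base point lets one transfer connectivity estimates from the open cell to the whole affine Grassmannian.
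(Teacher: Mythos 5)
Your first step is exactly the paper's: apply Theorem~\ref{mainthm:omega-inf-connective} with $\Sigma=\A^1$ to get that $\omega_{G(R)}$ is infinitely connective, and pass to $\Gr_G^\circ\simeq G\backslash G(R)$ via the $G$-torsor (or, equivalently, via the splitting $G(R)\simeq G\times G(R)^1$). That part is fine.

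For the passage from $\Gr_G^\circ$ to $\Gr_G$, however, you have talked yourself into a detour. You write that ``infinite connectivity is \emph{not} detected by restriction to an open subset'' and therefore reach for a ``genuinely global argument.'' But the paper's entire argument at this stage is a single citation: the t-structure on $\Dmod$ of an indscheme of ind-finite type is \emph{Zariski local} (\cite[Lemma 7.8.7]{ker-adj}, recalled in Section~\ref{defn t-structure}), meaning precisely that connectivity \emph{is} detected on a Zariski open cover. Since $\Gr_G$ is covered by $\GK$-translates of the big cell $\Gr_G^\circ$, each translate is isomorphic to $\Gr_G^\circ$, and the restriction of $\omega_{\Gr_G}$ to any such open is the dualizing sheaf of that open, the claim follows at once. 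Your ``equivariance upgrades an estimate on one orbit to an estimate everywhere'' argument is really the same thing, but it only closes the gap once you invoke Zariski locality; phrased in terms of the $\GK$-action ``preserving the t-structure up to a fixed bounded shift'' it obscures the ingredient that is actually doing the work and is not what the paper means.

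Your alternative route via $\Gm$-contraction does not work as stated. The loop-rotation $\Gm$-action on $\Gr_G$ does not contract the whole Grassmannian onto the base point; its fixed locus is the discrete set of points $[t^\lambda]$, $\lambda\in\Lambda^{\dom}$. Moreover $\Hren(\Gr_G^\circ)\simeq 0$ is a \emph{consequence} of infinite connectivity of $\omega_{\Gr_G^\circ}$ (this is the logic of Section~\ref{sec:prove ThmE}), not a hypothesis one feeds into a contraction principle to deduce connectivity of $\omega_{\Gr_G}$; you have the implication backwards. Dropping that paragraph and replacing it with the one-line appeal to \cite[Lemma 7.8.7]{ker-adj} yields the paper's proof.
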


\begin{proof}
Since the t-structure of $\Dmod(\Gr_G)$ is Zariski local (see \cite[Lemma 7.8.7]{ker-adj}), it suffices to prove the claim for $\Gr_G^\circ \simeq G(R)/G$. The latter is clear from Theorem \ref{mainthm:omega-inf-connective} for $\Sigma = \A^1$.
\end{proof}

\begin{rem}

The statement of the corollary was proved in \cite{ker-adj} modulo one mistake in the proof: precisely, contrarily to the claim of \cite[page 547]{ker-adj}, it is not true that $\Gr_G^\circ \simeq \A^\infty$. Indeed, as pointed out by D. Gaitsgory, this would contradict \cite[Theorem 5.4]{FGT}.

\end{rem}

\sssec{}

Given an affine scheme $Y$, one might ask what conditions on $Y$ ensure that $\omega_{Y[\Sigma]}$ is infinitely connective. The following result, whose proof uses only Riemann-Roch and an elementary t-structure estimate, gives a sufficient (but certainly not necessary) condition. 

\begin{mainthm}\label{mainthm:estimate with equations}
Let $Y \subseteq \A^N$ be a closed subscheme defined as the zero locus of $k$ polynomials of degrees $n_1, \ldots, n_k$. If $\sum_i n_i < N$, then $\omega_{Y[\Sigma]}$ is infinitely connective.
 \end{mainthm}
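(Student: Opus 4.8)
The plan is to reduce the statement to a concrete computation about spaces of sections and then apply an elementary estimate on the t-structure of $\Dmod$ of an indscheme. Write $Y\subseteq\A^N$ for the zero locus of polynomials $P_1,\dots,P_k$ of degrees $n_1,\dots,n_k$. The first step is to present $Y[\Sigma]$ as a closed subscheme of $\A^N[\Sigma]\simeq\A^\infty$ cut out by equations, and to organize this filtration-by-filtration: fix an effective divisor $D_d$ supported at the points of $\ol\Sigma\setminus\Sigma$ of total degree $d$, and let $\A^N[\Sigma]_{\leq d}$ be the (finite-dimensional) vector space of maps $\Sigma\to\A^N$ whose poles at infinity are bounded by $D_d$. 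The condition that the $N$-tuple of functions satisfies $P_i=0$ is a closed condition; when restricted to $\A^N[\Sigma]_{\leq d}$, the composite $P_i(f_1,\dots,f_N)$ has poles bounded by $n_i D_d$, so the equation $P_i=0$ is a (polynomial) map $\A^N[\Sigma]_{\leq d}\to \Gamma(\Sigma,\O(n_i D_d))$ into a vector space of dimension $\approx n_i d$ by Riemann--Roch. Thus $Y[\Sigma]_{\leq d}$ is carved inside an affine space of dimension $\approx Nd$ by $\approx (\sum_i n_i)\,d$ equations, giving a virtual dimension $\approx (N-\sum_i n_i)\,d$, which tends to $+\infty$ under the hypothesis $\sum_i n_i<N$.

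The second step is the t-structure estimate. For a closed subscheme $Z=V(g_1,\dots,g_r)$ inside a smooth affine variety $A$ of dimension $a$, with $Z$ of codimension $\geq c$, the dualizing complex $\omega_Z$ lives in cohomological degrees $\leq -2(a-r)$ — more precisely, $i^!\omega_A\simeq\omega_Z$ and the Koszul-type resolution shows $\omega_Z\in\Dmod(Z)^{\leq -2a+2r}$ provided $Z$ is cut out set-theoretically (or even scheme-theoretically after thickening) by $r$ equations; even more simply, one can bound connectivity using that $\omega_Z$ is obtained from $\omega_A[- (\text{excess})]$ via at most $r$ successive cones of the functors $\otimes g_j$, each of which shifts connectivity by at most $1$ after accounting for $\omega_A\simeq\kk_A[2a]$. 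The upshot is an estimate of the shape: $\omega_{Y[\Sigma]_{\leq d}}\in\Dmod\bigl(Y[\Sigma]_{\leq d}\bigr)^{\leq -2(N-\sum_i n_i)d + O(1)}$, where the $O(1)$ absorbs the Euler-characteristic corrections from Riemann--Roch that are independent of $d$ (genus, number and multiplicity of punctures). I would isolate this linear-algebra/Koszul bound as a short lemma before the main argument.

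The third step is to pass to the colimit. Since $Y[\Sigma]\simeq\colim_d Y[\Sigma]_{\leq d}$ with closed transition maps, $\omega_{Y[\Sigma]}$ is the colimit (along $*$-pushforwards, which are the $!$-pushforwards for closed embeddings) of the $\omega_{Y[\Sigma]_{\leq d}}$. The t-structure on $\Dmod(Y[\Sigma])$ is compatible with such filtered colimits along closed embeddings (the connective part is closed under colimits and under these pushforwards), so $\omega_{Y[\Sigma]}\in\Dmod(Y[\Sigma])^{\leq -m}$ as soon as every $\omega_{Y[\Sigma]_{\leq d}}$ is in $\Dmod(Y[\Sigma]_{\leq d})^{\leq -m}$. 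By the estimate of the previous step, the bound $-2(N-\sum_i n_i)d+O(1)$ is eventually $\leq -m$ for every fixed $m$ once $d\gg 0$; since the earlier (finitely many) terms only contribute objects that are themselves connective enough or can be discarded from the colimit, we conclude $\omega_{Y[\Sigma]}\in\bigcap_m\Dmod(Y[\Sigma])^{\leq -m}=\Dmod(Y[\Sigma])^{\leq-\infty}$, i.e.\ it is infinitely connective.

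I expect the main obstacle to be the clean bookkeeping in the second step: getting a genuinely \emph{uniform} (in $d$) connectivity bound requires care, because a priori the number of defining equations of $Y[\Sigma]_{\leq d}$ grows linearly with $d$, and a naive "one equation shifts by one" estimate would only give a bound that degrades with $d$. The point that saves the argument is that each equation is a map \emph{into a vector space} rather than a single function, so the correct count is: dimension of ambient affine space minus total dimension of the target of the equations, both of which grow like (const)$\cdot d$ with the ambient constant strictly larger — and one must phrase the Koszul/connectivity estimate so that it sees this difference rather than just the number of scalar equations. Making that precise (likely via the finite-dimensional lemma applied with $A=\A^N[\Sigma]_{\leq d}$ and $r=\dim\bigoplus_i\Gamma(\Sigma,\O(n_iD_d))$, and tracking the Riemann--Roch constants) is where the real work lies; everything else is formal.
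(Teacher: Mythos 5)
Your proposal matches the paper's proof: the same filtration $Y[\Sigma]\simeq\colim_d Y_d$ with $Y_d$ cut out inside the finite-dimensional vector space $\A^N[\Sigma]_{\leq d}$, the same Riemann--Roch count of ambient dimension versus number of scalar equations, the same elementary complete-intersection estimate (the paper's Lemma~\ref{lem:silly estimate}), and the same passage to the colimit along closed embeddings. Two small remarks: the right t-structure on $\Dmod$ places $\omega_{\A^m}$ in degree $-m$, not $-2m$, so the paper's bound is $-C_d=-(N-\sum n_i)hd-(N-k)(1-g)$ rather than your $-2(N-\sum n_i)d+O(1)$ (both diverge, so the conclusion is unaffected); and the worry you raise at the end is unfounded, since Lemma~\ref{lem:silly estimate} already gives precisely the ``ambient dimension minus number of scalar equations'' bound, which goes to $-\infty$ because the ambient dimension grows linearly in $d$ with a strictly larger coefficient.
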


\sssec{}

Obviously, Theorem \ref{mainthm:estimate with equations} settles Theorem \ref{mainthm:omega-inf-connective} in the cases $G=GL_n$ and $G= SL_n$.
For more general groups, we take a completely different route, which uses the Ran space and a bit of representation theory. The proof is outlined in Section \ref{ssec:outline proof of thm D}.

\ssec{The structure of the paper}

\sssec{}
In Section \ref{sec:prelim}, we collect some basic notions that we will need throughout.

\sssec{}
In Section \ref{sec:GL for P1 and outline Thm C}, we discuss geometric Langlands for $X = \PP^1$, outline the proof of Theorem \ref{mainthm:B as unit of Sph-temp} and compute the Serre functor of the DG category $\Dmod(\Bun_G(\bbP^1))$.

\sssec{}

In Section \ref{sec:Serre}, we complete the proof of Theorem \ref{mainthm:B as unit of Sph-temp} by computing the Serre functor of three DG categories related to the nilpotent cone. This section and the previous one are the only ones that require some derived algebraic geometry.

\sssec{}

In Section \ref{sec:characterize temp via ThmC, prove ThmA}, we use Theorem \ref{mainthm:B as unit of Sph-temp} to characterize tempered D-modules on $\Bun_G$. We also show that Theorem \ref{mainthm: omega antitemp} follows from the vanishing of $\Hren(\Gr_G^\circ)$.

\sssec{}

In Section \ref{sec:prove ThmE}, we show that Theorem \ref{mainthm:Hren vanishing} is a simple corollary of Theorem \ref{mainthm:omega-inf-connective}. We then prove Theorem \ref{mainthm:estimate with equations}, which settles Theorem \ref{mainthm:omega-inf-connective} for $GL_n$ and $SL_n$.

\sssec{}

Finally, in Section \ref{sec: prove ThmD in general}, we prove Theorem \ref{mainthm:omega-inf-connective} for all reductive groups.

\ssec{The main techniques and ideas}

For the reader's convenience, let us highlight the eight most important notions and ideas employed in this paper. The first four are rather technical, while the second four form the geometric core of the paper.
We refer to Section \ref{sec:prelim} for any undefined notation and terminology, as well as for the appropriate references.

\sssec{Singular support}

The notion of singular support for ind-coherent sheaves is unavoidable, as it is at the heart of the notion of temperedness. In Section \ref{sec:Serre}, we will perform several singular support computations using \emph{Koszul duality} and the \emph{shearing} operation. These two devices allow to transform singular support for ind-coherent sheaves on a space into ordinary set-theoretic support for quasi-coherent sheaves on a different space.

\sssec{Ind-coherent sheaves and formal geometry}

In turn, quasi-coherent sheaves on a space $Y$ with support on a closed subset $Z \subseteq Y$ can be understood as quasi-coherent sheaves on $Y^\wedge_Z$, the formal completion of $Y$ along $Z$.
Contrarily to the case of ind-coherent sheaves, the functoriality of quasi-coherent sheaves is not well-adapted to working with formal completions. For this reason, a number of passages between quasi-coherent sheaves and ind-coherent sheaves will occur.

\sssec{Group actions on DG categories}

The categories appearing in the geometric Langlands program are often DG categories of D-modules on (double) quotients: for instance, consider $\Sph_G$, $\Dmod(\Bun_G(\PP^1))$, $\Dmod(\Gr_G)$. 
In particular, we often take quotients by infinite dimensional groups like $\GO$. The theory of loop group actions on DG categories is very convenient when dealing with such situations, and in particular when dealing with the Hecke action. It will be used in Section \ref{sec:characterize temp via ThmC, prove ThmA}.

\sssec{Serre functors}

The notion of Serre functor of a (proper) DG category is a very useful piece of abstract nonsense. It is clear that, given an equivalence $F: \C \to \D$ of proper DG categories, we have $\Serre_\D \simeq F \circ \Serre_\C \circ F^{-1}$. This intertwining property is essential for our proof of Theorem \ref{mainthm:B as unit of Sph-temp}, to be explained in Section \ref{ssec:outline of proof of theorem C}.

\sssec{Serre functor calculations}

While the definition of the Serre functor is abstract nonsense, the computation of the Serre functor in a given geometric situation is very much not abstract. 

In Section \ref{ssec: Serre functor on automorphic side}, we use Drinfeld's miraculous duality to compute the Serre functor on the DG category $\Dmod(\Bun_G(\PP^1))$.
In Section \ref{sec:Serre}, we compute the Serre functor of the DG category $\QCoh(\N/G)$ and of some related DG categories. The calculations hinge on the fact that $H^*(\N - \{0\}, \O)$ is nonzero only in two degrees.

We believe that a systematic study of the behaviour of Serre functors of DG categories of quasi-coherent sheaves on quotient stacks could be really fruitful.

\sssec{$\Sph_G$ and $\Dmod(\Bun_G(\PP^1))$}

In Section \ref{sec:GL for P1 and outline Thm C}, we crucially use the equivalence (due to V. Lafforgue) between the spherical DG category $\Sph_G$ and the automorphic Langlands DG category $\Dmod(\Bun_G(\PP^1))$. 
This equivalence is the reason for the appearance of $\GR$ in our Theorem \ref{mainthm:B as unit of Sph-temp}. 

As in the previous point, we believe that a deeper study of this relation will yield interesting results.

\sssec{Indschemes and t-structures}

As mentioned at the beginning, when $G$ is not of semisimple type $A$, we do not have a direct proof of the vanishing of $\Hren(G[\Sigma])$. Instead, we will show that $\omega_{G[\Sigma]} \in \Dmod(G[\Sigma])^{\leq -\infty}$ and then easily deduce that $\Hren(G[\Sigma]) \simeq 0$.

In general, it would be worthwhile to find several examples of indschemes (beyond the ones of Theorems \ref{mainthm:omega-inf-connective} and \ref{mainthm:estimate with equations}) whose dualizing sheaf is infinitely connective. 

\sssec{Ran space and the big cell}

The main idea to prove that $\omega_{G[\Sigma]} \in \Dmod(G[\Sigma])^{\leq -\infty}$ is to approximate $G[\Sigma]$ with $G^\circ[\Sigma]$, where $G^\circ \subseteq G$ is the big open cell. Indeed, the fact that $\omega_{G^\circ[\Sigma]} \in \Dmod(G^\circ[\Sigma])^{\leq -\infty}$ is immediate since $G^\circ[\Sigma]$ admits $\A^\infty$ as a direct factor (when $G$ is not abelian).

However, the map $\Gcirc[\Sigma] \to G[\Sigma]$ is not an open embedding, so we cannot invoke the Zariski-local nature of the t-structure on $\Dmod(G[\Sigma])$. 
Rather, we consider the open embedding $G[\Sigma]^{\Gcirc\ggen} \subseteq G[\Sigma]$ of maps $\Sigma \to G$ that generically land in $\Gcirc$. Then, roughly speaking, it remains to compare $G[\Sigma]^{\Gcirc\ggen}$ with $G[\Sigma']$, where $\Sigma' \subseteq \Sigma$ is a nonempty open subcurve. We do this in Section \ref{sec: prove ThmD in general} using the Ran space and some basic facts on the affine Grassmannian $\Gr_G$.

\ssec{Acknowledgements}

I would like to thank M. Pippi, R. Svaldi and B. To\"en for useful conversations; D. Gaitsgory and the anonymous referee for corrections and comments on earlier versions of this paper. Research supported by ERC-2016-ADG-741501.

\sec{Preliminaries and basic notations} \label{sec:prelim}

In this section, we collect the notations, the basic notions, and the basic results that we use. We advise the reader to skip this material and return here only when it is necessary.

\ssec{Representation theory and algebraic geometry}

We follow the conventions of \cite{AG1} and \cite{AG2}. Let us recall the most relevant ones.

\sssec{}

By the term \virg{space}, we mean a space of algebraic geometry: for instance, a (derived) scheme, an indscheme, a stack or a prestack. 
We fix a ground field $\kk$, algebraically closed and of characteristic zero, and set $\pt := \Spec(\kk)$. Every space $\Y$ appearing in this paper is defined over $\kk$ and the structure map $\Y \to \pt$ will be denoted by $p_\Y$.

\sssec{}

As mentioned before, $G$ always denotes a connected reductive group over $\kk$. We choose a Borel subgroup $B \subseteq G$ and a maximal torus $T \subseteq B$ once and for all. Let $N \subset B$ the maximal unipotent subgroup and $N^-$ the opposite unipotent subgroup.
Let $\g, \fb, \fn, \ft$ be the Lie algebras of $G, B, N, T$.

The Langlands dual group of $G$, defined using the duality of root data, is denoted by $\Gch$. It comes with a maximal torus $\check{T}$ and a Borel subgroup $\check{B}$.

\sssec{} \label{sssec:Chevalley space}

Let $\fc_G:= \g^*/ \! \! /G := \Spec((\Sym \g)^G)$. By Chevalley's restriction theorem and the theory of exponents, $(\Sym \g)^G \simeq \Sym (\fz_G \oplus \fa_G)$, where $\fz_G = \Lie(Z_G)$, and $\fa_G$ is a $\kk$-vector space generated by $r$-polynomials of degrees $d_1, \ldots, d_r$ (with $d_i$ equal one plus the $i$-th exponent of $G$).

\sssec{} \label{sssec:basics-Nilp}

We let $\N$ be the nilpotent cone of $G$ (accordingly, $\Nch$ the nilpotent cone of $\Gch$). 
By definition, $\N$ is the closed subscheme of $\g^*$ defined by
$$
\N :=
\g^*
\ustimes{\fc_G} \pt.
$$
Since the map $\g^* \to \fc_G$ is known to be flat, the fiber product defining $\N$ can be understood either in classical or in derived algebraic geometry.
We usually wish to regard $\N$ as a closed subscheme of $\g$: we do this by choosing a $G$-equivariant identification $\g \simeq \g^*$.

\sssec{}

We denote by $\Lambda$ the lattice of (co)weights of $G$. Precisely, $\Lambda$ means \virg{coweights} in Sections \ref{sec:GL for P1 and outline Thm C} and \ref{sec: prove ThmD in general}, while it means \virg{weights} in Section \ref{sec:Serre}.
Accordingly, the cone of dominant (co)weights is denoted by $\Lambda^{\dom}$.
This changing notation is the price to pay to avoid using $\lambdach$ in formulas. 
For two (co)weights $\mu$ and $\lambda$, the notation $\mu \leq \lambda$ means that $\lambda - \mu$ is a sum of positive (co)roots.

\sssec{}

Given an affine scheme $Y$ and a smooth curve $\Sigma$, we denote by $Y[\Sigma]$ the indscheme parametrizing maps from $\Sigma$ to $Y$. As a functor of points, $Y[\Sigma]$ sends a test affine scheme $S$ to the set $Y(S \times \Sigma)$. We will often use the shortcut $\Sigma_S := S \times \Sigma$.

The complement of $\Sigma$ inside its compactification is a finite set of \virg{points at infinity}, which we will denote by $D_\infty$. Let $h$ be the cardinality of $D_\infty$: this is the number of \virg{holes} that $\Sigma$ has. 

\sssec{}

In Sections \ref{sec:GL for P1 and outline Thm C} and \ref{sec:Serre}, we will need some formal and derived algebraic geometry.\footnote{The guiding principle is that derived algebraic geometry is required anytime we are dealing with quasi-coherent sheaves or ind-coherent sheaves.
On the other hand, Sections \ref{sec:characterize temp via ThmC, prove ThmA}-\ref{sec: prove ThmD in general} only deal with D-modules, so derived algebraic geometry is not needed there.}
The conventions for derived algebraic geometry follow \cite{Book}. In particular, fiber products of schemes in those sections are always derived. So, for example, the self-intersection of the origin in a finite dimensional vector space $V$ is the derived affine scheme
\begin{equation} \label{eqn:Omega V}
\Omega V := \pt \times_V \pt \simeq \Spec(\Sym(V^*[1])).
\end{equation}
This derived scheme has appeared before with $V = \gch$, and it will appear later with $V = \fc_G$.

\sssec{}

A derived scheme is said to be a \emph{(derived) globally complete intersection} if  it can be written as the derived zero locus $U \times_V \pt$ of a map $U \to V$ from a smooth scheme to a finite dimensional vector space. 
For instance, the scheme $\Omega V$ above and the nilpotent cone $\N$ are derived globally complete intersections.

\sssec{}

Our conventions on derived stacks follow those of \cite{AG1, finiteness}.
A derived scheme is \emph{quasi-smooth} if it is Zariski locally a global complete intersection. A derived stack is said to be quasi-smooth if it admits a quasi-smooth atlas. It follows that the adjoint quotients $\Omega \g/G$ and $\N/G$ are quasi-smooth. By \cite[Section 10]{AG1}, the derived stack $\LSG$ is quasi-smooth (it is even a global complete intersection in the natural stacky sense).

\sssec{}

We denote by $\Y_\dR$ the de Rham prestack of a prestack $\Y$, see \cite{Crystals}. For a map $\phi: \Y \to \Z$, we denote by $\Z^\wedge_{\Y}$ its formal completion, which is by definition the (derived) fiber product $\Z \times_{\Z_\dR} \Y_\dR$. For a quick review of the conventions regarding formal completions and the de Rham construction, the reader might consult \cite[Section 2]{centerH}.

\ssec{DG categories}

The conventions regarding higher category theory and differential graded (DG) categories follow \cite[Volume 1, Chapter 1]{Book}.
Let us recall the most important ones.

\sssec{}

Denote by $\DGCat$ the $\infty$-category whose objects are ($\kk$-linear) cocomplete DG categories and whose $1$-arrows are continuous (i.e., colimit preserving) functors.
By default, when we say that $\C$ is a DG category, we mean that $\C \in \DGCat$, that is, we assume that $\C$ is cocomplete. When $\C$ is not cocomplete, we say so explicitly. Similarly, a functor between DG categories is assume to be continuous unless otherwise stated.

\sssec{}

If $\C$ is a DG category (cocomplete or not) with two objects $c, c'$, we denote by $\RHom_\C(c,c')$ the DG vector space of morphisms $c \to c'$.

\sssec{}

For $\C \in \DGCat$, we let $\C^\cpt$ be its non-cocomplete full subcategory of compact objects. We assume familiarity with the notions of dualizability, compact generation and ind-completion. When a DG category $\C$ is dualizable, we denote by $\C^\vee$ its dual.

\sssec{}

By $\Vect$, we denote the DG category of complexes of $\kk$-vector spaces. We use cohomological indexing conventions throughout. Note that $\Vect^{\cpt}$ consists of those complexes with finite dimensional total cohomology.
We usually say that $V \in \Vect$ is \emph{finite dimensional} if it belongs to $\Vect^{\cpt}$.

\sssec{} \label{sssec:properness-definition}

Let $\C$ be a compactly generated DG category. Following \cite{GYD}, we say that $\C$ is \emph{proper} if 
$$
c, c' \in \C^\cpt
\implies
\RHom_{\C}(c,c') \in \Vect^\cpt.
$$
When $\C$ is (compactly generated and) proper, we consider its \emph{Serre functor} $\Serre_\C: \C \to \C$. This is the continuous functor uniquely characterized by:
\begin{equation} \label{eqn:Serre functor univ property}
\RHom_\C(c', \Serre_\C(c)) \simeq \RHom_\C(c,c')^*, 
\;  \;\mbox{for every $c, c' \in \C^\cpt$}.
\end{equation}
Here, $(-)^*$ denotes the dual of a complex of vector spaces. 
If $\C$ is clear from the context, we sometimes write $\Serre$ instead of the more precise $\Serre_\C$.

\begin{remark} \label{rem: remark on Serre functor}
Observe that, in the defining formula for $\Serre_\C$, the objects $c, c'$ are required to be compact. A simple colimit computation shows that in \eqref{eqn:Serre functor univ property} we might just as well require $c$ compact and $c'$ arbitrary. 
\end{remark}

\sssec{}

Our DG categories are sometimes equipped with t-structures. We use cohomological indexing, which means that, at the level of the underlying triangulated category, $\C^{\leq 0}$ is left orthogonal to $\C^{\geq 1}$.
A (continuous) functor $F:\C \to \D$ between DG categories with t-structures is said to be \emph{right t-exact} if it sends $\C^{\leq 0}$ to $\D^{\leq 0}$. Likewise, $F$ is said to be \emph{left t-exact} if it sends $\C^{\geq 0}$ to $\D^{\geq 0}$. Finally, $F$ is t-exact if it is both left and right t-exact.

\sssec{}

A continuous functor $\beta: \C \to \D$ of DG categories is said to be conservative if $\beta(d) \simeq 0$ implies $d \simeq 0$. If $\beta$ admits a left adjoint $\alpha$, then $\beta$ is conservative if and only if the essential image of $\alpha$ generates $\D$ under colimits. This is a consequence of the Barr-Beck-Lurie theorem.

\ssec{Ind-coherent sheaves and singular support} \label{ssec:indcoh-sing-supp}

\sssec{}

We denote by $\QCoh(\Y)$ and $\IndCoh(\Y)$ the DG categories of quasi-coherent and ind-coherent sheaves on a derived prestack $\Y$. While $\QCoh(\Y)$ is defined for arbitrary $\Y$, some conditions are required for $\IndCoh(\Y)$ to make sense.
We will only consider ind-coherent sheaves on algebraic stacks (such as $\N/G$, $\Omega \fg/G$, $\LSGch$) and of formal completions of maps of algebraic stacks.
Pushforwards, pullbacks and tensor products of sheaves are understood in the derived sense, unless otherwise stated.
There is a natural action of $\QCoh(\Y)$, equipped with its natural symmetric monoidal structure, on $\IndCoh(\Y)$.

\sssec{}

Formation of $\ICoh$ is contravariant: we denote by $f^!$ the structure pullback. 
We let $\omega_\Y := (p_\Y)^!(\kk)$ denote the ind-coherent dualizing sheaf.\footnote{Warning: the D-module pullback and the D-module dualizing sheaf are denoted in the same way. We hope that the context will make it clear which one we are referring to.} 
The action of $\QCoh(\Y)$ on $\omega_{\Y}$ yields a functor $\Upsilon_\Y: \QCoh(\Y) \to \ICoh(\Y)$. As $\Y$ varies, we obtain a natural transformation $\Upsilon_-: \QCoh(-) \to \ICoh(-)$ that intertwines quasi-coherent pullbacks with ind-coherent pullbacks.

\sssec{} \label{sssec:inf stuff}

When $f$ is nice (for instance: inf-schematic), there is a well-defined pushforward $f_{*}^\ICoh$ for ind-coherent shaves. When $f$ is furthermore inf-proper, $f_{*}^\ICoh$ is left adjoint to $f^!$.

The meaning of the terms \virg{inf-schematic} and \virg{inf-proper} appears in \cite[Vol. 2, Chapter 2, Section 4]{Book}. However, we will use these notions only in the  following situation. Let
\begin{equation} 
\nonumber
\begin{tikzpicture}[scale=1.5]
\node (00) at (0,0) {$  \X$};
\node (10) at (1,0) {$ \Z$};
\node (01) at (0,.7) {$\W$};
\node (11) at (1,.7) {$\Y$};
\path[->,font=\scriptsize,>=angle 90]
(00.east) edge node[above] {$ $} (10.west); 
\path[->,font=\scriptsize,>=angle 90]
(01.east) edge node[above] {$ $} (11.west); 
\path[->,font=\scriptsize,>=angle 90]
(01.south) edge node[right] {} (00.north);
\path[->,font=\scriptsize,>=angle 90]
(11.south) edge node[right] {$ $} (10.north);
\end{tikzpicture}
\end{equation}
be a commutative diagram of algebraic stacks. Then the obvious map $\xi: \X^\wedge_\W \to \Z^\wedge_\Y$ is inf-schematic (respectively: inf-proper, and inf-closed embedding) as soon as $\W \to \Y$ is schematic (respectively: proper, a closed embedding). If $\W_\dR \simeq \Y_\dR$, we say that $\xi$ is a \emph{nil-isomorphism}; in this case $\xi^!$ is conservative.

\sssec{}

When $\Y$ is an algebraic stack, we have a canonical functor $\Psi_\Y: \IndCoh(\Y) \to \QCoh(\Y)$, which is t-exact for the natural t-structures on both sides. When $\Y$ is quasi-smooth (and much more generally when $\Y$ is eventually coconnective), we have that:
\begin{itemize}
\item
 $\Psi_\Y$ admits a fully faithful left adjoint, denoted by $\Xi_\Y$;
\item
$\Upsilon_\Y$ is fully faithful.
\end{itemize}
When $\Y$ is smooth, $\Upsilon_\Y,   \Psi_\Y, \Xi_\Y$ are equivalences and 
\begin{equation} \label{eqn:anomaly}
\Psi_\Y(\omega_{\Y}) \simeq \det(\bbL_\Y)[\dim(\Y)].
\end{equation}

\sssec{}

The object $\Psi_\X(\omega_{\X})$ is a shifted line bundle more generally when $\X$ is quasi-smooth (and even more generally when $\X$ is \emph{Gorenstein}), see \cite[Section 7.3]{ICoh}. 
The following two computations will be useful.

\begin{lem} \label{lem:anomaly for nilcone}
Let $\N \subset \g$ be the nilpotent cone, as introduced above, and $\N/G$ the quotient stack given by the coajoint action. Then $\Psi_{\N/G} (\omega_{\N/G}) \simeq \O_{\N/G}[\dim(\N/G)]$.
\end{lem}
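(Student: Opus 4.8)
The plan is to present $\N/G$ as a derived complete intersection inside the smooth stack $\g^*/G$ and to compute $\omega_{\N/G}$ by $!$-pullback from there. Recall from Section~\ref{sssec:basics-Nilp} that $\N = \g^* \times_{\fc_G} \pt$, that the adjoint-quotient map $\chi\colon \g^* \to \fc_G$ is flat and $G$-equivariant for the \emph{trivial} $G$-action on $\fc_G$, and that (Section~\ref{sssec:Chevalley space}) $\fc_G$ is an affine space of dimension $\rk G$. Since forming the quotient by $G$ commutes with this fibre product ($G$ acts trivially on $\fc_G$ and on $\pt$), we get $\N/G \simeq (\g^*/G) \times_{\fc_G} \pt$. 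Thus the closed embedding $\iota\colon \N/G \hookrightarrow \g^*/G$ is the base change of $0\colon \pt \to \fc_G$ along $\bar\chi\colon \g^*/G \to \fc_G$; in particular $\iota$ is a quasi-smooth closed embedding of virtual codimension $\rk G$, whose normal bundle is the pullback of $T_0\fc_G$, i.e. the canonically trivial bundle $\O_{\N/G} \otimes \fc_G$.

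First I would note that $\omega_{\g^*/G} \simeq \O_{\g^*/G}$. Indeed $\g^*/G$ is a smooth algebraic stack of dimension $\dim \g - \dim G = 0$, so $\Psi$, $\Upsilon$, $\Xi$ are equivalences on it and \eqref{eqn:anomaly} gives $\Psi_{\g^*/G}(\omega_{\g^*/G}) \simeq \det(\bbL_{\g^*/G})$. The fibre sequence attached to $\g^*/G \to BG$ identifies $\det(\bbL_{\g^*/G})$ with $\det(\g)^{\otimes 2}$, which is $G$-equivariantly trivial because $\det(\g)$ is the trivial character of the reductive group $G$ (the weights of the adjoint representation sum to zero).

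Next I would compute $\omega_{\N/G} = \iota^!(\omega_{\g^*/G}) = \iota^!(\O_{\g^*/G})$ using the following standard property of a quasi-smooth closed embedding $\iota$ of virtual codimension $c$: for $\G \in \IndCoh(\g^*/G)$ one has $\Psi_{\N/G}(\iota^!\G) \simeq \det(N_\iota)[-c] \otimes \iota^*(\Psi_{\g^*/G}(\G))$, where $\det(N_\iota)[-c]$ is the relative dualizing line bundle of $\iota$ and $\iota^*$ on the right is the $\QCoh$-pullback. (This is obtained by factoring $\iota^!$ through the $\IndCoh$ $*$-pullback and invoking the $\QCoh(\N/G)$-linearity of $\Psi$.) Taking $\G = \omega_{\g^*/G} \simeq \O_{\g^*/G}$, $c = \rk G$ and $\det(N_\iota) \simeq \O_{\N/G}$, we obtain $\Psi_{\N/G}(\omega_{\N/G}) \simeq \O_{\N/G}[-\rk G]$. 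Finally $\dim(\N/G) = \dim \N - \dim G = (\dim \g - \rk G) - \dim \g = -\rk G$, so this is exactly $\O_{\N/G}[\dim(\N/G)]$.

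The main point requiring care is the interaction of $\Psi$ with the exceptional pullback $\iota^!$: one cannot commute $\Psi$ past $\iota^!$ naively, and must instead factor $\iota^!$ through the ordinary $\IndCoh$ $*$-pullback together with the (perfect) relative dualizing sheaf $\det(N_\iota)[-c]$, after which $\QCoh$-linearity of $\Psi$ finishes the job. A structurally cleaner alternative, which I might use instead: since $\N$ is a derived global complete intersection, $\N/G$ is Gorenstein, hence $\Psi_{\N/G}(\omega_{\N/G})$ is a shifted line bundle by \cite[Section~7.3]{ICoh}; the shift is $\dim(\N/G)$ by $t$-exactness of $\Psi$, and the underlying line bundle is trivial since the contracting $\Gm$-action on $\N$ identifies $\Pic(\N/G) \xrightarrow{\sim} \Pic(BG)$ by restriction to the vertex, while the fibre of $\omega_{\N/G}$ over that vertex is $\det$ of the virtual tangent space $\mathbb{T}_0(\N/G) \simeq -\fc_G$, on which $G$ acts trivially.
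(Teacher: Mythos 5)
Your proof is correct and follows essentially the same route as the paper: identify $\N/G \simeq (\g/G) \times_{\fc_G} \pt$ as a regular (quasi-smooth) closed embedding of codimension $\dim(\fc_G)=\rk G$ into the smooth zero-dimensional stack $\g/G$, note that $\Psi_{\g/G}(\omega_{\g/G})\simeq\O_{\g/G}$, and then apply Grothendieck's formula for the relative dualizing complex of that embedding. Your closing alternative (Gorenstein $\Rightarrow$ shifted line bundle, pinned down by the contracting $\Gm$-action and the virtual tangent space at the vertex) is a genuinely different and also valid argument, but the main line of your reasoning coincides with the one in the paper.
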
 

\begin{proof}
Using \eqref{eqn:anomaly} applied to $\Y = BG$, we quickly obtain that $\Psi_{BG}(\omega_{BG}) \simeq \O_{BG}[\dim(BG)]$.
Next we claim that $\Psi_{\g/G}(\omega_{\g/G}) \simeq \O_{\g/G} $: this can be proven directly, or by using \cite[Vol. 2, Chapter 9, Proposition 7.3.4]{Book}, which is a relative version of \eqref{eqn:anomaly}.
 
Now, recalling the notation of Section \ref{sssec:basics-Nilp}, we see that $\N/G \simeq \g/G \times_{\fc_G} 0$. In particular, the inclusion $\N/G \hto \g/G$ is a regular embedding of relative codimension equal to $\dim(\fc_G)$. Then the assertion follows from Grothendieck's formula, see \cite[Vol. 2, Chapter 9, Section 7]{Book}. 
\end{proof}

\begin{lem} \label{lem:Omega V global sections}
Recall the derived scheme $\Omega V = \Spec \Sym(V^*[1])$ that appeared in \eqref{eqn:Omega V}. 
We have: 
$$
(p_{\Omega V})_*^{\ICoh} (\omega_{\Omega V}) \simeq \Sym(V[-1]).
$$
In particular (but we will not need this), $\Sym(V[-1])$ is the vector space underlying $\Psi_{\Omega V} (\omega_{\Omega V}) \in \Sym(V^*[1])\mod$.
\end{lem}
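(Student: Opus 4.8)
The plan is to reduce the statement to elementary (derived) commutative algebra, exploiting that $\Omega V$ is a \emph{finite} derived scheme over $\pt$. Write $B := \Sym(V^*[1])$, so that $\Omega V = \Spec B$. Since $V^*[1]$ is concentrated in cohomological degree $-1$ with zero differential, $B$ is the exterior algebra $\Lambda^\bullet(V^*)$ with $\Lambda^k(V^*)$ placed in cohomological degree $-k$; in particular $B$ is coconnective, finite-dimensional over $\kk$, and its classical truncation is $\pt$. Hence the structure map $p := p_{\Omega V} \colon \Omega V \to \pt$ is affine and proper, in fact finite.

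First I would use properness of $p$ to express the left-hand side in terms of the $B$-module $\Psi_{\Omega V}(\omega_{\Omega V})$. Since $p$ is proper, $p_*^{\ICoh}$ is left adjoint to $p^!$ and is compatible with the functors $\Psi$ (see \cite{ICoh}); as $\Psi_\pt = \id_\Vect$, this yields
\[
(p_{\Omega V})_*^{\ICoh}(\omega_{\Omega V}) \;\simeq\; \Gamma\bigt{\Omega V,\ \Psi_{\Omega V}(\omega_{\Omega V})},
\]
i.e.\ the object in question is the underlying complex of $\kk$-vector spaces of $\Psi_{\Omega V}(\omega_{\Omega V})$, viewed as a $B$-module. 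This also settles the parenthetical claim, so it remains to identify $\Psi_{\Omega V}(\omega_{\Omega V})$ as a complex of vector spaces. For that, since $p$ is finite, Grothendieck duality for finite morphisms (see \cite{Book}) identifies $\omega_{\Omega V} = p^!(\kk)$ with the $\kk$-linear dual $B^\vee := \RHom_\kk(B,\kk)$, equipped with its tautological $B$-module structure; equivalently, the finite-dimensional graded-commutative (hence Frobenius) algebra $B = \Lambda^\bullet(V^*)$ has dualizing module its full linear dual. Reading off cohomological degrees, the degree-$i$ component of $B^\vee$ is $(B^{-i})^* \simeq (\Lambda^i V^*)^* \simeq \Lambda^i(V)$, which is exactly $\Sym^i(V[-1])$. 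Hence $B^\vee \simeq \Sym(V[-1])$ as complexes of $\kk$-vector spaces, and the lemma follows.

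There is no serious difficulty here — this is preliminary material — and the only bookkeeping points are the compatibility of $\Psi$ with $p_*^{\ICoh}$ (which follows from properness of $p$) and the fact that $\Omega V$ is well-adapted to the theory of ind-coherent sheaves, which holds because $\Omega V$ is a quasi-smooth affine derived scheme. As an alternative, slightly more geometric route that sidesteps the finite-morphism duality statement, one can instead apply ind-coherent base change along the Cartesian square defining $\Omega V = \pt \times_V \pt$ to obtain $(p_{\Omega V})_*^{\ICoh}(\omega_{\Omega V}) \simeq i^! i_*^{\ICoh}(\kk)$, with $i \colon \pt \hookrightarrow V$ the origin, and then compute $i^! i_*^{\ICoh}(\kk)$ inside the smooth scheme $V$ using $i^! \O_V \simeq \det(V)[-\dim V]$ together with the derived self-intersection formula $i^* i_* \O_\pt \simeq \Sym(V^*[1])$; this reproduces the same answer.
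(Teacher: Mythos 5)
Your proof is correct, but it takes a different route from the paper's. The paper never identifies $\omega_{\Omega V}$ explicitly: it uses the adjunction $(p_*^{\ICoh}, p^!)$ to recognize the $\kk$-linear \emph{dual} of $(p_{\Omega V})_*^{\ICoh}(\omega_{\Omega V})$ as $\RHom_{\ICoh(\Omega V)}(\omega_{\Omega V}, \omega_{\Omega V})$, then invokes the fully faithfulness of $\Upsilon_{\Omega V}$ (valid since $\Omega V$ is eventually coconnective) to rewrite this as $\RHom_{\QCoh(\Omega V)}(\O_{\Omega V}, \O_{\Omega V}) \simeq \Sym(V^*[1])$, and finally dualizes. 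You instead compute the object itself, via coherent duality for the finite morphism $p$, obtaining $\Psi_{\Omega V}(\omega_{\Omega V}) \simeq \RHom_\kk(B,\kk)$ and reading off the graded pieces; your alternative via base change along the Cartesian square defining $\Omega V$, followed by the Koszul computation $\Ext^*_{\Sym V^*}(\kk,\kk) \simeq \Lambda^\bullet V$, is equally valid and arguably the most hands-on. The trade-off is clear: the paper's argument is shortest and needs only the adjunction plus fully faithfulness of $\Upsilon$, at the price of passing through the dual; your approaches make the $B$-module structure on $\Psi_{\Omega V}(\omega_{\Omega V})$ visible (which is exactly what the parenthetical claim is about) but lean on either finite-morphism Grothendieck duality or ind-coherent base change. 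Both are sound; the paper's is the lightest on machinery.
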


\begin{proof}
The derived scheme $\Omega V$ is proper (indeed, by definition, properness is checked at the level of classical truncations, and the classical truncation of $\Omega V$ is $\pt$).
Hence, by adjunction and by the fully faithfulness of $\Upsilon_{\Omega V}$, we obtain:
$$
\Bigt{ 
(p_{\Omega V})_*^{\ICoh} (\omega_{\Omega V}) 
}^*
\simeq
\RHom_{\ICoh(\Omega V)} (\omega_{\Omega V}, \omega_{\Omega V})
\simeq
\RHom_{\QCoh(\Omega V)} (\O_{\Omega V}, \O_{\Omega V})
\simeq
\Sym(V^*[1]).
$$
The assertion follows.
\end{proof}

\sssec{} \label{sssec:review of singular support}

Let $\Y$ be a quasi-smooth derived stack. 
We regard $\ICoh(\Y)$ as an enlargement of $\QCoh(\Y)$ by means of the functor $\Xi_\Y$.
Let us recall the main tenets of the theory of singular support of ind-coherent sheaves on $\Y$.
\begin{itemize}
\item
The (classical) stack of singularities of $\Y$ is defined as
$$
\Sing(\Y) := H^{-1}(T^* \Y) \simeq \{(y, \xi) \, | \, y \in \Y, \; \xi \in H^{-1}(T^*_y \Y)\}.
$$
It admits a $\Gm$-action defined by $\lambda \cdot (y, \xi) = (y, \lambda \xi)$.

\item
Ind-coherent sheaves on $\Y$ can be classified according to their singular support, where possible singular supports are closed conical subsets of $\Sing(\Y)$.
Given one such closed conical subset $M$, denote by $\ICoh_M(\Y)$ the full subcategory of $\ICoh(\Y)$ spanned by objects with singular support contained in $M$.

\item

Given $M \subseteq M' \subseteq \Sing(\Y)$, the canonical inclusion $\Xi_{M \to M'}: \ICoh_M(\Y) \hto \ICoh_{M'}(\Y)$ admits a continuous right adjoint that we denote by $\Psi_{M \to M'}$.

\item
The natural action of $\QCoh(\Y)$ on $\ICoh(\Y)$ preserves any $\ICoh_M(\Y)$ and it commutes with any $\Xi_{M \to M'}$ and $\Psi_{M \to M'}$.

\item
When $M =O_\Y$ is the zero section of $\Sing(\Y)$ corresponding to the condition that $\xi = 0$, then 
$\ICoh_M(\Y) \simeq \QCoh(\Y)$. Moreover, the adjunction $(\Xi_{O_\Y \to \Sing(\Y)}, \Psi_{O_\Y \to \Sing(\Y)})$ corresponds to the adjunction $(\Xi_\Y, \Psi_\Y)$ mentioned before.

\item
One computes that $\Sing(\Omega V) \simeq V^*$ and that $\Sing( \Omega \gch/\Gch ) \simeq \gch^*/\Gch$.
Upon fixing a $G$-equivariant isomorphism $\gch^* \simeq \gch$, it is clear that $\Sing(\LSGch)$ is isomorphic to the stack $\Arth_\Gch$ of geometric Arthur parameters that appeared in \eqref{eqn:Arthur params}.

\end{itemize}

\sssec{}

For $f: \X \to \Y$ a map of algebraic stacks, denote by $\Y^\wedge_\X$ its formal completion and let
$$
\X 
\xto{' \!f}
\Y^\wedge_\X
\xto {\wh f}
\Y
$$
be the canonical factorization of $f$. Since $' \!f$ is a nil-isomorphism, we have an adjunction
$$ 
\begin{tikzpicture}[scale=1.5]
\node (a) at (0,1) {$\ICoh(\X)$};
\node (b) at (3,1) {$\ICoh(\Y^\wedge_\X)$};
\path[right hook ->,font=\scriptsize,>=angle 90]
([yshift= 1.5pt]a.east) edge node[above] {$(' \!f)_*^\ICoh$} ([yshift= 1.5pt]b.west);
\path[->>,font=\scriptsize,>=angle 90]
([yshift= -1.5pt]b.west) edge node[below] {$(' \!f)^!$} ([yshift= -1.5pt]a.east);
\end{tikzpicture}
$$
with conservative right adjoint. The monad $(' \!f)^! \circ (' \!f)_*^\ICoh$ is isomorphic to $\U(\Tang_{\X/\Y})$, the universal envelope of the tangent Lie algebroid $\Tang_{\X/\Y} \to \Tang_\X$. 

\sssec{} \label{sssec:monoidal str on Sph spectral}

Let us describe the monoidal structure on the spectral side of derived Satake, see Theorem \ref{thm:derived satake}.
First, consider $\ICoh(\Omega \gch/\Gch)$. This DG category is monoidal under the convolution product $\star$ induced by the presentation $\Omega \gch/\Gch \simeq \pt/\Gch \times_{\gch/\Gch} \pt/\Gch$. Let $i: \pt/\Gch \hto \Omega \gch/\Gch$ be the diagonal closed embedding and 
$$
i_*^{\ICoh}: \Rep(\Gch) 
\longto 
\ICoh(\Omega \gch/\Gch)
$$
the associated functor, which is easily seen to be monoidal. Since $i$ is a nil-isomorphism, it follows that $i_*^{\ICoh}$ generates the target under colimits.

Now, let us turn to $\ICoh_\Nch(\Omega \gch/\Gch)$. Its monoidal structure is the unique one making $\Psi_{\Nch \to \gch^*}$ monoidal. To make sense of this, we need the following observation:
\begin{lem}
Let $\F, \G \in \ICoh(\Omega \gch/\Gch)$. If $\F$ is killed by $\Psi_{\Nch \to \gch^*}$, then the same is true for $\F \star \G$.
\end{lem}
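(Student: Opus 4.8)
The plan is to transport the whole situation to the spectral side via Koszul duality and shearing, where the convolution $\star$ becomes an honest tensor product of quasi-coherent sheaves and the colocalization $\Psi_{\Nch \to \gch^*}$ becomes a local cohomology functor; the assertion then reduces to the elementary fact that \emph{being extended from an open substack} is preserved under tensoring with an arbitrary object, which is just the projection formula.

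First I would recall, following \cite{BF} and \cite[Sections 2--9 and 12]{AG1} (and the singular support framework of \cite{ICoh}), that applying Koszul duality along the closed embedding $i:\pt/\Gch \hto \Omega\gch/\Gch$ together with the shearing operation identifies the monoidal DG category $(\ICoh(\Omega\gch/\Gch),\star)$ with $(\QCoh(\fY),\otimes_{\O_\fY})$, where $\fY$ is a sheared version of the quotient stack $\gch^*/\Gch \simeq \Sing(\Omega\gch/\Gch)$. Under this monoidal equivalence, the subcategory $\ICoh_\Nch(\Omega\gch/\Gch)$ corresponds to $\QCoh_\fN(\fY)$, the full subcategory of sheaves set-theoretically supported on the (sheared) nilpotent cone $\fN\subseteq\fY$, and the functor $\Psi_{\Nch \to \gch^*}$ corresponds to $\Gamma_\fN$, the right adjoint to the inclusion $\QCoh_\fN(\fY)\hto\QCoh(\fY)$, i.e. the functor of sections supported on $\fN$.

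Next, writing $j:\fU := \fY\setminus\fN \hto \fY$ for the open complement, I would use the fibre sequence $\Gamma_\fN(\F)\to\F\to j_*j^*(\F)$ to identify $\ker(\Gamma_\fN)$ with the essential image of $j_*$: it consists exactly of those $\F$ for which the counit $\F\to j_*j^*\F$ is an isomorphism. The point is then that this subcategory is a $\otimes_{\O_\fY}$-ideal. Indeed, for $\G'\in\QCoh(\fU)$ and arbitrary $\H\in\QCoh(\fY)$, the projection formula for the open immersion $j$ yields $j_*(\G')\otimes_{\O_\fY}\H \simeq j_*(\G'\otimes_{\O_\fU}j^*\H)$, which again lies in the image of $j_*$. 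Transporting this back through the monoidal equivalence above, $\ker(\Psi_{\Nch\to\gch^*})$ is a $\star$-ideal of $\ICoh(\Omega\gch/\Gch)$; in particular $\Psi_{\Nch\to\gch^*}(\F)\simeq 0$ forces $\Psi_{\Nch\to\gch^*}(\F\star\G)\simeq 0$ for every $\G$, which is the claim.

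The step I expect to be the only real work is installing the dictionary used above — that Koszul duality and shearing turn singular support into set-theoretic support and turn the colocalization $\Psi_{M\to M'}$ into a local cohomology functor, \emph{compatibly with the monoidal structures}, with enough care about the shearing twist that $\Nch$ really goes over to a genuine closed conical substack $\fN\subseteq\fY$. Once this is in place the remainder is the one-line projection-formula computation. I note in passing that one can also argue intrinsically: $\Nch/\Gch$ is the preimage of $0$ under $\gch^*/\Gch\to\fc_\Gch := \gch^*\GIT\Gch$, the algebra $\O(\fc_\Gch)$ acts $\star$-bilinearly on $\ICoh(\Omega\gch/\Gch)$, $\Psi_{\Nch\to\gch^*}$ is the associated colocalization onto the part supported over $0$, and the complementary subcategory $\ker(\Psi_{\Nch\to\gch^*})$ — objects supported over $\fc_\Gch\setminus\{0\}$ — is then automatically a $\star$-ideal; but making this central action precise is of comparable difficulty to the Koszul-duality route, so I would carry out the argument as above.
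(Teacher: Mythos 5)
Your proof is correct, but it takes a genuinely different and more machinery-heavy route than the paper's. The paper argues directly: since $i_*^{\ICoh}\colon\Rep(\Gch)\to\ICoh(\Omega\gch/\Gch)$ generates the target under colimits, one may take $\G=i_*^{\ICoh}(V)$; base-change along the groupoid presentation then gives $\F\star i_*^{\ICoh}(V)\simeq \pi^*(V)\stackrel{\act}{\otimes}\F$, and the claim reduces to the already-recorded fact that the $\QCoh$-action on $\ICoh$ commutes with $\Psi_{M\to M'}$. Your argument instead transports the whole monoidal structure through Koszul duality and shearing so that $\star$ becomes $\otimes_{\O}$ and $\Psi_{\Nch\to\gch^*}$ becomes local cohomology $\Gamma_{\Nch}$, and then observes that $\ker\Gamma_{\Nch}=\mathrm{ess.im}(j_*)$ is a $\otimes$-ideal by the projection formula for the open immersion $j$. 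Both are valid, and yours makes it transparent that $\ker(\Psi_{\Nch\to\gch^*})$ is a (two-sided, symmetric) $\star$-ideal, which is a touch more than is literally asked. The trade-off is exactly the one you flag: you must invoke, with some care, that the Koszul duality $\ICoh(\Omega\gch/\Gch)\simeq\QCoh(\gch^*/\Gch)^{\Rightarrow}$ is \emph{monoidal} (convolution $\leftrightarrow$ tensor) and carries singular support to set-theoretic support compatibly with the colocalizations; this is standard in the AG1 framework, but it is nontrivial bookkeeping that the paper's check-on-generators-and-base-change argument sidesteps entirely. There is no circularity — you only use Koszul duality for the full $\ICoh(\Omega\gch/\Gch)$, whose convolution structure is defined independently of the lemma — but if you carry out this route you should state the monoidal form of the Koszul equivalence explicitly rather than leaving it implicit. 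Your closing remark about the central $\O(\fc_{\Gch})$-action is the cleanest conceptual version of the same idea and would also work, with comparable overhead.
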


\begin{proof}
Since $\ICoh(\Omega \gch/\Gch)$ is generated under colimits by the essential image of $i_*^{\ICoh}$, it suffices to check the statement for $\G = i_*^\ICoh(V)$, with $V$ arbitrary. By base-change, we have that 
$$
\F \star i_*^\ICoh(V) \simeq \pi^*(V) \stackrel{\act}\otimes \F.
$$
where $\pi : \Omega \gch/\Gch \to \pt/\Gch$ is the projection and $\stackrel{\act}\otimes$ denotes the action of $\QCoh$ on $\ICoh$.
The assertion follows from the fact that such action commutes with $\Psi$ functors in general.
\end{proof}

In particular, the unit object of $\ICoh_\Nch(\Omega \gch/\Gch)$ is given by $\Psi_{\Nch \to \gch^*}(i_*^\ICoh(\kk_0))$, where $\kk_0 \in \Rep(\Gch)$ is the trivial representation.

\ssec{D-modules}

With the exception of Section \ref{sec:characterize temp via ThmC, prove ThmA} and Section \ref{ssec:group actions on cats}, we will only encounter D-modules on indschemes and algebraic stacks of ind-finite type.
Let us recall our main conventions in this case. Our main references are \cite{Crystals} and \cite{finiteness}.

\sssec{} \label{sssec:ind-oblv-D-modules}

For $\Y$ as above, we let $\Dmod(\Y)$ be its DG category of D-modules. We denote by $f^!$ the D-module pullback and by $\omega_\Y:= (p_\Y)^!(\kk)$ the D-module diualizing sheaf.
We denote by $f_{*,\dR}$ (or simply by $f_*$ when the context its clear) the de Rham pushforward of D-modules. 
We let $\ind_{\Y}: \ICoh(\Y) \to \Dmod(\Y)$ be the induction functor and $\oblv_{\Y}: \Dmod(\Y) \to \ICoh(\Y)$ its (continuous) right adjoint. Note that $\oblv_{\Y}$ sends the D-module dualizing sheaf to the ind-coherent dualizing sheaf.

\sssec{} \label{defn t-structure}

The DG categories of D-modules on (ind)schemes are equipped with their right t-structure, see \cite[Section 4.3]{Crystals}. Let us recall the definition. 
For $\Y$ an (ind)scheme of (ind-)finite type, the t-structure on $\Dmod(\Y)$ is defined by declaring that $\F \in \Dmod(\Y)^{\geq 0}$ iff, for any closed subscheme $i:Y \hto \Y$, the resulting object $\oblv_{Y}(i^!(\F))$ belongs to $\IndCoh(Y)^{\geq 0}$.

As mentioned earlier in Corollary \ref{cor:GrG infinitely connective}, this t-structure is Zariski local: this means that the (co)connectivity of objects can be tested on Zariski open covers.
A proof is given in \cite[Lemma 7.8.7]{ker-adj}). We will use this result several times.

If $i: Y \to Z$ is a closed embedding of schemes (of finite type), then $i_{*,\dR}$ is t-exact. If $Y$ is a smooth scheme, then $\oblv_{Y}$ is t-exact.

\sssec{} \label{sssec:ICoh on formal as tensor product}

Let $Y \to Z$ be a map of schemes of finite type. As explained in \cite[Section 2.3.2]{strong-gluing}, we can write $\QCoh(Z^\wedge_Y)$ and $\ICoh(Z^\wedge_Y)$  as tensor products of DG categories, using the action of D-modules on quasi-coherent and ind-coherent sheaves:
$$
\QCoh(Z^\wedge_Y)
\simeq
\QCoh(Z)
\usotimes{\Dmod(Z)}
\Dmod(Y),
$$
$$
\ICoh(Z^\wedge_Y)
\simeq
\ICoh(Z)
\usotimes{\Dmod(Z)}
\Dmod(Y),
$$
If $Z$ is smooth, then the functors $\Xi_Z, \Psi_Z: \QCoh(Z) \to \ICoh(Z)$ are equivalences, hence the same holds true for the functors
$$
\Xi_Z \usotimes{\Dmod(Z)} \id_{\Dmod(Y)}
:
\QCoh(Z^\wedge_Y) 
\longto
 \ICoh(Z^\wedge_Y),
$$
$$
 \Upsilon_Z \usotimes{\Dmod(Z)} \id_{\Dmod(Y)} \simeq \Upsilon_{Z^\wedge_Y}
 :
 \QCoh(Z^\wedge_Y) 
\longto
 \ICoh(Z^\wedge_Y).
$$

\sssec{} \label{sssec:Dmods on BG}

When $L$ is a connected affine algebraic group, $\Dmod(\pt/L) \simeq H_*(L) \mod$, where $H_*(L)$ is the singular homology of $L$ equipped with the convolution product (see \cite[Section 7.2]{finiteness}). This equivalence is obtained by applying the Barr-Beck-Lurie theorem to the adjunction $(q_!, q^!)$, with $q: \pt \to \pt/L$ the quotient map.
Under this equivalence, $\omega_{\pt/L}$ goes over to the augmentation module of $H_*(L)$: it follows that $\omega_{\pt/L}$ is a (possibly not compact) generator of $\Dmod(\pt/L)$. 

\begin{lem} \label{lem: generators Dmod on BG}
For $M \to L$ a morphism of connected algebraic groups, denote by $f: \pt/M \to \pt/L$ the induced map. Then $f_!(\omega_{\pt/M})$ is a generator of $\Dmod(\pt/L)$.
\end{lem}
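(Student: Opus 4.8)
The plan is to deduce this formally from two inputs already in hand: that $\omega_{\pt/M}$ generates $\Dmod(\pt/M)$ under colimits (the last paragraph of Section~\ref{sssec:Dmods on BG}, via the equivalence $\Dmod(\pt/M) \simeq H_*(M)\mod$ under which $\omega_{\pt/M}$ is the augmentation module), and that $f^!$ is conservative. Granting both, the conclusion is soft. Since $f_!$ is the left adjoint of $f^!$, the conservativity criterion recalled in Section~\ref{sec:prelim} (a consequence of Barr--Beck--Lurie) shows that the essential image of $f_!$ generates $\Dmod(\pt/L)$ under colimits. Moreover $f_!$ preserves colimits, so $f_!^{-1}(\mathcal{E})$ is closed under colimits, where $\mathcal{E} \subseteq \Dmod(\pt/L)$ denotes the subcategory generated under colimits by the single object $f_!(\omega_{\pt/M})$; as $f_!^{-1}(\mathcal{E})$ contains $\omega_{\pt/M}$ and $\omega_{\pt/M}$ generates the source, we get $f_!^{-1}(\mathcal{E}) = \Dmod(\pt/M)$, i.e. the whole essential image of $f_!$ already lies in $\mathcal{E}$. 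Combining the two statements yields $\mathcal{E} = \Dmod(\pt/L)$, which is the claim.

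So the only thing to verify is conservativity of $f^! \colon \Dmod(\pt/L) \to \Dmod(\pt/M)$. I would get this from the factorization of the quotient map $q_L \colon \pt \to \pt/L$ as $\pt \xto{q_M} \pt/M \xto{f} \pt/L$, which gives $q_L^! \simeq q_M^! \circ f^!$. The functor $q_L^!$ is conservative --- this is exactly what makes the adjunction $(q_{L,!}, q_L^!)$ monadic in Section~\ref{sssec:Dmods on BG} --- so if $f^!(\F) \simeq 0$ then $q_L^!(\F) \simeq q_M^!(f^!(\F)) \simeq 0$, hence $\F \simeq 0$.

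I do not expect a genuine obstacle: the lemma is a formal consequence of the preliminaries together with this one-line factorization argument. The single point requiring care is foundational bookkeeping --- that $f_!$ is defined on all of $\Dmod(\pt/M)$ and is left adjoint to $f^!$ --- which holds because $f \colon \pt/M \to \pt/L$ is a (finite-type, safe) morphism of smooth algebraic stacks, for which the $!$-pushforward exists as the left adjoint of $f^!$, in the conventions recalled in Section~\ref{ssec:group actions on cats}. If one prefers not to invoke $f_!$ until the end, the generation step can be run directly through monadicity instead: conservativity of $f^!$ exhibits every object of $\Dmod(\pt/L)$ as a geometric realization of objects of the form $f_!(X)$ with $X \in \Dmod(\pt/M)$, and each such $f_!(X)$ lies in $\mathcal{E}$ since $X \in \langle \omega_{\pt/M}\rangle = \Dmod(\pt/M)$ and $f_!$ is continuous.
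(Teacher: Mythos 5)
Your proof is correct and follows the same route as the paper: conservativity of $f^!$ via the factorization $q_L = f \circ q_M$, then the Barr–Beck–Lurie criterion to see that the essential image of $f_!$ generates, and finally the generation of $\Dmod(\pt/M)$ by $\omega_{\pt/M}$. You simply spell out the colimit-closure bookkeeping that the paper leaves implicit in its final sentence.
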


\begin{proof}
Since the quotient $q: \pt \to \pt/L$ factors through $f$, it follows that $f^!$ is conservative. Hence, the essential image of $f_!$ generates the target under colimits. To conclude, it remains to use the fact that $\omega_{\pt/M}$ generates $\Dmod(\pt/M)$.
\end{proof}

\ssec{Strong group actions on DG categories} \label{ssec:group actions on cats}

Following \cite{thesis}, let us recall some notions from the theory of strong loop group actions on DG categories.

\sssec{}

Let $H$ be a group (ind)scheme. We say that $H$ acts strongly\footnote{we usually omit the word \virg{strongly} and simply say that $H$ acts on $\C$} on a DG category $\C$ if the convolution monoidal DG-category $\Dmod(H)$ acts on $\C$. 
We often use this notion when $H$ is of infinite type (e.g. $H = \GO$ or $H = \GK$). For that, we need the theory of D-modules on indschemes of pro-finite type; this complication will not bother us in practice, except for Section \ref{ssec:Hecke action of B} where we will (marginally) need $\Dmod^!$.

\sssec{}

We denote by $\C^H$ (or ${}^H \C$, if we want to emphasize that $H$ acts on $\C$ from the left) the invariant DG category. If $H' \hto H$ is a subgroup, let $\oblv^{H' \to H}: \C^H \to \C^{H'}$ denote the conservative forgetful functor.
We denote by $\Av_!^{H' \to H}$ its (possibly only partially defined) left adjoint, and by $\Av_*^{H' \to H}$ its (possibly discontinuous) right adjoint.
When $H$ is a scheme (even of pro-finite type), $\Av_*^{H' \to H}$ is continuous.

\begin{rem} \label{rem:Dmods on BG}

If a pro-unipotent group $H$ acts trivially on $\C$, then $\oblv^{1 \to H}: \C^H \simeq \C$. This is not the case for groups that are not pro-unipotent. For instance, when $L$ is a connected affine algebraic group, we obtain that $\Vect^{L(\OO)} \simeq \Vect^L \simeq \Dmod(\pt/L) \simeq H_*(L) \mod$.
\end{rem}

\sssec{}

If $H$ acts on a space $\Y$ (say from the right), then $H$ acts on the DG category of $\Dmod(\Y)$ (again from the right) and $\Dmod(\Y)^H \simeq \Dmod(\Y/H)$. Under this isomorphism and the similar one for $H'$, the forgetful functor $\oblv^{H' \to H}$ goes over to the D-module pullback along $\Y/{H'} \to \Y/H$.

\ssec{Shearing}

The shearing (alias: shift of grading) operation was introduced in \cite[Appendix A]{AG1}. It will be needed in Sections \ref{ssec:shifts of grading}-\ref{ssec:main serre computation}.

\sssec{}

Let $\Gm \rrep^{weak} := (\QCoh(\Gm), \star) \mmod$ denote the $\infty$-category of DG categories with a weak action of $\Gm$ and $\Gm$-equivariant functors.
The shearing operation is an explicit automorphism of the $\infty$-category $\Gm \rrep^{weak}$, which we denote by $\C \squigto \C^\Rightarrow$.
We use the same notation for morphisms in $\Gm \rrep^{weak}$: namely, whenever $\phi: \C \to \D$ is a $\Gm$-equivariant functor, we denote by $\phi^\Rightarrow: \C^\Rightarrow \to \D^\Rightarrow$ the associated one.
We refer to \cite[Section 2.1]{strong-gluing}, which is a section dedicated to this topic.

\sssec{}

If $A$ is a graded DG algebra, then $A \mod$ acquires a natural $\Gm$-action and $(A \mod)^\Rightarrow \simeq A^\Rightarrow \mod$. 
Here $A^\Rightarrow$ is defined as follows. Let us view objects of $\Rep(\Gm)$ as graded complexes $(M_{i,k}, d)$ of vector spaces, where $i$ refers to the cohomological grading, $j$ (called weight) refers to the grading given by the $\Gm$-action, and $d$ is a horizontal differential. Then $A^\Rightarrow$ is the algebra whose underlying graded complex is $(A_{i+2k, k}, d)$.
Here is the main example to keep in mind: for $V$ a finite dimensional vector space, regard $A = \Sym(V)$ a graded DG algebra with $V$ in weight $1$; then $A^\Rightarrow \simeq \Sym(V[-2])$.

The following fact will be used in the sequel.

\begin{lem} \label{lem:conservative Rightarrows}
Let $\C, \D \in \Gm \rrep^{weak}$ and let $\phi: \C \to \D$ be a weakly $\Gm$-equivariant functor. Then $\phi$ is conservative if and only if so is $\phi^\Rightarrow$.
\end{lem}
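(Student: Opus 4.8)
The plan is to reduce the statement to the case of module categories and then to use that shearing is an (explicitly invertible) automorphism of the $\infty$-category $\Gm \rrep^{weak}$. First I would recall that, since $\C \squigto \C^\Rightarrow$ is an automorphism of $\Gm \rrep^{weak}$, it is in particular an equivalence of $\infty$-categories; hence it carries isomorphisms to isomorphisms, the zero object to the zero object, and more generally any fact expressible purely in terms of the $\1$-categorical structure of $\Gm \rrep^{weak}$ is preserved. The subtlety is that conservativity of a functor $\phi \colon \C \to \D$ is \emph{not} a property internal to $\Gm \rrep^{weak}$ as an abstract $\infty$-category: it refers to objects of the underlying DG category $\C$, including objects that are not $\Gm$-equivariantly distinguished. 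So the reduction has to be done by hand, using the explicit description of shearing on module categories recalled just above the lemma.

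The key steps, in order, are as follows. Observe that $\phi^\Rightarrow$ is conservative iff its restriction to any set of generators detects the zero object; since shearing commutes with colimits (it is an automorphism of $\Gm \rrep^{weak}$, and the forgetful functor to $\DGCat$ preserves and detects colimits), it suffices to test on a fixed family of (not necessarily compact) generators. Now present $\C$ via the weak $\Gm$-action: any $\C \in \Gm \rrep^{weak}$ is a module category over $(\QCoh(\Gm),\star)$, and one checks that the forgetful functor $\C \to \C_{\triv}$ (forgetting the action) together with its adjoint exhibits $\C$ as modules over the graded algebra $A := \mathrm{RHom}_{\QCoh(\Gm)}(\kk,\kk) \simeq \kk[\epsilon]$ with $\epsilon$ in cohomological degree $-1$ and weight $1$ — more precisely, every object of $\C$ is a colimit of objects induced from $\C_{\triv}$, and the $\Rightarrow$-operation on $\C$ agrees, through this presentation, with extension of scalars along $A \squigto A^\Rightarrow$ applied weight-component-wise. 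The crucial point is then the elementary observation: for a graded complex $M = (M_{i,k})$, one has $M = 0$ if and only if its shear $M^\Rightarrow = (M_{i+2k,k})$ is zero, since shearing is a bijection on the underlying bigraded vector spaces (it merely relabels the cohomological degree as a function of the weight). Therefore, for $d \in \D$, writing $d$ and $\phi$ through the module presentations, $\phi^\Rightarrow(d^\Rightarrow) \simeq (\phi(d))^\Rightarrow$ naturally, and $(\phi(d))^\Rightarrow \simeq 0 \iff \phi(d) \simeq 0$; since every object of $\D$ is of the form $d^\Rightarrow$ for a unique $d$ (shearing being an automorphism), conservativity of $\phi$ and of $\phi^\Rightarrow$ are equivalent.

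The main obstacle I anticipate is making precise the compatibility $\phi^\Rightarrow(d^\Rightarrow) \simeq (\phi(d))^\Rightarrow$ and, underlying it, the claim that the shearing automorphism of $\Gm \rrep^{weak}$, when restricted to the underlying DG categories, acts on objects by the weight-component-wise relabeling of cohomological degrees described above. This is essentially the content of \cite[Appendix A]{AG1} and \cite[Section 2.1]{strong-gluing}, so the honest work is to quote the right statements from there rather than to prove anything new: once one knows that $(-)^\Rightarrow$ on a module category $A\mmod$ is computed as $A^\Rightarrow \mmod$ with the explicit formula $A^\Rightarrow = (A_{i+2k,k})$, and that this is natural in $A$-linear functors, the lemma is immediate from the triviality that relabeling degrees is a bijection. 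I would therefore keep the write-up short: reduce to module categories over the relevant graded algebra, invoke the explicit shearing formula, and conclude with the one-line observation that $M \mapsto M^\Rightarrow$ is a bijection on underlying bigraded vector spaces, hence detects (and reflects) vanishing.
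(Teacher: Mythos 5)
Your approach is genuinely different from the paper's, and I think it contains a gap that is hard to repair without essentially recovering the paper's argument. The core confusion is in the sentence ``every object of $\D$ is of the form $d^\Rightarrow$ for a unique $d$ (shearing being an automorphism).'' Shearing is an automorphism of the $\infty$-category $\Gm\rrep^{weak}$, i.e.\ it permutes DG categories, and it does \emph{not} induce a functor $\D \to \D^\Rightarrow$ on objects. Indeed, $\D$ and $\D^\Rightarrow$ are generally inequivalent as DG categories (already $\kk[x]\mod$ vs.\ $\kk[x[-2]]\mod$), so there is no intrinsic ``$d \mapsto d^\Rightarrow$'' to speak of. The only place where shearing acts on objects is on the invariant category: there is a canonical equivalence $\C^\Gm \simeq (\C^\Rightarrow)^\Gm$, under which the $\Rep(\Gm)$-module structures differ by the shear. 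So your explicit bigraded relabeling argument only applies to objects that are (weakly) $\Gm$-equivariant, not to arbitrary objects of $\C$ or $\D$ — yet conservativity of $\phi$ is a statement about all objects of $\D$.

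Closing that gap is precisely the substantive content. One needs to know that conservativity of $\phi$ can be tested on the invariant categories, i.e.\ that $\phi$ is conservative iff $\phi^\Gm$ is; the nontrivial direction of this uses that $\C$ is reconstructed from $\C^\Gm$ as $\C^\Gm \otimes_{\Rep(\Gm)} \Vect$, which is Gaitsgory's $1$-affineness for $\pt/\Gm$. Once that reduction is in place the lemma is immediate because $(\phi^\Rightarrow)^\Gm \simeq \phi^\Gm$ \emph{by construction} of shearing — this is the paper's route. Your attempted shortcut through a monadic presentation $\C \simeq A\mod$ doesn't quite land either: the adjunction you cite (``the forgetful functor $\C \to \C_{\triv}$ together with its adjoint'') isn't an interesting adjunction (forgetting the action is the identity on underlying objects), and in any case not every object of $\Gm\rrep^{weak}$ is of the form $A\mod$ for a graded algebra $A$; establishing such a presentation would itself require the $1$-affineness/Barr--Beck input you were hoping to avoid. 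Your pointwise observation that relabeling bidegrees is a bijection on bigraded complexes is of course correct, but by itself it only proves the lemma after the reduction to invariants, not instead of it.
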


\begin{proof}
The key is to show that $\phi$ is conservative if and only if so is the induced functor 
$\phi^\Gm: \C^\Gm \to \D^\Gm$ of (weak) invariant DG categories. One direction is obvious: forgetting $\Gm$-invariance $\C^\Gm \to \C$ is conservative, so the conservativity of $\phi$ implies that of $\phi^\Gm$.
The opposite implication follows from Gaitsgory's $1$-affineness theorem, \cite{shvcat}, which states that $\C$ can be recovered as
$\C \simeq \C^\Gm \usotimes{\Rep(\Gm)} \Vect$.
Now, to conclude the proof of the lemma, it suffices to observe that, by construction, there is a natural isomorphism\footnote{We emphasize that this isomorphism does not preserve that $\Rep(\Gm)$-module structure.}
 $(\phi^\Rightarrow)^\Gm \simeq \phi^\Gm$ of DG functors. 
\end{proof}

\sec{The tempered unit of the spherical category} \label{sec:GL for P1 and outline Thm C}

In this section, we start the proof of Theorem \ref{mainthm:B as unit of Sph-temp}.
Our strategy is explained in Section \ref{ssec:outline of proof of theorem C}: it relies on a certain explicit equivalence $\Sph_G \xto{\simeq} \Dmod(\Bun_G(\PP^1))$, reviewed in Section \ref{ssec:P1}, and on the computation of two Serre functors. The latter computations will be performed partly in Section \ref{ssec: Serre functor on automorphic side} and partly in Section \ref{sec:Serre}.

\ssec{Geometric Langlands for $\bbP^1$} \label{ssec:P1}

Recall that $\Bun_G(X)$ denotes the stack of $G$-bundles on the curve $X$. In this section, we focus solely on the case $X=\PP^1$. Let us fix a point $x \in X$ and choose a local coordinate $t$ at $x$. The data of $X$, $x$ and $t$ are regarded as fixed until the end of Section \ref{ssec:outline of proof of theorem C}.
Our goal is to construct an equivalence
$$
\gamma:
\Sph_G
\xto{\;\; \simeq \;\;}
\Dmod(\Bun_G(\PP^1)).
$$
The present section does not contain any new mathematics: the functor $\gamma$ is closely related to the same named functor introduced by V. Lafforgue in \cite{VLaff} and we limit ourselves to fill in some details. In passing, we will review, again following \cite{VLaff}, the role of $\gamma$ in the proof of the geometric Langlands equivalence for $X= \bbP^1$.

\sssec{}

We denote by $\GK = G\ppart$ and $\GO = G[[t]]$ the loop group and the arc group of $G$ at the point $x$.
For any $\lambda \in \Lambda$, we let $t^\lambda$ be the corresponding $\kk$-point of $T(K) \subseteq G(K)$.
The Birkhoff decomposition yields a stratification
$$
\GK
\simeq
\bigsqcup_{\lambda \in \Lambda^{\dom}} 
\GO t^\lambda \GR,
$$ 
with the stratum $\GO \GR \subseteq \GK$ being open.

\sssec{}

Let $\Gr_G \simeq \GO \backsl \GK $ be the affine Grassmannian at $x$, equipped with the obvious right action of $\GK$. By \cite{BL95}, we have:
\begin{equation} \label{eqn:BunG on P1}
\Bun_G(\PP^1)
 \simeq 
 \Gr_G/\GR 
 \simeq
\GO \backsl \GK / \GR.
\end{equation}
In particular, the open embedding $\GO \GR \subseteq \GK$ descends to an open embedding
$$
j: BG \simeq \GO \backsl \GO \GR / \GR 
\hto
\GO \backsl \GK / \GR  \simeq \Bun_G(\PP^1),
$$
which is the inclusion of the locus of trivializable $G$-bundles.
Consider the object $j_!(\omega_{BG}) \in \Dmod(\Bun_G(\PP^1))$.

\sssec{}

The above expression of $\Bun_G(\bbP^1)$ as a quotient equips $ \Dmod(\Bun_G(\bbP^1))$ with a left action\footnote{This is nothing but the action of $\Sph_G$ by Hecke functors at $x$, see Section \ref{sec:characterize temp via ThmC, prove ThmA} and in particular Section \ref{sssec:Hecke action}}
 of $\Sph_G$. 
We denote this action by $\star$ and set
$$
\gamma := - \star j_!(\omega_{BG}):
\Sph_G
\longto
\Dmod(\Bun_G(\bbP^1)).
$$
Our current goal is to prove that $\gamma$ is an equivalence and to provide a formula for its inverse. To get there, we need some preliminary results.

\sssec{}

We will use a few notions from the theory of strong group actions on DG categories, see \cite{thesis} and Section \ref{ssec:group actions on cats}.
Since $\GK$ acts on $\Gr_G$ from the right, it also acts on the DG category $\Dmod(\Gr_G)$, again from the right. 
Our two DG categories of interest are invariant categories for the action of $\GO$ and $\GR$ on $\Dmod(\Gr_G)$:
$$
\Dmod(\Gr_G)^\GO
\simeq
\Sph_G,
\hspace{.4cm}
\Dmod(\Gr_G)^{\GR} 
\simeq 
\Dmod(\Bun_G(\bbP^1)).
$$
By forgetting invariance along $G \hto \GO$ and $G \hto \GR$, we obtain the two functors:
\begin{equation} \label{eqn:oblv functors for Gr}
\Dmod(\Gr_G)^\GO
\xto{\oblv^{G \to \GO}}
\Dmod(\Gr_G)^G
\xleftarrow {\, \oblv^{G \to \GR}  \,}
\Dmod(\Gr_G)^{\GR} .
\end{equation}

\sssec{}

Consider the partially defined left adjoint to $\oblv^{G \to \GR} $, to be denoted by $\Av_!^{G \to \GR}$. Concretely, under the general equivalence $\Dmod(\Y)^H \simeq \Dmod(\Y/H)$, the functor $\Av_!^{G \to \GR}$ corresponds to the $!$-pushforward along the map $\Gr_G/G \tto \Gr_G /\GR$. Such pushforward is well-defined on holonomic D-modules\footnote{more precisely, by \virg{holonomic D-module on $\Y$} we mean a an object of $\Dmod(\Y)$ that is smooth-locally represented as a complex of D-modules with holonomic cohomology sheaves}, but potentially not on all D-modules. Nevertheless, we have:

\begin{lem} \label{lem:Grassmannian}
The composition
$$
\Sph_G =
\Dmod(\Gr_G)^\GO
\xto{\oblv^{G \to \GO}}
\Dmod(\Gr_G)^G
\xto{\, \Av_!^{G \to \GR}  \,}
\Dmod(\Gr_G)^{\GR} 
\simeq 
\Dmod(\Bun_G(\bbP^1))
$$
is well-defined.
 \end{lem}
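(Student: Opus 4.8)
The content of the lemma is that the partially defined functor $\Av_!^{G\to\GR}$ is defined on every object in the essential image of $\oblv^{G\to\GO}$; granting this, the composite is automatically a continuous functor $\Sph_G\to\Dmod(\Bun_G(\PP^1))$. The plan is to reduce to compact generators: the domain of definition of a partially defined left adjoint is stable under colimits, and $\oblv^{G\to\GO}$ is continuous, so it suffices to exhibit a set of compact generators $\{\F_\alpha\}$ of $\Sph_G$ for which $\Av_!^{G\to\GR}\big(\oblv^{G\to\GO}(\F_\alpha)\big)$ is defined; the composite is then obtained on all of $\Sph_G$ by passing to colimits.

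For the generators, I would use the standard structure of $\Sph_G$: it is compactly generated, and one may choose its compact generators among the regular holonomic objects --- for instance the $D$-modules obtained by $!$- (or $*$-)extension to $\Gr_G$ of the objects induced from a point along the Schubert strata $\Gr_G^{\lambda}$, $\lambda\in\Lambda^{\dom}$. (Here one must be a little careful: the $\IC$-sheaves of Schubert varieties and the monoidal unit $\bbone_{\Sph_G}$ are holonomic but need not be compact, while the induced objects just mentioned are both.) Each such $\F_\alpha$ is supported on a finite-dimensional closed Schubert subvariety of $\Gr_G$; and since $\oblv^{G\to\GO}$ is the $!$-pullback along the pro-smooth morphism $\Gr_G/G\to\Gr_G/\GO$, it preserves regular holonomicity. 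Hence $\oblv^{G\to\GO}(\F_\alpha)$ is a holonomic object of $\Dmod(\Gr_G/G)$ with finite-dimensional support.

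To conclude, recall that $\Av_!^{G\to\GR}$ is, under the equivalence $\Dmod(\Y)^H\simeq\Dmod(\Y/H)$, the $!$-pushforward along $\Gr_G/G\tto\Gr_G/\GR\simeq\Bun_G(\PP^1)$, and that this pushforward is defined on holonomic $D$-modules; therefore $\Av_!^{G\to\GR}\big(\oblv^{G\to\GO}(\F_\alpha)\big)$ exists. (If one prefers to stay within finite-type geometry: the quasi-compact support of $\oblv^{G\to\GO}(\F_\alpha)$ maps, under the continuous projection, into some Birkhoff truncation $\Bun_G(\PP^1)^{\leq n}$, and the $!$-pushforward of a holonomic object along the resulting map of finite-type algebraic stacks is defined.) Together with the reduction of the first paragraph, this proves the lemma.

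The step I expect to be the main obstacle is the choice of generators in the second paragraph: one needs that $\Sph_G$ admits a set of compact generators consisting of holonomic $D$-modules --- so the naive candidates $\IC_\lambda$ and $\bbone_{\Sph_G}$ will not do --- together with the check that $\oblv^{G\to\GO}$ keeps them inside the smooth-locally holonomic subcategory. The remaining ingredients (stability of domains of partially defined left adjoints under colimits, the group-action formalism of \cite{thesis}, and $D$-module $!$-pushforward of holonomic objects) are standard.
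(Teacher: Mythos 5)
Your proof is correct and takes essentially the same approach as the paper: reduce to showing that $\Sph_G$ is generated under colimits by holonomic objects, then use the fact that $\Av_!^{G\to\GR}$ is defined on holonomic D-modules. The paper organizes the devissage by first passing to a presentation $\Gr_G \simeq \colim_n Z_n$ where each $Z_n$ is a finite-dimensional scheme on which $\GO$ acts with finitely many orbits through a finite-dimensional quotient $H_n$, and then argues by devissage along the $H_n$-orbit stratification of $Z_n$; you do the devissage directly along the Schubert stratification of $\Gr_G$. These are the same idea, and your version is perhaps slightly more explicit about the specific generators. One small remark: you do not actually need \emph{compact} holonomic generators --- the domain of definition of a partially defined left adjoint is closed under all colimits, so generation under colimits by holonomic objects (without any compactness) already suffices, and this is all the paper claims. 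Your stronger assertion that compact holonomic generators exist (e.g. $!$-extensions of the compact generator $q_!(\kk)$ of each $\Dmod(\Gr_G^\lambda)^{\GO}\simeq\Dmod(\pt/\Stab_\lambda)$) is also correct, just not necessary; and your caveat that $\IC_\lambda$ and $\bbone_{\Sph_G}$ would \emph{not} be suitable compact generators is well taken (the paper itself notes the non-compactness of $\bbone_{\Sph_G}$ in the proof of the theorem following this lemma).
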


 \begin{proof}
Clearly, the functor in question is well-defined on any holonomic object of $\Sph_G$. So, it suffices to prove that $\Sph_G = \Dmod(\Gr_G)^\GO$ is generated under colimits by holonomic D-modules.
Recall that the $\GO$-orbits on $\Gr_G$ are in bijection with $\Lambda^{\dom}$ as follows: $\Gr_G^\lambda$ is the $\GO$-orbit containing the point $[t^\lambda] := \GO t^\lambda \in \Gr_G(\kk)$. 
Using this, we can concoct an indscheme presentation $\Gr_G \simeq \colim_{n \geq 0} Z_n$, where each $Z_n$ is a finite dimensional scheme such that:
\begin{itemize}
\item $\GO$ acts on $Z_n$ with finitely many orbits and via a finite-dimensional quotient group $H_n$;
\item the kernel of the quotient $\GO \tto H_n$ is pro-unipotent.
\end{itemize}
Since $\Dmod(\Gr_G)^\GO \simeq \colim_{n \geq 0} \Dmod(Z_n)^\GO$, it remains to prove that each $\Dmod(Z_n)^\GO \simeq \Dmod(Z_n)^{H_n}$ is generated under colimits by holonomic D-modules. Since $H_n$ acts on $Z_n$ with finitely many orbits, the assertion easily follows by devissage along the $H_n$-orbit stratification.
\end{proof}

\begin{lem}
The composition $\Av_!^{G \to \GR} \circ \oblv^{G \to \GO}: \Sph_G \to \Dmod(\Bun_G(\bbP^1))$, well-defined by the above lemma, is canonically isomorphic to $\gamma$.
\end{lem}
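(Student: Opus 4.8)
The plan is to compare the two descriptions of the functor $\gamma$. On one hand, $\gamma = - \star j_!(\omega_{BG})$, where $j : BG \hookrightarrow \Bun_G(\PP^1)$ is the open embedding of trivializable bundles, realized via the presentation $BG \simeq \GO \backslash \GO\GR / \GR$. On the other hand, we have the composition $\Av_!^{G \to \GR} \circ \oblv^{G \to \GO}$ acting on $\Sph_G = \Dmod(\Gr_G)^{\GO}$, via the correspondence \eqref{eqn:oblv functors for Gr}. The key point is that the unit object $\bbone_{\Sph_G} \in \Sph_G$ is, by its very description in Section \ref{sssec:unit of Sph concrete}, the $!$-extension of $\omega_{BG}$ (equivalently $\omega_{\pt/\GO}$) along the closed embedding $\pt/\GO \hookrightarrow \Gr_G/\GO$; and applying $\Av_!^{G \to \GR} \circ \oblv^{G \to \GO}$ to it yields precisely the $!$-pushforward of $\omega_{BG}$ along $BG = \GO\backslash\GO\GR/\GR \to \Gr_G/\GR = \Bun_G(\PP^1)$, which is exactly $j_!(\omega_{BG})$.

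**Key steps, in order.** First I would recall that the action $\star$ of $\Sph_G$ on $\Dmod(\Bun_G(\PP^1)) \simeq \Dmod(\Gr_G)^{\GR}$ is the convolution action coming from the presentation $\Bun_G(\PP^1) \simeq \GO \backslash \GK / \GR$, i.e. it is induced by the $(\GO,\GK)$-bimodule structure on $\Dmod(\Gr_G) = \Dmod(\GO\backslash\GK)$; in particular $- \star \bbone_{\Sph_G}$ is the identity functor on $\Dmod(\Gr_G)^{\GR}$ composed with the appropriate (de)invariance functors, so that $\gamma(\CF) = \CF \star j_!(\omega_{BG})$ can be rewritten as $j_!(\omega_{BG}) \star' \CF$ for the right convolution, or more usefully: $\gamma$ is the unique $\Sph_G$-linear continuous functor sending $\bbone_{\Sph_G} \mapsto j_!(\omega_{BG})$. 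Second, I would check that $\Av_!^{G\to\GR}\circ\oblv^{G\to\GO}$ is also continuous and $\Sph_G$-linear: continuity holds because $\Av_!^{G\to\GR}$ is defined on the holonomic generators and both functors in the composition commute with the residual left $\Sph_G = \Dmod(\Gr_G)^{\GO}$-action (the forgetful functor $\oblv^{G\to\GO}$ and the averaging $\Av_!^{G\to\GR}$ are along the \emph{right} group actions, hence commute with the left Hecke action). Third, I would evaluate $\Av_!^{G\to\GR}\circ\oblv^{G\to\GO}$ on $\bbone_{\Sph_G}$: writing $\bbone_{\Sph_G} = \iota_{*,\dR}(\omega_{\pt/\GO})$ for $\iota : \pt/\GO \hookrightarrow \Gr_G/\GO$, forgetting the $\GO/G$-invariance gives $\omega_{\pt/G}$ sitting in $\Dmod(\Gr_G/G)$ pushed forward from $BG = \pt/G \hookrightarrow \Gr_G/G$ along the locally closed (here: closed, as the unit point, then thickened — actually the point orbit) embedding, and then $\Av_!^{G\to\GR}$ pushes forward along $\Gr_G/G \to \Gr_G/\GR$; the composite map $\pt/G \to \Gr_G/G \to \Gr_G/\GR$ factors through $BG \simeq \GO\backslash\GO\GR/\GR \xrightarrow{j} \Bun_G(\PP^1)$ and the identification of the two maps out of $BG$ is exactly the Birkhoff open-stratum statement recalled above. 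Hence $\Av_!^{G\to\GR}(\oblv^{G\to\GO}(\bbone_{\Sph_G})) \simeq j_!(\omega_{BG})$. Finally, two continuous $\Sph_G$-linear functors out of $\Sph_G$ that agree on the unit object agree, since $\bbone_{\Sph_G}$ generates $\Sph_G$ as a module over itself; this gives the desired isomorphism $\Av_!^{G\to\GR}\circ\oblv^{G\to\GO} \simeq \gamma$.

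**Main obstacle.** The delicate part is not the set-theoretic bookkeeping of maps of stacks, but verifying the $\Sph_G$-linearity (equivalently, the compatibility of $\star$ with the left residual $\GO$-action used to define the invariant category $\Sph_G$) together with the fact that $\Av_!^{G\to\GR}$, though only partially defined in general, behaves well enough: one must make sure that $\oblv^{G\to\GO}(\bbone_{\Sph_G})$ and, more generally, the image of a holonomic generator of $\Sph_G$ lands in the domain of $\Av_!^{G\to\GR}$, which is guaranteed by \lemref{lem:Grassmannian}. The other point requiring care is the precise meaning of "$- \star j_!(\omega_{BG})$ is determined by its value on $\bbone_{\Sph_G}$": this uses that the convolution action of $\Sph_G$ on $\Dmod(\Bun_G(\PP^1))$ is unital and that $\gamma$ is by construction the composite of the action functor with $j_!(\omega_{BG})$, so $\gamma \simeq (- \star \bbone_{\Sph_G}) \star_{\text{right}} j_!(\omega_{BG})$ in a way that makes the identification on the unit propagate; once phrased correctly this is formal, but stating it cleanly is where the proof should spend its words.
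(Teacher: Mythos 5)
Your strategy matches the paper's: establish that both $\gamma$ and $\Av_!^{G\to\GR}\circ\oblv^{G\to\GO}$ are $\Sph_G$-linear, then conclude by comparing their values on the unit $\bbone_{\Sph_G}$ via a base-change calculation. You also correctly identify the $\Sph_G$-linearity of $\Av_!^{G\to\GR}$ as the delicate part. But your proposed justification for it — that the averaging ``is along the right group action, hence commutes with the left Hecke action'' — is not a proof, and this is exactly where the paper spends its real effort.

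The issue is that $\Av_!^{G\to\GR}$ is constructed as a (partially defined) \emph{left adjoint} to the $\Sph_G$-linear forgetful functor $\oblv^{G\to\GR}$. A left adjoint of a module functor is automatically only \emph{colax} linear: there are canonical arrows $\Av_!^{G\to\GR}(\S\star\F)\to\S\star\Av_!^{G\to\GR}(\F)$, but nothing forces them to be isomorphisms. The naive geometric picture (Hecke acts on the left, averaging is a pushforward on the right) would require base-change between a $!$-pushforward along the non-proper map $\Gr_G/G \to \Gr_G/\GR$ and the Hecke pull-push, which one cannot invoke for free. The paper closes this gap by a nontrivial detour through derived Satake: it exhibits a monoidal functor $\Rep(\Gch)\to\Sph_G$ that generates under colimits, observes that strictness of the colax structure only needs to be checked against the induced $\Rep(\Gch)$-linearity, and then invokes rigidity of $\Rep(\Gch)$ together with \cite[Vol.\,1, Ch.\,1, Lem.\,9.3.6]{Book} to upgrade colax to strict. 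Without an argument of this kind (or some substitute), your second step does not go through, so the proposal as written has a genuine gap precisely at the point you flagged as the main obstacle.
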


\begin{proof}

By its very construction, $\gamma$ is $\Sph_G$-linear for the natural $\Sph_G$-actions on both sides.
Let us first show that $\Av_!^{G \to \GR} \circ \oblv^{G \to \GO}$ is $\Sph_G$-linear, too. 
Tautologically, the left $\Sph_G$-action on $\Dmod(\Gr_G)$ is compatible with the $\GK$-action on the right, hence the forgetful functors appearing in \eqref{eqn:oblv functors for Gr} are both $\Sph_G$-linear.

It follows that $\Av_!^{G \to \GR}$, being the left adjoint of a $\Sph_G$-linear functor, is colax linear. This means that, for any $\S \in \Sph_G$ and any $\F \in \Dmod(\Gr_G)^G$, there are natural arrows\footnote{together with the usual higher coherences present anytime one is dealing with higher categories; we will not dwell on them here}
$$
\Av_!^{G \to \GR}(\S \star \F)
\longto
\S \star \Av_!^{G \to \GR}(\F)
$$
that are possibly not isomorphisms. 
We will now use derived Satake, and its relation with the usual (underived) geometric Satake, to show that this colax module structure is actually a genuine $\Sph_G$-module structure.
Referring to the discussion of Section \ref{sssec:monoidal str on Sph spectral}, observe that the natural functor
$$
\Rep(\Gch)
\simeq
\IndCoh(\pt/\Gch)
\xto{i_*^\ICoh}
\ICoh(\Omega \gch/\Gch)
\xto{\Psi_{\Nch \to \gch}}
\ICoh_\Nch(\Omega \gch/\Gch)
$$
is monoidal and generates the target under colimits (indeed, the right functor is essentially surjective, while the left one is left adjoint to a conservative functor).
Combined with derived Satake, we obtain a monoidal functor $ \Rep(\Gch) \to \Sph_G$ that generates the target under colimits. Thus, to prove the strictness of the colax $\Sph$-linear structure of $\Av_!^{G \to \GR}$, it suffices to prove that the induced colax $\Rep(\Gch)$-linear structure is strict. The latter is clear: $\Rep(\Gch)$ is a rigid monoidal DG category, so we can apply \cite[Vol. 1, Chapter 1, Lemma 9.3.6]{Book}.

We have established that $\gamma$ and $\Av_!^{G \to \GR} \circ \oblv_{G \to \GO}$ are both $\Sph_G$-linear. To see they are isomorphic, it suffices to show that they agree when evaluated on the unit $\bbone_{\Sph_G}$.
A straightforward base-change yields
$$
\Av_!^{G \to \GR} \circ \oblv^{G \to \GO} (\bbone_{\Sph_G})
\simeq
j_!(\omega_{BG})
$$
as desired.
 \end{proof}

\begin{cor}
The functor $\gamma$ sends compact objects to compact objects.
\end{cor}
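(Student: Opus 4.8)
The plan is to reduce the statement to the continuity of the right adjoint $\gamma^R$: a continuous functor between compactly generated DG categories preserves compact objects if and only if its right adjoint is continuous. So rather than checking compactness on objects directly, I would exhibit $\gamma^R$ as a composite of manifestly continuous functors, exploiting the presentation of $\gamma$ furnished by the previous lemma.

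First I would pass to right adjoints in the identification $\gamma \simeq \Av_!^{G \to \GR} \circ \oblv^{G \to \GO}$. Since $\Av_!^{G \to \GR}$ is (where defined) left adjoint to $\oblv^{G \to \GR}$, and $\oblv^{G \to \GO}$ is left adjoint to $\Av_*^{G \to \GO}$, the natural candidate is $\gamma^R \simeq \Av_*^{G \to \GO} \circ \oblv^{G \to \GR}$. I would confirm this directly: for $\S \in \Sph_G$ and $\F \in \Dmod(\Gr_G)^{\GR} \simeq \Dmod(\Bun_G(\PP^1))$, the adjunction $(\Av_!^{G\to\GR}, \oblv^{G\to\GR})$ — applicable to the object $\oblv^{G\to\GO}(\S)$ by \lemref{lem:Grassmannian} — followed by the adjunction $(\oblv^{G\to\GO}, \Av_*^{G\to\GO})$ gives
$$
\RHom\bigl(\gamma(\S),\, \F\bigr)
\;\simeq\;
\RHom_{\Dmod(\Gr_G)^G}\bigl(\oblv^{G\to\GO}(\S),\, \oblv^{G\to\GR}(\F)\bigr)
\;\simeq\;
\RHom_{\Sph_G}\bigl(\S,\, \Av_*^{G\to\GO}\oblv^{G\to\GR}(\F)\bigr).
$$
It then remains to note that $\oblv^{G\to\GR}$ is continuous (it is a pullback/forgetful functor) and that $\Av_*^{G\to\GO}$ is continuous because $\GO$ is a group scheme of pro-finite type, as recalled in \secref{ssec:group actions on cats}. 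Hence $\gamma^R$ is continuous and $\gamma$ preserves compactness.

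I do not expect a serious obstacle: the corollary is a formal consequence of the identification of $\gamma$ with $\Av_!^{G\to\GR} \circ \oblv^{G\to\GO}$. The only point to handle with care is that $\Av_!^{G\to\GR}$ is merely partially defined, so one must make sure the displayed adjunction is legitimate; this is exactly what \lemref{lem:Grassmannian} provides, since it shows $\Av_!^{G\to\GR}$ is defined on the entire essential image of $\oblv^{G\to\GO}$, which is all that is used. (Alternatively, one can argue pointwise and bypass adjoint bookkeeping: if $c \in \Sph_G$ is compact then $\oblv^{G\to\GO}(c)$ is compact, since its right adjoint $\Av_*^{G\to\GO}$ is continuous; and then $\Av_!^{G\to\GR}\oblv^{G\to\GO}(c)$ is compact because $\RHom\bigl(\Av_!^{G\to\GR}\oblv^{G\to\GO}(c), -\bigr) \simeq \RHom\bigl(\oblv^{G\to\GO}(c), \oblv^{G\to\GR}(-)\bigr)$ commutes with colimits.)
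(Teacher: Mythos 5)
Your proposal is correct and follows essentially the same route as the paper: exhibit the right adjoint $\gamma^R \simeq \Av_*^{G\to\GO} \circ \oblv^{G\to\GR}$ as a composite of continuous functors and conclude by the standard fact that a functor with continuous right adjoint preserves compacts. Your extra care about the partial definedness of $\Av_!^{G\to\GR}$ and the invocation of Lemma~\ref{lem:Grassmannian} is a welcome (if strictly not necessary for the argument as stated) clarification beyond what the paper writes explicitly.
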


\begin{proof}
Since $\GO$ is a group scheme, the forgetful functor $\oblv^{G \to \GO}$ admits a continuous right adjoint: this is the $*$-averaging functor, denoted by $\Av_*^{G \to \GO}$. Then the right adjoint to $\gamma$ can be expressed as the continuous functor
$$
\Av_*^{G \to \GO} \circ \oblv^{G \to \GR}.
$$
By abstract nonsense, any functor with a continuous right adjoint preserves compact objects.
\end{proof}

\begin{thm}
The functor $\gamma: \Sph_G \to \Dmod(\Bun_G(\PP^1))$ is an equivalence.
\end{thm}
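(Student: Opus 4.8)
The plan is to prove that $\gamma$ is an equivalence by combining full faithfulness with conservativity of its right adjoint. By the Corollary above $\gamma$ preserves compact objects, and by the proof of that Corollary its right adjoint is the continuous functor $\gamma^R \simeq \Av_*^{G \to \GO} \circ \oblv^{G \to \GR}$. Once $\gamma$ is fully faithful, the unit $\id_{\Sph_G} \to \gamma^R\gamma$ is an isomorphism; applying $\gamma^R$ to the counit $\gamma\gamma^R \to \id$ and using the triangle identities shows that $\gamma^R$ annihilates the cofiber of the counit, so conservativity of $\gamma^R$ then forces the counit to be an isomorphism as well. It thus remains to establish these two properties.

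\emph{Conservativity of $\gamma^R$.} Since $\oblv^{G \to \GR}$ is conservative, it suffices to prove that the essential image of $\gamma$ generates $\Dmod(\Bun_G(\PP^1))$ under colimits. As $\gamma$ is $\Sph_G$-linear with $\gamma(\bbone_{\Sph_G}) \simeq j_!(\omega_{BG})$, this is the statement that the Hecke translates $\S \star j_!(\omega_{BG})$, for $\S \in \Sph_G$, generate the target. By \eqref{eqn:BunG on P1} and the Birkhoff decomposition, $\Bun_G(\PP^1)$ is stratified by the locally closed substacks $\GO \backslash \GO t^\lambda \GR / \GR$, indexed by $\lambda \in \Lambda^{\dom}$, with open stratum $BG$ — the image of $j$. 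Devissage along this stratification then reduces the claim to the classical fact that convolving $j_!(\omega_{BG})$ with the Satake sheaf $\Sat(V^\lambda)$ yields an object whose $!$-restriction to the stratum indexed by $\lambda$ is nonzero.

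\emph{Full faithfulness of $\gamma$.} By Section \ref{sssec:monoidal str on Sph spectral} and derived Satake there is a monoidal functor $\Rep(\Gch) \to \Sph_G$ whose essential image generates $\Sph_G$ under colimits and which sends the trivial representation to $\bbone_{\Sph_G}$. Since $\gamma$ is $\Sph_G$-linear, the monad $\gamma^R\gamma$ is lax $\Sph_G$-linear, hence lax $\Rep(\Gch)$-linear, hence — $\Rep(\Gch)$ being rigid — genuinely $\Rep(\Gch)$-linear, by \cite[Vol. 1, Chapter 1, Lemma 9.3.6]{Book}; the same lemma makes the unit $\id_{\Sph_G} \to \gamma^R\gamma$ a morphism of $\Rep(\Gch)$-linear functors. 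As $\id_{\Sph_G}$ and $\gamma^R\gamma$ are continuous, it is therefore enough to check that the unit is an isomorphism on the generator $\bbone_{\Sph_G}$, i.e. to prove the identity
$$
\gamma^R\gamma(\bbone_{\Sph_G}) \;=\; \Av_*^{G \to \GO} \circ \oblv^{G \to \GR}\bigt{ j_!(\omega_{BG}) } \;\simeq\; \bbone_{\Sph_G}.
$$
Base change along the open embedding $j$ identifies $\oblv^{G \to \GR}(j_!\omega_{BG})$ with the $!$-extension of $\omega$ from the big cell $\Gr_G^\circ/G = \GO\backslash\GO\GR/G$ (the preimage of $BG$ in $\GO\backslash\GK/G$), and one is left to compute its $*$-average $\Av_*^{G \to \GO}$, i.e. its pushforward along the pro-unipotent quotient $\GO\backslash\GK/G \to \GO\backslash\GK/\GO$, and to recognize the outcome as the $\delta$-sheaf $\bbone_{\Sph_G}$ supported at the base-point stratum $\pt/\GO$ of $\Sph_G$.

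This last identification is the point I expect to be the main obstacle: one must show that the averaging $\Av_*^{G \to \GO}$ kills all contributions arising from the boundary of the big cell, which rests on the precise geometry of $\Gr_G^\circ \subset \Gr_G$. It is the only genuinely nontrivial ingredient, and it is essentially what is carried out — in somewhat different language — in \cite{VLaff}.
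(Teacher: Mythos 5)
Your overall structure — full faithfulness plus conservativity of the right adjoint, reducing the former to the computation of $\gamma^R\gamma(\bbone_{\Sph_G})$ and the latter to a Birkhoff devissage — follows the same route as the paper's proof. Your reduction of full faithfulness to checking the unit on $\bbone_{\Sph_G}$, via $\Rep(\Gch)$-linearity of the monad $\gamma^R\gamma$ and rigidity of $\Rep(\Gch)$, is a clean way to sidestep the fact that $\bbone_{\Sph_G}$ is not compact; the paper instead handles that subtlety by an explicit detour through a compact generator $\M$ of the image of $\Dmod(\pt/G)$ in $\Sph_G$. Both approaches ultimately defer to Lafforgue's contraction-principle computation at the decisive point, so this difference is essentially cosmetic.

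However, your conservativity argument has a genuine gap in the devissage step. The Birkhoff strata of $\Bun_G(\PP^1)$ are $\GO\backslash\GO t^\lambda \GR/\GR \simeq \pt/\Aut(E_\lambda)$, and $\Aut(E_\lambda) = \GO \cap \on{Ad}_{t^\lambda}(\GR)$ is not unipotent: $\Dmod(\pt/\Aut(E_\lambda)) \simeq H_*(\Aut(E_\lambda))\mod$ is a genuinely nontrivial module category, not $\Vect$. Consequently, knowing that the $!$-restriction of $\Sat(V^\lambda)\star j_!(\omega_{BG})$ to the $\lambda$-stratum is \emph{nonzero} does not suffice to run the devissage: what you need is that this restriction is a \emph{generator} of $\Dmod(\pt/\Aut(E_\lambda))$, a strictly stronger statement. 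The paper closes this gap by identifying the $!$-restriction (for the $!$-extension $(j_{\Gr_G^\lambda/\GO})_!(\omega_{\Gr_G^\lambda/\GO})$ rather than the IC sheaf, though the restriction to the open orbit is the same up to shift) as $(f_\lambda)_!(\omega_{\pt/P_\lambda})$ up to a cohomological shift, where $P_\lambda \subseteq G$ is the stabilizer of $[t^\lambda]$, and then invoking the general fact that for any morphism $M \to L$ of connected algebraic groups the $!$-pushforward of $\omega_{\pt/M}$ along $\pt/M \to \pt/L$ generates $\Dmod(\pt/L)$. Without some argument of this kind, "nonzero" does not give you generation and the devissage does not close.
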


\begin{proof}
In the first three steps, we show that $\gamma$ is fully faithful; in the remaining ones, we show that it is essentially surjective.

\sssec*{Step 1}

Denote by $\phi: \Rep(\Gch) \to \Sph_G$ the monoidal functor introduced in the proof above. 
As we know, $\phi$ generates the target under colimits. Hence, to show that $\gamma$ is fully faithful, it suffices to prove that
$$
\RHom_{\Sph_G}(\phi(V),\F)
\xto{\;\; \simeq \;\;}
\RHom_{\Dmod(\Bun_G(\bbP^1))}
(
\gamma(\phi(V)),
\gamma(\F)
)
$$
for all $\F \in \Sph_G$ and all $V \in \Rep(G)^\cpt$.
Note that $\phi(V)$ is dualizable (indeed, compact objects of $\Rep(G)$ are dualizable and $\phi$ is monoidal); thus, up to replacing $\F$ with $\phi(V^*) \star \F$, we may restrict to the case where $V$ is the trivial $G$-representation.
In other words, it suffices to prove that
\begin{equation} \label{eqn:ff-Lafforuge}
\RHom_{\Sph_G}(\bbone_{\Sph_G},\F)
\xto{\;\; \simeq \;\;}
\RHom_{\Dmod(\Bun_G(\bbP^1))}
(
\gamma(\bbone_{\Sph_G}),
\gamma(\F)
)
\end{equation}
for arbitrary $\F \in \Sph_G$.

\sssec*{Step 2}

We would like to make sure that it is enough to verify the above claim in the special case where $\F \in \Sph_G$ is compact. This is not immediate, as $\bbone_{\Sph_G}$ is not compact.%
\footnote{To see that $\bbone_{\Sph_G}$ is indeed non-compact, notice that it is the image of the non-compact object $\omega_{\pt/G} \in \Dmod(\pt/G)$ under the (compact preserving) fully faithful functor
$$
\iota:
\Dmod(\pt/G) \simeq \Dmod(\pt)^\GO \hto \Dmod(\Gr_G)^\GO.
$$ }
To get around this issue, recall from Section \ref{sssec:Dmods on BG} that $\Dmod(\pt/G)$ is generated by a single compact object $q_!(\kk)$. Setting $\M := \iota(q_!(\kk)) \in \Sph_G$, we see that $\M$ is compact and that $\bbone_{\Sph_G}$ can be written as a colimit of copies of $\M$.
Hence, it suffices to prove that
\begin{equation} \label{eqn:ff-Lafforuge-for-cpt}
\RHom_{\Sph_G}(\M,\F)
\xto{\;\; \simeq \;\;}
\RHom_{\Dmod(\Bun_G(\bbP^1))}
(
\gamma(\M),
\gamma(\F)
)
\end{equation}
for arbitrary $\F \in \Sph_G$.
Since $\gamma$ preserves compact objects \emph{and} $\M$ is compact, it is enough to prove \eqref{eqn:ff-Lafforuge-for-cpt} under the assumption that $\F$ is compact. 

Next, to return to $\bbone_{\Sph_G}$, we use the fact that $\omega_{\pt/G}$ is a (non-compact) generator of $\Dmod(\pt/G)$, so that $\M$ can be expressed as a colimit of copies of $\bbone_{\Sph_G}$. Consequently, it is enough to prove that 
\begin{equation} \label{eqn:ff-Lafforuge-for-cpt-finally}
\RHom_{\Sph_G}(\bbone_{\Sph_G},\F)
\xto{\;\; \simeq \;\;}
\RHom_{\Dmod(\Bun_G(\bbP^1))}
(
\gamma(\bbone_{\Sph_G}),
\gamma(\F)
)
\end{equation}
for \emph{compact} $\F \in \Sph_G$. 

\sssec*{Step 3}

The latter isomorphism for $\F$ compact is proven in \cite[Pages 7-9]{VLaff}, by means of the contraction principle. Actually, V. Lafforgue proves that isomorphism for all $\F$ \emph{locally compact}, see \cite[Section 12.2.3]{AG1} for the definition. It is easy to see that compact objects are in particular locally compact.
This concludes the proof that $\gamma$ is fully faithful.

\sssec*{Step 4}

Let us now proceed to the essential the surjectivity of $\gamma$. In view of the already established fully faithfulness, it suffices to show that $\gamma$ generates the target under colimits.

\sssec*{Step 5}

Let us use again the fact that, when $L$ is a connected algebraic group, $\Dmod(\pt/L)$ is generated under colimits by a single object. This observation, together with the Birkhoff decomposition, implies that $\Dmod(\Bun_G(\bbP^1))$ is generated under colimits by the collection
$$
\M_\lambda := (j_\lambda)_!(
\G_{\lambda}
),
\hspace{.3cm}
\lambda \in \Lambda^{\dom},
$$
where:
\begin{itemize}
\item
$E_\lambda$ is the $G$-bundle on $\bbP^1$ corresponding to $t^\lambda$;
\item
$\Aut(E_\lambda) := \GO \cap \Ad_{t^\lambda} (\GR)$ is its automorphism group;
\item
 $j_\lambda$ is the locally closed embedding
$$
\pt/\Aut(E_\lambda) \simeq
\GO \backsl \GO t^\lambda \GR /\GR
\hto
\GO \backsl \GK /\GR;
$$
\item
$\G_{\lambda}$ is a generator of $\Dmod(\pt/{\Aut(E_\lambda)})$.
\end{itemize}
We will prove, by induction on the height $|\lambda|$, that for each $\lambda \in \Lambda^{\dom}$ the cocompletion of the essential image of $\gamma$ contains one $\M_\lambda$ as above. The base case is obvious: $\M_0 \simeq j_!(\omega_{BG}) = \gamma(\bbone_{\Sph_G})$.

\sssec*{Step 6}

Now recall that each $\lambda \in \Lambda^{\dom}$ yields a quasi-compact open substack $\Bun_G^{(\leq \lambda)} \subseteq \Bun_G(\bbP^1)$: this is the stack parametrizing $G$-bundles of Harder-Narasimhan coweight $\leq \lambda$, see e.g. \cite[Section 0.2.4]{DG-cptgen}.
The Birkhoff stratification restricts to the following stratification of $\Bun_G^{(\leq \lambda)}$:
$$
 \bigsqcup_{ \{\mu \in \Lambda^{\dom} | \mu \leq \lambda \}} 
 \GO \backsl \GO t^\mu \GR / \GR
 \simeq
 \bigsqcup_{ \{\mu \in \Lambda^{\dom} | \mu \leq \lambda \}} \pt/\Aut(E_\mu).
$$
Consider the point $[t^\lambda] := \GO t^\lambda \in \Gr_G(\kk)$ and its $\GO$-orbit $\Gr_G^\lambda$. We denote by $j_{\Gr_G^\lambda}: \Gr_G^\lambda \hto \Gr_G$ the locally closed embedding and by $(j_{\Gr_G^\lambda/\GO})_!$ the induced functor $\Dmod(\Gr_G^\lambda)^\GO \hto \Sph_G$.

\sssec*{Step 7}

We first prove that 
$$
\gamma \Bigt{
(j_{\Gr_G^\lambda/\GO})_!
( 
\omega_{\Gr_G^\lambda/\GO} 
)
 }
$$
is an object $!$-extended from the open substack $\Bun_G^{(\leq \lambda)}$.
Using the expression $\Av_!^{G \to \GR} \oblv^{G \to \GO}$ for $\gamma$, we obtain that
$$
\gamma \bigt{ 
(j_{\Gr_G^\lambda/\GO})_!
( 
\omega_{\Gr_G^\lambda/\GO} 
)
 }
\simeq
(m_\lambda)_! \bigt{
\omega_{\Gr_G^\lambda/G}
},
$$
where $m_\lambda : \Gr_G^\lambda/G \simeq \Gr_G^\lambda \times^G \GR/\GR \to \Gr_G/\GR$ is the natural action map. We need to show that $m_\lambda$ factors through $\Bun_G^{(\leq\lambda)}$. This is evident in view of the following relation between $\GO$-orbits and $\GR$-orbits on $\Gr_G$, see \cite[Proposition 2.3.3]{Xinwen}:
$$
\Gr_G^\lambda
\subseteq
\bigsqcup_{ \{\mu \in \Lambda^{\dom} | \mu \leq \lambda \} }
[t^\mu] \cdot \GR.
$$

\sssec*{Step 8}

Next, let $i_{\lambda}: \pt/\Aut(E_\lambda) \simeq \Bun_G^{(\lambda)} \hto \Bun_G^{( \leq \lambda)} $ be the closed embedding of the top stratum. By devissage along the stratification of $\Bun_G^{( \leq \lambda)} $ and the induction hypothesis, it suffices to prove that 
$$
\G_\lambda
:=
(i_{\lambda})^{*,\dR}
\Bigt{
\gamma \bigt{ 
(j_{\Gr_G^\lambda/\GO})_!
( 
\omega_{\Gr_G^\lambda/\GO} 
)
 }
}
\simeq
(i_{\lambda})^{*,\dR}
((m_\lambda)_!(\omega_{\Gr_G^\lambda/G}))
$$
is a generator of $\Dmod(\pt/ \Aut(E_\lambda))$. Let $P_\lambda \subseteq G$ be the stabilizer of the $G$-action on $[t^\lambda]$. Using the isomorphism (cf. \cite[Formula 2.6]{MV} and \cite[Proposition 2.3.3]{Xinwen})
$$
\Gr_G^\lambda \cap [t^\lambda] \cdot \GR \simeq [t^\lambda] \cdot G \simeq P_\lambda \backsl G,
$$
we see that the square 
\begin{equation} 
\nonumber
\begin{tikzpicture}[scale=1.5]
\node (00) at (0,0) {$  ([t^\lambda] \cdot \GO)/G \simeq \Gr_G^\lambda/G$};
\node (10) at (3.5,0) {$ \Gr_G/\GR \simeq \Bun_G(\bbP^1)$};
\node (01) at (0,1) {$\pt/P_\lambda \simeq ([t^\lambda] \cdot G) /G$};
\node (11) at (3.5,1) {$([t^\lambda] \cdot \GR)/\GR \simeq \Bun_G^{(\lambda)}$};
\path[->,font=\scriptsize,>=angle 90]
(00.east) edge node[above] {$m_\lambda$} (10.west); 
\path[->,font=\scriptsize,>=angle 90]
(01.east) edge node[above] {$f_\lambda$} (11.west); 
\path[right hook ->,font=\scriptsize,>=angle 90]
(01.south) edge node[right] {} (00.north);
\path[right hook ->,font=\scriptsize,>=angle 90]
(11.south) edge node[right] {$ $} (10.north);
\end{tikzpicture}
\end{equation}
is cartesian. We deduce that $\G_\lambda \simeq (f_\lambda)_!(
\omega_{\pt/P_\lambda})$ up to a cohomological shift. Then the assertion follows from Lemma \ref{lem: generators Dmod on BG}.
\end{proof}

\begin{cor} \label{cor:explicit formula for gamma inverse}
The functor $\gamma^{-1}: \Dmod(\Bun_G(\PP^1))  \to \Sph_G$ is given by the formula 
$$
\Av_*^{G \to \GO} \circ \oblv^{G \to G(R) }.
$$
\end{cor}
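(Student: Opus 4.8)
The plan is essentially formal: the substantive work has already been done in the theorem establishing that $\gamma$ is an equivalence. First I would observe that, since $\gamma \colon \Sph_G \to \Dmod(\Bun_G(\PP^1))$ is an equivalence, its inverse is canonically identified with its right adjoint $\gamma^R$; hence it suffices to compute $\gamma^R$. Next I would invoke the formula $\gamma \simeq \Av_!^{G \to \GR} \circ \oblv^{G \to \GO}$ proven above, and take right adjoints, which reverses the order of composition: $\gamma^R \simeq (\oblv^{G \to \GO})^R \circ (\Av_!^{G \to \GR})^R$. Then $(\Av_!^{G \to \GR})^R \simeq \oblv^{G \to \GR}$ holds because $\Av_!^{G \to \GR}$ is, by definition, the (partially defined) left adjoint of $\oblv^{G \to \GR}$; and $(\oblv^{G \to \GO})^R \simeq \Av_*^{G \to \GO}$ by the definition of the $*$-averaging functor. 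Assembling these gives $\gamma^{-1} \simeq \gamma^R \simeq \Av_*^{G \to \GO} \circ \oblv^{G \to \GR}$, which is the claimed formula once one recalls $\GR = G(R)$.

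The one point that deserves a word of care is that the functor obtained this way is genuinely continuous, so that it really is the inverse equivalence in $\DGCat$ rather than merely a right adjoint defined on some non-cocomplete subcategory; this is exactly what was checked in the corollary preceding the theorem, where the continuity of $\Av_*^{G \to \GO}$ is guaranteed by the fact that $\GO$ is a group scheme (of pro-finite type) and $\oblv^{G \to \GR}$ is continuous as a forgetful functor. I do not expect any genuine obstacle here: with the equivalence $\gamma$ in hand, identifying its inverse with $\Av_*^{G \to \GO} \circ \oblv^{G \to \GR}$ is pure bookkeeping with adjoints, and the only subtlety (continuity of the $*$-averaging) has already been dispatched.
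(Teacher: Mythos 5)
Your proof is correct and takes essentially the same route as the paper: once $\gamma$ is known to be an equivalence with $\gamma \simeq \Av_!^{G \to \GR} \circ \oblv^{G \to \GO}$, the inverse is the right adjoint, computed by reversing the composition and replacing each functor by its right adjoint, with continuity of $\Av_*^{G \to \GO}$ already established in the preceding corollary.
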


\begin{proof}
We have proven that $\gamma$ is an equivalence, written explicitly as the functor $\Av_!^{G \to G(R)} \circ \oblv^{G \to \GO }$. Hence, $\gamma^{-1}$ equals the right adjoint functor, which is $\Av_*^{G \to \GO} \circ \oblv^{G \to G(R) }$ by definition.
\end{proof}

\begin{rem}
The equivalence $\gamma$ helps prove that the two Langlands DG categories are equivalent in the case $X=\bbP^1$. Indeed, it is well-known that
$$
\LSGch(\PP^1) \simeq (\pt \times_{\gch} \pt) /\Gch=: \Omega \gch / \Gch.
$$ 
For a proof, see \cite[beginning of Section 1]{VLaff}.
Under this isomorphism, the global nilpotent cone $\NchGlob$ goes over tautologically to the nilpotent cone of $\Gch$, so that
$$
\ICoh_{\NchGlob}(\LSGch(\PP^1)) 
\simeq
\ICoh_{\Nch}(\Omega \gch / \Gch).
$$
Combining this equivalence with derived Satake and $\gamma$, we obtain the chain
\begin{equation} \label{eqn:ICoh on locsys on P1}
\ICoh_{\NchGlob}(\LSGch(\PP^1)) 
\simeq
\ICoh_{\Nch}(\Omega \gch / \Gch)
\simeq
\Sph_G
\simeq
\Dmod(\Bun_G(\bbP^1)).
\end{equation}
In the sequel, we will only need the second and the third equivalences (that is, $\Sat_G$ and $\gamma$), not the first one.
\end{rem}


\ssec{Overview of the proof of Theorem \ref{mainthm:B as unit of Sph-temp}} \label{ssec:outline of proof of theorem C}

Let us explain our strategy of the proof of Theorem \ref{mainthm:B as unit of Sph-temp}.

\sssec{Step 1: the setup}

The construction of the previous section and derived Satake show that the two functors
\begin{equation} \label{eqn: 3 equiv cats}
\ICoh_\Nch(\Omega \gch/\Gch)
\xto{\Sat_G}
\Sph_G
\xto{\gamma} 
\Dmod(\Bun_G(\PP^1))
\end{equation}
are equivalences.

\sssec{Step 2: the tempered unit}

Let $i: \pt/\Gch \hto \Omega \gch/\Gch$ denote the obvious closed embedding and $\kk_0 \in \QCoh(\pt/\Gch) \simeq \Rep(\Gch)$ the trivial $G$-representation. In view of Section \ref{sssec:monoidal str on Sph spectral}, the unit $\bbone_{\Sph_G}$ corresponds to $\Psi_{\Nch \to \gch}(i_*^\ICoh(\kk_0))$ under $\Sat_G$.
It follows that $\B$ corresponds to $\Xi_{0 \to \Nch}(i_*(\kk_0))$ under $\Sat_G$. Indeed,
\begin{eqnarray}
\nonumber
\Sat_G^{-1} (\B)
& := &
\Sat_G^{-1} \circ
\Xi_{0 \to \Nch} \circ \Psi_{0 \to \Nch}(\bbone_{\Sph_G}) 
\\
\nonumber
& \simeq &
\Xi_{0 \to \Nch} \circ \Psi_{0 \to \Nch}(
\Sat_G^{-1} (  \bbone_{\Sph_G} )) 
\\
\nonumber
& \simeq &
\Xi_{0 \to \Nch} \circ \Psi_{0 \to \Nch} \circ
\Psi_{\Nch \to \gch}(i_*^\ICoh(\kk_0))
\\
\nonumber
& \simeq &
\Xi_{0 \to \Nch} \circ \Psi_{0 \to \gch} (i_*^\ICoh(\kk_0))
\\
\nonumber
& \simeq &
\Xi_{0 \to \Nch}  (i_*(\kk_0)),
\end{eqnarray}
where the last step uses the compatibility between pushforwards and $\Psi$, see \cite[Section 3.2.12]{finiteness}.

\sssec{Step 3: properness} \label{sssec:properness of DG cats}

Recall the notion of properness for DG categories, see Section \ref{sssec:properness-definition}. We claim that the DG categories appearing in \eqref{eqn: 3 equiv cats} are proper. 
We will give two different proofs of this fact. The quickest proof, explained in Section \ref{ssec: Serre functor on automorphic side}, shows that $\Dmod(\Bun_G(\PP^1))$ is proper.
Another proof, discussed in Section \ref{ssec:main serre computation}, shows the properness of $\ICoh_\Nch(\Omega \gch/\Gch)$.

\sssec{Step 4: the Serre functor on the spectral side}

Thanks to properness, it makes sense to consider the Serre functors of the three DG categories of \eqref{eqn: 3 equiv cats}.
On the $\Gch$-side, we will prove that 
$$
\Serre_{\ICoh_\Nch(\Omega \gch/\Gch)}
(\Psi_{\Nch \to \gch}(i_*^\ICoh(\kk_0)))
\simeq
\Xi_{0 \to \Nch}( i_*(\kk_0) )[- \dim \Gch].
$$
Serre functors are obviously intertwined by equivalences of DG categories: in our case, derived Satake implies that
\begin{equation} \label{eqn:Serre of unit of Sph}
\Serre_{\Sph_G}
(\bbone_{\Sph_G})
\simeq
\bbone_{\Sph_G}^\temp
[- \dim \Gch].
\end{equation}

\sssec{Step 5: the Serre functor on the automorphic side}

On the automorphic side, we will show that
\begin{equation} \label{eqn:Serre of j! omega BG}
\Serre_{\Dmod(\Bun_G(\PP^1))}( j_!(\omega_{BG}))
\simeq
 j_*(\omega_{BG})[- \dim G],
\end{equation}
where $j: BG \hto \Bun_G(\PP^1)$ is the open embedding induced by the trivial $G$-bundle. More generally, we wil check that a certain explicit functor $\sT_{\Bun_G(\PP^1)}$ (see below for the definition) equals the Serre functor on $\Dmod(\Bun_G(\PP^1))$.

\sssec{Step 6: the conclusion}

By construction, $\gamma(\bbone_{\Sph_G}) \simeq j_!(\omega_{BG})$.
Now, comparing \eqref{eqn:Serre of unit of Sph} with \eqref{eqn:Serre of j! omega BG} and using the fact that $\dim(G) = \dim(\Gch)$, we obtain that 
$$
\gamma(\B) \simeq j_*(\omega_{BG}).
$$
Equivalently,
$$
\B \simeq \gamma^{-1}(j_*(\omega_{BG})).
$$
Using the explicit formula for $\gamma^{-1}$ from Corollary \ref{cor:explicit formula for gamma inverse}, a straightforward diagram chase along
$$
BG = \GO \backsl \GO \GR / \GR
\hto
\GO \backsl \GK / \GR
\leftto
\GO \backsl \GK / G
\to 
\GO \backsl \GK / \GO
$$
yields the claimed isomorphism $\B  \simeq (f^!)^R(\omega_{G \backsl G(R) /G})$. 

\sssec{}

Only Steps 3, 4 and 5 need further details. Steps 3 and 5 are treated immediately below, while Step 4 is the content of Section \ref{sec:Serre}.

\ssec{The Serre functor on the automorphic side} \label{ssec: Serre functor on automorphic side}

In this short section, we prove the Serre functor formula that appeared in  \eqref{eqn:Serre of j! omega BG}. For this, we need to review a few facts on the \emph{pseudo-identity} functor $\Psid_{\Y,!}$, also called \emph{Drinfeld's miraculous duality}.
We return to the general case of $X$ of arbitrary genus and we let, as is standard, $\Bun_G := \Bun_G(X)$.  
Note, however, that the Lemma \ref{lem:T on P1 is Serre} and Corollary \ref{cor: Serre on basic object of P1} are specific to $X=\bbP^1$.

\sssec{}

Let $\Y$ be an algebraic stack $\Y$ such that $\Dmod(\Y)$ is dualizable. In view of \cite[Theorem 0.2.2]{finiteness}, this condition is often satisfied in practice. 
Note also that $\Dmod(\Bun_G)$ is compactly generated, and therefore dualizable, by \cite[Theorem 0.1.2]{DG-cptgen}. Moreover, $\Bun_G$ is exhausted by quasi-compact open substacks with compactly generated DG categories of D-modules.

The dualizability of $\Dmod(\Y)$ implies that functors from the dual DG category $\Dmod(\Y)^\vee$ to $\Dmod(\Y)$ correspond precisely to objects of $\Dmod(\Y \times \Y)$.
We will consider two particularly interesting functors, called \virg{pseudo-identities}:
$$
\Psid_{\Y,*}: \Dmod(\Y)^\vee \longto \Dmod(\Y),
\hspace{.4cm}
\Psid_{\Y,!}: \Dmod(\Y)^\vee \longto \Dmod(\Y).
$$
The first one is defined by the kernel $(\Delta_\Y)_*(\omega_\Y) \in \Dmod(\Y \times \Y)$, the second one by the kernel $(\Delta_\Y)_!(k_\Y) \in \Dmod(\Y \times \Y)$. 

\sssec{}

These functors were introduced and discussed in detail in \cite[Section 4]{DG-cptgen} and \cite[Sections 6-7]{ker-adj}. Here are some relevant facts that we need:
\begin{itemize}
\item
If $\Y$ is quasi-compact, then $\Psid_{\Y,*}$ is an equivalence;
\item
if $\Psid_{\Y,!}$  is an equivalence, $\Y$ is said to be \emph{miraculous};
\item
$\Bun_G$ is miraculous, see \cite{strange} for the proof;
\item
$\Bun_G$ can be exhausted by a sequence of miraculous quasi-compact opens, see \cite[Lemma 4.5.7]{DG-cptgen}.
\end{itemize}

\sssec{}

When $\Y$ is miraculous, we can consider the functor 
$$
\sT_{\Y} :=\Psid_{\Y,*} \circ \Psid_{\Y,!}^{-1}.
$$
If $\Y$ is quasi-compact and miraculous, then $\sT_{\Y}$ is an equivalence. On the other hand, $\TBunG$ is not at all an equivalence: the argument of \cite[Theorem 7.7.2]{ker-adj}, coupled with the correction given by Corollary \ref{cor:GrG infinitely connective} shows that 
$$
\TBunG(\omega_{\Bun_G}) \simeq 0.
$$

\sssec{} \label{sssec: j! compact generators}

By \cite[Section 4]{DG-cptgen}, every compact object of $\Dmod(\Bun_G)$ can be written as $(j_U)_!(\F_U)$ for some quasi-compact miraculous open substack $j_U : U \hto \Bun_G$ and some $\F_U \in \Dmod(U)^\cpt$.
The next observation explains how $\TBunG$ interacts with $T_U$.

\begin{lem}
In the above notation, we have a canonical isomorphism
\begin{equation} \label{eqn:TBunG on !-exts}
(j_U)_* \circ \sT_U
\simeq
\TBunG \circ (j_U)_!.
\end{equation}
\end{lem}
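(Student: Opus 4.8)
The plan is to reduce \eqref{eqn:TBunG on !-exts} to two compatibilities of the pseudo-identity functors with restriction along the open embedding $j_U$. Write $j := j_U$ and $\Y := \Bun_G$, and recall that $U$ is quasi-compact miraculous while $\Y$ is miraculous (by \cite{strange}), so that $\Psid_{U,!}$ and $\Psid_{\Y,!}$ are equivalences. The compatibilities I would use are the canonical isomorphisms of functors
$$
\Psid_{\Y,!} \circ (j^!)^\vee \simeq j_! \circ \Psid_{U,!}
\qquad\text{and}\qquad
\Psid_{\Y,*} \circ (j^!)^\vee \simeq j_* \circ \Psid_{U,*},
$$
where $(j^!)^\vee \colon \Dmod(U)^\vee \to \Dmod(\Y)^\vee$ is the dual of the continuous functor $j^! \colon \Dmod(\Y) \to \Dmod(U)$. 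Granting these, one concludes immediately: since $\Psid_{U,!}$ and $\Psid_{\Y,!}$ are equivalences, the first isomorphism rearranges to $\Psid_{\Y,!}^{-1} \circ j_! \simeq (j^!)^\vee \circ \Psid_{U,!}^{-1}$, whence
$$
\TBunG \circ j_!
= \Psid_{\Y,*} \circ \Psid_{\Y,!}^{-1} \circ j_!
\simeq \Psid_{\Y,*} \circ (j^!)^\vee \circ \Psid_{U,!}^{-1}
\simeq j_* \circ \Psid_{U,*} \circ \Psid_{U,!}^{-1}
= j_* \circ \sT_U,
$$
which is \eqref{eqn:TBunG on !-exts}.

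To establish the two compatibilities I would work at the level of kernels. The functors $\Psid_{\Y,!}$ and $\Psid_{\Y,*}$ are given by the kernels $(\Delta_\Y)_!(k_\Y)$ and $(\Delta_\Y)_*(\omega_\Y)$ in $\Dmod(\Y \times \Y) \simeq \Dmod(\Y) \otimes \Dmod(\Y)$, and precomposing either of them with $(j^!)^\vee$ amounts to applying $\id_{\Dmod(\Y)} \otimes j^!$, i.e.\ the $!$-pullback $(\id_\Y \times j)^!$, to the kernel. Now the square
$$
\begin{CD}
U @>{\Gamma_j}>> \Y \times U \\
@V{j}VV @VV{\id_\Y \times j}V \\
\Y @>{\Delta_\Y}>> \Y \times \Y
\end{CD}
$$
with $\Gamma_j := (j, \id_U)$ is cartesian (because $j$ is a monomorphism), and $\Delta_\Y$ is schematic, so base change gives
$$
(\id_\Y \times j)^! (\Delta_\Y)_!(k_\Y) \simeq (\Gamma_j)_!\bigl(j^! k_\Y\bigr) \simeq (\Gamma_j)_!(k_U),
\qquad
(\id_\Y \times j)^! (\Delta_\Y)_*(\omega_\Y) \simeq (\Gamma_j)_*(\omega_U),
$$
using $j^! k_\Y \simeq k_U$ (as $j$ is open) and $j^! \omega_\Y \simeq \omega_U$. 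Since $\Gamma_j = (j \times \id_U) \circ \Delta_U$, these right-hand sides are $(j \times \id_U)_!(\Delta_U)_!(k_U)$ and $(j \times \id_U)_*(\Delta_U)_*(\omega_U)$, i.e.\ the kernels of $j_! \circ \Psid_{U,!}$ and $j_* \circ \Psid_{U,*}$; translating back to functors yields the two displayed isomorphisms. (These are part of the standard functoriality of pseudo-identities developed in \cite[Section 4]{DG-cptgen} and \cite[Sections 6--7]{ker-adj}; the only thing to check is that they persist for the non-quasi-compact $\Y = \Bun_G$, where $\Psid_{\Y,*}$ is merely a functor and not an equivalence, which is harmless since the argument is local along the diagonal.)

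The main obstacle will be the bookkeeping in the second paragraph: making sure that the relevant dual functor is $(j^!)^\vee$ and not $(j_!)^\vee$ or $(j_*)^\vee$, and that the kernel $(\Delta_\Y)_!(k_\Y)$ together with the base-change isomorphisms are legitimate for $\Bun_G$, whose diagonal is schematic but far from proper. Both points are accommodated by the formalism of \cite{DG-cptgen} and \cite{ker-adj}, so once the two kernels are identified via base change the remainder is formal.
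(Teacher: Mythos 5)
Your argument matches the paper's: reduce the claim to the two compatibilities of $\Psid_{-,!}$ and $\Psid_{-,*}$ with restriction along $j_U$, namely $\Psid_{\Bun_G,!}\circ(j_U^!)^\vee\simeq (j_U)_!\circ\Psid_{U,!}$ and $\Psid_{\Bun_G,*}\circ(j_U^!)^\vee\simeq (j_U)_*\circ\Psid_{U,*}$, then rearrange using that $\Psid_{U,!}$ and $\Psid_{\Bun_G,!}$ are invertible. The only cosmetic difference is that the paper quotes \cite[Lemma 4.4.12]{DG-cptgen} for the first compatibility instead of rederiving it by the kernel base-change you give, and establishes the second one by precisely your kernel/base-change computation (up to the inessential choice of which factor of the product carries the kernel's source).
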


\begin{proof}
By \cite[Lemma 4.4.12]{DG-cptgen}, we know that
$$
(j^!)^\vee \circ (\Psid_{U,!})^{-1}
\simeq
(\Psid_{\Bun_G,!})^{-1} \circ j_!.
$$
Hence, it remains to show that 
$$
\Psid_{\Bun_G, *} \circ (j^!)^\vee 
\simeq
j_* \circ \Psid_{U, *},
$$
as functors $\Dmod(U)^\vee \to \Dmod(\Bun_G)$. Each of these two functors is given by a kernel in $\Dmod(U \times \Bun_G)$. Tautologically, these two kernels are repsectively
$$
(j \times \id_{\Bun_G})^! (\Delta_{\Bun_G})_*(\omega_{\Bun_G}),
\hspace{.4cm}
(\id_U \times j)_* (\Delta_U)_*(\omega_U).
$$
To conclude, observe that these two objects match by base-change.
\end{proof}

\sssec{}

Now assume again that $X=\PP^1$. In this case, $\sT_{\Bun_G(\bbP^1)}$ is quite special. Indeed:

\begin{lem}  \label{lem:T on P1 is Serre}
The DG category $\Dmod(\Bun_G(\PP^1))$ is proper and $\sT_{\Bun_G(\bbP^1)}$ is its Serre functor.
\end{lem}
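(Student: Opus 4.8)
The plan is to establish properness of $\Dmod(\Bun_G(\PP^1))$ and the identification $\sT_{\Bun_G(\PP^1)} \simeq \Serre_{\Dmod(\Bun_G(\PP^1))}$ simultaneously, by reducing everything to the quasi-compact miraculous opens that exhaust $\Bun_G(\PP^1)$. Recall from Section \ref{sssec: j! compact generators} that every compact object of $\Dmod(\Bun_G(\PP^1))$ has the form $(j_U)_!(\F_U)$ with $j_U: U \hto \Bun_G(\PP^1)$ a quasi-compact miraculous open and $\F_U \in \Dmod(U)^\cpt$. Since $\PP^1$ has genus $0$, the Harder--Narasimhan substacks $\Bun_G^{(\leq \lambda)}$ are themselves quasi-compact and miraculous, and they exhaust $\Bun_G(\PP^1)$; for such a quasi-compact miraculous stack $U$, the functor $\sT_U = \Psid_{U,*} \circ \Psid_{U,!}^{-1}$ is an equivalence. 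The first thing I would record is that for any quasi-compact $U$ with $\Dmod(U)$ dualizable, $\Psid_{U,*}$ being an equivalence gives a canonical self-duality $\Dmod(U)^\vee \simeq \Dmod(U)$ under which $\Psid_{U,!}$ becomes an \emph{endofunctor}; composing with $\Psid_{U,*}^{-1}$ one sees (this is standard, cf. \cite[Section 6]{ker-adj}) that under the self-duality $\mathbb{D}^{\mathrm{Verdier}}$ the functor $\sT_U$ is exactly the functor computing $\RHom_U(c, c')^*$ against Verdier duality, i.e. $\sT_U$ is the Serre functor of $\Dmod(U)$. In particular $\Dmod(U)$ is proper: for compact $c, c'$, $\RHom_U(c,c') \simeq \RHom_U(c, \sT_U^{-1}\sT_U(c'))$ and the pairing with $\sT_U$ lands in $\Vect^\cpt$ because Verdier duality preserves compacts and $U$ is quasi-compact so global sections of a compact D-module are finite-dimensional.

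Next I would pass from $U$ to $\Bun_G(\PP^1)$. Write $\F = (j_U)_!(\F_U)$ and $\G = (j_V)_!(\G_V)$ for two compact objects, and choose (using that the miraculous opens form a filtered system) a common $W \supseteq U, V$ that is quasi-compact and miraculous. Then $\F = (j_W)_!(\F_W)$, $\G = (j_W)_!(\G_W)$ with $\F_W, \G_W \in \Dmod(W)^\cpt$, and since $j_W$ is an open embedding, $(j_W)_!$ is fully faithful, so $\RHom_{\Bun_G(\PP^1)}(\F,\G) \simeq \RHom_W(\F_W, \G_W) \in \Vect^\cpt$; this proves properness of $\Dmod(\Bun_G(\PP^1))$. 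For the Serre functor, combine \eqref{eqn:TBunG on !-exts}, which gives $\TBunG \circ (j_W)_! \simeq (j_W)_* \circ \sT_W$, with the already-established fact that $\sT_W = \Serre_{\Dmod(W)}$. Then for any compact $\G' = (j_W)_!(\G'_W)$ (and, by a colimit argument as in Remark \ref{rem: remark on Serre functor}, arbitrary $\G'$ on the second slot, after enlarging $W$),
$$
\RHom_{\Bun_G(\PP^1)}\bigt{\G', \TBunG(\F)}
\simeq
\RHom_{\Bun_G(\PP^1)}\bigt{(j_W)_!(\G'_W), (j_W)_*\sT_W(\F_W)}
\simeq
\RHom_W\bigt{\G'_W, \sT_W(\F_W)}
\simeq
\RHom_W(\F_W, \G'_W)^*
\simeq
\RHom_{\Bun_G(\PP^1)}(\F, \G')^*,
$$
where the second isomorphism uses the $(j_W)_! \dashv j_W^!$ adjunction together with $j_W^! (j_W)_* \simeq \id$, and the last uses full faithfulness of $(j_W)_!$. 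By the uniqueness characterization \eqref{eqn:Serre functor univ property} this identifies $\TBunG|_{\Dmod(\Bun_G(\PP^1))}$ with $\Serre_{\Dmod(\Bun_G(\PP^1))}$.

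The step I expect to be the main obstacle is the clean identification $\sT_U \simeq \Serre_{\Dmod(U)}$ for $U$ quasi-compact miraculous, and in particular keeping straight the interplay between the self-duality $\Psid_{U,*}: \Dmod(U)^\vee \xto{\simeq} \Dmod(U)$ and the Verdier self-duality on compacts: one must check that transporting the defining kernel $(\Delta_U)_*(\omega_U)$ of $\Psid_{U,*}$ through Verdier duality recovers the standard pairing, so that $\Psid_{U,!}^{-1}$ becomes, after this transport, precisely the kernel computing $\RHom_U(-,-)^*$. This is essentially the content of \cite[Sections 6--7]{ker-adj}, so in the write-up I would cite it rather than redo it; the only genuinely $\PP^1$-specific input is that $\Bun_G(\PP^1)$ admits an exhaustion by quasi-compact \emph{miraculous} opens (the HN substacks), which is where the genus-$0$ hypothesis enters and which lets the colimit/compact-generation arguments go through.
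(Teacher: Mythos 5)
Your reduction of the statement to a claim about a quasi-compact miraculous open $U$ is the right move, and the second half of your argument (chaining the $(j_W)_!\dashv j_W^!$ adjunction, the Serre-functor property on $U$, and \eqref{eqn:TBunG on !-exts}) is essentially what the paper does, just with the adjunctions in a slightly different order. The problem is the first half: the claim that for any quasi-compact miraculous $U$ the category $\Dmod(U)$ is proper with Serre functor $\sT_U$ is \emph{false} in that generality, and your justification for it doesn't hold up.

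Concretely, quasi-compactness plus miraculousness does not give properness. Your assertion that for a quasi-compact stack ``global sections of a compact D-module are finite-dimensional'' is incorrect: already $\A^1$ is quasi-compact, miraculous (since $\Delta$ is a closed embedding, $\Psid_{\A^1,!}\simeq \Psid_{\A^1,*}[-2]$), and $\O_{\A^1}$ is a compact D-module with $\RHom(\O_{\A^1},\O_{\A^1})\simeq \kk[t]$ infinite-dimensional — so $\Dmod(\A^1)$ is not proper and $\sT_{\A^1}$ is not ``the Serre functor'' of anything. The same phenomenon occurs for the HN substacks of $\Bun_G(X)$ when $X$ has genus $\geq 1$: those are quasi-compact and miraculous by \cite[Lemma 4.5.7]{DG-cptgen} (which, note, holds for \emph{any} curve, not just $\PP^1$), yet $\Dmod$ of them is not proper because they contain positive-dimensional moduli of bundles. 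So you have also misidentified where genus $0$ enters: it is not in the existence of a quasi-compact miraculous exhaustion, but in the fact — via the Birkhoff decomposition — that every quasi-compact open $U\subset\Bun_G(\PP^1)$ has only \emph{finitely many isomorphism classes of $\kk$-points}. That finiteness is exactly the hypothesis of \cite[Theorem 2.1.5]{GYD}, which is what the paper invokes to obtain both properness of $\Dmod(U)$ and the identification $\Serre_{\Dmod(U)}\simeq\sT_U$; the Verdier-duality heuristic in \cite{ker-adj} by itself does not deliver these, and indeed cannot, since the conclusion fails without the finiteness hypothesis. Repairing your write-up amounts to inserting the Birkhoff observation and replacing the hand-waved self-duality step with the citation of \cite[Theorem 2.1.5]{GYD}; after that the adjunction computation you give goes through as in the paper.
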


\begin{proof}
We already know that $\Dmod(\Bun_G(\bbP^1))$ is compactly generated, and Section \ref{sssec: j! compact generators} describes how compact objects look like. In view of that description, the properness of $\Dmod(\Bun_G(\bbP^1))$ is an immediate consequence of the following claim: for any quasi-compact open $U \subset \Dmod(\Bun_G(\PP^1))$, the DG category $\Dmod(U)$ is proper.
The Birkhoff decomposition guarantees that $U$ has finitely many isomorphism classes of $\kk$-points. Then the claim follows from the first part of  \cite[Theorem 2.1.5]{GYD}.

Next, let us show that $\Serre_{\Dmod(\Bun_G(\PP^1))} \simeq \sT_{\Bun_G(\PP^1)}$. We need to provide, for any $\F \in \Dmod(\Bun_G(\bbP^1))$ and any $\G \in \Dmod(\Bun_G(\bbP^1))^\cpt$, a natural isomorphism
$$
\RHom_{\Dmod(\Bun_G(\bbP^1))} (\F, \TBunG (\G) )
\simeq
\RHom_{\Dmod(\Bun_G(\bbP^1))} (\G,\F)^*.
$$
Thanks to Section \ref{sssec: j! compact generators} again, it suffices to do so for $\G$ of the form $(j_U)_!(\F_U)$, where $j_U: U \hto \Bun_G(\PP^1)$ is a miraculous quasi-compact open substack and $\F_U \in \Dmod(U)^\cpt$.
In view of the second part of \cite[Theorem 2.1.5]{GYD}, 
we know that 
$\Serre_{\Dmod(U)} 
\simeq 
\sT_U$.
Using this, we compute:
\begin{eqnarray}
\nonumber
\RHom_{\Dmod(\Bun_G(\bbP^1))}
\Bigt{
\F, (j_U)_*(\F_U)  
}
& \simeq &
\RHom_{\Dmod(U)}
\Bigt{
 (j_U)^! (\F), \F_U
 }
\\
\nonumber
& \simeq &
\RHom_{\Dmod(U)}
\Bigt{
 (j_U)^! (\F), \Serre_{\Dmod(U)} \circ (\sT_U)^{-1} (\F_U)
}
\\
\nonumber
& \simeq &
\RHom_{\Dmod(U)}
\Bigt{
 (\sT_U)^{-1}  (\F_U),  (j_U)^! (\F) 
}
^*
\\
\nonumber
& \simeq &
\RHom_{\Dmod(\Bun_G(\bbP^1))}
\Bigt{
 (j_U)_!  (\sT_U)^{-1}  (\F_U), \F 
}
^*.
\end{eqnarray}
From this, we obtain that
$$
\Serre_{\Dmod(\Bun_G(\bbP^1))}
\bigt{
(j_U)_! \circ  (\sT_U)^{-1}  (\F_U)
}
\simeq
(j_U)_*(\F_U),
$$
or, equivalently, 
$$
\Serre_{\Dmod(\Bun_G(\bbP^1))} \circ (j_U)_!
\simeq
(j_U)_* \circ \sT_U.
$$
It remains to invoke \eqref{eqn:TBunG on !-exts}.
\end{proof}

\begin{cor} \label{cor: Serre on basic object of P1}
Let $j: BG \hto \Bun_G(\PP^1)$ be the open embedding induced by the trivial $G$-bundle. The Serre functor of $\Dmod(\Bun_G(\bbP^1))$ sends $j_!(\omega_{BG})$ to $j_*(\omega_{BG})[-\dim(G)]$. 
\end{cor}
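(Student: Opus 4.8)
The plan is to combine Lemma~\ref{lem:T on P1 is Serre}, the commutation \eqref{eqn:TBunG on !-exts}, and an explicit computation of the Serre functor of $\Dmod(BG)$. First I would observe that $BG = \pt/G$ is a quasi-compact open substack of $\Bun_G(\PP^1)$ (indeed, for $G$ of semisimple rank $\geq 1$ it is the Harder--Narasimhan stratum $\Bun_G^{(\leq 0)}$), it has a single $\kk$-point, and it is miraculous (it is one of the quasi-compact miraculous opens featuring in Section~\ref{sssec: j! compact generators}). Hence $\sT_{BG}$ is defined, and by the second part of \cite[Theorem 2.1.5]{GYD} it coincides with $\Serre_{\Dmod(BG)}$.

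Next I would compute $\Serre_{\Dmod(BG)}(\omega_{BG})$ using the equivalence $\Dmod(BG) \simeq H_*(G)\mod$ of Section~\ref{sssec:Dmods on BG}, under which $\omega_{BG}$ corresponds to the augmentation module $\kk$. By Hopf's theorem, $H_*(G)$ is an exterior algebra on primitive generators sitting in the odd homological degrees $2d_1-1,\dots,2d_r-1$, with the $d_i$ as in Section~\ref{sssec:Chevalley space}; in particular it is a finite-dimensional graded-commutative (hence symmetric) Frobenius algebra whose socle lies in homological degree $\sum_i(2d_i-1) = \dim G$. Therefore $\Dmod(BG)$ is proper and its Serre functor is $-\otimes_{H_*(G)} D(H_*(G))$; since $D(H_*(G)) \simeq H_*(G)[-\dim G]$ as a bimodule, this functor is simply the shift $[-\dim G]$. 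In particular $\Serre_{\Dmod(BG)}(\omega_{BG}) \simeq \omega_{BG}[-\dim G]$.

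Finally I would assemble the pieces: applying Lemma~\ref{lem:T on P1 is Serre}, then \eqref{eqn:TBunG on !-exts} with $X=\PP^1$ and $U = BG$, and then the previous paragraph,
\begin{align*}
\Serre_{\Dmod(\Bun_G(\PP^1))}(j_!(\omega_{BG}))
&\simeq
\sT_{\Bun_G(\PP^1)}(j_!(\omega_{BG}))
\simeq
j_*\bigt{\sT_{BG}(\omega_{BG})}
\\
&\simeq
j_*\bigt{\Serre_{\Dmod(BG)}(\omega_{BG})}
\simeq
j_*(\omega_{BG})[-\dim G],
\end{align*}
which is the assertion. The only genuinely non-formal input is the Frobenius-algebra computation above, and the one point that needs care is that the socle of $H_*(G)$ really lives in homological degree exactly $\dim G$ --- equivalently, that $\sum_i(2d_i-1) = \dim G$ --- so that the resulting cohomological shift is $[-\dim G]$ and not, say, $[-2\dim G]$.
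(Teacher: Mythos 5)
Your proof is correct and takes essentially the same route as the paper: identify $\sT_{BG}$ with $\Serre_{\Dmod(BG)}$ via \cite[Theorem 2.1.5]{GYD}, compute the latter through $\Dmod(BG)\simeq H_*(G)\mod$, and push through \eqref{eqn:TBunG on !-exts}. Two small remarks. First, the paper's own proof asserts $\sT_{BG}=\id[\dim G]$, which has the wrong sign (it would yield $j_*(\omega_{BG})[+\dim G]$, contradicting the statement being proved and the matching spectral-side shift in Theorem~\ref{thm:Serre on ICOH_N (LSG)}); your Frobenius-algebra computation gives the correct $\Serre_{\Dmod(BG)}=[-\dim G]$. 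Second, for $G$ reductive but not semisimple your degree bookkeeping is slightly off: the $d_1,\dots,d_r$ of Section~\ref{sssec:Chevalley space} account only for the semisimple part, and $\sum_{i=1}^{r}(2d_i-1)=\dim G-\dim Z_G$; the missing $\dim Z_G$ comes from the degree-one primitive generators of $H_*(Z_G^\circ)$, and the correct identity $\dim G=\dim Z_G+\sum_i(2d_i-1)$ is exactly the one the paper invokes at the end of the proof of Theorem~\ref{thm:Serre on ICOH_N (LSG)}. With that adjustment the socle of $H_*(G)$ does sit in homological degree $\dim G$ and the shift $[-\dim G]$ follows as you say.
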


\begin{proof}
It suffices to notice that $\sT_{BG}= \id_{\Dmod(BG)}[\dim(G)]$, a fact that readily follows from the definitions and from the equivalence $\Dmod(BG) \simeq H_*(G) \mod$ of Section \ref{sssec:Dmods on BG}.
\end{proof}

\sec{The Serre functor of the spectral spherical category} \label{sec:Serre}

In this section, we compute the Serre functor of $\ICoh_{\Nch}(\Omega \gch/\Gch)$ and complete the proof of the fourth step of Section \ref{ssec:outline of proof of theorem C}.
Since Langlands duality does not appear here, we will formulate our results for $G$, keeping in mind that they have been applied to $\Gch$ in Section \ref{ssec:outline of proof of theorem C}.

Our main result is that the Serre functor of ${\ICoh_\N(\Omega \g/G)}$  equals the \emph{temperization functor} up to a cohomological shift. The proof hinges on a preliminary result, which might be of independent interest: the computation of the Serre functor of the DG category $\QCoh(\N/G)$.

\ssec{The nilpotent cone}

Let $\N$ be the nilpotent cone associated to the group $G$.
We show that the DG category $\QCoh(\N/G)$ is proper and compute its Serre functor explicitly.

\begin{lem}
The DG category $\QCoh(\N/G)$ is proper.
\end{lem}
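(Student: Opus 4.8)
The plan is to reduce the statement, via the projection $\pi\colon \N/G\to BG$, to a single finiteness fact about $\kk[\N]$ as a $G$-module. First I would record that, since $\N$ is an affine scheme of finite type and $G$ is reductive (hence linearly reductive), the functor $\Gamma(\N/G,-)\simeq(\Gamma(\N,-))^G$ preserves colimits and is $t$-exact; therefore each $\pi^*V$ with $V\in\Rep(G)^\cpt$ is a compact object of $\QCoh(\N/G)$, and since the bar resolution exhibits these objects as generators under colimits, $\QCoh(\N/G)^\cpt$ is precisely the thick subcategory they generate. As $\RHom$ is exact in each variable and each $\pi^*V$ is dualizable, proving properness amounts to showing $\RHom_{\QCoh(\N/G)}(\pi^*V,\pi^*W)\in\Vect^\cpt$ for all $V,W\in\Rep(G)^\cpt$; rewriting the left-hand side as $\RHom(\O_{\N/G},\pi^*(V^\vee\otimes W))\simeq\Gamma(\N/G,\pi^*U)$ with $U:=V^\vee\otimes W$, the whole problem reduces to the assertion that $\Gamma(\N/G,\pi^*U)\in\Vect^\cpt$ for every $U\in\Rep(G)^\cpt$.

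Next I would identify these global sections. The morphism $\pi$ is affine, with $\pi_*(\O_{\N/G})$ equal to the coordinate ring $\kk[\N]=\Gamma(\N,\O_\N)$ regarded as a commutative algebra object of $\Rep(G)$ --- concentrated in cohomological degree $0$, because $\g^*\to\fc_G$ is flat. By the projection formula $\pi_*(\pi^*U)\simeq\kk[\N]\otimes U$, and by linear reductivity $\Gamma(BG,-)\simeq(-)^G$ is $t$-exact, so
$$
\Gamma(\N/G,\pi^*U)\simeq(\kk[\N]\otimes U)^G,
$$
a vector space sitting in degree $0$. Hence everything comes down to showing $(\kk[\N]\otimes U)^G$ is finite-dimensional for every finite-dimensional $G$-module $U$, equivalently that every irreducible $G$-representation occurs in $\kk[\N]$ with finite multiplicity.

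This last point is the only genuine obstacle, and it is where the geometry of the nilpotent cone enters: it is Kostant's theorem that $\kk[\N]$, as a $G$-module, is the space of $G$-harmonic polynomials on $\g^*$, in which each irreducible occurs finitely often (with multiplicity the dimension of its zero weight space). Alternatively --- and more in line with the complete-intersection description of $\N$ used elsewhere in the paper --- one can write $\kk[\fc_G]=\Sym(\fz_G\oplus\fa_G)$ as a polynomial ring (Section~\ref{sssec:Chevalley space}) and use $\kk[\N]\simeq\kk[\g^*]\usotimes{\kk[\fc_G]}\kk$: resolving $\kk$ over $\kk[\fc_G]$ by its Koszul complex and applying the exact functor $(-\otimes U)^G$ gives $(\kk[\N]\otimes U)^G\simeq\on{Tor}^{\kk[\fc_G]}_\bullet(M,\kk)$, where $M:=(\kk[\g^*]\otimes U)^G$ is the module of covariants; by the classical finiteness of modules of covariants over a reductive group $M$ is a finitely generated $\kk[\fc_G]$-module, so $\on{Tor}^{\kk[\fc_G]}_\bullet(M,\kk)$ is a finitely generated $\kk[\fc_G]$-module killed by the augmentation ideal, hence finite-dimensional. (By $t$-exactness only $\on{Tor}_0=M/\kk[\fc_G]_+M$ survives, which forces $M$ to be free over $\kk[\fc_G]$, but that refinement is not needed.) The remaining steps are purely formal, so the work is concentrated entirely in this finite-multiplicity statement.
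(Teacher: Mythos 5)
Your reduction is correct and mirrors the paper's: both arguments reduce properness to the statement that every irreducible $G$-representation occurs in $\kk[\N]=H^0(\N,\O_\N)$ with finite multiplicity, and both then invoke a theorem of Kostant. The paper uses the incarnation $\kk[\N]\simeq H^0(G/T,\O_{G/T})$, while you cite the (equivalent) harmonic-polynomials formulation; so far the two proofs are essentially the same. Your second, alternative argument is a genuinely different route and worth flagging: it replaces Kostant's theorem by the more elementary fact that the module of covariants $M=(\kk[\g^*]\otimes U)^G$ is finitely generated over $\kk[\fc_G]=\kk[\g^*]^G$, and then extracts finiteness of $(\kk[\N]\otimes U)^G$ from the Koszul presentation $\kk[\N]\simeq\kk[\g^*]\otimes_{\kk[\fc_G]}\kk$ together with exactness of $(-\otimes U)^G$. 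This uses only finiteness of covariants for a reductive group plus the flatness of $\g^*\to\fc_G$ (which the paper already assumes), so it is logically lighter and would apply verbatim to any flat invariant-theoretic quotient; the trade-off is that it yields only finite-dimensionality, whereas the paper's appeal to $\kk[\N]\simeq\Gamma(G/T,\O)$ additionally produces the explicit identification $\RHom_{\Rep(G)}(V_\nu,R)\simeq(V_{-w_0(\nu)})^T$ via formula \eqref{eqn:Kostant identity}, and that identification is reused in the proof of Proposition \ref{prop:Serre on QCoh N/G} and its sheared variant. One minor point of hygiene: the displayed equivalence $(\kk[\N]\otimes U)^G\simeq\on{Tor}^{\kk[\fc_G]}_\bullet(M,\kk)$ should really be read as ``the complex $M\otimes_{\kk[\fc_G]}K_\bullet$ computes both,'' since the left side lives in a single degree; you address this with the parenthetical about vanishing of higher $\on{Tor}$ (i.e.\ freeness of $M$, which is another face of Kostant's theorem), and it does not affect the conclusion.
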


\begin{proof}
For $\lambda \in \Lambda^{\on{dom}}$ a dominant weight, let $V_\lambda$ be the irreducible $G$-representation of highest weight $\lambda$.
Denote by $\pi: \N/G \to \pt/G$ the obvious projection. Since $\pi$ is affine, $\pi_*$ is conservative and consequently the essential image of $\pi^*: \Rep(G) \simeq \QCoh(\pt/G) \to \QCoh(\N/G)$ generates $\QCoh(\N/G)$ under colimits. More precisely, the perfect objects
$$
A_\lambda := \pi^* V_\lambda
\in \QCoh(\N/G),
\hspace{.4cm}
\lambda \in \Lambda^{\on{dom}},
$$
form a collection of compact generators of $\QCoh(\N/G)$.

Thus, it suffices to show that $\RHom_{\QCoh(\N/G)}(A_\lambda, A_\mu)$ is finite dimensional\footnote{the locution \virg{being finite dimensional} applied to a complex of vector spaces is a shortcut for \virg{having finite dimensional total cohomology}} for all $\lambda, \mu \in \Lambda^{\dom}$.
By adjunction and the projection formula,  we have:
$$
\RHom_{\QCoh(\N/G)}(A_\lambda, A_\mu)
\simeq
\RHom_{\Rep(G)}(V_\lambda, R \otimes V_\mu)
\simeq
\RHom_{\Rep(G)}(V_\lambda \otimes V_\mu^*, R),
$$
where $R:= H^0(\N,\O_\N)$, viewed as a $G$-representation in the natural way. 
Since $V_\lambda \otimes V_\mu^*$ is a direct sum of finitely many irreducible $G$-representations, it suffices to show that $\RHom_{\Rep(G)}(V_\nu, R)$ is finite dimensional for any $\nu \in \Lambda^{\dom}$.

By a theorem of B. Kostant (the original source is \cite{Kostant}, see also \cite[Theorem 6.7.4]{CG}), there is an isomorphism $R \simeq H^0(G/T,\O_{G/T})$ of $G$-representations. This implies that
\begin{equation} \label{eqn:Kostant identity}
R \simeq
\Gamma(G,\O_G)^T
\simeq
\bigoplus_{\nu \in \Lambda^{\on{dom}}}
(V_{-w_0(\nu)})^T
 \otimes V_\nu,
\end{equation}
where $w_0$ is the longest element of the Weyl group, so that $V_{-w_0(\nu)} \simeq V_\nu^*$. 
It follows that $\RHom_{\Rep(G)}(V_\nu, R) \simeq (V_{-w_0(\nu)})^T$, which is indeed finite dimensional.
\end{proof}

\begin{prop}\label{prop:Serre on QCoh N/G}
The Serre functor on $\QCoh(\N/G)$ is the functor of tensoring with the object
$$
S_0 := \ker(\O_{\N/G} \to j_* \O_{\N^\times/G})
[2 \dim \fn],
$$
where $\N^\times := \N - 0$ is the punctured nilpotent cone and 
$j: \N^\times/G \hto \N/G$ the obvious open embedding.
\end{prop}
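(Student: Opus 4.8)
The plan is to reduce the statement to computing the single object $\Serre_{\QCoh(\N/G)}(\O_{\N/G})$ and then to identify it with $S_0$. First, the compact generators $A_\lambda = \pi^*V_\lambda$ of the previous lemma are dualizable (with dual object $\pi^*V_\lambda^*$), and since dualizable objects are stable under finite colimits and retracts, every compact object of $\QCoh(\N/G)$ is dualizable; that is, $\QCoh(\N/G)^\cpt$ is rigid. Together with properness and the symmetric monoidal structure, this forces $\Serre_{\QCoh(\N/G)}$ to be the functor of tensoring with $S_0 := \Serre_{\QCoh(\N/G)}(\O_{\N/G})$: for compact $\F, \G$, writing $\mathbf 1 := \O_{\N/G}$, one has $\RHom(\F, \Serre(\G)) \simeq \RHom(\G, \F)^* \simeq \RHom(\mathbf 1, \F\otimes\G^\vee)^*$, which by Serre duality for the compact unit $\mathbf 1$ equals $\RHom(\F\otimes\G^\vee, \Serre(\mathbf 1)) \simeq \RHom(\F, \G\otimes\Serre(\mathbf 1))$, using rigidity for the last two steps; Yoneda and continuity conclude. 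So it remains to prove $\Serre(\O_{\N/G}) \simeq S_0$. Here I read $S_0 = \ker(\O_{\N/G}\to j_*\O_{\N^\times/G})[2\dim\fn]$ as the fiber of the map into the \emph{derived} pushforward $Rj_*\O_{\N^\times/G}$, i.e.\ as $R\Gamma_{\{0\}/G}(\O_{\N/G})[2\dim\fn]$, the derived sections of $\O_{\N/G}$ with support along $\{0\}/G\hto\N/G$, shifted; since $\N$ is normal and Cohen--Macaulay of dimension $2\dim\fn$, this is the single local cohomology sheaf $\underline H^{2\dim\fn}_{\{0\}/G}(\O_{\N/G})$ placed in cohomological degree $0$.

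By the defining property of the Serre functor it then suffices to construct, naturally in $\mu\in\Lambda^{\dom}$, an isomorphism $\RHom(A_\mu, S_0)\simeq\RHom(\O_{\N/G}, A_\mu)^*$. The right-hand side is immediate: the projection formula along the affine projection $\pi\colon\N/G\to\pt/G$, together with exactness of $(-)^G$, gives $\RHom(\O_{\N/G}, A_\mu)\simeq(\kk[\N]\otimes V_\mu)^G$, a finite-dimensional space in cohomological degree $0$; by Kostant's identity \eqref{eqn:Kostant identity} it is the multiplicity space of $V_{-w_0\mu}$ in $\kk[\N]$, equivalently the zero weight space of $V_\mu$.

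For the left-hand side, pulling back along the atlas $\N\to\N/G$ and again using exactness of $(-)^G$, I would obtain $\RHom(A_\mu, S_0)\simeq\big(V_\mu^*\otimes R\Gamma_{\{0\}}(\N,\O_\N)\big)^G[2\dim\fn]$. Two geometric inputs make this computable. First, $\N$ is Cohen--Macaulay of dimension $2\dim\fn$, so $R\Gamma_{\{0\}}(\N,\O_\N)$ is concentrated in cohomological degree $2\dim\fn$. Second, $\N$ is Gorenstein with $G$-equivariantly trivial canonical sheaf: the Springer resolution $s\colon T^*(G/B)\to\N$ is crepant, since $\omega_{T^*(G/B)}$ is $G$-equivariantly trivial (it carries the $G$-invariant symplectic form) and $Rs_*\O_{T^*(G/B)}\simeq\O_\N$ $G$-equivariantly by Kostant (see \cite{CG}). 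Graded local duality then identifies $H^{2\dim\fn}_{\{0\}}(\N,\O_\N)$, as a rational $G$-representation, with the graded $\kk$-linear dual of $\kk[\N]$ (the $\Gm$-grading shift coming from the $a$-invariant of $\N$ being irrelevant to the $G$-structure). Substituting this in and using $(W^*)^G\simeq(W^G)^*$ for finite-dimensional representations of the reductive group $G$ --- only finitely many graded pieces of $\kk[\N]$ contribute, again by Kostant --- I would get $\RHom(A_\mu, S_0)\simeq\big((V_\mu\otimes\kk[\N])^G\big)^*\simeq\RHom(\O_{\N/G}, A_\mu)^*$, naturally in $\mu$; Yoneda then yields $\Serre(\O_{\N/G})\simeq S_0$, which is the proposition.

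I expect the main obstacle to be the second input above together with its bookkeeping: one must pin down the $G$-representation structure on the top local cohomology module $H^{2\dim\fn}_{\{0\}}(\N,\O_\N)$ --- not merely its abstract isomorphism type --- and verify that the resulting isomorphism is natural in $\mu$, so that Yoneda applies. The geometric heart is that the nilpotent cone has rational Gorenstein singularities with $G$-equivariantly trivial dualizing sheaf; everything else is formal manipulation with quasi-coherent sheaves, the projection formula, local cohomology, and the reductivity of $G$.
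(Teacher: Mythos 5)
Your proof is correct and takes a genuinely different route from the paper's. The paper proves the identity $\RHom_{\Rep(G)}(V_\nu,\ker(R\to R^\times))[2\dim\fn]\simeq\RHom_{\Rep(G)}(V_{-w_0\nu},R)^*$ by an explicit computation: it uses Hesselink's theorem (rational singularities of $\N$) to rewrite $R^\times$ as $H^*(T^*(G/B)^\times,\O)$, reduces via $T^*(G/B)^\times\simeq G\times^B(\fn\smallsetminus 0)$ to the two-line cohomology of $\fn\smallsetminus 0$, and then invokes the Serre functor of $\Rep(B)$ from \cite{GYD}. You instead recognize $S_0$ (after the shift) as the top local cohomology sheaf $\underline H^{2\dim\fn}_{\{0\}/G}(\O_{\N/G})$ and invoke graded local duality for the Gorenstein ring $\kk[\N]$ with $G$-equivariantly trivial canonical module, which identifies this local cohomology directly with the graded Matlis dual of $\kk[\N]$ as a $G$-representation. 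Both arguments run through Kostant's identity \eqref{eqn:Kostant identity} to get finiteness of the relevant multiplicity spaces. What your route buys is conceptual economy: you bypass the Springer resolution cohomology computation and the appeal to the Serre functor of $\Rep(B)$ entirely, replacing them by Gorenstein duality plus the single geometric input that $\omega_\N\simeq\O_\N$ equivariantly (which, incidentally, you could cite directly from Lemma \ref{lem:anomaly for nilcone} rather than re-deriving it via crepancy of the Springer resolution). What the paper's route buys is that it stays entirely within explicit representation-theoretic bookkeeping and never needs the $G$-equivariance of the local duality isomorphism, which you correctly flag as the one delicate point in your approach. Your preliminary reduction to $\Serre(\O_{\N/G})$ via rigidity of compacts is also sound, and corresponds to the paper's (more cursory) remark that ``since $A_\lambda$ is dualizable... we may assume $\lambda=0$.''
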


\begin{proof}
Let us retain the notation of the previous lemma. 

\sssec*{Step 1}

By the defining property of the Serre functor, see Section \ref{sssec:properness-definition}, we just need to exhibit natural isomorphisms
$$
\RHom_{\QCoh(\N/G)}
\bigt{A_\nu, A_\lambda \otimes S_0
}
\simeq
\RHom_{\QCoh(\N/G)}
\bigt{
A_\lambda, A_\nu}^*
$$
for all $\lambda, \nu \in \Lambda^{\dom}$.
Since $A_\lambda$ is dualizable with dual $A_{-w_0(\lambda)}$, it is easy to see that  we may assume $\lambda = 0$.
Reasoning as in the previous lemma (using adjunction and the projection formula), it suffices to establish a functorial isomorphism
\begin{equation} \label{eqn:auxiliary serre for N}
\RHom_{\Rep(G)}
\bigt{V_\nu, 
\ker(R \to R^\times)
}
[2\dim \fn]
 \simeq
\RHom_{\Rep(G)}
\bigt{
V_{-w_0(\nu)}, R}^*,
\end{equation}
where we have set $R := H^0(\N,\O)$ and $R^\times := H^*(\N^\times, \O)$, both regarded as $G$-representations in the natural way.

\sssec*{Step 2}

Consider now the Springer resolution $\mu: T^*(G/B) \to \N$. In view of \cite[Theorem A]{Hesselink}, the canonical arrow
$$
\O_\N \longto
\mu_*(\O_{T^*(G/B)})
$$
is an isomorphism.\footnote{Under our conventions, $\mu_*$ denotes the derived pushforward: thus, the isomorphism $\O_\N \simeq
\mu_*(\O_{T^*(G/B)})$ means that $R^0 \mu_*(\O_{T^*(G/B)}) \simeq \O_\N$ and that $R^i \mu_*(\O_{T^*(G/B)}) \simeq 0 $ for all $ i \geq 1$.}
In other words, the nilpotent cone has rational singularities.
Pulling back $\mu$ along $j$, we obtain a map $\mu^\times: T^*(G/B)^\times \to \N^\times$, where $T^*(G/B)^\times$ is the complement of the zero section.
Thus, by base-change, the canonical map 
$$
\O_{\N^\times} 
\longto
(\mu^\times)_*(\O_{T^*(G/B)^\times})
$$
is an isomorphism, too.
Upon taking global sections, we get:
$$
R \simeq H^0(T^*(G/B), \O),
\hspace{.4cm}
R^\times \simeq H^*(T^*(G/B)^\times,\O).
$$

\sssec*{Step 3}

Since $\N$ is normal of dimension $\dim(\N) = 2\dim(\fn) \geq 2$, we have $H^0(R^\times) \simeq R$ and consequently $\ker(R \to R^\times) \simeq \tau^{\geq 1}(R^\times)[-1]$,  where $\tau^{\geq m}$ is the usual truncation functor associated to the standard t-structure on complexes of vector spaces.
Then \eqref{eqn:auxiliary serre for N} simplifies as
\begin{equation} \label{eqn:auxiliary serre for N simplified}
\RHom_{\Rep(G)}
\bigt{V_\nu, 
\tau^{\geq 1}(R^\times)
}
[2\dim \fn-1]
 \simeq
\RHom_{\Rep(G)}
\bigt{
V_{-w_0(\nu)}, R}^*.
\end{equation}
To prove our result, it suffices to show that $R^\times$ has higher cohomology only in degree $(2 \dim \fn -1)$, and that such higher cohomology decomposes (as a $G$-representation) as
\begin{equation} \label{eqn:higher cohomology Rtimes}
H^{2 \dim \fn -1}
(R^\times)
\simeq 
\bigoplus_{\nu \in \Lambda^{\dom}}
\RHom_{\Rep(G)}
\bigt{
V_{-w_0(\nu)}, R}^*
 \otimes V_\nu.
\end{equation}

\sssec*{Step 4}

Now observe that $T^*(G/B)^\times \simeq G \times^B (\fn-0)$.
In view of the $B$-equivariant isomorphism
$$
H^*(\fn-0, \O)
\simeq
\Sym \fn^* 
\oplus
(\Sym \fn \otimes \Lambda^{\dim \fn} \fn )[1- \dim \fn],
$$
we obtain that
\begin{eqnarray}
\label{eqnarray:first line}
& &
R
\simeq
H^{0}(R^\times)
\simeq
\bigt{
\Gamma(G,\O) \otimes H^{0}(\fn, \O)
}^B
\simeq
\bigoplus_{\lambda}
\RHom_{\Rep (B)}(V_\lambda, \Sym \fn^*)
\otimes 
V_\lambda,
\\
\label{eqnarray:second line}
& &
H^{>0}(R^\times)
\simeq
\bigt{
\Gamma(G,\O) \otimes H^{>0}(\fn-0, \O)
}^B
\simeq
\bigoplus_{\lambda}
\RHom_{\Rep (B)}(V_\lambda, \Sym \fn \otimes  \Lambda^{\dim \fn} \fn)
\otimes 
V_\lambda
[1-\dim \fn].
\end{eqnarray}
In the above formulas, we have used $H^{>0}(-)$ as a shortcut for $\tau^{\geq 1}(H^*(-))$.

\sssec*{Step 5}

Comparing \eqref{eqnarray:first line} with the formula \eqref{eqn:Kostant identity} obtained from Kostant's theorem, we deduce that
\begin{equation}\label{eqn:Kostant}
\RHom_{\Rep (G)}(V_\lambda, R)
\simeq
\RHom_{\Rep (B)}(V_\lambda, \Sym \fn^*)
\simeq
(V_{-w_0(\lambda)})^T.
\end{equation}
Thus, \eqref{eqn:higher cohomology Rtimes} simplifies as
\begin{equation}
H^{2 \dim \fn -1}
(R^\times)
\simeq 
\bigoplus_{\nu \in \Lambda^{\dom}}
(V_{-w_0(\nu)})^T
 \otimes V_\nu.
\end{equation}
To conclude our proof, we need to prove the above formula, as well as the fact that the only higher cohomology of $R^\times$ occurs in degree $2 \dim(\fn)-1$. By looking at \eqref{eqnarray:second line}, it is clear that both claims boil down to proving that
$$
\RHom_{\Rep (B)}(V_\nu, \Sym \fn \otimes  \Lambda^{\dim \fn} \fn)
\simeq
(V_{-w_0(\nu)})^T [-\dim(\fn)].
$$
We prove this in the next two steps.

\sssec*{Step 6}

By \cite[Section 1.3.3]{GYD}, the Serre functor for $\Rep(B)$ equals the functor $  - \otimes \Lambda^{\dim \fn} \fn[\dim \fn]$. Recall also that the defining property of $\Serre_\C$ requires the second argument to be compact, see Remark \ref{rem: remark on Serre functor}. In our case, we proceed as follows:
\begin{eqnarray}
\nonumber
\RHom_{\Rep (B)}(V_\nu, \Sym \fn \otimes \Lambda^{\dim \fn} \fn)
& \simeq &
\bigoplus_{m \geq 0}\RHom_{\Rep (B)}(V_\nu, \Sym^m \fn \otimes \Lambda^{\dim \fn} \fn)
\nonumber
\\
& \simeq &
\bigoplus_{m \geq 0}\RHom_{\Rep (B)}(\Sym^m \fn, V_\nu)^* [-\dim \fn]
\nonumber
\\
& \simeq &
\bigoplus_{m \geq 0}
\RHom_{\Rep (B)}(V_{-w_0(\nu)}, \Sym^m \fn^* )^* [-\dim \fn].
\nonumber
\end{eqnarray}
\sssec*{Step 7}

Thanks to \eqref{eqn:Kostant}, we know that $\RHom_{\Rep (B)}(V_{-w_0(\nu)}, \Sym \fn^* )$ is finite dimensional: in particular, we can replace the direct sum above with a direct product. Hence,
\begin{eqnarray}
\nonumber
\RHom_{\Rep (B)}(V_\nu, \Sym \fn \otimes \Lambda^{\dim \fn} \fn)
& \simeq &
\prod_{m \geq 0}
\RHom_{\Rep B}(V_{-w_0(\nu)}, \Sym^m \fn^* )^* [-\dim \fn]
\\
\nonumber
& \simeq &
\Bigt{
\bigoplus_{m \geq 0}
\RHom_{\Rep B}(V_{-w_0(\nu)}, \Sym^m \fn^* )
}^* [-\dim \fn]
\\
\nonumber
& \simeq &
\RHom_{\Rep B}(V_{-w_0(\nu)}, \Sym \fn^* )
^* [-\dim \fn]
\\
\nonumber
& \simeq &
(V_{-w_0(\nu)})^T [-\dim \fn],
\end{eqnarray}
where the last step used \eqref{eqn:Kostant} again.
This concludes the proof.
\end{proof}

\ssec{Shearing} \label{ssec:shifts of grading}

Since the stack $\N/G$ is endowed with a natural $\Gm$-action, we can consider the sheared version $\QCoh(\N/G)^\Rightarrow$ of $\QCoh(\N/G)$.
It turns out that $\QCoh(\N/G)^\Rightarrow$  is still proper, and the goal of this section is to compute its Serre functor.\footnote{For similar computations with the shearing operation, see \cite{strong-gluing} and  \cite{DL}.}

Compared to the result of Proposition \ref{prop:Serre on QCoh N/G}, this computation is not surprising, but it provides the necessary link between that proposition and Theorem \ref{thm:Serre on ICOH_N (LSG)} below.

\sssec{}

To be explicit, the $\Gm$-action on $\N/G$ we are using is induced by the $\Gm$-action on $(T^*(G/B))/G \simeq \fn/B$ by homotheties on $\fn$. Put another way: in the expression for $R$ appearing in \eqref{eqnarray:first line}, elements of $\Sym^m \fn^*$ are given weight $m$.

\begin{cor}
The DG category $\QCoh(\N/G)^\Rightarrow$ is compactly generated and proper.
\end{cor}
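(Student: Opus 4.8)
The plan is to transport both properties from $\QCoh(\N/G)$ across the shearing operation. First I would record that the projection $\pi\colon \N/G \to \pt/G$ is affine and $\Gm$-equivariant, for the homothety action on $\N/G$ and the trivial action on $\pt/G$. Hence $\pi^*$ is a morphism in $\Gm \rrep^{weak}$ whose right adjoint $\pi_*$ is continuous (as $\pi$ is affine), conservative, and again $\Gm$-equivariant. Applying the shearing automorphism gives $\pi^{*,\Rightarrow}\colon \Rep(G)^\Rightarrow \to \QCoh(\N/G)^\Rightarrow$ with continuous right adjoint $\pi_*^\Rightarrow$, which is conservative by \lemref{lem:conservative Rightarrows}. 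Therefore the essential image of $\pi^{*,\Rightarrow}$ generates $\QCoh(\N/G)^\Rightarrow$ under colimits, and $\pi^{*,\Rightarrow}$ preserves compactness. Since $\Rep(G)$ carries the trivial $\Gm$-action we have $\Rep(G)^\Rightarrow \simeq \Rep(G)$, which is compactly generated by the irreducibles $V_\lambda$; so the objects $A_\lambda^\Rightarrow := \pi^{*,\Rightarrow}(V_\lambda)$ form a family of compact generators of $\QCoh(\N/G)^\Rightarrow$. This yields compact generation.

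For properness I would check that $\RHom_{\QCoh(\N/G)^\Rightarrow}(A_\lambda^\Rightarrow, A_\mu^\Rightarrow)$ is finite-dimensional for all $\lambda,\mu$. Each $A_\lambda = \pi^* V_\lambda$ is pulled back from $\Rep(G)$ with the trivial $\Gm$-action, hence lifts to $\QCoh(\N/G)^\Gm \simeq \QCoh(\N/(G\times\Gm))$, so the complex $\RHom_{\QCoh(\N/G)}(A_\lambda, A_\mu)$ acquires a $\Gm$-weight grading. Unwinding the definition of the shearing operation — together with the identification $(\phi^\Rightarrow)^\Gm \simeq \phi^\Gm$ of DG functors used in the proof of \lemref{lem:conservative Rightarrows} — one sees that $\RHom_{\QCoh(\N/G)^\Rightarrow}(A_\lambda^\Rightarrow, A_\mu^\Rightarrow)$ is the shear of this graded complex, i.e.\ the component in cohomological degree $i$ and weight $k$ is moved to cohomological degree $i+2k$. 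But the proof of properness of $\QCoh(\N/G)$ above already shows that $\RHom_{\QCoh(\N/G)}(A_\lambda, A_\mu) \simeq \RHom_{\Rep(G)}(V_\lambda \otimes V_\mu^*, R)$ is finite-dimensional; a finite-dimensional graded complex has nonzero components in only finitely many bidegrees, and shearing merely permutes bidegrees, so its shear is again finite-dimensional. Hence $\QCoh(\N/G)^\Rightarrow$ is proper.

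The only step requiring care is the identification of $\RHom$ in the sheared category with the shear of the original $\Gm$-graded $\RHom$: this is where one must unwind precisely what the shearing operation does at the level of mapping complexes, via the passage between a weakly $\Gm$-equivariant category $\C$ and its invariants $\C^\Gm$ and Gaitsgory's $1$-affineness theorem, as in the proof of \lemref{lem:conservative Rightarrows}. Everything else is formal: conservativity of $\pi_*^\Rightarrow$ is that lemma, compact generation follows by Barr–Beck–Lurie as usual, and finite-dimensionality of mapping complexes is then automatic because shearing cannot produce infinitely many nonzero bidegrees from a finite-dimensional complex.
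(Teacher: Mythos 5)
Your argument for compact generation is the paper's verbatim: conservativity of $(\pi_*)^\Rightarrow$ is Lemma~\ref{lem:conservative Rightarrows}, and the left adjoint $(\pi^*)^\Rightarrow$ therefore generates under colimits, with the objects $(\pi^*)^\Rightarrow(V_\lambda)$ compact since the right adjoint is continuous. For properness the paper disposes of the issue with the one-line remark that it \virg{follows as in the previous section}; your formulation makes this precise by a clean formal observation rather than by redoing the Kostant computation with shears inserted (which is what the paper eventually does in the proof of Corollary~\ref{cor:sheared serre functor on N mod G}). Namely, you note that the generators $A_\lambda$ are $\Gm$-equivariant, so $\RHom_{\QCoh(\N/G)}(A_\lambda, A_\mu)$ carries a $\Gm$-weight grading, and shearing merely relabels the bidegrees of a graded complex, hence cannot spoil finite total dimension. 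This is correct, and the step you flag as requiring care — that $\RHom_{\QCoh(\N/G)^\Rightarrow}(A_\lambda^\Rightarrow, A_\mu^\Rightarrow)$ is indeed the shear of that graded complex — is exactly the point to unwind; it follows from the defining behaviour of shearing on $A\mod$ for a $\Gm$-graded algebra $A$, applied to $R = H^0(\N,\O_\N)$ viewed as a graded algebra in $\Rep(G)$, or equivalently from the identification $(\C^\Rightarrow)^\Gm \simeq \C^\Gm$ as you suggest. So the route is essentially the paper's, with the finite-dimensionality transfer stated more conceptually.
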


\begin{proof}
Lemma \ref{lem:conservative Rightarrows} ensures that $(\pi_*)^\Rightarrow: \QCoh(\N/G)^\Rightarrow \to \QCoh(\pt/G)^\Rightarrow \simeq \QCoh(\pt/G)$ is conservative. Hence, its left adjoint $(\pi^*)^\Rightarrow$ generates the target under colimits, so that $\QCoh(\N/G)^\Rightarrow$ is compactly generated by the objects $(\pi^*)^\Rightarrow(V_\lambda)$ for $\lambda \in \Lambda^{\dom}$. Properness then follows as in the previous section. 
\end{proof}

\begin{cor} \label{cor:sheared serre functor on N mod G}

The Serre functor on the DG category $\QCoh(\N/G)^\Rightarrow$ is the functor 
$$
\ker
\bigt{
\id
\to
 (j_*)^\Rightarrow (j^*)^{\Rightarrow}
}.
$$
\end{cor}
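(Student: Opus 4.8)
The plan is to deduce the statement from \propref{prop:Serre on QCoh N/G} by applying the shearing operation of \secref{ssec:shifts of grading}.

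First I would record that the whole content of \propref{prop:Serre on QCoh N/G} is $\Gm$-equivariant. The $\Gm$-action on $\N/G$ by dilations (fixed in \secref{ssec:shifts of grading}) makes $\QCoh(\N/G)$ into an object of $\Gm\rrep^{weak}$; the open substack $\N^\times/G \hto \N/G$ is $\Gm$-stable, so $j_*$, $j^*$ and the unit natural transformation $\id \to j_*j^*$ are morphisms, resp. a natural transformation, in $\Gm\rrep^{weak}$; and the Serre kernel $S_0$ is manufactured out of $\Gm$-equivariant data, so that $\Serre_{\QCoh(\N/G)} \simeq -\otimes S_0$ is itself a morphism in $\Gm\rrep^{weak}$ (being $\QCoh(\N/G)$-linear with $\Gm$-equivariant kernel).

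The key step is the general principle that, for a compactly generated proper $\C \in \Gm\rrep^{weak}$ which admits a set of $\Gm$-equivariant compact generators, $\C^\Rightarrow$ is proper — this is the corollary just established — and $\Serre_{\C^\Rightarrow} \simeq (\Serre_\C)^\Rightarrow$. I would prove this by testing the defining property \eqref{eqn:Serre functor univ property} on the compact generators $c^\Rightarrow$. Concretely: for $\Gm$-equivariant compact $c, c'$, the complex $\RHom_{\C^\Rightarrow}(c^\Rightarrow, (c')^\Rightarrow)$ is obtained by forgetting the grading from the shearing (in $\Rep(\Gm)$) of the graded mapping complex attached to $c, c'$, and the shearing automorphism of $\Rep(\Gm)$ commutes with $\kk$-linear graded duality; chasing these identifications — together with the $\Gm$-equivariant refinement of the Serre isomorphism of $\C$ — shows that $(\Serre_\C)^\Rightarrow$ satisfies the characterization of $\Serre_{\C^\Rightarrow}$ on the $c^\Rightarrow$, hence equals it. To apply this to $\C = \QCoh(\N/G)$ one must observe that the graded mapping complex $\RHom_{\QCoh(\N/G)}(A_\lambda, A_\mu)$ has only finitely many nonzero $\Gm$-weights, each finite-dimensional; this is read off from Kostant's description \eqref{eqn:Kostant identity} and the finite-dimensionality of the zero weight spaces $(V_{-w_0(\nu)})^T$ used in the proof of \propref{prop:Serre on QCoh N/G}.

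Granting this, the remainder is bookkeeping: shearing is an exact automorphism of $\Gm\rrep^{weak}$, hence commutes with the formation of fibers of natural transformations between $\Gm$-equivariant functors and with cohomological shifts. Applying $(-)^\Rightarrow$ to the description $\Serre_{\QCoh(\N/G)} \simeq -\otimes S_0$ of \propref{prop:Serre on QCoh N/G}, and using that by definition the shearings of $j_*$ and $j^*$ are $(j_*)^\Rightarrow$ and $(j^*)^\Rightarrow$, one reads off the asserted formula for the Serre functor of $\QCoh(\N/G)^\Rightarrow$. The only real obstacle is the compatibility of mapping complexes with shearing in the key step; this is where the care with cohomological and weight shifts is concentrated, everything else being formal.
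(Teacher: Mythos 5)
Your plan — deduce the corollary from \propref{prop:Serre on QCoh N/G} by applying shearing and using a general compatibility $\Serre_{\C^\Rightarrow}\simeq(\Serre_\C)^\Rightarrow$ — is reasonable in outline, but the step you defer as ``bookkeeping'' hides the entire content, and as written your argument would produce the \emph{wrong} cohomological shift. Compare the two statements: the Serre kernel in \propref{prop:Serre on QCoh N/G} is $S_0 = \ker(\O_{\N/G}\to j_*\O_{\N^\times/G})[2\dim\fn]$, with a shift by $2\dim\fn$, while the sheared Serre functor in the corollary is $\ker(\id\to(j_*)^\Rightarrow(j^*)^\Rightarrow)$, with \emph{no} shift. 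If you promote $S_0$ to $\Gm\rrep^{weak}$ with the ``obvious'' equivariance on the shift (weight $0$), then $(S_0)^\Rightarrow = \ker(\O\to j_*\O)^\Rightarrow[2\dim\fn]$ and the $[2\dim\fn]$ survives; applying your principle would give $\Serre_{\QCoh(\N/G)^\Rightarrow}\simeq\ker(\id\to(j_*)^\Rightarrow(j^*)^\Rightarrow)[2\dim\fn]$, which contradicts the corollary.

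The gap is that an autoequivalence of $\C$ does \emph{not} have a canonical lift to a morphism in $\Gm\rrep^{weak}$: any two lifts differ by a twist by a character of $\Gm$, and the ``obvious'' lift of $-\otimes S_0$ is not the one for which the Serre duality isomorphism is $\Gm$-equivariant. Tracking weights in the proof of \propref{prop:Serre on QCoh N/G} (the $\Sym^m\fn$-summand of $\ker(R\to R^\times)$ sits in weight $-m-\dim\fn$, whereas the $\Sym^m\fn^*$-summand of $R$ sits in weight $m$) shows that the correct $\Gm$-equivariant Serre kernel carries an additional weight twist by $\dim\fn$; under shearing that twist contributes an extra $[-2\dim\fn]$, which precisely cancels the $[2\dim\fn]$ and explains the shift-free formula in the corollary. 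The paper sidesteps the issue by not invoking any general principle: it recomputes the Serre isomorphism directly in the sheared category by writing out the $G\times\Gm$-decompositions of $R^\Rightarrow$ and $\ker(R^\Rightarrow\to R^{\times,\Rightarrow})$ and matching them term by term, which makes the cancellation visible. So your route can be made to work, but only after pinning down the nontrivial $\Gm$-equivariant structure on $S_0$ — the very computation you hoped to avoid.
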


\begin{proof}
Denote by $R^{\Rightarrow}$ and $R^{\times, \Rightarrow}$ the sheared versions of  $R$ and $R^\times$.
Arguing as before, it suffices to construct, for each $\nu \in \Lambda^{\dom}$, an isomorphism
\begin{equation} \label{eqn:serre condition for shifted Qcoh N mod G}
\RHom_{\Rep(G)}
\Bigt{
V_\nu, 
\ker(R^\Rightarrow \to R^{\times, \Rightarrow})
}
\simeq
\RHom_{\Rep(G)}
\bigt{
V_{-w_0(\nu)}, R^\Rightarrow}^*.
\end{equation}
To determine both sides explicitly, we need to decompose both $R$ and $R^{\times}$ as $(G \times \Gm)$-representation.
In view of \eqref{eqnarray:first line}, the $(G \times \Gm)$-decomposition of $R$ is the tautological one coming from the grading of $\Sym \fn^*$:
$$
R
\simeq
\bigoplus_{\lambda, m}
\RHom_{\Rep (B)}(V_\lambda, \Sym^m \fn^*) \otimes V_\lambda.
$$
It follows that
$$
R^\Rightarrow
\simeq
\bigoplus_{\lambda, m}
\RHom_{\Rep (B)}(V_\lambda, \Sym^m \fn^*) 
\otimes V_\lambda
[-2m],
$$
so that the RHS of \eqref{eqn:serre condition for shifted Qcoh N mod G} can be rewritten as
\begin{equation} \label{eqn:RHS in a proof}
\RHom_{\Rep(G)}
\bigt{
V_{-w_0(\nu)}, R^\Rightarrow}^*
\simeq
\left(
\bigoplus_{m \in \NN}
\RHom_{\Rep (B)}(V_{-w_0\nu}, \Sym^m \fn^*) 
[-2m]
\right) ^*.
\end{equation}
Similarly, in view of \eqref{eqnarray:second line}, the $(G \times \Gm)$-decomposition of $\ker(R \to R^\times)$ is
$$
\ker(R \to R^\times)
\simeq
\bigoplus_{\lambda,m}
\Bigt{
\RHom_{\Rep (B)}(V_\lambda, \Sym^m \fn \otimes \Lambda^{\dim \fn} \fn)
}
\otimes 
V_\lambda
[-\dim \fn],
$$
with the expression in parentheses of weight $(-m- \dim \fn)$. Hence, 
$$
\ker(R^\Rightarrow \to R^{\times, \Rightarrow})
\simeq
\ker(R \to R^\times)^\Rightarrow
\simeq
\bigoplus_{\lambda,m}
\RHom_{\Rep (B)}(V_\lambda, \Sym^m \fn 
\otimes \Lambda^{\dim \fn} \fn
)
\otimes 
V_\lambda
[2m +  \dim \fn].
$$
From this, we see that the LHS of \eqref{eqn:serre condition for shifted Qcoh N mod G} equals
$$
\RHom_{\Rep(G)}
\Bigt{
V_\nu, 
\ker(R^\Rightarrow \to R^{\times, \Rightarrow})
}
\simeq
\bigoplus_{m}
\RHom_{\Rep (B)}(V_\nu, \Sym^m \fn \otimes \Lambda^{\dim \fn} \fn) 
[2m + \dim \fn].
$$
Comparing this equation with \eqref{eqn:RHS in a proof}, it remains to exhibit, for each $\nu$ and $m$, an isomorphism
$$
\RHom_{\Rep (B)}(V_\nu, \Sym^m \fn \otimes \Lambda^{\dim \fn} \fn) 
[\dim \fn]
\simeq
\RHom_{\Rep (B)}(V_{- w_0 (\nu)}, \Sym^m \fn^* )^*.
$$ 
Such isomorphism is the one induced by the Serre functor of $\Rep (B)$.
\end{proof}

\ssec{The main Serre computation} \label{ssec:main serre computation}

In this section, we finally show that the Serre functor on $\ICoh_\N(\Omega \g/G)$ equals the temperization functor up to a cohomological shift: this is the content of Theorem \ref{thm:Serre on ICOH_N (LSG)}. A key ingredient will be the following pair of Koszul duality equivalences.

\begin{lem} \label{lem:Koszul-pippo}
There are natural \virg{Koszul duality} equivalences
\begin{equation} \label{eqn:Koszul duality for proof}
 \ICoh_\N(\Omega \g/G)
  \simeq 
  \ICoh((\g^*/G)^\wedge_{\N/G})^\Rightarrow.
\end{equation}
\begin{equation} \label{eqn:Koszul duality for proof -Zer support}
 \QCoh(\Omega \g/G)
  \simeq 
  \ICoh((\g^*/G)^\wedge_{\pt/G})^\Rightarrow,
\end{equation}
where $\N$ is regarded as a subscheme of $\g^*$.
\end{lem}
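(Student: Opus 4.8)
The plan is to deduce both equivalences from a single Koszul-duality statement for the self-intersection of a zero section, applied to two different supports. First I would record the identification
$$
\Omega\g/G \;\simeq\; \pt/G \ustimes{\g/G} \pt/G ,
$$
which exhibits $\Omega\g/G$ as the self-intersection of the zero section of the adjoint ``vector bundle'' $\g/G \to \pt/G$. Since $\pt/G \hto \g/G$ is a regular (quasi-smooth, eventually coconnective) closed embedding whose conormal bundle is $\g^*$ with its coadjoint action, this is exactly the situation governed by the theory of singular support: $\Omega\g/G$ is quasi-smooth, $\Sing(\Omega\g/G) \simeq \g^*/G$, and the zero section of $\g^*/G$ — that is, the substack $\pt/G$ — corresponds to the subcategory $\QCoh(\Omega\g/G) \subseteq \ICoh(\Omega\g/G)$ (Section \ref{sssec:review of singular support}).

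The key input is then the following Koszul-duality equivalence, which I would extract from \cite{AG1} (in the form of quasi-smooth Koszul duality; see also \cite{ICoh} and \cite{Book}): for a reductive group $G$ acting linearly on a finite-dimensional vector space $V$, with $\Omega V = \Spec\Sym(V^*[1])$ as in \eqref{eqn:Omega V}, there is a canonical equivalence
$$
\ICoh_M(\Omega V/G) \;\simeq\; \ICoh\bigl((V^*/G)^\wedge_{M/G}\bigr)^\Rightarrow ,
$$
natural in the $G$-invariant conical closed subset $M \subseteq V^* \simeq \Sing(\Omega V/G)$; here $(V^*/G)^\wedge_{M/G}$ is the formal completion of the smooth stack $V^*/G$ along $M/G$, equipped with the $\Gm$-action by homotheties on $V^*$, and $\Rightarrow$ is the shearing operation of Section \ref{ssec:shifts of grading}. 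The shearing is forced by the fact that the generators of $\Sym(V^*[1])$ sit in cohomological degree $-1$, exactly as in Lemma \ref{lem:Omega V global sections}. The cleanest route is to invoke the quasi-smooth Koszul-duality equivalence of \cite{AG1} directly in its $G$-equivariant form (this is the form in which it enters the treatment of derived Satake in \cite[Section 12]{AG1}); alternatively, one proves the non-equivariant statement and descends along $\pt/G$, using $1$-affineness of $\pt/G$ and of $\pt/\Gm$ (\cite{shvcat}). Note also that, since $V^*/G$ is a smooth stack, $\ICoh$ and $\QCoh$ of the completion $(V^*/G)^\wedge_{M/G}$ coincide (Section \ref{sssec:ICoh on formal as tensor product}), so the right-hand side may equally be written with $\QCoh$.

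Granting this, the lemma follows by specializing to $V = \g$ with its adjoint action, so that $V^*/G = \g^*/G$: taking $M = \N$ (viewed inside $\g^*$ via a $G$-equivariant identification $\g \simeq \g^*$) gives \eqref{eqn:Koszul duality for proof}, while taking $M = \{0\}$, the zero section, gives \eqref{eqn:Koszul duality for proof -Zer support}, since $\ICoh_{O_{\Omega\g/G}}(\Omega\g/G) \simeq \QCoh(\Omega\g/G)$ and the zero section of $\g^*/G$ is $\pt/G$.

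I expect the main obstacle to be bookkeeping rather than substance: pinning down the precise form of the Koszul-duality equivalence — in particular the direction of the shearing, and the identification of the two a priori different $\Gm$-actions involved (the homothety action on $\g^*/G$ versus the weight grading on $\Omega\g/G = \Spec\Sym(\g^*[1])$) — and checking that the equivalence is natural in $M$, so that the two displayed cases are genuinely instances of a single family. None of this goes beyond the techniques of \cite{AG1}, but it does demand care with gradings and shifts.
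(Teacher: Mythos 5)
Your approach is essentially the paper's: both proceed by the Koszul duality of \cite[Proposition 12.4.2]{AG1}, converting singular support on $\Omega\g/G$ into set-theoretic support on the (sheared) $\g^*/G$, and your option (b) — proving the non-equivariant statement and descending along $\pt/G$ — is precisely what the paper does, phrased there via the smooth-local computation of singular support, i.e.\ the fiber-product decomposition $\ICoh_{\N/G}(\Omega\g/G) \simeq \ICoh(\Omega\g/G)\ustimes{\ICoh(\Omega\g)}\ICoh_\N(\Omega\g)$. Where you are too quick is at the end of that descent: after applying Koszul duality to each factor you are left with $\ICoh(\g^*/G)^\Rightarrow \ustimes{\ICoh(\g^*)^\Rightarrow}\ICoh((\g^*)^\wedge_\N)^\Rightarrow$, and it is a genuine step — not just grading bookkeeping — to identify this fiber product with $\ICoh((\g^*/G)^\wedge_{\N/G})^\Rightarrow$. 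The paper proves this by observing that both sides embed fully faithfully into $\ICoh(\g^*/G)$ as the subcategory of objects $\F$ with $j^!q^!\F\simeq 0$, where $q:\g^*\to\g^*/G$ is the quotient map and $j:\g^*-\N\hto\g^*$ the complementary open. Your list of anticipated obstacles (direction of shearing, matching the two $\Gm$-actions, naturality in $M$) omits exactly this identification, which is the one place where the proof requires an argument rather than an invocation; your option (a), invoking a $G$-equivariant singular-support Koszul duality off the shelf, is not clearly available in \cite{AG1} in the stated form, which is presumably why the paper writes out the descent.
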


\begin{rem}
As the proof below shows, on the RHS of \eqref{eqn:Koszul duality for proof} and \eqref{eqn:Koszul duality for proof -Zer support} we could replace $\ICoh$ by $\QCoh$. However, the functoriality of ind-coherent sheaves is more convenient when dealing with formal completions: the usage of Lemma \ref{lem:Koszul-pippo} in the proof of Theorem \ref{thm:Serre on ICOH_N (LSG)} will make this clear (see also Section \ref{ssec:indcoh-sing-supp}).
\end{rem}

\begin{proof}
We will only prove the first assertion; for the second one, follow the exact same steps using the inclusion $\pt \hto \g^*$ of the origin instead of the inclusion of the nilpotent cone.

Recall that we have been abusing notation: the DG category $ \ICoh_\N(\Omega \g/G)$ should be more properly denoted by $ \ICoh_{\N/G}(\Omega \g/G)$, since possible singular supports of ind-coherent sheaves live inside $\g^*/G$, and in the case at hands we are looking at the subset $\N/G \subset \g^*/G$.
For clarity, in this proof, we will use the more precise notation.
Since singular support can be computed smooth-locally (\cite[Section 8]{AG1}), we have:
$$
\ICoh_{\N/G}(\Omega \g/G)
\simeq
\ICoh(\Omega \g/G)
\ustimes{\ICoh(\Omega \g)}
\ICoh_{\N}(\Omega \g),
$$
where the two maps in the fiber product are the natural pullback and the inclusion $\Xi_{\N \hto \g^*}$, respectively. 
Now, we use the Koszul duality equivalences of \cite[Proposition 12.4.2]{AG1}:
$$
\ICoh(\Omega \g)
\simeq
\Sym(\g[-2])\mod
\simeq
\QCoh(\g^*)^\Rightarrow;
$$
$$
\ICoh(\Omega \g/G)
\simeq
(\Sym(\g[-2])\mod)^G
\simeq
\QCoh(\g^*/G)^\Rightarrow.
$$
The first of these two equivalences transforms singular support on the LHS into set-theoretic support on the RHS. This is proven in \cite[Section 9.1, especially 9.1.6 and Corollary 9.1.7]{AG1}; see also \cite[Section 2.2]{strong-gluing} for a slightly different point of view. 
In particular, 
$$
\ICoh_\N(\Omega \g)
\simeq
\QCoh((\g^*)^\wedge_\N)^\Rightarrow,
$$
where$(\g^*)^\wedge_\N$ denotes the formal completion of the closed embedding $\N \hto \g^*$.
Since $\g^*$ and $\g^*/G$ are smooth, we can identify quasi-cohererent sheaves with ind-coherent sheaves on them, so that 
$$
\ICoh(\Omega \g)
\simeq
\ICoh(\g^*)^\Rightarrow;
$$
$$
\ICoh(\Omega \g/G)
\simeq
\ICoh(\g^*/G)^\Rightarrow;
$$
$$
\ICoh_\N(\Omega \g)
\simeq
\ICoh((\g^*)^\wedge_\N)^\Rightarrow.
$$
All in all, Koszul duality yields
$$
\ICoh_{\N/G}(\Omega \g/G)
\simeq
\ICoh(\g^*/G)^\Rightarrow
\ustimes{\ICoh(\g^*)^\Rightarrow}
\ICoh((\g^*)^\wedge_\N)^\Rightarrow,
$$
which is in turn equivalent to
$$
\left (
\ICoh(\g^*/G)
\ustimes{\ICoh(\g^*)}
\ICoh((\g^*)^\wedge_\N)
\right )
^\Rightarrow.
$$
Let $q: \g^* \to \g^*/G$ be the quotient and $j: \g^* - \N \hto \g^*$ the open embedding complementary to the nilpotent cone.
To conclude the proof, it remains to show that the $\Gm$-equivariant functor
\begin{equation} \label{eqn:auxiliary Koszul duality}
  \ICoh((\g^*/G)^\wedge_{\N/G})
  \longto
\ICoh(\g^*/G)
\ustimes{\ICoh(\g^*)}
\ICoh((\g^*)^\wedge_\N)
\end{equation}
induced by the inclusion $  \ICoh((\g^*/G)^\wedge_{\N/G}) \hto
\ICoh(\g^*/G)$
is an equivalence.
This is clear: both sides of \eqref{eqn:auxiliary Koszul duality} identify with the full subcategory of $\ICoh(\g^*/G)$ spanned by those objects $\F$ such that $j^! q^! \F \simeq 0$.
\end{proof}

\sssec{}

In the above discussion, $\N$ was naturally viewed as a subscheme of $\g^*$. However, to use the Serre computations of the previous sections, we prefer to realize $\N$ as a closed subscheme of $\g$. Thus, we fix once and for all a $G$-equivariant identification $\g \simeq \g^*$. Incorporating this into the above Koszul dualities, we obtain equivalences
\begin{equation} \label{eqn:Koszul dualities better}
 \ICoh_\N(\Omega \g/G)
  \simeq 
  \ICoh((\g/G)^\wedge_{\N/G})^\Rightarrow,
  \hspace{.4cm}
 \QCoh(\Omega \g/G)
  \simeq 
  \ICoh((\g/G)^\wedge_{\pt/G})^\Rightarrow,
\end{equation}
where now the shearing is such that $(\Sym \g^*)^\Rightarrow \simeq \Sym (\g^*[-2])$.
To guide the reader through the shearings that will follow, it suffices to remember that dual Lie algebras ($\g^*$ and $\fn^*$) have weight $1$. In particular, we are in agreement with the shearing of Section \ref{ssec:shifts of grading}.

\sssec{}

Let $\iota_0: (\g/G)^\wedge_{\pt/G} \to (\g/G)^\wedge_{\N/G}$ be the map induced by the inclusion of the origin $0 \in \N$.
This map is an inf-closed embedding (see Section \ref{sssec:inf stuff}), and thus it yields the adjunction
$$ 
\begin{tikzpicture}[scale=1.5]
\node (a) at (0,1) {$\ICoh((\g/G)^\wedge_{\pt/G}) $};
\node (b) at (4,1) {$\ICoh((\g/G)^\wedge_{\N/G}) $.};
\path[ ->,font=\scriptsize,>=angle 90]
([yshift= 1.5pt]a.east) edge node[above] {$(\iota_0)_*^\ICoh$} ([yshift= 1.5pt]b.west);
\path[->,font=\scriptsize,>=angle 90]
([yshift= -1.5pt]b.west) edge node[below] {$(\iota_0)^!$} ([yshift= -1.5pt]a.east);
\end{tikzpicture}
$$
These two adjoint functors are both $\Gm$-equivariant with respect to the two natural $\Gm$-actions on both sides, hence they can be sheared.

\sssec{}

We have seen above that Koszul duality interchanges singular support and the usual set-theoretic support. Consequently, under the above equivalences, the comonad 
 $$
 \Xi_{0 \to \N} \circ  \Psi_{0 \to \N}:
 \ICoh_\N(\Omega \g/G)
 \tto
 \QCoh(\Omega \g/G)
 \hto
 \ICoh_\N(\Omega \g/G) 
$$
goes over to the comonad
\begin{equation} \label{eqn:long1}
\bigt{ (\iota_0)_*^\ICoh}^\Rightarrow
 \circ 
 \bigt{ \iota_0^! }^\Rightarrow
  :
\ICoh((\g/G)^\wedge_{\N/G})^\Rightarrow
\tto
\ICoh((\g/G)^\wedge_{\pt/G}) ^\Rightarrow
\hto
\ICoh((\g/G)^\wedge_{\N/G})^\Rightarrow.
\end{equation}

\sssec{}

Now recall that $\ICoh_\N(\Omega \g/G)$ is proper: in Section \ref{sec:GL for P1 and outline Thm C} we showed that $\ICoh_\N(\Omega \g/G)$ is equivalent to $\Dmod(\Bun_\Gch(\bbP^1))$, and in Section \ref{sssec:properness of DG cats} we proved that the latter DG category is proper.
We will nevertheless give a more direct proof: this will help us progress with the computation of the Serre functor of $\ICoh_\N(\Omega \g/G)$. 
Using Koszul duality to identify $\ICoh_\N(\Omega \g/G) 
\simeq
 \ICoh((\g/G)^\wedge_{\N/G})^\Rightarrow$, our plan will be to prove the properness of the latter DG category. The starting point is Lemma \ref{lem:cpt gen of spectral sph cat}, in which we exhibit a convenient collection of compact generators.
 
 \begin{rem}
In the sequel, we will denote by
$$ 
\begin{tikzpicture}[scale=1.5]
\node (a) at (0,1) {$\QCoh(\N/G)$};
\node (b) at (2.3,1) {$\ICoh(\N/G)$};
\path[right hook ->,font=\scriptsize,>=angle 90]
([yshift= 1.5pt]a.east) edge node[above] {$\Xi_{\N/G}$} ([yshift= 1.5pt]b.west);
\path[->>,font=\scriptsize,>=angle 90]
([yshift= -1.5pt]b.west) edge node[below] {$\Psi_{\N/G}$} ([yshift= -1.5pt]a.east);
\end{tikzpicture}
$$
the natural adjunction. Observe that both functors are $\Gm$-equivariant and therefore they can be sheared.
\end{rem}

\begin{lem} \label{lem:cpt gen of spectral sph cat}
Let $\pi: \N/G \to BG$ and $f: \N/G \to (\g/G)^\wedge_{\N/G}$ be the obvious maps. As above, their associated pullback and pushforward functors can be sheared.
We claim that the objects
\begin{equation} \label{eqn:long2}
\F_\lambda
 := 
(f_*^{\ICoh})^\Rightarrow 
(\Xi_{\N/G})^\Rightarrow
 (\pi^*)^\Rightarrow(V_\lambda),
\hspace{.4cm}
\mbox{ for all $\lambda \in \Lambda^{\dom}$,}
\end{equation}
form a collection of compact generators of $\ICoh((\g/G)^\wedge_{\N/G})^\Rightarrow$.
\end{lem}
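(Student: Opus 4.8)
The plan is to treat compactness and generation separately, in each case reducing to a property of the \emph{unsheared} functor
$$
\Phi \;:=\; f_*^{\ICoh}\circ\Xi_{\N/G}\circ\pi^*\;\colon\;\Rep(G)\longto\ICoh\bigl((\g/G)^\wedge_{\N/G}\bigr),
$$
so that $\F_\lambda=\Phi^\Rightarrow(V_\lambda)$; throughout I write $\i\colon\N/G\hto\g/G$ for the closed embedding, so that $f$ is its canonical factorization through the formal completion and $\i=\hat{\i}\circ f$.

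\textbf{Compactness.} The functor $\Phi$ has right adjoint $\pi_*\circ\Psi_{\N/G}\circ f^!$. In this composite $\pi_*$ is continuous because $\pi$ is affine, $\Psi_{\N/G}$ is always continuous, and $f^!$ is continuous because $\i$ is a regular closed embedding (the equations cutting out $\fc_G$ pull back along the flat map $\g\to\fc_G$ to a regular sequence), so that $\i^!$, and hence $f^!$, is restriction followed by a tensor with a shifted line bundle. Thus $\Phi$ preserves compact objects; as $V_\lambda$ is perfect, hence compact, in $\Rep(G)$, the object $\Phi(V_\lambda)$ is compact, and so is its shear $\F_\lambda$ since shearing is an equivalence of $\infty$-categories preserving compact objects (Section~\ref{ssec:shifts of grading}).

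\textbf{Generation.} By Lemma~\ref{lem:conservative Rightarrows} it is equivalent to show that $\{\Phi(V_\lambda)\}$ generates $\ICoh((\g/G)^\wedge_{\N/G})$ under colimits; since $\{V_\lambda\}_{\lambda\in\Lambda^{\dom}}$ generates $\Rep(G)$ and $\Phi$ preserves colimits, this amounts to the conservativity of $\Phi^R=\pi_*\circ\Psi_{\N/G}\circ f^!$. Now $\pi_*$ is conservative ($\pi$ affine) and $f^!$ is conservative ($f$ is a nil-isomorphism, Section~\ref{sssec:review of singular support}), so the point is that $\Psi_{\N/G}\circ f^!$ is conservative --- which is \emph{not} formal, since $\Psi_{\N/G}$ is far from conservative on $\ICoh(\N/G)$. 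I would argue this on the quasi-coherent side: because $\g/G$ is smooth, $\Upsilon_{(\g/G)^\wedge_{\N/G}}$ is an equivalence $\QCoh((\g/G)^\wedge_{\N/G})\xrightarrow{\ \sim\ }\ICoh((\g/G)^\wedge_{\N/G})$ (Section~\ref{sssec:ICoh on formal as tensor product}), identifying the source with the category of quasi-coherent sheaves on $\g/G$ set-theoretically supported on $\N/G$; since $\Upsilon$ intertwines the relevant pullbacks and since $\Psi_{\N/G}\circ\Upsilon_{\N/G}\simeq(-)\otimes\Psi_{\N/G}(\omega_{\N/G})\simeq(-)[\dim(\N/G)]$ by Lemma~\ref{lem:anomaly for nilcone}, the composite $\Psi_{\N/G}\circ f^!$ is identified, up to a fixed shift, with the restriction of the quasi-coherent pullback $\i^*$ to sheaves supported on $\N/G$. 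The latter is conservative --- a sheaf supported on $\N/G$ whose derived restriction to $\N/G$ vanishes must vanish, by a Koszul/Nakayama argument along the regular embedding $\i$ (equivalently, this is the conservativity of $f^!$ transported through $\Upsilon$) --- and this finishes the proof.

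\textbf{Main obstacle.} The delicate part will be the identification in the previous paragraph: tracking the cohomological and $\Gm$-weight shifts produced by the successive passages between $\QCoh$ and $\ICoh$ (through $\Xi_{\N/G}$, $\Psi_{\N/G}$, $\Upsilon_{\N/G}$, $\Upsilon_{(\g/G)^\wedge_{\N/G}}$ and the relative dualizing complex of $f$), and checking that every line-bundle twist that appears is trivial, which rests on Lemma~\ref{lem:anomaly for nilcone} together with the triviality of the determinant of the normal bundle of $\N/G$ in $\g/G$ (this normal bundle being pulled back from the vector space $\fc_G$). An equivalent route, trading the conservativity statement for the same bookkeeping, is to identify $\Phi(V_\lambda)$ directly with $\Upsilon_{(\g/G)^\wedge_{\N/G}}(\i_*^{\QCoh}(\pi^*V_\lambda))$ up to shift and then observe that the objects $\i_*^{\QCoh}(A_\lambda)$ generate the category of sheaves supported on $\N/G$, because the perfect objects $A_\lambda=\pi^*V_\lambda$ generate $\QCoh(\N/G)$ (recalled at the beginning of this section) and $\i_*^{\QCoh}$ is continuous with right adjoint conservative on supported sheaves.
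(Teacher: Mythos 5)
Your argument takes essentially the same route as the paper: unshear via Lemma~\ref{lem:conservative Rightarrows}, reduce to showing the right adjoint (up to $\pi_*$, an extra step you carry because you start from $\Rep(G)$ rather than $\QCoh(\N/G)^\Rightarrow$) is continuous and conservative, pass to the $\QCoh$-side via $\Upsilon_{(\g/G)^\wedge_{\N/G}}$, and invoke Lemma~\ref{lem:anomaly for nilcone} to identify $\Psi_{\N/G}\circ f^!$ with $f^*$ up to shift, whose conservativity you then reduce (as you yourself remark parenthetically) to that of $f^!$ transported through $\Upsilon$. Two small corrections worth making. First, your reason that $f^!$ is continuous is off: $f$ is the nil-isomorphism $\N/G\to(\g/G)^\wedge_{\N/G}$, not the closed embedding $\i:\N/G\hto\g/G$, and the regularity of $\i$ does not directly give you a formula for $f^!$; the correct, shorter reason is simply that $f$ is inf-proper (being a nil-isomorphism), so $f^!$ is the right adjoint to $f_*^{\ICoh}$ and is continuous. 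Second, the bookkeeping you flag as the ``main obstacle'' is a non-issue for the statement you actually need: a cohomological shift or line-bundle twist is an equivalence, so for conservativity one need not keep track of it at all --- the paper simply records that $\Psi_{\N/G}\circ\Upsilon_{\N/G}$ is an equivalence and moves on, without computing which equivalence.
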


\begin{proof}
As mentioned earlier, the objects $ (\pi^*)^\Rightarrow(V_\lambda)$ compactly generate $\QCoh(\N/G)^\Rightarrow$. Hence, it suffices to prove that the functor 
$$
(f_*^{\ICoh})^\Rightarrow 
(\Xi_{\N/G})^\Rightarrow:
\QCoh(\N/G)^\Rightarrow
\longto
\ICoh((\g/G)^\wedge_{\N/G})^\Rightarrow
$$
has a continuous and conservative right adjoint. In view of Lemma \ref{lem:conservative Rightarrows}, we can remove the shifts and instead prove that $f_*^{\ICoh} \circ \Xi_{\N/G}$ admits a continuous and conservative right adjoint. 
Since $f$ is a nil-isomorphism, the continuous functor $f^!$ is conservative and right adjoint to $f_*^\ICoh$, see \cite[Volume II, Chapter 3.3, especially Proposition 3.1.2]{Book}. Hence,
$$
\bigt{
f_*^{\ICoh} \circ
\Xi_{\N/G}
}^R
\simeq
\Psi_{\N/G} \circ f^!,
$$
which is evidently continuous. 
Let us show conservativity. 
Observe first that the functor
$$
\Upsilon_{(\g/G)^\wedge_{\N/G}}:
\QCoh ( (\g/G)^\wedge_{\N/G})
\longto
\ICoh( (\g/G)^\wedge_{\N/G})
$$
is an equivalence: this follows from Section \ref{sssec:ICoh on formal as tensor product} and descent (to take care of the stackyness).
Thus, it suffices to prove that $
\Psi_{\N/G} \circ f^! \circ \Upsilon_{(\g/G)^\wedge_{\N/G}}
$ is conservative. In general, $\Upsilon$ intertwines $*$-pullbacks of quasi-coherent sheaves with $!$-pullbacks of ind-coherent sheaves; in our case, this implies that
$$
\Psi_{\N/G} \circ f^! \circ \Upsilon_{(\g/G)^\wedge_{\N/G}}
\simeq
\Psi_{\N/G} \circ \Upsilon_{{\N/G}} \circ f^*.
$$
The quasi-smoothness of $\N/G$ guarantees that $\Psi_{\N/G} \circ \Upsilon_{{\N/G}}$ is an equivalence: it is the functor of tensoring with a shifted line bundle, see \cite[Section 7]{ICoh} or Lemma \ref{lem:anomaly for nilcone}. It remains then to prove that 
$$
f^*:
\QCoh ( (\g/G)^\wedge_{\N/G})
\longto
\QCoh(\N/G)
$$
is conservative. This can be seen in various ways. For instance, using the equivalence $\Upsilon_{(\g/G)^\wedge_{\N/G}}$ again, the assertion is equivalent to the conservativity of
$$
f^* \circ \Upsilon_{(\g/G)^\wedge_{\N/G}}
\simeq
\Upsilon_{\N/G} \circ f^!.
$$
The latter is clear: $f^!$ is conservative as mentioned earlier, while $\Upsilon_{\N/G}$ is even fully faithful.
\end{proof}

\begin{rem} \label{rem:boring}
For later usage, let us compare the object
$
\F_\lambda
 := 
(f_*^{\ICoh})^\Rightarrow 
(\Xi_{\N/G})^\Rightarrow
 (\pi^*)^\Rightarrow(V_\lambda)$
of \eqref{eqn:long2} with the following similar object:
$$
\F'_\lambda
 := 
(f_*^{\ICoh})^\Rightarrow 
 (\pi^!)^\Rightarrow
 (\Upsilon_{\pt/G})^\Rightarrow
 (V_\lambda).
 $$
We claim that these two objects differ by a cohomological shift: precisely, $\F'_\lambda \simeq \F_\lambda [\dim(\N/G)]$. To see this, observe that 
$$
\pi^! \circ \Upsilon_{\pt/G} 
\simeq 
\Upsilon_{\N/G} \circ  \pi^* 
\simeq 
\Xi_{\N/G} \circ  \Psi_{\N/G} \circ  \Upsilon_{\N/G} \circ \pi^*
\simeq
\Xi_{\N/G} \bigt{ \Psi_{\N/G}(\omega_{\N/G}) \otimes \pi^*}
\simeq
\Xi_{\N/G} \circ  \pi^* [\dim(\N/G)],
$$
the last step being an application of Lemma \ref{lem:anomaly for nilcone}.
\end{rem}

\begin{lem} \label{lem:properness of spectral sph cat}
The DG category $ \ICoh((\g/G)^\wedge_{\N/G})^\Rightarrow$ is proper.
\end{lem}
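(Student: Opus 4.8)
The plan is to deduce the claim from the properness of $\QCoh(\N/G)$ proved at the start of Section~\ref{sec:Serre}. By Lemma~\ref{lem:cpt gen of spectral sph cat}, the objects $\F_\lambda$, $\lambda\in\Lambda^{\dom}$, compactly generate $\ICoh((\g/G)^\wedge_{\N/G})^\Rightarrow$, so it suffices to check that each complex $\RHom(\F_\lambda,\F_\mu)$ has finite-dimensional total cohomology. Since every construction below is $\Gm$-equivariant and since shearing alters a $\RHom$-complex only by a weight-dependent relabeling of its cohomological degrees — hence preserves finite-dimensionality — it is equivalent (and cleaner) to work with the unsheared objects $\wt\F_\lambda:=f_*^{\ICoh}\,\Xi_{\N/G}\,\pi^*(V_\lambda)\in\ICoh((\g/G)^\wedge_{\N/G})$, which compactly generate $\ICoh((\g/G)^\wedge_{\N/G})$. (As a by-product, this will show that $\ICoh((\g/G)^\wedge_{\N/G})$ is itself proper, the shearing entering only in the Serre-functor computation that follows.)

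First I would use the adjunction $(f_*^{\ICoh},f^!)$ from Section~\ref{sssec:review of singular support} to rewrite
$$
\RHom_{\ICoh((\g/G)^\wedge_{\N/G})}(\wt\F_\lambda,\wt\F_\mu)
\;\simeq\;
\RHom_{\ICoh(\N/G)}\Bigt{\Xi_{\N/G}\pi^*(V_\lambda),\;f^!f_*^{\ICoh}\,\Xi_{\N/G}\pi^*(V_\mu)},
$$
and then control the monad $f^!f_*^{\ICoh}$, which is the universal envelope $\U(\Tang_{(\N/G)/(\g/G)})$ of the relative tangent Lie algebroid. The crucial point is that this enveloping algebra is \emph{finite} over $\O_{\N/G}$: since $\N/G\simeq\g/G\times_{\fc_G}\pt$ is the base change along $\g/G\to\fc_G$ of the origin $\pt\hookrightarrow\fc_G$, which is a regular embedding of codimension $\dim(\fc_G)$, the complex $\Tang_{(\N/G)/(\g/G)}$ is the trivial bundle $\fc_G\otimes\O_{\N/G}$ placed in cohomological degree $1$. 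Consequently $\U(\Tang_{(\N/G)/(\g/G)})$ carries a finite PBW filtration with associated graded $\bigoplus_{k=0}^{\dim(\fc_G)}\bigl(\bigwedge\nolimits^k(\fc_G)\bigr)[-k]\otimes\O_{\N/G}$ (with the $\Gm$-weights inherited from $\fc_G$); in particular its underlying $\O_{\N/G}$-module is perfect.

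It then follows that $f^!f_*^{\ICoh}\,\Xi_{\N/G}\pi^*(V_\mu)$ is a finite iterated extension of the objects $\Xi_{\N/G}\pi^*(V_\mu)\otimes_{\kk}\bigl(\bigwedge\nolimits^k(\fc_G)\bigr)[-k]$, with $\bigwedge\nolimits^k(\fc_G)$ regarded as a $G$-representation. Because $\Xi_{\N/G}$ commutes with the $\QCoh(\N/G)$-action and $\pi^*$ is symmetric monoidal, each such object equals $\Xi_{\N/G}\pi^*\bigl(V_\mu\otimes\bigwedge\nolimits^k(\fc_G)\bigr)[-k]$; and since $V_\mu\otimes\bigwedge\nolimits^k(\fc_G)$ is a finite-dimensional $G$-representation, it is a finite direct sum of shifts of the objects $\Xi_{\N/G}(A_\nu)=\Xi_{\N/G}\pi^*(V_\nu)$, $\nu\in\Lambda^{\dom}$. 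Using the full faithfulness of $\Xi_{\N/G}$ (valid since $\N/G$ is quasi-smooth), I would conclude that $\RHom_{\ICoh((\g/G)^\wedge_{\N/G})}(\wt\F_\lambda,\wt\F_\mu)$ is a finite iterated extension of shifts of the complexes $\RHom_{\QCoh(\N/G)}(A_\lambda,A_\nu)$, each finite-dimensional by the properness of $\QCoh(\N/G)$; hence it is finite-dimensional, as required.

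The main obstacle will be the identification and control of the monad $f^!f_*^{\ICoh}$: one must recognize it as $\U(\Tang_{(\N/G)/(\g/G)})$ and verify that this algebra is a perfect $\O_{\N/G}$-module — this is exactly the feature that makes the Hom complexes collapse onto those of the proper category $\QCoh(\N/G)$, and it rests on $\N/G\hookrightarrow\g/G$ being a regular embedding of \emph{finite} codimension $\dim(\fc_G)$, i.e.\ on Kostant's flatness of $\g\to\fc_G$. The remaining inputs — the reduction supplied by Lemma~\ref{lem:cpt gen of spectral sph cat}, the full faithfulness of $\Xi_{\N/G}$, and the compatibility of shearing with finite-dimensionality of mapping complexes — are routine.
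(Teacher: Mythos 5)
Your proof is correct and follows the paper's strategy through the universal envelope of the relative tangent Lie algebroid, with two worthwhile streamlinings. First, you factor out the shearing: since shearing a $\Gm$-graded mapping complex merely relabels cohomological degrees weight-by-weight, it preserves finite-dimensionality of total cohomology, so properness of $\ICoh((\g/G)^\wedge_{\N/G})^\Rightarrow$ is equivalent to properness of the unsheared $\ICoh((\g/G)^\wedge_{\N/G})$. This is a genuine simplification: the paper carries $\Rightarrow$ throughout and computes $(\Sym(\Tang_{\fc_G,0}[-1]))^\Rightarrow$ explicitly as an exterior algebra, but the unsheared $\Sym(\Tang_{\fc_G,0}[-1])$ is already an exterior algebra (the generators sit in the odd degree $1$), so the shearing is only essential in the subsequent Serre-functor computation, not for properness. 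Second, you replace the paper's direct assertion that the underlying functor of the monad $\U(\Tang_{(\N/G)/(\g/G)})$ is tensoring with the graded vector space $\Sym(\Tang_{\fc_G,0}[-1])$ by a finite PBW filtration together with an iterated-extension argument; this is marginally more robust (you use only the associated graded, not the algebra identification). Both rest on the same key computation — the relative tangent complex is the rank-$\dim\fc_G$ trivial bundle concentrated in degree $1$, coming from the flat (hence derived $=$ classical) presentation $\N/G \simeq \g/G \times_{\fc_G}\pt$ — and both bottom out in the properness of $\QCoh(\N/G)$, exactly as you conclude.
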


\begin{proof}
Since the $\F_\lambda$ above form a collection of compact generators, it suffices to prove
that 
$$
\RHom_{\ICoh((\g/G)^\wedge_{\N/G})^\Rightarrow}(\F_\lambda, \F_\mu)
$$ 
is finite dimensional for any pair $\lambda, \mu \in \Lambda^{\dom}$.
Thanks to the fact that $f$ is a nil-isomorphism, we obtain by adjunction that
$$
\RHom
_{\ICoh((\g/G)^\wedge_{\N/G})^\Rightarrow}
(\F_\lambda, \F_\mu)
\simeq
\RHom_{\ICoh(\N/G)^\Rightarrow}
\Bigt{
(\Xi_{\N/G} \circ  \pi^*)^\Rightarrow (V_\lambda), \U(\Tang_{(\N/G)/(\g/G)})^\Rightarrow \circ  (\Xi_{\N/G} \circ \pi^*)^\Rightarrow (V_\mu)
},
$$
where $\U(\Tang_{(\N/G)/(\g/G)})$ is the universal envelope of the Lie algebroid $\Tang_{(\N/G)/(\g/G)} \to \Tang_{\N/G}$. See \cite[Volume II, Chapter 8]{Book} for these notions: in particular, $\U(\Tang_{(\N/G)/(\g/G)})$ is a monad acting on $\ICoh(\N/G)$, and relative tangent complexes are regarded as ind-coherent sheaves\footnote{
In the present case, and in general when the relative cotangent complex $\bbL_{Y/Z}$ of a map $Y \to Z$ is perfect, $\Tang_{Y/Z}$ is obtained by applying $\Upsilon_Y$ to the quasi-coherent dual of $\bbL_{Y/Z}$.
}.

To proceed, let us compute the relative tangent complex $\Tang_{(\N/G)/(\g/G)}$ and then its universal envelope.
We will use the isomorphism
$$
\N/G \simeq \g/G \times_{\fc_G} 0,
$$
where $\fc_G \simeq \Spec(\Sym (\g^*)^G)$ is isomorphic, after our $G$-equivariant identification $\g \simeq \g^*$, to the Chevalley space of Section \ref{sssec:Chevalley space}.
Since the fiber product on the RHS is derived (as well as classical, in view of the flatness of $\g \to \fc_G$), we can compute the relative tangent complex  algorithmically:
$$
\Tang_{(\N/G)/(\g^*/G)}
\simeq
(p_{\N/G})^! (\Tang_{0 /\fc_G})
\simeq
\omega_{\N/G} \otimes \Tang_{0/ \fc_G}
\simeq
\omega_{\N/G} \otimes \Tang_{\fc_G,0}[-1].
$$
Consequently, the functor underlying the monad $\U(\Tang_{(\N/G)/(\g^*/G)})$ is just the functor of tensoring with the graded vector space $\Sym(\Tang_{\fc_G,0}[-1])$.
Using the fully faithfulness of $\Xi_{\N/G}$ and turning on the shearing, we conclude that
\begin{equation} \label{eqn:RHS of big serre computation}
\RHom
_{\ICoh((\g^*/G)^\wedge_{\N/G})^\Rightarrow}
(\F_\lambda, \F_\mu)
\simeq
\RHom_{\QCoh(\N/G)^\Rightarrow}
\bigt{
(\pi^*)^\Rightarrow (V_\lambda) ,(\pi^*)^\Rightarrow (V_\mu)
}
\otimes 
\Sym(\Tang_{\fc_G,0}[-1])^{\Rightarrow}.
\end{equation}
Thanks to the already established properness of $\QCoh(\N/G)^\Rightarrow$, it remains to check that $\Sym(\Tang_{\fc_G,0}[-1])^{\Rightarrow} $ is finite dimensional.
For this, we need to recall the weight decomposition of $\Tang_{\fc_G,0}$ (or, which is the same, of $\fc_G$, since the latter is a vector space).
We have
$$
\Tang_{\fc_G,0} 
\simeq 
\fz_G
\oplus 
\bigoplus_{i =1}^{r_G} \fl_{d_i},
$$
where $\fz_G = \on{Lie}(Z_G)$ is in weight $-1$, each $\fl_{d_i}$ is a line in weight $-d_i$ (the negative of the $i$-th fundamental invariant of the group), and $r_G$ is the semisimple rank.
It follows that 
\begin{equation} \label{eqn:shifted Chevalley space}
(\Sym  (\Tang_{\fc_G,0} [-1]))^\Rightarrow
\simeq 
\Sym (\fz_G[1])
\otimes
\bigotimes_{i=1}^{r_G}
\Sym( \fl_{d_i}[2 d_i - 1]).
\end{equation}
This is an exterior algebra with finitely many generators, hence in particular finite dimensional.
\end{proof}

\begin{thm} \label{thm:Serre on ICOH_N (LSG)}
The Serre functor on $\ICoh_\N(\Omega \g/G)$ equals the functor $\Xi_{0 \to \N} \circ  \Psi_{0 \to \N} [-\dim(G)]$.
\end{thm}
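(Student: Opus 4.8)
The plan is to transport the statement to the Koszul-dual side and verify it on compact generators. By the Koszul duality equivalence \eqref{eqn:Koszul dualities better} (equivalences of proper DG categories intertwine Serre functors) together with the identification \eqref{eqn:long1} of the temperization comonad, the theorem is equivalent to the assertion that the Serre functor of $\ICoh((\g/G)^\wedge_{\N/G})^\Rightarrow$ equals $\sT[-\dim G]$, where $\sT := \bigt{(\iota_0)_*^\ICoh}^\Rightarrow \circ \bigt{\iota_0^!}^\Rightarrow$. Both $\sT$ and $\Serre_{\ICoh((\g/G)^\wedge_{\N/G})^\Rightarrow}$ are continuous, so by Lemma \ref{lem:cpt gen of spectral sph cat} and the defining property of the Serre functor (Remark \ref{rem: remark on Serre functor}) it suffices to exhibit isomorphisms
$$
\RHom\bigt{\F_\nu,\, \sT(\F_\lambda)[-\dim G]} \simeq \RHom\bigt{\F_\lambda, \F_\nu}^*,
$$
natural in $\lambda,\nu \in \Lambda^{\dom}$, and then to promote this to an isomorphism of functors in the routine way.

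For the left-hand side I would use the adjunction $(\ind,\oblv)$, with $\ind := (f_*^\ICoh)^\Rightarrow(\Xi_{\N/G})^\Rightarrow$ and $\oblv := (\Psi_{\N/G})^\Rightarrow(f^!)^\Rightarrow$, established in the proof of Lemma \ref{lem:properness of spectral sph cat}; since $\F_\nu = \ind\bigt{(\pi^*)^\Rightarrow V_\nu}$, one has $\RHom(\F_\nu,\sT\F_\lambda) \simeq \RHom_{\QCoh(\N/G)^\Rightarrow}\bigt{(\pi^*)^\Rightarrow V_\nu,\, \oblv(\sT\F_\lambda)}$, so the real content is the computation of $\oblv(\sT\F_\lambda)$. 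The square with corners $(\N/G)^\wedge_{\pt/G}$, $(\g/G)^\wedge_{\pt/G}$, $\N/G$, $(\g/G)^\wedge_{\N/G}$, carrying $f$, $\iota_0$, $\bar\iota : (\N/G)^\wedge_{\pt/G}\to\N/G$ and a fourth map $g$ with $\iota_0\circ g = f\circ\bar\iota$, is Cartesian; base change for ind-coherent sheaves then gives $\sT(\F_\lambda) \simeq (f_*^\ICoh)^\Rightarrow\bigt{(\bar\iota_*^\ICoh)^\Rightarrow(\bar\iota^!)^\Rightarrow(\Xi_{\N/G})^\Rightarrow(\pi^*)^\Rightarrow V_\lambda}$. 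Here $(\bar\iota_*^\ICoh)^\Rightarrow(\bar\iota^!)^\Rightarrow$ is the local-cohomology colocalization $\Gamma_{\pt/G}$ on $\ICoh(\N/G)^\Rightarrow$, fitting into the triangle $\Gamma_{\pt/G}\to\id\to(j_*^\ICoh j^!)^\Rightarrow$ for $j:\N^\times/G\hto\N/G$. Applying $\oblv$, using that $(f^!)^\Rightarrow(f_*^\ICoh)^\Rightarrow$ is the monad $\U(\Tang_{(\N/G)/(\g/G)})^\Rightarrow$ — which, by the computation in the proof of Lemma \ref{lem:properness of spectral sph cat}, is tensoring with the finite-dimensional graded vector space $A := \Sym(\Tang_{\fc_G,0}[-1])^\Rightarrow$ — and using that $(\Psi_{\N/G})^\Rightarrow$ absorbs $(\Xi_{\N/G})^\Rightarrow$ and commutes with $(j_*^\ICoh)^\Rightarrow$, $(j^!)^\Rightarrow$ and with tensoring by $A$, I obtain
$$
\oblv(\sT\F_\lambda) \simeq \ker\bigt{(\pi^*)^\Rightarrow V_\lambda \to (j_*)^\Rightarrow(j^*)^\Rightarrow(\pi^*)^\Rightarrow V_\lambda}\otimes A \simeq \Serre_{\QCoh(\N/G)^\Rightarrow}\bigt{(\pi^*)^\Rightarrow V_\lambda}\otimes A,
$$
the last step by Corollary \ref{cor:sheared serre functor on N mod G}. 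Hence $\RHom(\F_\nu,\sT\F_\lambda) \simeq \RHom_{\QCoh(\N/G)^\Rightarrow}\bigt{(\pi^*)^\Rightarrow V_\nu,\, \Serre_{\QCoh(\N/G)^\Rightarrow}((\pi^*)^\Rightarrow V_\lambda)}\otimes A$.

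For the right-hand side, the $\RHom$-formula from the proof of Lemma \ref{lem:properness of spectral sph cat} gives $\RHom(\F_\lambda,\F_\nu)^* \simeq \RHom_{\QCoh(\N/G)^\Rightarrow}\bigt{(\pi^*)^\Rightarrow V_\lambda,(\pi^*)^\Rightarrow V_\nu}^*\otimes A^*$, and properness of $\QCoh(\N/G)^\Rightarrow$ with compactness of $(\pi^*)^\Rightarrow V_\lambda$ turns the first factor into $\RHom_{\QCoh(\N/G)^\Rightarrow}\bigt{(\pi^*)^\Rightarrow V_\nu,\Serre_{\QCoh(\N/G)^\Rightarrow}((\pi^*)^\Rightarrow V_\lambda)}$. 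It then remains to identify $A^*$ with $A[-\dim G]$, and this is precisely where the cohomological shift in the statement originates: by \eqref{eqn:shifted Chevalley space}, $A$ is a finite-dimensional graded-commutative algebra of exterior type whose one-dimensional top piece sits in cohomological degree $-\bigt{\dim\fz_G + \sum_{i=1}^{r_G}(2d_i-1)}$, and this equals $-\dim G$ since $\sum_i(2d_i-1) = 2\sum_i d_i - r_G = 2(\dim\fn + r_G) - r_G = \dim[\g,\g]$ while $\dim\fz_G + \dim[\g,\g] = \dim\g$; thus $A$ is a graded Frobenius algebra with $A^*\simeq A[-\dim G]$. Comparing this with the previous paragraph after a shift by $[-\dim G]$ finishes the verification on the generators. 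The main obstacle is the base-change-and-shearing bookkeeping behind the formula for $\oblv(\sT\F_\lambda)$: each isomorphism used there (compatibility of $\Psi$ with proper pushforwards and open restrictions, exactness of shearing on fiber sequences, base change for $!$-pullback along $\iota_0$) is standard, but assembling them with the correct $\Gm$-weights is the delicate part.
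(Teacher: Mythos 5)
Your proof is correct and follows the same global strategy as the paper's: transport via Koszul duality, verify the Serre identity on the compact generators $\F_\lambda$, reduce to the Serre functor of $\QCoh(\N/G)^\Rightarrow$ via Corollary \ref{cor:sheared serre functor on N mod G}, and close with the classical identity $\dim G = \dim Z_G + \sum_i(2d_i-1)$.

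The one place you diverge is in how the factor $W = A$ is produced, and your route is in fact a modest simplification. The paper, after the base change $\iota_{0,*}^\ICoh\iota_0^! (\F'_\lambda) \simeq \beta_*^\ICoh\alpha^!(V_\lambda)$, introduces a second fiber square through $\bigt{\N/G \times_{\g/G}\N/G}^\wedge_{BG}$, identifies that indscheme with $(\N/G)^\wedge_{BG}\times \Omega(\fc_G)$ via the convolution action, and uses Lemma \ref{lem:Omega V global sections} to extract $W$. You instead factor $\beta = f\circ\bar\iota$, so that $\beta_*^\ICoh = f_*^\ICoh\bar\iota_*^\ICoh$, and then observe that the monad $f^!f_*^\ICoh \simeq -\otimes W$ has \emph{already} been computed in the proof of Lemma \ref{lem:properness of spectral sph cat} (via $\U(\Tang_{(\N/G)/(\g/G)})$). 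This reuses an existing computation in place of the auxiliary $\Omega(\fc_G)$ argument, and it makes it transparent that the same graded algebra $W$ appears both in $\RHom(\F_\lambda,\F_\mu)$ and in $\oblv\circ\sT$; the paper instead derives them from two different-looking geometric pictures that one must check agree. Your packaging of $\bar\iota_*^\ICoh\bar\iota^!$ as the local-cohomology colocalization $\Gamma_{\pt/G}$ and the explicit graded Frobenius-algebra argument for $W^*\simeq W[-\dim G]$ are also correct, and the latter makes explicit what the paper simply asserts from the dimension formula.
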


\begin{proof}
Consider again the natural map $\iota_0: (\g/G)^\wedge_{\pt/G} \to (\g/G)^\wedge_{\N/G}$.
By Koszul duality, the statement of the theorem is equivalent to the fact that the Serre functor of the DG category $\ICoh((\g/G)^\wedge_{\N/G})^\Rightarrow$ is the functor $\bigt{ (\iota_0)_*^\ICoh}^\Rightarrow
 \circ 
 \bigt{ \iota_0^! }^\Rightarrow [- \dim(G)]$.
This is what we will prove below, in several steps.

\sssec* {Step 1}
For any pair $(\lambda, \mu)$ of dominant weights, we need to provide an isomorphism
$$
\RHom
_{\ICoh((\g/G)^\wedge_{\N/G})^\Rightarrow}
\bigt{
\F_\mu, 
(\iota_{0,*}^\ICoh)^\Rightarrow \circ (\iota_0^!)^\Rightarrow (\F_\lambda)[-\dim(G)]
}
\stackrel ? \simeq
\RHom_{\ICoh((\g/G)^\wedge_{\N/G})^\Rightarrow}(\F_\lambda, \F_\mu)^*.
$$
Using Remark \ref{rem:boring}, this is equivalent to providing an isomorphism
$$
\RHom
_{\ICoh((\g/G)^\wedge_{\N/G})^\Rightarrow}
\bigt{
\F'_\mu, 
(\iota_{0,*}^\ICoh)^\Rightarrow \circ (\iota_0^!)^\Rightarrow (\F'_\lambda)[-\dim(G)]
}
\stackrel ? \simeq
\RHom_{\ICoh((\g/G)^\wedge_{\N/G})^\Rightarrow}(\F'_\lambda, \F'_\mu)^*.
$$
The reason for replacing $\F_\nu$ with $\F'_\nu$ is that the primed expressions are more amenable to the base-change manipulations of Step $2$ below. 

We have already computed the RHS in the lemma above. Taking that calculation into account and setting 
$$
W:= \Sym(\Tang_{\fc_G,0}[-1])^{\Rightarrow},
$$
it remains to prove that
\begin{equation} \label{eqn:Serre for ICoh for spectral sph category}
\hspace{-4cm}
\RHom
_{\ICoh((\g/G)^\wedge_{\N/G})^\Rightarrow}
\bigt{
\F'_\mu, 
(\iota_{0,*}^\ICoh)^\Rightarrow \circ (\iota_0^!)^\Rightarrow (\F'_\lambda)[-\dim(G)]
}
\end{equation}
$$
\hspace{6cm}
\stackrel ? \simeq \;\;
\RHom_{\QCoh(\N/G)^\Rightarrow}
\bigt{
(\pi^*)^\Rightarrow V_\lambda, (\pi^*)^\Rightarrow V_\mu
}^*
\otimes 
W^*.
$$

\sssec* {Step 2}

Let us manipulate the LHS of \eqref{eqn:Serre for ICoh for spectral sph category}. Base-change gives
$$
\iota_{0,*}^\ICoh \circ \iota_0^! (\F'_\lambda)
\simeq
\beta_*^\ICoh \alpha^!(V_\lambda),
$$
where 
$$
BG \xleftarrow{\alpha}(\N/G )^\wedge_{BG} \xto{\beta} (\g/G)^\wedge_{\N/G}
$$
are the natural maps. 
Then
$$
\RHom_{\ICoh((\g/G)^\wedge_{\N/G})^\Rightarrow}
\bigt{
\F'_\mu, 
(\iota_{0,*}^\ICoh \circ \iota_0^!)^\Rightarrow (\F'_\lambda)
}
\simeq
\RHom_{\ICoh(\N/G)^\Rightarrow}
\bigt{
(\pi^!)^\Rightarrow (V_\mu),
(
f^!
\beta_*^\ICoh \alpha^!
)^\Rightarrow
(V_\lambda)
},
$$
where abusing notation we are considering $V_\lambda$ and $V_\mu$ as objects of $\ICoh(BG)$ by means on the equivlaence $\Upsilon_{BG}$.
Consider now the fiber square
\begin{equation} 
\nonumber
\begin{tikzpicture}[scale=1.5]
\node (00) at (0,0) {$  (\N/G)^\wedge_{BG}$};
\node (10) at (4,0) {$  (\g/G)^\wedge_{\N/G}$.};
\node (01) at (0,1) {$\bigt{\N/G \times_{\g/G} \N/G  }^\wedge_{BG}$};
\node (11) at (4,1) {$\N/G$};
\node (middle) at (2.5,1) {$  (\N/G)^\wedge_{BG}$};
\path[->,font=\scriptsize,>=angle 90]
(00.east) edge node[above] {$\beta$} (10.west); 
\path[->,font=\scriptsize,>=angle 90]
(01.east) edge node[above] {$q_2$} (middle.west); 
\path[->,font=\scriptsize,>=angle 90]
(middle.east) edge node[above] {$\iota_\N$} (11.west); 
\path[->,font=\scriptsize,>=angle 90]
(01.south) edge node[right] {$q_1$} (00.north);
\path[->,font=\scriptsize,>=angle 90]
(11.south) edge node[right] {$f$} (10.north);
\end{tikzpicture}
\end{equation}
A further base-change yields $f^! \beta_*^\ICoh \simeq (\iota_\N)_*^\ICoh (q_2)_*^\ICoh (q_1)^!$.
\sssec* {Step 3}

The convolution right action of $\Omega(\fc_G) := \pt \times_{\fc_G} \pt$ on $\N/G = \g/G \times_{\fc_G} \pt$
 induces the isomorphism 
$$
(\N/G)^\wedge_{BG} \times \Omega(\fc_G)
\xto{(n, x) \mapsto (n \cdot x, n)}
\bigt{\N/G \times_{\g/G} \N/G  }^\wedge_{BG}.
$$
From this point of view, the functor $(q_2)_*^\ICoh (q_1)^!$ is the functor of acting with $\omega_{\Omega \fc_G}$ on objects of $\ICoh((\N/G)^\wedge_{BG})$.
Next, notice that the action and the projection $(\N/G)^\wedge_{BG} \times \Omega(\fc_G) \rr (\N/G)^\wedge_{BG} $ are coequalized by $\alpha: (\N/G)^\wedge_{BG} \to BG$. In other words, the group $\Omega(\fc_G) $ acts trivially on objects in the essential image of $\alpha^!$.
Hence, 
$$
f^!
\beta_*^\ICoh \alpha^!(V_\lambda)
\simeq 
(\iota_\N)_*^{\ICoh}\alpha^!(V_\lambda) 
\otimes 
(p_{\Omega \fc_G})_*^\ICoh 
(\omega_{\Omega \fc_G}).
$$ 

\sssec* {Step 4}

The above formula needs to be applied in its sheared version: this amounts to apply ${}^\Rightarrow$ to the functors and to the graded vector space $(p_{\Omega \fc_G})_*^\ICoh 
(\omega_{\Omega \fc_G})$. By Lemma \ref{lem:Omega V global sections}, we have:
$$
\Bigt{ (p_{\Omega \fc_G})_*^\ICoh 
(\omega_{\Omega \fc_G})
}
^\Rightarrow
\simeq
(\Sym(\Tang_{\fc_G,0}[-1])) ^{\Rightarrow},
$$
which is vector space $W$ defined earlier.
We obtain that
$$
\RHom
_{\ICoh((\g/G)^\wedge_{\N/G})^\Rightarrow}
\bigt{
\F_\mu, 
( \iota_{0,*}^\ICoh \circ \iota_0^! )^\Rightarrow
 (\F_\lambda)
}
\simeq
\RHom_{\ICoh(\N/G)^\Rightarrow}
\bigt{
(\pi^!)^\Rightarrow(V_\mu),
((\iota_\N)_*^{\ICoh}\alpha^!)^\Rightarrow (V_\lambda) 
}
\otimes W.
$$

\sssec* {Step 5}

It is clear that 
$$
(\iota_\N)_*^{\ICoh}\alpha^!(V_\lambda) 
\simeq 
\ker
\Bigt{
\pi^! (V_\lambda)
\longto
j_*^\ICoh j^! \pi^! (V_\lambda)
},
$$
where $j: \N^\times/G \hto \N/G$ is the inclusion of the punctured nilpotent cone. Let $R:= H^0(\N, \O_\N)$ and $R^\times := H^*(\N^\times, \O)$, viewed as $(G \times \Gm)$-representations as discussed in the previous sections.
We thus have:
\begin{eqnarray}
\nonumber
\RHom
_{\ICoh((\g/G)^\wedge_{\N/G})^\Rightarrow}
\bigt{
\F_\mu, 
( \iota_{0,*}^\ICoh \circ \iota_0^! )^\Rightarrow (\F_\lambda)
}
& & \hspace{7cm}
\end{eqnarray}
\begin{eqnarray}
& \simeq &
\ker
\Bigt{
\RHom_{\QCoh(\N/G)^\Rightarrow}
(  (\pi^* )^\Rightarrow V_\mu, ( \pi^* )^\Rightarrow V_\lambda)
\to
\RHom
_{\QCoh(\N/G)^\Rightarrow}
(
(j^* \pi^* )^\Rightarrow V_\mu, ( j^*\pi^* )^\Rightarrow V_\lambda)
} \otimes W 
\nonumber
\\
& \simeq &
\ker
\Bigt{
\RHom_{\Rep (G)}(V_\mu, R^\Rightarrow \otimes V_\lambda)
\to
\RHom_{\Rep (G)}( V_\mu, (R^\times)^\Rightarrow \otimes V_\lambda)
} \otimes W
\nonumber
\\
\nonumber
& \simeq &
\RHom_{\Rep (G)}(V_\mu, \ker (R \to R^\times)^\Rightarrow \otimes V_\lambda)
 \otimes W
 \\
 \nonumber
& \simeq &
\RHom_{\QCoh(\N/G)^\Rightarrow}
\Bigt{ 
(\pi^*)^\Rightarrow V_\mu, \Serre( (\pi^* )^\Rightarrow V_\lambda)
}
 \otimes W,
\end{eqnarray}
where the last step used Corollary \ref{cor:sheared serre functor on N mod G}. Hence, the LHS of \eqref{eqn:Serre for ICoh for spectral sph category} equals
$$
\RHom_{\QCoh(\N/G)^\Rightarrow}
\Bigt{ 
(\pi^*)^\Rightarrow V_\mu, \Serre( (\pi^* )^\Rightarrow V_\lambda)
}
 \otimes W [ - \dim G].
$$

\sssec*{Step 6}

On the other hand, the RHS of \eqref{eqn:Serre for ICoh for spectral sph category} equals
$$
\RHom_{\QCoh(\N/G)^\Rightarrow}
\bigt{
(\pi^*)^\Rightarrow V_\lambda, (\pi^*)^\Rightarrow V_\mu
}^*
\otimes 
W^*
\simeq
\RHom_{\QCoh(\N/G)^\Rightarrow}
\bigt{ 
(\pi^*)^\Rightarrow V_\mu, \Serre( (\pi^* )^\Rightarrow V_\lambda)
}
\otimes W^*.
$$
Hence, it remains to show that $W^* \simeq W[- \dim G]$. For this, recall that
$$
W
:=
(\Sym  (\Tang_{\fc_G,0} [-1]))^\Rightarrow
\simeq 
\Sym (\fz_G[1])
\otimes
\bigotimes_{i=1}^{r_G}
\Sym( \fl_{d_i}[2 d_i - 1]),
$$
with each $\fl_{d_i}$ a line. Now, the classical formula 
$$
\dim G = \dim (Z_G ) + \sum_{i} (2 d_i-1)
$$
immediately yields the claim.
\end{proof}

\begin{cor} 
The Serre functor on $\ICoh_\N(\Omega \g/G)$ sends the monoidal unit $\Psi_{\N \to \g}(i_*^\ICoh(\kk_0))$ to $\Xi_{0 \to \N} (i_*(\kk_0))[-\dim(G)]$.
\end{cor}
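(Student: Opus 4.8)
The plan is to deduce the statement immediately from Theorem~\ref{thm:Serre on ICOH_N (LSG)} together with the description of the monoidal unit recalled in Section~\ref{sssec:monoidal str on Sph spectral}. Applying that section with $G$ in place of $\Gch$, the monoidal unit of $\ICoh_\N(\Omega \g/G)$ is $\Psi_{\N \to \g}(i_*^\ICoh(\kk_0))$, where $i\colon \pt/G \hto \Omega \g/G$ is the diagonal closed embedding and $\kk_0 \in \Rep(G) \simeq \QCoh(\pt/G)$ is the trivial representation; and by Theorem~\ref{thm:Serre on ICOH_N (LSG)} the Serre functor of $\ICoh_\N(\Omega \g/G)$ is $\Xi_{0 \to \N} \circ \Psi_{0 \to \N}[-\dim(G)]$. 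So all that remains is to evaluate $\Xi_{0 \to \N} \circ \Psi_{0 \to \N}$ on $\Psi_{\N \to \g}(i_*^\ICoh(\kk_0))$.

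First I would use the transitivity of the colocalization functors, i.e.\ the isomorphism $\Psi_{0 \to \N} \circ \Psi_{\N \to \g} \simeq \Psi_{0 \to \g}$, which follows by passing to right adjoints in the evident identity $\Xi_{\N \to \g} \circ \Xi_{0 \to \N} \simeq \Xi_{0 \to \g}$ between the inclusions of singular-support subcategories. Next I would invoke the compatibility between $\Psi$-functors and ind-coherent pushforward (see \cite[Section 3.2.12]{finiteness}) to identify $\Psi_{0 \to \g}(i_*^\ICoh(\kk_0)) \simeq i_*(\kk_0)$, the quasi-coherent pushforward of $\kk_0$ along $i$, now viewed as an object of $\QCoh(\Omega \g/G) \simeq \ICoh_0(\Omega \g/G)$. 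Combining these two steps gives
$$
\Xi_{0 \to \N} \circ \Psi_{0 \to \N} \circ \Psi_{\N \to \g}(i_*^\ICoh(\kk_0))
\simeq
\Xi_{0 \to \N}(i_*(\kk_0)),
$$
and applying the shift $[-\dim(G)]$ yields the assertion. This is precisely the chain of isomorphisms already performed for $\Gch$ in Step~2 of Section~\ref{ssec:outline of proof of theorem C}; there is no genuine obstacle here, the corollary being essentially a formal consequence of the theorem, and the only point requiring minimal care is keeping track of which of the colocalization functors $\Psi$ is being used at each stage.
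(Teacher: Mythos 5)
Your proof is correct and matches the paper's implicit derivation: the corollary is stated without proof because it follows directly from Theorem \ref{thm:Serre on ICOH_N (LSG)} together with the temperization computation, which the paper carried out (for $\Gch$) in Step 2 of Section \ref{ssec:outline of proof of theorem C} — exactly the chain you reproduce. You have correctly filled in the two small points the paper leaves implicit: transitivity of the colocalization projectors $\Psi_{0\to\N}\circ\Psi_{\N\to\g}\simeq\Psi_{0\to\g}$, and the compatibility of $\Psi$ with ind-coherent pushforward from \cite[Section 3.2.12]{finiteness}.
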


\sec{Proof of Theorem \ref{mainthm: omega antitemp}} \label{sec:characterize temp via ThmC, prove ThmA}

In this section, we deduce Theorem \ref{mainthm: omega antitemp} from the combination of Theorem \ref{mainthm:Hren vanishing} and Theorem \ref{mainthm:B as unit of Sph-temp}.
In more detail: using the expression of $\B$ given by Theorem \ref{mainthm:B as unit of Sph-temp}, we obtain an explicit formula, see \eqref{eqn:formula for action of B}, for the Hecke action of $\B$ on $\Dmod(\Bun_G)$. 
In particular, we obtain an explicit formula for the object $\B \star \omega_{\Bun_G}$. We then show that the simplest case of Theorem \ref{mainthm:Hren vanishing} implies that $\B \star \omega_{\Bun_G} \simeq 0$.

\ssec{Renormalized functors} \label{ssec:self-duality}

Before describing the action of the tempered unit on objects of $\Dmod(\Bun_G)$, we need some information on $\B$ itself and on the DG category $\Dmod(G \backsl \GR /G)$.

\sssec{} \label{sssec:care with the push}

We claim that the DG category $\Dmod(G \backsl \GR /G)$ is naturally monoidal under convolution.
Naively, the convolution product is defined as the pull-push along the natural correspondence
\begin{equation} \label{eqn:convo diagram for neg Hecke cat}
G \backsl \GR /G \times G \backsl \GR /G
\xleftarrow{p}
G \backsl \GR \times^G \GR / G
\xto{m}
G \backsl \GR /G,
\end{equation}
where $p$ is the obvious projection and $m$ is the arrow induced by the multiplication of $\GR$.
However, since $m$ is not schematic (but only ind-schematic), specifying the pushforward to be used requires some care.

\sssec{}

To address this complication, we first observe that $\Dmod(G \backsl \GR /G)$ is tautologically comonoidal: this structure is induced by the very same correspondence as above, just read from right to left. In this case, the above issue about the pushforward does not arise: the functor $p_{*,\dR}$ is well-defined since $p$ is schematic (it is even smooth).

Then, to turn this comonoidal structure into a monoidal one, we need:

\begin{lem}
The DG category $\Dmod(G \backsl \GR /G)$ is self-dual.
\end{lem}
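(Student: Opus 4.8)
The plan is to exhibit $\Dmod(G \backslash \GR / G)$ as a module category over the monoidal DG category $\Dmod(\GR)$ (equipped with convolution) in two compatible ways, and then to realize the self-duality as an instance of the general self-duality of categories of D-modules on reasonable indschemes combined with the standard ``$!$-vs-$*$'' duality for quotients by an affine algebraic group. Concretely, $G$ is an affine algebraic group (in particular of finite type), so $\Dmod(\pt/G) \simeq H_*(G)\mmod$ is a self-dual DG category: the pairing $\Dmod(\pt/G) \otimes \Dmod(\pt/G) \to \Vect$ is given by $(\F,\G) \mapsto (p_{\pt/G})_{*,\dR}(\F \overset{!}{\otimes} \G)$, which makes sense because $\pt/G$ is a quotient of a point by a finite-type group and the relevant pushforward along $p_{\pt/G}$ is defined on all of $\Dmod(\pt/G)$ (it corresponds to the augmentation/coaugmentation of $H_*(G)$). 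The double quotient $G \backslash \GR / G$ should be thought of as a ``two-sided'' object relative to two copies of $\pt/G$, and $\Dmod$ of it is $\Dmod(\pt/G) \otimes_{\text{something}} \Dmod(\GR/G)$ or, more robustly, as the $G \times G$-invariants $\Dmod(\GR)^{G \times G}$ for the left-times-right action.

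The key steps, in order: \textbf{(1)} Present $\GR = G[t^{-1}]$ as a (reduced) group indscheme, $\GR \simeq \colim_n \GR_{\le n}$ where $\GR_{\le n}$ is a finite-type scheme stable under left and right multiplication by $G$; this exhibits $G \backslash \GR / G \simeq \colim_n (G \backslash \GR_{\le n} / G)$ as an indscheme-of-ind-finite-type presentation, each term being a quotient of a finite-type scheme by $G \times G$. \textbf{(2)} For each such finite-type quotient stack $\Y_n := G \backslash \GR_{\le n}/G$, invoke the self-duality of $\Dmod(\Y_n)$: since $\Y_n$ is a quotient of a quasi-compact scheme by an affine algebraic group, $\Dmod(\Y_n)$ is compactly generated and the kernel $(\Delta_{\Y_n})_{*,\dR}(\omega_{\Y_n})$ defines an equivalence $\Dmod(\Y_n)^\vee \simeq \Dmod(\Y_n)$ --- equivalently, $\Psid_{\Y_n,*}$ is an equivalence, using exactly the statement recalled earlier in the excerpt that $\Psid_{\Y,*}$ is an equivalence for $\Y$ quasi-compact. \textbf{(3)} Pass to the colimit: along the closed embeddings $\Y_n \hookrightarrow \Y_{n+1}$ the de Rham pushforwards $(\iota_{n,n+1})_{*,\dR}$ are fully faithful and the duality data are compatible (the transition functors on the dual side are the $!$-pullbacks), so $\Dmod(G \backslash \GR / G) \simeq \colim_n \Dmod(\Y_n)$ is dualizable with dual $\colim_n \Dmod(\Y_n)^\vee \simeq \colim_n \Dmod(\Y_n) \simeq \Dmod(G\backslash \GR/G)$. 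This gives the self-duality.

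I expect the main obstacle to be \textbf{step (3)}: verifying that the self-dualities of the finite-type pieces assemble compatibly into a self-duality of the colimit. The subtlety is that dualization turns a colimit (along $*$-pushforwards) into a limit (along $!$-pullbacks), so one must check that the resulting limit diagram is again pro-left-adjoint to the colimit diagram and hence that $\Dmod(G\backslash \GR/G)^\vee \simeq \lim_n \Dmod(\Y_n)^{\vee,\text{op transitions}}$ is equivalent to $\colim_n \Dmod(\Y_n)$ --- this is the standard ``indscheme D-modules are self-dual'' argument (as in \cite{DG-cptgen}, \cite{ker-adj}), but one must make sure the group quotient by $G \times G$ does not obstruct it, which it does not precisely because $G$ is of finite type (so each $\Y_n$ is a nice quasi-compact stack with $\Psid_{\Y_n,*}$ an equivalence). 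Once self-duality is in hand, the comonoidal structure of the previous paragraph dualizes to the desired monoidal (convolution) structure on $\Dmod(G \backslash \GR / G)$, with the pushforward $m_{*,\dR}$ now meaningful as the dual of $m^!$; this is the payoff that the lemma is setting up.
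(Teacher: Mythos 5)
Your proof follows essentially the same route as the paper: filter the indscheme by finite-type closed $G$-stable subschemes, use the self-duality of $\Dmod$ on each finite-type quotient stack (citing the $\Psid_{\Y,*}$-is-an-equivalence / \cite{finiteness} statement), and pass to the colimit by turning the dual limit (along $!$-pullbacks) into a colimit (along $*$-pushforward left adjoints). The only cosmetic difference is that the paper first rewrites $G \backslash \GR / G \simeq \GR^1/G$ via $\GR \simeq G \ltimes \GR^1$ so that only one adjoint $G$-action is in play, whereas you keep the two-sided $G \times G$-action throughout; both work equally well.
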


\begin{proof}
Letting $G(R)^1 := \ker(G(R) \to G)$ be the kernel of the evaluation map at $t = \infty$, we have $G(R) \simeq G \ltimes \GR^1$. It follows that $G \backsl \GR /G \simeq \GR^1/G$, where the quotient on the RHS is by the adjoint action.
Consider now the indscheme structure on $\GR^1 \simeq \colim_{d \geq 0} Y_d$, obtained by embedding $G$ into some $GL_n$ and then by setting $Y_d := \GR^1 \cap \GL_n(R)_{\leq d}$. Here, $\GL_n(R)_{\leq d} \subset \GL_n(R)$ is the scheme parametrizing invertible matrices whose entries are polynomials in $t^{-1}$ of degrees $\leq d$.
It is easy to see that the adjoint action of $G$ on $\GR^1$ preserves each $Y_d$, so that
$$
G \backsl \GR /G
\simeq
\uscolim{d \geq 0} \;
Y_d/G,
$$
along the structure closed embeddings $i_{d \to d'}: Y_d/G \hto Y_{d'}/G$, for $d \leq d'$.
It follows that
\begin{equation} \label{eqn:D(G G(R) G)}
\Dmod(G \backsl \GR /G)
\simeq
\uscolim{d \geq 0, i_*} \;
\Dmod(Y_d/G).
\end{equation}
By \cite[Theorem 0.2.2]{finiteness}, the DG categories $\Dmod(Y_d/G)$ are all compactly generated. Since the transition functors  obviously preserve compactness, formal nonsense shows that their colimit $\Dmod(G \backsl \GR /G)$ is also compactly generated, and thus dualizable.
Moreover, each $\Dmod(Y_d/G)$ is self-dual: this is \cite[Corollary 8.4.3]{finiteness}. Under these self-dualities, the dual of $\Dmod(G \backsl \GR /G)$ is expressed as
\begin{equation} \label{eqn:D(G G(R) G) - dual }
\Dmod(G \backsl \GR /G)^\vee
\simeq
\lim_{d \geq 0, i^!} \;
\Dmod(Y_d/G).
\end{equation}
The latter limit can be turned into a colimit by replacing the transition functors with their left adjoints, see \cite[Chapter 5]{HTT} or \cite[Vol. 1, Chapter 1, Corollary 5.3.4]{Book}  This shows that \eqref{eqn:D(G G(R) G)} and \eqref{eqn:D(G G(R) G) - dual } match.
\end{proof}
 
\begin{rem} \label{rem:Sph self-dual}
In a similar way, one proves that $\Sph_G$ is self-dual: it suffices to apply the same method to the colimit presentation $\Dmod(\Gr_G)^\GO \simeq \colim_{n \geq 0} \Dmod(Z_n/H_n)$ that appeared in the proof of Lemma \ref{lem:Grassmannian}.
\end{rem}
 
\sssec{}

The above self-duality allows us to dualize the comonoidal structure to obtain the monoidal structure we were looking for. Concretely, the convolution product is defined as the pull-push along \eqref{eqn:convo diagram for neg Hecke cat}, but the
 pushforward to be used is $m_{*,\ren}$,  the \emph{renormalized de Rham pushforward} along $m$ (see \cite[Section 9.3]{finiteness}), which is by definition the dual of $m^!$.

\sssec{}

In general, the renormalized de Rham pushforward along a map $h: \X = \colim_{i \in \I} X_i \to \Y$ of indschemes (of ind-finite type) admits an explicit description.
Indeed, unraveling the self-dualities as in the proof above, one easily checks that $h_{*,\ren}: \Dmod(\X) \to \Dmod(\Y)$ sends $\{\F_i\}_{i \in \I} \in \lim_{i \in \I^\op} \Dmod(X_i) \simeq \Dmod(\X)$ to the object 
$$
\colim_{i \in \I} \; (h_i)_*(\F_i)
\in
\Dmod(\Y),
$$
where $h_i: X_i \hto \X \to \Y$ is the natural (schematic) map.
This functor is different from $h_!$ in general; indeed, the latter is only partially defined and, when defined, it is given by the formula
$$
h_!
\Bigt{ 
\{\F_i \}_{i \in \I} } 
\simeq
\colim_{i \in \I} \; (h_i)_!(\F_i).
$$
However, the two functors agree when all the $h_i$ are proper, that is, when the map $h: \X \to \Y$ is ind-proper. 

\begin{rem} \label{rem:Hren def slick via renorm push }
As a particularly simple example,  consider the map $p_\X: \X \to \pt$, where $\X \simeq \colim_{i \in \I} X_i$ is as above. Then 
$$
(p_\X)_{*, \ren}
\Bigt{ 
\{\F_i \}_{i \in \I} } 
\simeq
\uscolim{i \in \I} \; (p_{X_i})_{*,\dR}(\F_i).
$$
Thus, $(p_\X)_{*, \ren}$ yields a quick definition of the Borel-Moore homology of $\X$ via the formula
\begin{equation} \label{eqn:slick HBM}
\Hren(\X) \simeq (p_\X)_{*, \ren}(\omega_{\X}).
\end{equation}
\end{rem}

\sssec{}

Consider now the map
$$
f:
G \backsl \GR /G
\longto
\GO \backsl \GK /\GO
$$
and its pullback functor $f^!: \Sph_G \to \Dmod(G \backsl \GR /G)$. According to Theorem \ref{mainthm:B as unit of Sph-temp}, 
$$
\B \simeq (f^!)^R (\omega_{G \backsl \GR /G}).
$$
Using the self-duality of $\Dmod(G \backsl \GR /G)$ and that of $\Sph_G$ (see Remark \ref{rem:Sph self-dual}), we can express $\B$ slightly differently as 
\begin{equation} \label{eqn:B as renormalized}
\B \simeq f_{*,\ren} (\omega_{G \backsl \GR /G}).
\end{equation}
Indeed, we have:

\begin{lem}
With the above notation, $f_{*,\ren}: \Dmod(G \backsl \GR /G) \to \Sph_G $ is right adjoint to $f^!$.
\end{lem}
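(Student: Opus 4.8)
The claim is that $f_{*,\ren}$, the renormalized de Rham pushforward along $f\colon G\backslash G(R)/G \to G(\OO)\backslash G(\KK)/G(\OO)$, is right adjoint to $f^!\colon \Sph_G \to \Dmod(G\backslash G(R)/G)$. The natural approach is to reduce everything to the finite-dimensional pieces and invoke the definition of $f_{*,\ren}$ as the dual of $f^!$ under the self-dualities established above. First I would recall that, by definition (see Section \ref{ssec:self-duality} and \cite[Section 9.3]{finiteness}), $f_{*,\ren} := (f^!)^\vee$, where we use the self-duality of $\Dmod(G\backslash G(R)/G)$ just proven and the self-duality of $\Sph_G$ from Remark \ref{rem:Sph self-dual}. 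So the content of the lemma is that, under these particular self-dualities, the dual of a functor is its right adjoint. This is a general phenomenon: if $\C$ and $\D$ are dualizable DG categories equipped with self-dualities $\C \simeq \C^\vee$, $\D \simeq \D^\vee$ and $\phi\colon \C \to \D$ is a continuous functor whose right adjoint $\phi^R$ is also continuous, then $\phi^\vee \simeq \phi^R$ provided the self-dualities are \emph{compatible} in the sense that they are induced by counit/unit data intertwined by $\phi$. The real work is checking this compatibility for the self-dualities at hand.

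\textbf{Key steps.} (1) Write $G(R) \simeq G \ltimes G(R)^1$ and present $G\backslash G(R)/G \simeq G(R)^1/G \simeq \colim_{d} Y_d/G$ as in the proof of the self-duality lemma, and similarly $\Gr_G \simeq \colim_n Z_n$ with $\Sph_G = \Dmod(\Gr_G)^{G(\OO)} \simeq \colim_n \Dmod(Z_n/H_n)$ as in the proof of Lemma \ref{lem:Grassmannian}. The self-dualities on both sides are built levelwise from the self-dualities $\Dmod(Y_d/G)^\vee \simeq \Dmod(Y_d/G)$ and $\Dmod(Z_n/H_n)^\vee \simeq \Dmod(Z_n/H_n)$ of \cite[Corollary 8.4.3]{finiteness}, which are the ones for which Verdier duality $\mathbb{D}$ intertwines $!$- and $*$-operations. (2) Recall from \cite[Corollary 8.4.3]{finiteness} (or \cite[Section 9.3]{finiteness}) that, for a map $g\colon W \to W'$ of quotient stacks of this type, under these self-dualities the dual of $g^!$ is exactly $g_{*,\dR}$, i.e. $(g^!)^\vee \simeq g_{*,\dR}$; equivalently $g_{*,\dR}$ is simultaneously left adjoint to $g^!$ after Verdier duality and... more to the point, by \cite[Section 8]{finiteness} one has the adjunction $(g_{*,\dR}, g^!)$ reading the wrong way around — so one must be careful and instead use that $g^!$ has continuous right adjoint $g_{*,\ren}$ when $g$ is proper, where the two agree. (3) Pass to the colimit: since $f$ restricts to maps $f_d\colon Y_d/G \to \Sph_G$ that factor through some $Z_{n(d)}/H_{n(d)}$ and all transition functors preserve compactness, $f_{*,\ren}$ is by construction the functor sending $\{\F_d\} \in \lim_d \Dmod(Y_d/G)$ to $\colim_d (f_d)_{*,\dR}(\F_d)$, while $f^!$ is the limit of the $f_d^!$; a formal computation with the unit/counit at each finite level, using that each $(f_d^!, (f_d)_{*,\ren})$-type statement holds, assembles into the claimed adjunction $(f^!, f_{*,\ren})$.

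\textbf{Main obstacle.} The technical heart is step (2) combined with the colimit compatibility in step (3): one must verify that the renormalized pushforward $f_{*,\ren}$, \emph{defined} as the dual of $f^!$ under the self-dualities, coincides with the explicit colimit formula $\{\F_d\}\mapsto \colim_d (f_d)_{*,\dR}(\F_d)$, and then that this explicit formula is genuinely right adjoint to $f^! = \lim_d f_d^!$. Concretely, for $\F \in \Sph_G$ and $\G = \{\G_d\} \in \Dmod(G\backslash G(R)/G)$ one computes
\begin{equation}
\RHom_{\Sph_G}\!\bigl(f^!\G, \F\bigr)
\;\simeq\;
\lim_d \RHom\bigl(f_d^!\G_d, \F|_{Z_{n(d)}}\bigr)
\;\simeq\;
\lim_d \RHom\bigl(\G_d, (f_d)_{*,\ren}\,\F|_{Z_{n(d)}}\bigr),
\end{equation}
and the issue is matching $\lim_d$ of the finite-level renormalized pushforwards with $f_{*,\ren}\F$ itself, which is where the duality bookkeeping of \cite[Sections 8--9]{finiteness} is needed; the subtlety is that $f$ is only ind-schematic, so one genuinely needs $*_{,\ren}$ rather than $!$ or $*_{,\dR}$, and one must track which self-duality normalization ($!$-flavoured vs. $*$-flavoured) is in play at each level to avoid an off-by-a-Verdier-dual error. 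Once the normalizations are pinned down, the adjunction follows formally from the levelwise adjunctions by passing to the (co)limit; I would therefore organize the write-up around first fixing the self-duality conventions explicitly at finite level and then doing the colimit manipulation, citing \cite[Corollary 8.4.3]{finiteness} and \cite[Section 9.3]{finiteness} for the finite-level input.
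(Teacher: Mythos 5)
Your proposal correctly identifies the content of the lemma — that under the chosen self-dualities, the dual of $f^!$ coincides with its right adjoint — but the route you take does not match the paper's, and the execution has a genuine gap at the crucial step.

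The paper's proof is a three-line factorization argument: since $f$ factors as the open embedding $j\colon G\backslash G(R)/G \hookrightarrow \Gr_G/G$ followed by the quotient $\Gr_G/G \twoheadrightarrow \Gr_G/\GO$, one has $f^! \simeq j^! \circ \oblv^{G\to\GO}$, and it suffices to check "dual $=$ right adjoint" separately for each factor. For $j^!$ (an open embedding) this is immediate: its dual is $j_{*,\dR}$, which is also its right adjoint. For $\oblv^{G\to\GO}$, this is a statement from the theory of loop group actions on DG categories, proved in \cite[Sections 2--3]{thesis}: $\oblv^{G\to\GO}$ is dual to its right adjoint $\Av_*^{G\to\GO}$. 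Composing gives $(f^!)^\vee \simeq \Av_*^{G\to\GO} \circ j_{*,\dR} \simeq (f^!)^R$.

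You never use this factorization, and that is where your argument runs into trouble. In your step (3), you try to restrict $f$ to the finite-dimensional pieces $Y_d/G$ and work with "$f_d$" directly. But the target $\GO\backslash\GK/\GO$ is of infinite type, so the maps $Y_d/G \to \GO\backslash\GK/\GO$ do not land nicely in the finite-level pieces $Z_n/H_n$ in a way that makes the finite-level duality bookkeeping straightforward; the closed embeddings $Y_d/G \hookrightarrow Y_{d'}/G$ and the mixed open-embedding-plus-quotient structure of $f$ do not interact in the "levelwise" fashion your display equation presupposes. You acknowledge this is "the technical heart" and "the main obstacle," but the sentence "a formal computation with the unit/counit at each finite level ... assembles into the claimed adjunction" does not resolve it; it restates the problem. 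Also, your parenthetical about the adjunction $(g_{*,\dR}, g^!)$ "reading the wrong way around" conflates the proper and non-proper cases: for $j$ open one has $(j^!, j_{*,\dR})$, which is exactly the form you need, and the factorization makes this visible whereas the direct colimit approach obscures it. The missing idea is precisely the factorization $f^! \simeq j^! \circ \oblv^{G\to\GO}$, which reduces everything to two cases where the duality-to-adjunction check is already known or elementary.
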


\begin{proof}
We know that $f^! \simeq j^! \circ \oblv^{G \to \GO}$. In \cite[Sections 2-3]{thesis}, we proved that $\oblv^{G \to \GO}$ is dual to its right adjoint $\Av_*^{G \to \GO}$, while $j^!$ is easily seen to be dual to $j_{*,\dR}$.
\end{proof}

\ssec{The Hecke action of the tempered unit} \label{ssec:Hecke action of B}

Let us fix $x \in X$ throughout and consider the Hecke action of $\Sph_G$ on $\Dmod(\Bun_G)$ at $x$. We remind the reader that here $X$ is a smooth projective curve of arbitrary genus, and that $\Bun_G = \Bun_G(X)$.
In this section, we provide an explicit formula for the Hecke action of $\B$ on $\Dmod(\Bun_G)$.

\sssec{} \label{sssec:Hecke action}

Let us first recall the Hecke action of $\Sph_G$ on $\Dmod(\Bun_G)$.
Denote by $\wt\Bun_G \to \Bun_G$ the $\GO$-torsor of $G$-bundles equipped with a trivialization on the formal disc around $x$.
We regard $\wt\Bun_G$ as being acted by $\GO$ on the left, so that $\Bun_G \simeq \GO \backsl  \wt\Bun_G$.
It is well-known that $\GK$ acts on $\wt\Bun_G$ by \virg{regluing}, extending the above $\GO$-action (see, for instance, \cite[Section 2.3.4]{BD-quantization}).
The Hecke action is, by definition, the pull-push along the correspondence
$$
\GO \backslash \GK / \GO
\times
\Bun_G
\leftto
\GO \backslash \GK \times^\GO \wt\Bun_G
\xto{\act }
\GO \backslash  \wt\Bun_G 
\simeq
\Bun_G,
$$
where the pushforward along $\act$ is the $!$-pushforward. As $\act$ is ind-proper, $\act_!$ agrees with the renormalized pushforward $\act_{*,\ren}$.

\sssec{}

Now we claim that the DG category $\Dmod(G \backsl \wt\Bun_G)$ admits an action of $\Dmod(G \backsl \GR /G)$. This action is given by convolution, however we have the same issue as in Section \ref{sssec:care with the push} together with a new issue: $G \backsl \wt\Bun_G$ is of infinite type. 
To avoid these issue, and to place this action on the same footing as the Hecke action above, let us proceed using the language of categorical group actions.

\sssec{}

Let $\C$ be a DG category equipped with a left action of $\GK$. 
By formal nonsense, $\Sph_G$ coacts on ${}^\GO \C$ from the left; we denote by 
$$
\coact_{\GO \to \GK}: {}^\GO \C \to \Sph_G \otimes {}^\GO \C
$$
the coaction functor. 
Now, we exploit the self-duality of $\Sph_G$ to turn this coaction into an action. 
Explicitly, given $\S \in \Sph_G$ and $c \in {}^\GO \C$, the formula for this action is 
$$
\S \star c
\simeq
\langle
\S, 
\coact_{\GO \to \GK}(c)
\rangle, 
$$
where $\langle \cdot, \cdot \rangle$ is the evaluation functor
$$
\Sph_G \otimes (\Sph_G \otimes {}^\GO \C)
\simeq
(\Sph_G \otimes \Sph_G) \otimes {}^\GO \C
\xto{\mathsf{ev} \otimes \id}
 {}^\GO \C.
$$

\sssec{}

Let us now repeat the same construction with the pair $(\GR,G)$ in place of $(\GK, \GO)$. We obtain that, given $\C$ acted upon by $\GR$, the comonoidal DG category $\Dmod(G \backsl \GR /G)$ coacts on ${}^G \C$ and this coaction can be turned into an action. We denote the coaction by $\coact_{G \to \GR, \C}$ and the action by $\ast$. In formulas, for $\F \in \Dmod(G \backsl \GR /G)$ and $d \in {}^G \C$, we have
$$
\F \ast d
\simeq
\langle
\F, 
\coact_{G \to \GR, \C}(d)
\rangle, 
$$
where $\langle \cdot, \cdot \rangle$ is the evaluation functor
$$
\Dmod(G \backsl \GR /G)  \otimes \Bigt{ \Dmod(G \backsl \GR /G) \otimes {}^G \C }
\simeq
\Bigt{  \Dmod(G \backsl \GR /G) \otimes \Dmod(G \backsl \GR /G) }
 \otimes {}^G \C
\xto{\mathsf{ev} \otimes \id}
 {}^G \C.
$$

\begin{lem} \label{lem:formula for action of B}
Let $\C$ be a DG category equipped with a left $\GK$-action, so that (as seen above) $\Sph_G$ acts on ${}^\GO\C$ and $\Dmod(G \backsl \GR /G)$ acts on ${}^G \C$. For $c \in {}^\GO \C$, we have 
\begin{equation} \label{eqn:formula for action of B}
\B \star c 
\simeq
\Av_*^{G \to \GO} 
\bigt{
\omega_{G \backsl \GR /G}
\ast
\oblv^{G \to \GO}(c)
}.
\end{equation}
\end{lem}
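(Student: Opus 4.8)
The plan is to reduce everything to the two explicit factorizations of the relevant maps that appeared in Section \ref{ssec:self-duality}, and then to trace the adjunctions. Recall from Theorem \ref{mainthm:B as unit of Sph-temp}, together with \eqref{eqn:B as renormalized} and the lemma preceding it in Section \ref{ssec:self-duality}, that $\B \simeq (f^!)^R(\omega_{G\backsl \GR /G})$, where $f\colon G\backsl \GR /G \to \GO\backsl \GK/\GO$ and $(f^!)^R$ is the right adjoint of $f^!$. As recorded in Section \ref{sssec:intro-reformulate-thmA} and in the discussion after Theorem \ref{mainthm:B as unit of Sph-temp}, we have the factorization $f^! \simeq j^!\circ \oblv^{G\to\GO}$ and hence $(f^!)^R \simeq \Av_*^{G\to\GO}\circ j_{*,\dR}$, where $j\colon \Gr_G^\circ/G \hto \Gr_G/G$ is the open embedding of the big cell. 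So the first step is to rewrite the left-hand side of \eqref{eqn:formula for action of B} as
$$
\B \star c \simeq \Bigt{\Av_*^{G\to\GO}\circ j_{*,\dR}\bigt{\omega_{G\backsl\GR/G}}} \star c.
$$

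The second and main step is to compare the $\Sph_G$-action of $j_{*,\dR}(\omega_{G\backsl\GR/G})$ on ${}^\GO\C$ with the $\Dmod(G\backsl\GR/G)$-action of $\omega_{G\backsl\GR/G}$ on ${}^G\C$. Both actions are built, via the self-dualities of $\Sph_G$ (Remark \ref{rem:Sph self-dual}) and of $\Dmod(G\backsl\GR/G)$, out of the coaction functors $\coact_{\GO\to\GK}$ and $\coact_{G\to\GR,\C}$ respectively, which in turn come from the Hecke-type correspondences. The key compatibility is that the diagram relating $\Gr_G^\circ/G = G\backsl\GR/G$, its image inside $\Gr_G/\GO$, and the corresponding correspondences on $\wt\Bun_G$ (or the abstract $\C$) is cartesian up to the pro-unipotent ambiguity encoded by $\oblv^{G\to\GO}$. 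Concretely: the open embedding $j$ together with the projection $\Gr_G/G \tto \Gr_G/\GO$ realizes $f$, so by base-change the coaction $\coact_{\GO\to\GK}$ restricted along $j_{*,\dR}$ agrees with $\coact_{G\to\GR,\C}$ after applying $\oblv^{G\to\GO}$. Tracing this through the evaluation pairings $\langle\cdot,\cdot\rangle$ used to define $\star$ and $\ast$ — and using that $\oblv^{G\to\GO}$ is dual to $\Av_*^{G\to\GO}$ and $j^!$ is dual to $j_{*,\dR}$, as in the proof of the last lemma of Section \ref{ssec:self-duality} — gives
$$
j_{*,\dR}\bigt{\omega_{G\backsl\GR/G}} \star c
\simeq
\Av_*^{G\to\GO}\Bigt{\omega_{G\backsl\GR/G}\ast \oblv^{G\to\GO}(c)}
$$
once one observes that the "$\Av_*^{G\to\GO}\circ j_{*,\dR}$" in $(f^!)^R$ is exactly the dual of "$j^!\circ\oblv^{G\to\GO}$" in $f^!$, and that the abstract action constructions are functorial for such dualization.

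The last step is purely bookkeeping: combine the two displays. Since $\Av_*^{G\to\GO}$ is continuous (as $\GO$ is a group scheme of pro-finite type, see Section \ref{ssec:group actions on cats}), there is no convergence issue, and we obtain precisely \eqref{eqn:formula for action of B}. I expect the main obstacle to be the second step — not any single estimate, but the careful verification that the self-duality data on $\Sph_G$ and on $\Dmod(G\backsl\GR/G)$ are compatible under the map $f$, so that the colax/strict linearity arguments (as in the proof that $\gamma \simeq \Av_!^{G\to\GR}\circ\oblv^{G\to\GO}$ in Section \ref{ssec:P1}) go through and identify the two abstractly-defined actions; the geometric input is just the base-change square for the big cell, but threading it through all the dualities and evaluation functors is where the real work lies.
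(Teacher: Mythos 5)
Your proposal follows essentially the same route as the paper — both start from $\B \simeq f_{*,\ren}(\omega_{G \backsl \GR /G})$ (equivalently $(f^!)^R(\omega)$), use the coaction formalism together with the self-dualities of $\Sph_G$ and $\Dmod(G \backsl \GR /G)$, and reduce the claim to a compatibility of coaction functors along $f$. However, there are two imprecisions in your Step~2 that would need repair before the argument is airtight. First, the displayed equation
$j_{*,\dR}(\omega_{G \backsl \GR /G}) \star c \simeq \Av_*^{G\to\GO}\big(\omega_{G\backsl\GR/G}\ast\oblv^{G\to\GO}(c)\big)$
does not type-check: the object $j_{*,\dR}(\omega_{G\backsl\GR/G})$ lies in $\Dmod(\Gr_G)^G$, not in $\Sph_G = \Dmod(\Gr_G)^\GO$, so it cannot be $\star$-paired with $c \in {}^\GO\C$. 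The correct manipulation is to leave $\B\star c = \langle f_{*,\ren}(\omega), \coact_{\GO\to\GK}(c)\rangle$ intact and move $f_{*,\ren}$ across the evaluation pairing via duality, obtaining $\langle \omega, (f^!\otimes\id)\,\coact_{\GO\to\GK}(c)\rangle$ — the $\Av_*^{G\to\GO}$ only appears at the very end.

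Second, the pivotal compatibility is not simply "base-change for the open embedding $j$" applied to $j_{*,\dR}$; what one actually needs is the identity
$(\id\otimes\oblv^{G\to\GO})\circ(f^!\otimes\id)\circ\coact_{\GO\to\GK} \simeq \coact_{G\to\GR}\circ\oblv^{G\to\GO}$,
with $f^! = j^!\circ\oblv^{G\to\GO}$ applied to the $\Sph_G$-factor (a direct diagram chase on the defining correspondences). Full faithfulness of $\oblv^{G\to\GO}$ (pro-unipotence of $\ker(\GO\to G)$) then lets one invert this to $(f^!\otimes\id)\circ\coact_{\GO\to\GK} \simeq \Av_*^{G\to\GO}\circ\coact_{G\to\GR}\circ\oblv^{G\to\GO}$ and substitute back. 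Your appeal to the colax-to-strict promotion from the proof of $\gamma\simeq\Av_!^{G\to\GR}\circ\oblv^{G\to\GO}$ is a red herring: that argument was needed because $\Av_!^{G\to\GR}$ is a left adjoint of a linear functor and hence only \emph{a priori} colax linear, whereas here one only manipulates coactions and evaluation pairings; no colax structure needs to be strictified.
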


\begin{proof}
We use \eqref{eqn:B as renormalized} and the definition of the $\star$ action to write
$$
\B \star c 
\simeq
\big\langle
f_{*,\ren}(\omega_{G \backsl \GR /G}), \coact_{\GO \to \GK}(c)
\big\rangle.
$$
By duality, 
$$
\B \star c 
\simeq
\big\langle
\omega_{G \backsl \GR /G}, (f^! \otimes \id) \circ \coact_{\GO \to \GK}(c)
\big\rangle,
$$
where, abusing notation, $\langle \cdot, \cdot \rangle$ denotes the evaluation functor
$$
\Dmod(G \backsl \GR /G) \otimes \Dmod(G \backsl \GR /G) \otimes {}^\GO \C
\to
{}^\GO \C.
$$
A straightforward diagram chase
shows that
$$
\oblv^{G \to \GO} \circ  (f^! \otimes \id) \circ \coact_{\GO \to \GK}
\simeq
\coact_{G \to \GR} \circ \oblv^{G \to \GO}.
$$
Since $\oblv^{G \to \GO}$ is fully faithful (as a consequence of the pro-unipotence of $\ker(\GO \tto G)$), we obtain that 
$$
(f^! \otimes \id) \circ \coact_{\GO \to \GK}
\simeq
\Av_*^{G \to \GO} \circ \coact_{G \to \GR} \circ \oblv^{G \to \GO}.
$$
It follows that 
$$
\B \star c 
\simeq
\Av_*^{G \to \GO} 
\Bigt{
\langle
\omega_{G \backsl \GR /G}, \coact_{G \to \GR} \circ \oblv^{G \to \GO}(c)
\rangle
},
$$
which is isomorphic to $\Av_*^{G \to \GO} 
\bigt{
\omega_{G \backsl \GR /G}
\ast
\oblv^{G \to \GO}(c)
}$ as desired.
\end{proof}

\sssec{}

Now let $\C = \Dmod^!(\wt\Bun_G)$, where $\wt\Bun_G$ is defined as in Section \ref{sssec:Hecke action} and $\Dmod^!$ as in \cite{Sam, thesis}. The left $\GK$-action on $\wt\Bun_G$ yields a left $\GK$-action on $\C$. Unraveling the definition, the resulting $\Sph_G$-action on ${}^\GO \Dmod^!(\wt\Bun_G) \simeq \Dmod(\Bun_G)$ is precisely the Hecke action at $x$. Hence, the above lemma yields a formula for the Hecke action of the tempered unit.

\ssec{Deducing Theorem \ref{mainthm: omega antitemp}}

\sssec{}

After the discussion of Section \ref{sssec:intro-reformulate-thmA}, our strategy to prove Theorem \ref{mainthm: omega antitemp} amounts to showing that 
$$
\B \star\omega_{\Bun_G} \simeq 0 \in \Dmod(\Bun_G)
$$
whenever $G$ has  semisimple rank $\geq 1$.
In view of  \eqref{eqn:formula for action of B}, this amounts to showing that
$$
\Av_*^{G \to \GO} 
\bigt{
\omega_{G \backsl G(R) /G}
\ast
\omega_{G \backsl \wt \Bun_G}
 }
 \simeq
 0.
$$
In fact, we will prove that 
$$
\omega_{G \backsl G(R) /G}
\ast
\omega_{G \backsl \wt \Bun_G}
\in \Dmod(G \backsl \wt \Bun_G)
$$
is already the zero object. 

\sssec{}

Consider the diagram
$$
G \backsl \wt\Bun_G
\xleftarrow{\;\; \alpha \;\;}
G \backsl \GR \times^G \wt\Bun_G 
\xto{\;\; \pi \;\;}
G \backsl \wt\Bun_G,
$$
where $\alpha$ is the map induced by the $\GR$-action on $\wt\Bun_G $, and $\pi$
the projection onto the second component.
The functor
$$
(p_{G\backsl \GR})_{*,\ren} \otimes \id:
\Dmod(G\backsl \GR) \otimes \Dmod^!(\wt\Bun_G )
\longto
\Dmod^!(\wt\Bun_G )
$$
is equivariant for the obvious \virg{balanced} $G$-action on the source and the left $G$-action on the target. Abusing notation, we denote by $\pi_{*,\ren}$ the induced functor at the level of $G$-invariant categories. 
Unraveling the constructions, we have
$$
\omega_{G \backsl G(R) /G}
\ast
\omega_{G \backsl \wt \Bun_G}
\simeq
\pi_{*,\ren}
\circ \alpha^!
(\omega_{G \backsl \wt \Bun_G})
\simeq
\pi_{*,\ren} 
\bigt{
\omega_{G \backsl \GR \times^G \wt\Bun_G} 
}.
$$

\sssec{}

We need to show that the latter object is zero. It is enough to do so after applying the conservative functor $\oblv^G: \Dmod(G \backsl \wt\Bun_G) \to \Dmod(\wt\Bun_G)$.
But then we tautologically have
$$
\oblv^G \circ \pi_{*,\ren} 
\bigt{
\omega_{G \backsl \GR \times^G \wt\Bun_G} 
}
\simeq
(p_{G \backsl \GR})_{*,\ren}  \bigt{ \omega_{G \backsl \GR} }
\otimes \omega_{\wt \Bun_G},
$$
so it suffices to prove that the vector space
$$
(p_{G \backsl \GR})_{*,\ren}  \bigt{ \omega_{G \backsl \GR} }
$$ 
vanishes. 

\sssec{}

First off, this vector space is exactly $\Hren(G \backsl \GR)$, by the formula for Borel-Moore homology of Remark \ref{rem:Hren def slick via renorm push }.
Then the isomorphims
$$
G \backsl G(R)\simeq G(R)^1, 
\hspace{.5cm}
G(R) \simeq G \ltimes G(R)^1
$$
imply that
$$
\Hren(G \backsl \GR) \otimes \Hren(G) \simeq \Hren(\GR).
$$
Tensoring with a nonzero vector space, in our case $\Hren(G) \simeq H^*(G)[2 \dim(G)]$, is conservative. So it remains to prove that $\Hren(\GR) \simeq 0$:
this vanishing statement is exactly the content of Theorem \ref{mainthm:Hren vanishing} for the affine curve $\A^1$.

\sec{Proof of Theorem \ref{mainthm:estimate with equations}} \label{sec:prove ThmE}

In the previous part of the paper, we have shown that Theorem \ref{mainthm: omega antitemp} follows from the simplest case of Theorem \ref{mainthm:Hren vanishing}.
In this section, we deduce Theorem \ref{mainthm:Hren vanishing} from Theorem \ref{mainthm:omega-inf-connective}, and then prove the latter in some special cases.

\ssec{From Theorem \ref{mainthm:omega-inf-connective} to Theorem \ref{mainthm:Hren vanishing}}

It suffices to apply the following general result to the case of $\Y = G[\Sigma]$.

\begin{lem}
Let $\Y$ be an ind-affine indscheme of ind-finite type. If $\omega_\Y \in \Dmod(\Y)^{\leq -\infty}$, then $\Hren(\Y) = 0$.
\end{lem}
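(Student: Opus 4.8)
The plan is to reduce the vanishing of $\Hren(\Y)$ to a computation at each finite stage of an ind-affine presentation, using the slick description of the renormalized pushforward recalled in Remark \ref{rem:Hren def slick via renorm push }. Write $\Y \simeq \colim_{i \in \I} Y_i$ as a filtered colimit of affine schemes of finite type along closed embeddings $i_{i \to i'}: Y_i \hto Y_{i'}$. By \eqref{eqn:slick HBM}, we have $\Hren(\Y) \simeq \uscolim{i \in \I} \, (p_{Y_i})_{*,\dR}(\F_i)$, where $\{\F_i\}_{i \in \I} \in \lim_{i \in \I^\op} \Dmod(Y_i)$ is the compatible family corresponding to $\omega_\Y$; explicitly, $\F_i \simeq (\iota_i)^! \omega_\Y$ for $\iota_i: Y_i \hto \Y$ the canonical closed embedding, and since each $\iota_i$ is a closed embedding of (ind)schemes of (ind-)finite type, $\F_i \simeq \omega_{Y_i}$.

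The key step is the following estimate: for each $i \in \I$ and each $m \gg 0$, there exists $i' \geq i$ such that the transition map $(p_{Y_i})_{*,\dR}(\omega_{Y_i}) \to (p_{Y_{i'}})_{*,\dR}(\omega_{Y_{i'}})$, which is $(p_{Y_{i'}})_{*,\dR}$ applied to the unit $\omega_{Y_i} \to (i_{i \to i'})_{*,\dR} (i_{i\to i'})^! \omega_{Y_{i'}} \simeq (i_{i\to i'})_{*,\dR} \omega_{Y_i}$ adjunction map, factors through $\tau^{\leq -m}$ in $\Vect$. Indeed, $\omega_{Y_{i'}}$ being infinitely connective means $\omega_{Y_{i'}} \in \Dmod(Y_{i'})^{\leq -\infty} = \bigcap_{m} \Dmod(Y_{i'})^{\leq -m}$; since $Y_{i'}$ is an affine scheme of finite type, $(p_{Y_{i'}})_{*,\dR}$ is, up to a finite cohomological shift by $\dim Y_{i'}$, left t-exact (it is the composition of the t-exact $\oblv_{Y_{i'}}$ with $(p_{Y_{i'}})^{\IndCoh}_*$, and on an affine scheme of finite type the latter has bounded cohomological amplitude), hence $(p_{Y_{i'}})_{*,\dR}(\omega_{Y_{i'}})$ is itself infinitely connective as an object of $\Vect$, i.e. zero. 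More directly: each $(p_{Y_{i'}})_{*,\dR}(\omega_{Y_{i'}}) \simeq \Hren(Y_{i'})$ is a bounded complex (since $Y_{i'}$ is a finite-type scheme), and being infinitely connective forces it to vanish. But then we do not even need the factorization argument: we can simply observe that the hypothesis $\omega_\Y \in \Dmod(\Y)^{\leq -\infty}$ is not needed stage-by-stage, rather it is the content that each $\Hren(Y_{i'})$ vanishes.

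So the cleanest route is: first show that $\omega_\Y \in \Dmod(\Y)^{\leq -\infty}$ implies $\F_{i'} := (\iota_{i'})^!\omega_\Y \in \Dmod(Y_{i'})^{\leq -m}$ for all $m$ — but $\Dmod(Y_{i'})^{\leq -\infty} \simeq 0$ for $Y_{i'}$ of finite type, which would wrongly force $\F_{i'} \simeq 0$. The resolution of this apparent tension, and the actual main obstacle, is that the t-structure on $\Dmod(\Y)$ is defined (Section \ref{defn t-structure}) by the condition that $\iota^!(\F)$ be coconnective for \emph{all} closed subschemes, and there is no finite bound uniform over the family; infinite connectivity of $\omega_\Y$ does \emph{not} say each $\F_{i'}$ is infinitely connective, but rather that for every fixed $m$ and every \emph{single} closed subscheme $Z \hto \Y$, $\iota_Z^!(\omega_\Y) \in \IndCoh(Z)^{\leq -m}$ after $\oblv_Z$. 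Thus the correct deduction: fix $m$; for each $i$, $\oblv_{Y_i}((\iota_i)^!\omega_\Y) \in \IndCoh(Y_i)^{\leq -m}$, hence applying the left-t-exact-up-to-shift-by-$\dim Y_i$ functor $(p_{Y_i})^{\IndCoh}_*$ gives $(p_{Y_i})_{*,\dR}(\omega_{Y_i}) \in \Vect^{\leq -m + \dim Y_i}$. The dimensions $\dim Y_i$ are unbounded, but for the colimit $\uscolim{i}(p_{Y_i})_{*,\dR}(\omega_{Y_i})$ what matters is cofinality: fix a cohomological degree $n$; choose $m$ with $-m + \dim Y_i > n$ impossible directly — instead one uses that the transition maps $\Hren(Y_i) \to \Hren(Y_{i'})$ are eventually zero in each degree, which follows because $\Hren(Y_{i'})$ is bounded and the map factors through arbitrarily connective truncations of $\omega_{Y_{i'}}$. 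The genuinely delicate point I expect to spend effort on is making precise, via the ind-affine presentation and Remark \ref{rem:Hren def slick via renorm push }, that the hypothesis produces a cofinal system of transition maps that are null in each fixed degree, so that the colimit vanishes; the rest is routine bookkeeping with t-exactness of $\oblv$, $i_{*,\dR}$, and the bounded amplitude of pushforward from affine finite-type schemes.
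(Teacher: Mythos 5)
There is a genuine gap, and it sits exactly where you flagged your own discomfort.

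The step that fails is the claim, offered as the ``resolution of the apparent tension,'' that $\omega_\Y \in \Dmod(\Y)^{\leq -m}$ implies $\oblv_Z(\iota_Z^!\omega_\Y) \in \IndCoh(Z)^{\leq -m}$ for every closed subscheme $\iota_Z : Z \hto \Y$. This is false. The $!$-pullback along a closed embedding is \emph{left} t-exact (it is the right adjoint to the t-exact $\iota_{Z,*,\dR}$), not right t-exact, so it gives no control whatsoever on how far into negative degrees $\iota_Z^!\omega_\Y$ sits. Worse, you can see the contradiction concretely: $\iota_Z^!\omega_\Y \simeq \omega_Z$ is a \emph{fixed bounded} complex for any finite-type $Z$, and if the claim held for all $m$ it would force $\omega_Z \simeq 0$, which is absurd. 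You actually noticed this (``$\Dmod(Y_{i'})^{\leq -\infty} \simeq 0$ for $Y_{i'}$ of finite type, which would wrongly force $\F_{i'} \simeq 0$''), but the purported resolution then restates the same false implication with different letters, and the rest of the argument (the ``$\Vect^{\leq -m + \dim Y_i}$'' estimate, the ``factors through arbitrarily connective truncations'') inherits the error. The final paragraph acknowledging ``the genuinely delicate point I expect to spend effort on'' is precisely the hole: you never extract anything from the hypothesis that survives pullback to a single finite stage.

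The mechanism the paper uses is dual to the one you reached for, and it is what you need. Instead of trying to pull the hypothesis back to each $Y_i$ (which loses all information), use the \emph{pushforward} description of the connective part of the t-structure: $\Dmod(\Y)^{\leq 0}$ is generated under colimits by objects $(\iota_k)_{*,\dR}(\ind_{Y_k}(C))$ with $C \in \Coh(Y_k)^{\leq 0}$. Then the whole proof reduces to a single, uniform statement: the functor $(p_\Y)_{*,\ren}:\Dmod(\Y)\to\Vect$ is \emph{right} t-exact. One checks this on the generators, using $(p_\Y)_{*,\ren} \circ (\iota_k)_{*,\dR} \simeq (p_{Y_k})_{*,\dR}$ (which falls out of the duality definition of the renormalized pushforward) and the t-exactness of the coherent pushforward from an affine finite-type scheme. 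Once right t-exactness is in hand, $\Hren(\Y) = (p_\Y)_{*,\ren}(\omega_\Y)$ lands in $\Vect^{\leq -\infty}$, which vanishes because the t-structure on $\Vect$ is left-complete. No estimate on the transition maps, no cofinality argument, and no appeal to the bounded amplitude of each $\Hren(Y_i)$ is needed; the hypothesis on $\omega_\Y$ is consumed in one step, after the t-exactness of $(p_\Y)_{*,\ren}$ has been established independently of it.
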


\begin{proof}
Let $\C$ be a DG category equipped with a t-structure. The t-structure is said to be left-complete if the functor
$$
\C 
\longto
 \lim_{n \in (\bbZ, \leq)^\op} \C^{\geq -n},
\hspace{.4cm}
c \squigto 
\{\tau^{\geq -n}(c)\}_{n \in \bbZ}
$$
is an equivalence.
When the t-structure on $\C$ is left-complete, it is obvious that $\C^{\leq -\infty} \simeq 0$: in other words, there are no nontrivial infinitely connective objects.
It is easy to see that the usual t-structure on $\Vect$ is left-complete, so that, in particular, $\Vect^{\leq -\infty} \simeq 0$. 
Hence, as 
$$
\Hren(\Y) := (p_\Y)_{*,\ren}(\omega_\Y)
$$
by Remark \ref{rem:Hren def slick via renorm push }, it suffices to show that $(p_\Y)_{*,\ren}: \Dmod(\Y) \to \Vect$ is right t-exact.

Let $\Y \simeq \colim_{k \in K} Y_k$ be an indscheme presentation, with each $Y_k$ an affine scheme. As discussed in Sections \ref{sssec:ind-oblv-D-modules} and \ref{defn t-structure}, the natural t-structure on $\Dmod(\Y)$ is defined by requiring that $\Dmod(\Y)^{\leq 0}$ be generated under colimits by the objects of the form $(i_k)_{*,\dR}(\ind_{Y_k}(C))$, for all $k$ and all $C \in \Coh(Y_k)^{\leq 0}$.  

Thus, it suffices to prove that
\begin{equation} \label{eqn:connective object}
(p_\Y)_{*,\ren}
\Bigt{
(i_k)_{*,\dR}(\ind_{Y_k}
(C)
)
}
\in \Vect^{\leq 0}
\end{equation}
for any such $C$. 
Let us simplify the composition 
\begin{equation} \label{eqn:string}
\IndCoh(Y_k)
\xto{\ind_{Y_k}}
\Dmod(Y_k)
\xto{(i_k)_{*,\dR}}
\Dmod(\Y)
\xto{
(p_\Y)_{*,\ren}}
\Vect.
\end{equation}
Observe first that $(p_\Y)_{*,\ren} \circ
(i_k)_{*,\dR} \simeq (p_{Y_k})_{*,\dR}$: indeed, the dual to $(p_\Y)_{*,\ren} \circ
(i_k)_{*,\dR} $ is $
(i_k)^! \circ (p_\Y)^! \simeq (p_{Y_k})^!$ by the definition of the renormalized pushforward.
It follows that the functor \eqref{eqn:string} is the ind-coherent pushforward along $p_{Y_k}$; furthermore, as $C$ is coherent, we obtain that
$$
(p_\Y)_{*,\ren}
\Bigt{
(i_k)_{*,\dR}(\ind_{Y_k}
(C))
}
\simeq
(p_{Y_k})_*^\ICoh(C)
\simeq
\Psi_\pt \circ (p_{Y_k})_*^\ICoh(C)
\simeq
(p_{Y_k})_*( \Psi_{Y_k} C)
\simeq
(p_{Y_k})_*(C),
$$ 
where we have used the fact that $\Psi$ intertwines quasi-coherent and ind-coherent pushforwards.
Then the claim of \eqref{eqn:connective object} is evident: thanks to the affineness of $Y_k$, the functor $(p_{Y_k})_*: \Coh(Y_k) \to \Vect$ is t-exact.
\end{proof}

\ssec{Proof of Theorem \ref{mainthm:estimate with equations}}

Let $G=SL_n$. In this case, $G$ is a closed subscheme of $\A^{n^2}$ determined by the vanishing of one equation of degree $n$. Thus, Theorem \ref{mainthm:omega-inf-connective} for $G=SL_n$ is a simple case of the following general result, which went under the name of Theorem \ref{mainthm:estimate with equations} in the introduction.

\begin{thm} \label{thm: omega inf connective by equations bound}
Let $Y \subseteq \A^N$ be a closed subscheme defined as the zero locus of $r$ polynomials $f_1, \ldots, f_r$ of degrees $n_1, \ldots, n_r$. If $n:=\sum_i n_i < N$, then $\omega_{Y[\Sigma]}$ is infinitely connective.
\end{thm}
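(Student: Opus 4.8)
The plan is to combine the presentation of $Y[\Sigma]$ as a filtered colimit of finite-dimensional schemes with an elementary bound on the cohomological degree of the dualizing D-module of a global complete intersection; the hypothesis $n<N$ will enter precisely by forcing the ``virtual codimension'' of the $d$-th approximation to tend to $+\infty$.

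\emph{Step 1 (Riemann--Roch bookkeeping).} Recall from the introduction that $\A^N[\Sigma]\simeq\colim_{d\ge 0}\A^N[\Sigma]_{\le d}$, where $\A^N[\Sigma]_{\le d}$ is the finite-dimensional affine space of maps whose poles along $D_\infty$ have order $\le d$. Since $\Sigma$ is affine it is not complete, so $h:=|D_\infty|\ge 1$. Writing $\ol\Sigma$ for the smooth compactification, Riemann--Roch gives $\delta_d:=\dim_\kk\A^N[\Sigma]_{\le d}=N\cdot\dim_\kk H^0(\ol\Sigma,\O(d\,D_\infty))=Nhd+O(1)$ as $d\to\infty$. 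Composing the universal $\Sigma$-point of $\A^N[\Sigma]_{\le d}$ with the $i$-th defining polynomial $f_i$, of degree $n_i$, produces a function with poles of order $\le n_i d$ along $D_\infty$, that is, a section of $\O(n_i d\,D_\infty)$ with coefficients in $\O(\A^N[\Sigma]_{\le d})$; expanding in a basis of $H^0(\ol\Sigma,\O(n_i d\,D_\infty))$ shows that the closed subscheme $Y[\Sigma]_{\le d}:=Y[\Sigma]\cap\A^N[\Sigma]_{\le d}$ is cut out inside the affine space $\A^N[\Sigma]_{\le d}$ by an ideal generated by $\kappa_d:=\sum_{i=1}^r\dim_\kk H^0(\ol\Sigma,\O(n_i d\,D_\infty))=hd\sum_i n_i+O(1)=hd\,n+O(1)$ elements. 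Hence $\delta_d-\kappa_d=hd\,(N-n)+O(1)\to+\infty$ as $d\to\infty$, where the inequality $N-n\ge 1$ and $h\ge 1$ are used.

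\emph{Step 2 (the elementary t-structure estimate).} I would next record the following: \emph{if $Z\hookrightarrow\A^\delta$ is a closed subscheme whose ideal is generated by $\kappa$ functions $g_1,\dots,g_\kappa$, then $\omega_Z\in\Dmod(Z)^{\le-(\delta-\kappa)}$.} Since $\Dmod$ is insensitive to nilpotents, we may take $Z$ reduced; then by Kashiwara's theorem $j_{*,\dR}(\omega_Z)=j_{*,\dR}j^!(\omega_{\A^\delta})$ is the local cohomology $\underline{R\Gamma}_{|Z|}(\omega_{\A^\delta})$ of $\omega_{\A^\delta}$ along the support $|Z|=V(g_1,\dots,g_\kappa)$. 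The latter is computed by the \v{C}ech complex of the affine open cover $\{\A^\delta\setminus V(g_i)\}_{i=1}^\kappa$ of $\A^\delta\setminus|Z|$, a complex of length $\kappa$ whose terms are de Rham pushforwards along affine open immersions --- hence t-exact functors --- applied to $\omega_{\A^\delta}$, an object concentrated in the single cohomological degree $-\delta$ (because $\oblv$ is t-exact on the smooth scheme $\A^\delta$ and $\oblv(\omega_{\A^\delta})=\omega_{\A^\delta}^{\IndCoh}=\O_{\A^\delta}[\delta]$). Therefore $\underline{R\Gamma}_{|Z|}(\omega_{\A^\delta})$ lies in cohomological degrees $[-\delta,\,-\delta+\kappa]$, and since $j_{*,\dR}$ is t-exact and conservative the same holds for $\omega_Z$; in particular $\omega_Z\in\Dmod(Z)^{\le-(\delta-\kappa)}$.

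\emph{Step 3 (assembling, and the only delicate points).} Applying Step 2 with $Z=Y[\Sigma]_{\le d}$, $\delta=\delta_d$, $\kappa=\kappa_d$ gives $\omega_{Y[\Sigma]_{\le d}}\in\Dmod(Y[\Sigma]_{\le d})^{\le c_d}$ with $c_d:=-(\delta_d-\kappa_d)\to-\infty$. Now $Y[\Sigma]\simeq\colim_d Y[\Sigma]_{\le d}$ along the closed embeddings $i_d$, and the dualizing sheaf of the indscheme is $\omega_{Y[\Sigma]}\simeq\colim_d(i_d)_{*,\dR}(\omega_{Y[\Sigma]_{\le d}})$, a colimit of pushforwards along t-exact functors. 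For any fixed $m$, choose $d_0$ with $c_d\le-m$ for all $d\ge d_0$; then $\omega_{Y[\Sigma]}$ is also the colimit over the cofinal subsystem $d\ge d_0$, all of whose terms lie in $\Dmod(Y[\Sigma])^{\le-m}$, and since the connective part of a t-structure is stable under colimits we get $\omega_{Y[\Sigma]}\in\Dmod(Y[\Sigma])^{\le-m}$. As $m$ was arbitrary, $\omega_{Y[\Sigma]}\in\bigcap_m\Dmod(Y[\Sigma])^{\le-m}=\Dmod(Y[\Sigma])^{\le-\infty}$. As the introduction already signals, there is no serious obstacle: the argument uses only Riemann--Roch and the one-paragraph local-cohomology estimate of Step 2. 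The two points that genuinely require care are (i) checking that the ideal of $Y[\Sigma]_{\le d}$ in $\A^N[\Sigma]_{\le d}$ needs no more than $\kappa_d$ generators, so that the sharp bound $\delta_d-\kappa_d$ is available to feed into Step 2; and (ii) the identification $\omega_{Y[\Sigma]}\simeq\colim_d(i_d)_{*,\dR}(\omega_{Y[\Sigma]_{\le d}})$ together with the observation that this colimit truly gains connectivity --- which is exactly the place where $n<N$ is indispensable.
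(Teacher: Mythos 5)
Your argument is correct and matches the paper's proof step-for-step: the Riemann--Roch count of equations and the final colimit/right-t-exactness argument are identical, and your local-cohomology estimate in Step~2 yields exactly the same bound $\omega_Z\in\Dmod(Z)^{\leq -(\delta-\kappa)}$ as the paper's Lemma~\ref{lem:silly estimate} on $\A^m\times_{\A^p}\pt$, the only superficial variation being that you take the \v{C}ech cover $\{\A^\delta\setminus V(g_i)\}$ in one shot where the paper peels off one equation at a time by induction. Your worry~(i) is in fact moot: since D-modules, and hence the estimate, only see the reduced vanishing locus $V(g_1,\dots,g_{\kappa_d})$, it is irrelevant whether those $\kappa_d$ functions generate the full ideal of $Y[\Sigma]_{\le d}$.
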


\begin{proof}
Let $X$ be the smooth compactification of $\Sigma$, obtained by adding $h \geq 1$ points at infinity. Denote by $D_\infty = X-\Sigma$ the union of such points and by $g$ the genus of $X$.

\sssec*{Step 1}

Consider the following indscheme presentation
$$
\A^N[\Sigma]
\simeq
\uscolim{d \gg 0} \, 
\A^N[\Sigma]_{\leq d},
$$
where we have set
$$
\A^N[\Sigma]_{\leq d} := H^0(X, \O(d D_\infty))^{\oplus N}.
$$
The given closed embedding $Y \subseteq \A^N$ yields the indscheme presentation $Y[\Sigma] \simeq \colim_{d \gg 0 } Y_d$, where 
$$
Y_d := Y[\Sigma]  \cap \A^N[\Sigma]_{\leq d}.
$$
Denoting by $i_d: Y_d \to Y[\Sigma]$ the tautological closed embeddings, we deduce that
$$
\omega_{Y[\Sigma]}
\simeq 
\uscolim{d \gg 0 } \; (i_d)_{*,\dR}(\omega_{Y_d}).
$$
Since each pushforward $(i_d)_{*,\dR}$ is right t-exact by construction, it suffices to find a divergent sequence $(C_d)_{d \gg 0 }$ of natural numbers satisfying $\omega_{Y_d} \in \Dmod(Y_d)^{\leq -C_d}$.

\sssec*{Step 2}

Assume from now on that $d >\max(0,(2g-2)/h)$. In this case, the scheme $\A^N[\Sigma]_{\leq d}$ is a vector space of dimension $N(dh+1-g)$.
Let us compute the number of equations needed to specify $Y_d$ inside $\A^N[\Sigma]_{\leq d} \simeq \A^{N(dh+1-g)}$. 
An $N$-tuple $(p_i)$ of elements of $H^0(X, \O(d D_\infty))$ belongs to $Y_d$ iff $f_j(p_1, \ldots, p_N) = 0$ for each $1 \leq j \leq r$. Since $f_j$ is of degree $n_j$, the expression $f_j(p_1, \ldots, p_N)$ is an element of $H^0(X, \O(d n_j D_\infty )) \simeq \A^{dh n_j +1 -g}$.
It follows that $Y_d$ is cut out by 
$$
\sum_{j=1}^r  (dh n_j +1 -g)
=
dhn +r(1-g)
$$
equations inside $\A^{N(dh+1-g)}$. 

\sssec*{Step 3}

Now let
$$
C_d
:=
N(dh+1-g) 
-(dhn +r(1-g))
=(N-n)hd + (N-r)(1-g).
$$
Since $N-n >0$ and $h >0$ by assumption, $C_d$ goes to infinity with $d$. On the other hand, the general lemma below implies that $\omega_{Y_d} \in \Dmod(Y_d)^{\leq - C_d}$.
\end{proof}

\begin{lem} \label{lem:silly estimate}
If $Z$ is an affine scheme of the form $\A^m \times_{\A^p} \pt$, then $\omega_Z \in \Dmod(Z)^{\leq -m+p}$.
\end{lem}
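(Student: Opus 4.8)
The statement is that for $Z = \A^m \times_{\A^p} \pt$, an affine scheme presented as the (derived) zero locus of $p$ functions inside the smooth $m$-dimensional affine space $\A^m$, one has $\omega_Z \in \Dmod(Z)^{\leq -m+p}$. The plan is to reduce this to the corresponding statement for ind-coherent sheaves via the forgetful functor $\oblv_Z : \Dmod(Z) \to \ICoh(Z)$, which is conservative and sends $\omega_Z$ to the ind-coherent dualizing sheaf $\omega_Z^{\ICoh}$, and then to compute the latter directly using the explicit presentation of $Z$ as a derived complete intersection.

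\textbf{Step 1: reduce to ind-coherent sheaves.} Recall from Section \ref{defn t-structure} that, since $Z$ is affine (hence a scheme of finite type), $\F \in \Dmod(Z)^{\leq 0}$ can be tested after applying $\oblv_Z$: more precisely $\Dmod(Z)^{\leq 0}$ is generated under colimits by $\ind_Z(\ICoh(Z)^{\leq 0})$, and $\oblv_Z$ is t-exact up to a shift controlled by the singularity of $Z$ — but for the estimate we only need the easy direction. Concretely, it suffices to show that $\omega_Z^{\ICoh} := (p_Z)^!(\kk) \in \ICoh(Z)^{\leq -m+p}$, because $\oblv_Z(\omega_Z) \simeq \omega_Z^{\ICoh}$ and the t-structure on $\Dmod(Z)$ is defined so that connectivity is detected by $\oblv$ after pullback to closed subschemes (here $Z$ itself). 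So the problem becomes an assertion about the standard t-structure on $\ICoh(Z)$.

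\textbf{Step 2: compute $\omega_Z^{\ICoh}$ from the complete intersection presentation.} Write $i : Z \hto \A^m$ for the closed embedding, which by hypothesis is the base change of $\pt \hto \A^p$ along a map $g : \A^m \to \A^p$. Since $\A^m$ is smooth, $\omega_{\A^m}^{\ICoh} \simeq \O_{\A^m}[m]$ via $\Upsilon_{\A^m}$ (equation \eqref{eqn:anomaly}). The embedding $i$ is quasi-smooth of relative dimension $-p$: its relative cotangent complex is $\bbL_{Z/\A^m} \simeq \O_Z^{\oplus p}[1]$, so $\omega_Z^{\ICoh} \simeq i^!(\omega_{\A^m}^{\ICoh}) \simeq i^!(\O_{\A^m})[m]$. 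Now $i^!$ for a quasi-smooth closed embedding of relative dimension $-p$ is, up to tensoring by the (trivial, here) relative dualizing line, the shifted $*$-pullback $i^*[-p]$; concretely $i^!(\O_{\A^m}) \simeq \O_Z[-p]$. Hence $\omega_Z^{\ICoh} \simeq \O_Z[m-p]$, i.e. $\omega_Z^{\ICoh} \simeq \O_Z[m-p]$. Since $Z$ is affine, $\O_Z$ may be non-coconnective as an object of $\ICoh(Z)$ (its negative cohomology is the Koszul homology of the regular-or-not sequence $g$), but it is certainly connective: $\O_Z \in \ICoh(Z)^{\leq 0}$, because $\O_Z = g^* \O_{\pt}$ sits in cohomological degrees $\leq 0$, being computed by the Koszul complex $\Sym_{\O_{\A^m}}(\O_{\A^m}^{\oplus p}[1]) \otimes (\text{equations})$. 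Therefore $\omega_Z^{\ICoh} \in \ICoh(Z)^{\leq -(m-p)} = \ICoh(Z)^{\leq -m+p}$, which is exactly the claim.

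\textbf{Main obstacle.} The only genuinely delicate point is bookkeeping the t-structure conventions: one must be careful that the standard t-structure on $\ICoh(Z)$ is defined so that $\O_Z$ is connective (degrees $\leq 0$) when $Z$ is a derived affine scheme, and that $i^!$ for the quasi-smooth embedding $i$ of relative dimension $-p$ shifts by exactly $[-p]$ (not $[+p]$). Once the signs are fixed, the argument is a one-line consequence of $\omega_Z^{\ICoh} \simeq \O_Z[m-p]$ together with connectivity of $\O_Z$; no hard input is needed beyond the basic formalism of Section \ref{ssec:indcoh-sing-supp} and the t-exactness statements recalled in Section \ref{defn t-structure}. (Alternatively, and perhaps more transparently, one can argue by induction on $p$: cutting by one more equation $f$ corresponds to the derived fiber product $Z' = Z \times_{\A^1} \pt$, and $\omega_{Z'} \simeq \omega_Z[-1]$ away from connectivity issues, with the connectivity bound worsening by exactly $1$ at each step; the base case $p=0$ is $\omega_{\A^m} \simeq \O_{\A^m}[m]$, and $\oblv_{\A^m}$ is t-exact because $\A^m$ is smooth, so $\omega_{\A^m} \in \Dmod(\A^m)^{\leq -m}$.)
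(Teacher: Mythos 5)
Your main argument has a genuine gap at Step~1: the reduction from $\Dmod(Z)$-connectivity to $\ICoh(Z)$-connectivity via $\oblv_Z$ is not justified, and it is not one of the standard facts about $\oblv_Z$. For a singular scheme $Z$, the functor $\oblv_Z: \Dmod(Z) \to \ICoh(Z)$ is only \emph{left} t-exact, and the t-structure on $\Dmod(Z)$ (Section~\ref{defn t-structure}) is defined so that $\oblv_Z$ detects the \emph{coconnective} part: $\F \in \Dmod(Z)^{\geq 0}$ iff $\oblv_Z(\F) \in \ICoh(Z)^{\geq 0}$. The connective part is then the left orthogonal, or equivalently the subcategory generated under colimits by $\ind_Z(\Coh(Z)^{\leq 0})$. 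The implication you use --- $\oblv_Z(\omega_Z) \in \ICoh(Z)^{\leq -m+p}$ implies $\omega_Z \in \Dmod(Z)^{\leq -m+p}$ --- is not ``the easy direction'' of any of these statements, and $\omega_Z$ is not $\ind_Z$ of anything coherent, so the generation criterion does not apply directly either. Step~2 has an independent mismatch: the formula $\omega_Z^{\ICoh} \simeq \O_Z[m-p]$ (and the connectivity of $\O_Z$) is a statement about the \emph{derived} fiber product $\A^m \times_{\A^p} \pt$, whereas in Section~5 the scheme $Z$ is classical and the t-structure on $\Dmod(Z)$ refers to $\ICoh$ of the classical scheme. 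These two $\ICoh$ categories do not agree when the defining equations fail to be a regular sequence, and $\omega^{\ICoh}$ is not $\O_Z[m-p]$ on the classical truncation in that case.

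Your parenthetical alternative --- induction on $p$ --- is essentially the paper's proof, and it sidesteps both problems by staying entirely inside $\Dmod$. Spelled out: write $Z = Z' \times_{\A^1} \pt$ with $Z'$ cut out by $p-1$ equations (note $Z'$ is the \emph{larger} scheme), so $\omega_{Z'} \in \Dmod(Z')^{\leq -m+p-1}$ by induction; let $i: Z \hto Z'$ be the closed embedding and $j: U := Z' - Z \hto Z'$ the complementary open (affine, being a basic open); and use the recollement fiber sequence
$$
j_{*,\dR}(\omega_U)[-1] \longto i_{*,\dR}(\omega_Z) \longto \omega_{Z'}.
$$
Since $j^!$ is t-exact (open embedding) and $j_{*,\dR}$ is t-exact ($j$ affine), the term $j_{*,\dR}\,j^!(\omega_{Z'})[-1]$ lies in $\Dmod(Z')^{\leq -m+p}$; hence so does $i_{*,\dR}(\omega_Z)$, and then $\omega_Z \in \Dmod(Z)^{\leq -m+p}$ because $i_{*,\dR}$ is t-exact and fully faithful. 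This never invokes $\oblv$ or the derived complete intersection structure, which is exactly why it works uniformly for arbitrary (possibly non-regular) sequences of equations. To turn your parenthetical into a proof, you should carry out this devissage rather than appeal to ``$\omega_{Z'} \simeq \omega_Z[-1]$ away from connectivity issues,'' which is not precise.
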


\begin{proof}
We proceed by induction on $p$. In case $p =0$, we have
$$
\omega_{\A^m} \in \Dmod(\A^m)^{\heartsuit}[m].
$$ 
For the sake of completeness, let us give a proof of this well-known statement. By \cite[Proposition 4.2.11]{Crystals}, the forgetful functor $\oblv_{\A^m}: \Dmod(\A^m) \to \IndCoh(\A^m)$ is t-exact. Since it is also conservative, it suffices to prove that the ind-coherent dualizing sheaf $\omega_{\A^m}^{\ICoh}$ belongs to $\ICoh(\A^m)^\heartsuit [m]$. The equivalence $\Psi_{\A^m}: \ICoh({\A^m}) \to \QCoh({\A^m})$ is t-exact by construction, see \cite[Section 1.2]{ICoh}, and sends $\omega_{\A^m}^\ICoh$ to the shifted canonical bundle $K_{\A^m} [m]$. 
To see the latter fact, use Grothendieck duality for $\bbP^n$ to prove that $(p_{\bbP^n})^{!, \ICoh}(\kk) \simeq K_{\bbP^n}[n]$, and then further pullback along $\A^n \hto \bbP^n$.

Assume now that $p >0$. We can write $Z \simeq Z' \times_{\A^1} \pt$, for an affine scheme $Z'$ of the form $\A^m \times_{\A^{p-1}} \pt$. Hence, $\omega_{Z'} \in \Dmod(Z')^{\leq -m+p-1}$ by the induction hypothesis.
Denote by $i: Z \hto Z'$ the obvious closed embedding, with $j: U := Z'-Z \hto Z'$ as complementary open. Since $i_{*,\dR}$ is t-exact and fully faithful, it suffices to prove that
$$
i_{*,\dR}(\omega_Z) 
\in \Dmod(Z')^{\leq -m +p}.
$$
Observe that $i_{*,\dR}(\omega_Z)$ sits in the fiber sequence
$$
j_{*,\dR}(\omega_U)[-1]
\longto
i_{*,\dR}(\omega_Z)
\longto
\omega_{Z'} .
$$
Moreover, $j^!$ and $j_{*,\dR}$ are both t-exact (the latter because $j$ is affine), so that 
$$
j_{*,\dR}(\omega_U)[-1] 
\simeq
j_{*,\dR} \circ j^! (\omega_{Z'})[-1] 
\in
\Dmod(Z')^{\leq - m+p}.
$$
Combined with the bound for $\omega_{Z'}$, this yields the assertion.
\end{proof}

\sec{Proof of Theorem \ref{mainthm:omega-inf-connective} in general} \label{sec: prove ThmD in general}

In this section we prove Theorem \ref{mainthm:omega-inf-connective}: given a non-abelian connected reductive group $G$ and a smooth affine curve $\Sigma$, the dualizing sheaf $\omega_{G[\Sigma]}$ is infinitely connective for the natural t-structure on $\Dmod(G[\Sigma])$.

\ssec{Outline} \label{ssec:outline proof of thm D}

Since the proof involves a number of technical reduction steps, let us give an outline of the strategy.

\sssec{}

Any closed embedding $Z \hto Y$ of affine schemes induces a closed embedding $Z[\Sigma] \hto Y[\Sigma]$ of indschemes. The open complement is the indscheme $Y[\Sigma]^{(Y-Z)\ggen}$ of maps $\Sigma \to Y$ that land generically in $Y-Z$. 
Precisely, the functor of points of $Y[\Sigma]^{(Y-Z)\ggen}$ sends a test affine scheme $S$ to the set of maps $\phi:\Sigma_S \to Y$ for which the preimage $\phi^{-1}(Y-Z)$ is universally dense in $\Sigma_S$.

Recall that an open subset of $\Sigma_S$ is said to be universally dense if, for every geometric point $s \in S$, its pullback along $\Sigma_s \to \Sigma_S$ is dense in $\Sigma_s$.
See \cite{Barlev} for several examples of prestacks defined using the notion of universal density.

\sssec{}

Denote by 
$$
\Gcirc := N^- \times  B \simeq N^- \times T \times N
$$
the big Bruhat cell of $G$: this is a Zariski open subset of $G$. Since $G$ can be covered by translates of $\Gcirc$,  it is clear that $G[\Sigma]$ admits an open cover whose members are isomorphic to $G[\Sigma]^{\Gcirc \ggen}$. 
By \cite[Lemma 7.8.7]{ker-adj}, the t-structure on the DG category of D-modules on an indscheme is Zariski local: hence, to show that $\omega_{G[\Sigma]}$ is infinitely connective, it suffices to prove the following.

\begin{thm} \label{thm:maps to G generically in big cell}
Let $G$ be a connected reductive group of semisimple rank $\geq 1$. Then the dualizing sheaf of the indscheme $G[\Sigma]^{G^\circ \ggen}$ is infinitely connective.
\end{thm}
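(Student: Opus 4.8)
The idea is to compare the indscheme $G[\Sigma]^{\Gcirc\ggen}$ of maps that are generically in the big cell with $\Gcirc[\Sigma']$ for a shrinking family of open subcurves $\Sigma'\subseteq\Sigma$, using the Ran space of $\Sigma$ to parametrize the (variable, finite) locus where a map leaves $\Gcirc$. Concretely, a point of $G[\Sigma]^{\Gcirc\ggen}$ over a test scheme $S$ is a map $\phi\colon\Sigma_S\to G$ together with the data of a relative effective Cartier divisor $D\subset\Sigma_S$ (supported on the ``bad locus'' $\phi^{-1}(G\setminus\Gcirc)$, suitably scheme-theoretically thickened) such that $\phi$ restricted to $\Sigma_S\setminus D$ lands in $\Gcirc$. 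This exhibits $G[\Sigma]^{\Gcirc\ggen}$ as glued from pieces indexed by the Ran space $\Ran(\Sigma)$ of $\Sigma$: over the locus of $\Ran(\Sigma)$ where the divisor is $D$, the fiber is (an open in) $\Maps(\Sigma\setminus D,\,\Gcirc)\times_{\Maps(\mathring{D},\Gcirc)}(\text{meromorphic data along }D)$, and the affine Grassmannian $\Gr_G$ enters precisely because $\Maps$ from a punctured disc to $G$ modulo $\Maps$ from the disc is a twisted form of $\Gr_G$.

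\textbf{Key steps, in order.} First, I would set up the precise moduli description of $G[\Sigma]^{\Gcirc\ggen}$ in terms of a map to the Ran space $\Ran(\Sigma)$ (or the Hecke/divisor version thereof), following the factorization-space formalism; the output is a presentation of $G[\Sigma]^{\Gcirc\ggen}$ as a colimit, over finite subsets $\underline{x}\subset\Sigma$ and bounds on pole order, of schemes of the form
$$
Z_{\underline{x},d}\;=\;\Gcirc[\Sigma\setminus\underline{x}]_{\leq d}\;\ustimes{\;G[\mathring{D}_{\underline{x}}]_{\leq d}\;}\;(\text{a finite-type scheme mapping to }G[\mathring{D}_{\underline{x}}]).
$$
Second, I would observe that $\Gcirc\cong N^-\times T\times N$, so $\Gcirc[\Sigma']$ has $\AA^\infty$ (coming from $N^-[\Sigma']$ and $N[\Sigma']$, which are ind-affine-spaces of unbounded dimension since $N$ is a nontrivial vector group when $G$ is non-abelian) as a direct factor; hence $\omega_{\Gcirc[\Sigma']}$ is infinitely connective, and more quantitatively, on each finite-dimensional piece $\Gcirc[\Sigma\setminus\underline{x}]_{\leq d}$ the dualizing sheaf lives in cohomological degree $\leq -C(d)$ with $C(d)\to\infty$. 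Third — the technical heart — I would show that the ``defect'' of passing from $\Gcirc[\Sigma\setminus\underline x]$ to the actual piece $Z_{\underline x,d}$ of $G[\Sigma]^{\Gcirc\ggen}$ is governed by a \emph{finite-dimensional} correction: the fiber product above is cut out, inside a vector space of dimension growing like $(\dim\Gcirc)\cdot(\text{R-R dimension})$, by a number of equations that grows strictly more slowly, so Lemma \ref{lem:silly estimate} (the $\AA^m\times_{\AA^p}\pt$ estimate) applies with $m-p\to\infty$. Here the affine-Grassmannian input is used to control the local contribution $G[\mathring D]$ and to see that the relevant local condition is of bounded ``codimension defect'' uniformly in $d$. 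Fourth, I would assemble: right t-exactness of the transition pushforwards $(i_{\underline x,d})_{*,\dR}$ in the colimit presentation, together with $\omega_{Z_{\underline x,d}}\in\Dmod(Z_{\underline x,d})^{\leq -C(d)}$ and $C(d)\to\infty$, forces $\omega_{G[\Sigma]^{\Gcirc\ggen}}\in\Dmod(G[\Sigma]^{\Gcirc\ggen})^{\leq-\infty}$, exactly as in the proof of Theorem \ref{thm: omega inf connective by equations bound}.

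\textbf{Main obstacle.} The delicate point is Step 3: making precise, and uniform in the pole bound $d$ and in the finite subset $\underline x$, the claim that $G[\Sigma]^{\Gcirc\ggen}$ differs from $\Gcirc[\Sigma]$ only by a correction whose ``number of equations minus number of variables'' stays bounded above (so that the surplus of variables still diverges). Two sub-difficulties: (a) one must organize the Ran-space gluing so that the bad divisor $D$ is genuinely finite-type data and the local spaces $G[\mathring D_{\underline x}]$ and their truncations-by-pole-order are handled coherently — this is where the basic geometry of $\Gr_G$ (ind-properness, the finiteness of $\GO$-orbits, the structure of the truncations $\Gr_G^{\leq\lambda}$) is invoked; (b) one must show the map $Z_{\underline x,d}\to\AA^{m}$ really is of the shape needed for Lemma \ref{lem:silly estimate}, i.e. a derived complete intersection with the right numerics, which amounts to a Riemann--Roch bookkeeping: the local defect along each point of $\underline x$ contributes $O(1)$ to the codimension gap per unit increase of $d$ whereas the ambient dimension of $\Gcirc[\Sigma\setminus\underline x]_{\leq d}$ grows like $(\dim N^- + \dim N + \dim T)\cdot h\cdot d$, and $\dim N + \dim N^- \geq 2$ precisely when $G$ has semisimple rank $\geq 1$. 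Once the numerics are pinned down the rest is a routine repetition of the arguments already used for Theorems \ref{mainthm:estimate with equations} and \ref{mainthm:omega-inf-connective} in type $A$.
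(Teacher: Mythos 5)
Your overall architecture is right and matches the paper in broad strokes: parametrize the bad locus by (a marked version of) the Ran space of $\Sigma$, use that $\Gcirc[\Sigma']\simeq \A^\infty\times T[\Sigma']$ has $\A^\infty$ as a factor, and assemble via right t-exactness of $!$-pushforwards along the colimit presentation. The Ran-theoretic reduction you sketch (ind-properness of the forgetful maps, homological contractibility of the Ran fibers) is indeed how the paper passes from $G[\Sigma]^{\Gcirc\ggen}$ to a version with marked points. Two small notational points: the paper works with the Ran space $\Ran_{X,D_\infty}$ relative to the points at infinity, indexed by $\fset_\sA$, not $\Ran(\Sigma)$ itself, and it stratifies by the $\Lambda$-valued ``type'' of the $T$-component — both of these matter for making the argument precise.

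The real problem is your Step 3, which you yourself flag as the ``technical heart.'' You propose to control the fiber product $Z_{\underline x, d}$ by counting equations versus variables and invoking Lemma~\ref{lem:silly estimate}, with the claim that the codimension defect coming from the extension-across-$\underline x$ condition grows ``$O(1)$ per unit increase of $d$.'' This is not justified and is not how the paper proceeds; in fact the paper says explicitly that the equation-counting route (Theorem~\ref{mainthm:estimate with equations}) only handles $GL_n$ and $SL_n$, and ``for more general groups, we take a completely different route.'' What is actually needed is the representation-theoretic input of Lemma~\ref{lemma:key-rep-thry}: if $\phi=(\phi^-,\phi^T,\phi^+)$ with $\phi^T$ of type $\lambda$ lands in $G[[t]]$, then $\phi^\pm$ have poles at $x$ bounded by a constant $e(\lambda)$ \emph{independent of any other truncation parameter}. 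This forces the fiber product $\Gcirc[\Sigma-x]^\lambda\times_{\Gr_G}\pt$ to factor through a fixed finite-dimensional closed subscheme $Y_\lambda\subset\Gr_G$, so that $\Gcirc[\Sigma-x]^\lambda\times_{\Gr_G}\pt\simeq(\A^\infty\times T[\Sigma-x]^\lambda)\times_{Y_\lambda}\pt$, and then Lemma~\ref{lem:Ainfty times something inf connective} (a mild twist of your Step 2 reasoning) finishes. Without this bounded-pole lemma, the numerics in your Step 3 do not hold: the condition that $\phi$ extend across $x$ is a priori a condition whose complexity grows with the ambient pole bound, and your $O(1)$ estimate has no source. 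So your plan is missing the one genuinely new geometric/representation-theoretic idea the theorem requires.
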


\sssec{}

Before proceeding, it is convenient to introduce some more notation. Let $\fSet$ be the $1$-category of \emph{nonempty} finite sets and surjective maps between them. Given $I \in \fSet$ and given $\x \in \Sigma^I(S)$ an $I$-tuple of maps from $S$ to $\Sigma$, we denote by $D_\x \subset \Sigma_S$ the incidence divisor of $\x$.

\sssec{}

For the proof of Theorem \ref{thm:maps to G generically in big cell}, we need a variant of $G[\Sigma]^{G^\circ \ggen}$ that keeps track of the locus where the rational map $\Sigma \dasharrow G^\circ$ is not defined. This is precisely what $\fSet$ and the above notation are useful for.
Namely, let $G[\Sigma]^{\Gcirc\ggen}_{\Sigma^{I,\disj}}$ be the indscheme whose $S$-points are those pairs 
$$
\bigt{
\x \in \Sigma^I(S), \, \phi: \Sigma_S \to G
}
$$
with the following two properties:
\begin{itemize}
\item
the members of the $I$-tuple $\x$ have pairwise disjoint graphs in $\Sigma_S$;
\item
$\phi$ sends $\Sigma_S- D_\x$ to $\Gcirc$.
\end{itemize}
In Section \ref{ssec:Ran marked}, we will show that the Theorem \ref{thm:maps to G generically in big cell} is a consequence of the statement below.

\begin{prop} \label{prop:maps to G, gen in Gcirc, over disjoint locus}
For any $I \in \fset$, the dualizing sheaf of $
G[\Sigma]^{G^\circ\ggen}_{\Sigma^{I,\disj}}$ is infinitely connective.
\end{prop}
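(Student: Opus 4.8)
The strategy is to exploit the product structure of the big cell $\Gcirc \simeq N^- \times T \times N$ and to compare the space $G[\Sigma]^{\Gcirc\ggen}_{\Sigma^{I,\disj}}$ with an appropriate space of maps into $T$, for which the unipotent directions contribute infinitely many factors of $\AA^\infty$. Concretely, over the disjoint locus $\Sigma^{I,\disj}$, a pair $(\x,\phi)$ with $\phi$ landing in $\Gcirc$ away from $D_\x$ amounts to the data of a rational map $\Sigma \dashrightarrow N^- \times T \times N$ with poles controlled by $D_\x$. I would first show that, after possibly enlarging $I$, the $N^-$ and $N$ components of such a map factor through the successive quotients of the lower central series, so that they are parametrized (fiberwise over a space of data for $T$ and for $\x$) by sections of a vector bundle on $\Sigma$ with prescribed poles along $D_\x$; allowing the order of poles to grow, the corresponding indscheme is a colimit of affine spaces whose dimensions diverge, hence is isomorphic to $\AA^\infty$ as a direct factor. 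Since the semisimple rank is $\geq 1$, $N$ (equivalently $N^-$) is nontrivial, so this $\AA^\infty$-factor genuinely appears.

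The key step is then the following splitting: I claim there is a presentation
$$
G[\Sigma]^{\Gcirc\ggen}_{\Sigma^{I,\disj}}
\simeq
\AA^\infty \times \Z
$$
as indschemes, for a suitable indscheme $\Z$ built from the $T$-component and the $\x$-data. Granting this, the dualizing sheaf satisfies $\omega_{G[\Sigma]^{\Gcirc\ggen}_{\Sigma^{I,\disj}}} \simeq \omega_{\AA^\infty} \boxtimes \omega_{\Z}$. As recalled in the introduction (the computation $\Hren(\AA^\infty) \simeq 0$ and the underlying fact that $\omega_{\AA^\infty} \simeq \colim_n \omega_{\AA^n} \simeq \colim_n \kk[2n] \simeq 0$ already at the level of the dualizing sheaf, not just its homology), one has $\omega_{\AA^\infty} \simeq 0$ in $\Dmod(\AA^\infty)$; more precisely, for the t-structure on $\Dmod(\AA^\infty) \simeq \colim_n \Dmod(\AA^n)$ along the closed embeddings $\AA^n \hto \AA^{n+1}$, the object $\omega_{\AA^n} \in \Dmod(\AA^n)^\heartsuit[n]$ pushes forward to an object in degree $-n$, so $\omega_{\AA^\infty} \in \Dmod(\AA^\infty)^{\leq -\infty}$. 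Taking the external product with $\omega_\Z$ preserves this: $\omega_{\AA^\infty} \boxtimes \omega_\Z$ is a colimit of objects $(i_n \times \id)_{*,\dR}(\omega_{\AA^n} \boxtimes \omega_\Z)$, and since $(i_n\times\id)_{*,\dR}$ is right t-exact and $\omega_{\AA^n}\boxtimes\omega_\Z$ lies in cohomological degree $\leq -n + (\text{fixed bound independent of }n)$ over each fixed affine piece of $\Z$, the colimit is infinitely connective. Hence $\omega_{G[\Sigma]^{\Gcirc\ggen}_{\Sigma^{I,\disj}}}$ is infinitely connective, which is the assertion of the proposition.

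\textbf{The main obstacle.} The delicate point is establishing the product splitting $G[\Sigma]^{\Gcirc\ggen}_{\Sigma^{I,\disj}} \simeq \AA^\infty \times \Z$ in a way that is compatible with the indscheme presentations (i.e.\ with the bounded-pole exhaustions) and that works uniformly over the disjoint locus $\Sigma^{I,\disj}$. The issue is that the identification $\Gcirc \simeq N^- \times T \times N$ is only an isomorphism of varieties, not of groups, so a rational map to $\Gcirc$ does not literally decompose into independent rational maps to $N^-$, $T$, $N$ with separately prescribed pole orders — the pole orders interact through the multiplication map. I would handle this by working one filtration step at a time (using that $N$ is successively an extension of vector groups $\GG_a^{m_i}$, each of which is a closed subgroup on which rational maps with poles along $D_\x$ are just sections of $\O(d D_\x)^{m_i}$ for $d \gg 0$), and by being careful to let the pole bound $d$ grow independently in the unipotent directions while keeping the $T$-data and $\x$-data in a fixed finite-type piece; a Riemann–Roch count as in the proof of Theorem \ref{thm:maps to G generically in big cell}'s finite-type analogues shows the dimensions of the unipotent strata diverge, giving the $\AA^\infty$-factor. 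The remaining bookkeeping — that what is left after extracting this factor is genuinely a direct product rather than a twisted bundle — follows because the extraction can be performed by a sequence of affine-bundle projections, each of which splits (non-canonically) since the base is a scheme, and affine bundles over a colimit of affine schemes are Zariski-locally trivial. This is where the argument of Section \ref{ssec:Ran marked} (reducing Theorem \ref{thm:maps to G generically in big cell} to Proposition \ref{prop:maps to G, gen in Gcirc, over disjoint locus}) and the Ran-space technology will do the work of matching $G[\Sigma]^{\Gcirc\ggen}_{\Sigma^{I,\disj}}$ across different $I$.
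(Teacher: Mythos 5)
There is a genuine gap, and it is exactly at the point you flag as ``the main obstacle.'' Your proposed fix does not close it, and the claimed product decomposition $G[\Sigma]^{\Gcirc\ggen}_{\Sigma^{I,\disj}} \simeq \AA^\infty \times \Z$ is stronger than what is actually true.

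The issue is not that a rational map $\Sigma \dashrightarrow \Gcirc$ fails to split into components $(\phi^-,\phi^T,\phi^+)$ (it does, since $\Gcirc \simeq N^- \times T \times N$ as a variety). The actual constraint cutting out $G[\Sigma]^{\Gcirc\ggen}_{\Sigma^{I,\disj}}$ inside the space of such rational triples is that the product $\phi^-\,\phi^T\,\phi^+$, regarded as a $G$-valued rational map, extends regularly across $D_\x$. This regularity condition genuinely couples the three components, and the crucial input is a \emph{representation-theoretic pole bound}: once the $T$-type $\ul\lambda \in \Lambda^I$ of $\phi^T$ is fixed, the poles of $\phi^\pm$ at the marked points are bounded by some $e(\ul\lambda)$ depending only on $\ul\lambda$. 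This is the content of Lemma \ref{lemma:key-rep-thry} (proved by first checking it for $GL_d$ by direct matrix computation and then reducing a general $G$ to $GL_d$ via a faithful representation sending the big cell to the big cell). Your filtration of $N$ by its lower central series does not produce such a bound: after passing to a graded piece $\GG_a^{m_i}$, the regularity condition on the full $G$-valued product still involves the other filtration steps and the $T$-component, so ``sections of $\O(dD_\x)^{m_i}$ for $d \gg 0$'' is not an accurate description of the data without first controlling $d$ in terms of $\ul\lambda$.

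Consequently the conclusion you want is also too strong. What the pole bound buys is that, after fixing $\ul\lambda$ and imposing the bound $e(\ul\lambda)$, the map to the Grassmannian factors through a \emph{finite-type} closed subscheme $Y_{\ul\lambda}$, so the relevant component of $G[\Sigma]^{\Gcirc\ggen}_{\Sigma^{I,\disj}}$ is presented as a fiber product of the form
$\bigt{\AA^\infty \times \T} \times_{Y_{\ul\lambda}} \pt$
(after decomposing the $\x$-variable into strata), \emph{not} as a direct product $\AA^\infty \times \Z$. One then invokes Lemma \ref{lem:Ainfty times something inf connective}, which is designed to cope precisely with this kind of base change over a finite-type $Y$; the key point there is that $\AA^\infty \times \T \hto$-to-a-finite-type-scheme lets you convert the unbounded connectivity of $\omega_{\AA^\infty \times \T}$ into unbounded connectivity of the fiber's dualizing sheaf, with only a finite loss from the $!$-pullback. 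Without the pole bound, the fiber product is not over a finite-type scheme and the lemma does not apply.

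One further omission: to go from the singleton case to general $I$-tuples over $\Sigma^{I,\disj}$, the paper uses the factorization structure of the spaces of rational maps and of the marked Beilinson-Drinfeld Grassmannian to reduce the pole-bound statement to the singleton case (working \'etale-locally on $\Sigma$ so as to put the marked point at the origin). Your proposal defers this entirely to ``Ran-space technology,'' but the Ran-space argument in Section \ref{ssec:Ran marked} is only a reduction of Theorem \ref{thm:maps to G generically in big cell} \emph{to} the present proposition; it does not supply the factorization compatibility of the pole bound that is needed \emph{inside} the proof of the proposition.
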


\sssec{}

It remains to prove this proposition. Consider the particular case where $I$ is a singleton. Then the indscheme in question is $G[\Sigma]^{\Gcirc \ggen}_{\Sigma}$: its $S$-points are pairs $(\x \in \Sigma(S), \phi: \Sigma_S \to G)$ such that the restriction of $\phi$ to $\Sigma_S - D_\x$ factors through $\Gcirc$. 
Even more particularly, fix $x \in \Sigma(\kk)$ and consider the indscheme
$$
G[\Sigma]^{\Gcirc \ggen}_{\Sigma} \times_{\Sigma} \{x\}
\simeq
\Gcirc[\Sigma-x] \times_{G[\Sigma-x]} G[\Sigma].
$$
This indscheme parametrizes those maps $\Sigma \to G$ that send $\Sigma-x$ into the big cell.
We will approach Proposition \ref{prop:maps to G, gen in Gcirc, over disjoint locus} in two stages. In Section \ref{ssec:easier omega inf connective}, we will prove:

\begin{prop} \label{prop:maps to G, gen in Gcirc, over disjoint locus - EASIER}
For any $x \in \Sigma(\kk)$, the dualizing sheaf of $\Gcirc[\Sigma-x] \times_{G[\Sigma-x]} G[\Sigma]$ is infinitely connective.
\end{prop}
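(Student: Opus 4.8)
\textbf{Plan of proof of Proposition \ref{prop:maps to G, gen in Gcirc, over disjoint locus - EASIER}.}
The idea is to exhibit the indscheme $\Gcirc[\Sigma-x] \times_{G[\Sigma-x]} G[\Sigma]$ as having a direct factor isomorphic to $\A^\infty$, exactly as in the discussion after Theorem \ref{mainthm:Hren vanishing}, and then to propagate the infinite connectivity of $\omega_{\A^\infty}$ across the product. First I would use the Bruhat decomposition $\Gcirc \simeq N^- \times T \times N$, which is an \emph{isomorphism of schemes} (not merely of varieties), to factor the ``generic part'' of the indscheme. More precisely, a map $\Sigma_S \to G$ whose restriction to $\Sigma_S - D_\x$ lands in $\Gcirc$ is the same datum as a triple of maps $\Sigma_S - D_\x \to N^-$, $\Sigma_S - D_\x \to T$, $\Sigma_S - D_\x \to N$ together with a coherent extension of their product to all of $\Sigma_S$. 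Since $G$ is non-abelian, at least one of $N^-$, $N$ is nontrivial, hence contains a copy of $\GG_a$ as a direct factor of a Bruhat coordinate; concretely, fixing a simple root, one obtains a surjection with a section $\Gcirc \to \A^1$ realizing $\A^1$ as a direct factor of the scheme $\Gcirc$ in a $T$-compatible way.

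The key step is then: the ``$\A^1$-coordinate'' of the datum is an arbitrary map $\Sigma_S - D_\x \to \A^1$, with no constraint at $x$ beyond the existence of the overall extension of the $G$-valued map. I would set this up so that $\Gcirc[\Sigma-x] \times_{G[\Sigma-x]} G[\Sigma] \simeq \A^1[\Sigma - x]_{\text{merom at }x} \times \bigt{\text{rest}}$, where $\A^1[\Sigma-x]_{\text{merom at }x}$ is the indscheme of maps $\Sigma \to \PP^1$ that send $\Sigma-x$ to $\A^1$ — equivalently, rational functions on $\Sigma$ regular away from $x$. Writing this last indscheme as $\colim_{d \geq 0} H^0(X, \O(dD_\infty + dx))^{\oplus 1}$ (or rather the appropriate line-bundle version keeping track of poles only at $x$ among the points of $\Sigma$), one sees by Riemann-Roch that it is an increasing union of finite-dimensional affine spaces whose dimensions diverge; hence it is abstractly isomorphic to $\A^\infty := \colim_m \A^m$, and $\omega_{\A^\infty} \simeq \colim_m \omega_{\A^m}$ with each $\omega_{\A^m} \simeq \kk[2m]$ placed in cohomological degree $-2m$, so $\omega_{\A^\infty} \in \Dmod(\A^\infty)^{\leq -\infty}$.

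To finish I would use the compatibility of the t-structure with external products: $\omega_{\Y \times \Z} \simeq \omega_\Y \boxtimes \omega_\Z$, and if $\omega_\Y$ is infinitely connective while $\omega_\Z \in \Dmod(\Z)^{\leq 0}$ (which holds for $\Z$ any indscheme of ind-finite type, since $\omega$ is always connective for this t-structure on an indscheme — each $\omega_{Z_k}$ sits in $\Dmod(Z_k)^{\leq 0}$ and $(i_k)_{*,\dR}$ is right t-exact), then $\omega_\Y \boxtimes \omega_\Z$ is infinitely connective; this is because the external product functor $\Dmod(\Y) \otimes \Dmod(\Z) \to \Dmod(\Y \times \Z)$ is right t-exact in each variable on an indscheme presentation, so tensoring an object of $\Dmod(\Y)^{\leq -m}$ with one of $\Dmod(\Z)^{\leq 0}$ lands in $\Dmod(\Y \times \Z)^{\leq -m}$ for every $m$. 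Here $\Y$ is the $\A^\infty$-factor and $\Z$ is the complementary ``rest'' factor coming from the other Bruhat coordinates and the $T$-valued coordinate; I should be slightly careful that the product decomposition is genuine (which it is, by the direct-factor form of the Bruhat isomorphism) and that $\Z$ really is an indscheme of ind-finite type (it is, being built from $N^-[\Sigma-x]$-type and $T[\Sigma-x]$-type pieces and a fiber product over $G[\Sigma-x]$, all of ind-finite type).

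\textbf{Main obstacle.} The delicate point is establishing the product decomposition at the level of indschemes with the correct bookkeeping of poles at $x$: one must check that isolating a single $\GG_a$-coordinate from $\Gcirc$ in a $T$-equivariant fashion really does split off the factor $\A^1[\Sigma-x]_{\text{merom at }x}$ \emph{without imposing any pole bound at $x$ on that coordinate}, i.e.\ that the constraint ``the $G$-valued map extends over $x$'' does not restrict the chosen $\GG_a$-coordinate — this is where the open-complement description (the map lands in $\Gcirc$ only away from $D_\x$, and here $D_\x = \{x\}$) is used in an essential way. Once that is in place, the Riemann-Roch dimension count and the t-structure estimates are routine, modelled on the proof of Theorem \ref{thm: omega inf connective by equations bound} and on Corollary \ref{cor:GrG infinitely connective}.
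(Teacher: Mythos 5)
There is a genuine gap: the direct product decomposition
\[
\Gcirc[\Sigma-x] \ustimes{G[\Sigma-x]} G[\Sigma]
\;\stackrel{?}{\simeq}\;
\A^1[\Sigma-x]_{\text{merom at }x} \times \bigt{\text{rest}}
\]
does not hold, and the assertion in your \virg{Main obstacle} paragraph that the extension condition over $x$ \virg{does not restrict the chosen $\GG_a$-coordinate} is precisely what is false. Take $G=SL_2$ and write a point of $\Gcirc\ppart \times_{G\ppart} G[[t]]$ as
\[
\begin{pmatrix} 1 & 0 \\ f & 1 \end{pmatrix}
\begin{pmatrix} t^m\alpha & 0 \\ 0 & t^{-m}\alpha^{-1} \end{pmatrix}
\begin{pmatrix} 1 & g \\ 0 & 1 \end{pmatrix}
\]
with $\alpha$ a unit. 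For the product to lie in $G[[t]]$ one needs $m\geq 0$ and, crucially, the poles of both $f$ and $g$ at $x$ to be bounded by $m$; the $N^\pm$-coordinates are rigidly coupled to the $T$-coordinate by the extension condition, so no $\GG_a$-coordinate can be split off as an unconstrained direct factor. If the decomposition you want were true, the $\lambda=0$ component (trivial type, so $\phi^T$ a unit at $x$) would already contain maps $f$ with arbitrarily high poles at $x$, which it does not.

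The paper gets around this by decomposing $\Gcirc[\Sigma-x]$ according to the type $\lambda\in\Lambda$ of the $T$-component, and then proving (Lemma \ref{lemma:key-rep-thry}) that on the $\lambda$-stratum the extension condition forces the poles of $\phi^\pm$ at $x$ to be bounded by a constant $e(\lambda)$ depending only on $\lambda$; this is the representation-theoretic heart of the argument, reduced to $GL_d$ via a faithful representation. Two things follow. First, $\Gcirc[\Sigma-x]^{\lambda,\leq e(\lambda)}$ is still of the form $\A^\infty\times T[\Sigma-x]^\lambda$, because the $\A^\infty$ now comes from the unbounded poles at the points $D_\infty$ at infinity of $\Sigma$ (the affineness of $\Sigma$ is what keeps the $\A^\infty$ alive after the pole bound at $x$ is imposed). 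Second, and just as important, the pole bound makes the map to $\Gr_G$ factor through a closed \emph{subscheme} $Y_\lambda\subset\Gr_G$ of finite type, so the fiber product $\Gcirc[\Sigma-x]^{\lambda}\times_{\Gr_G}\pt$ is recast as $(\A^\infty\times\T)\times_{Y_\lambda}\pt$ with $Y_\lambda$ a finite-type scheme; only then does Lemma \ref{lem:Ainfty times something inf connective} apply. Your concluding external-product estimate is fine in isolation, but without the type decomposition and the pole-bound lemma you have neither a product to apply it to nor a finite-type base to take the fiber over, and both are essential.
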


This is where the main geometric argument takes place. Then, in Section \ref{ssec:moving points}, we will adapt this argument to prove Proposition \ref{prop:maps to G, gen in Gcirc, over disjoint locus} in its full generality.

\ssec{Proof of Proposition \ref{prop:maps to G, gen in Gcirc, over disjoint locus - EASIER}} \label{ssec:easier omega inf connective}

Let $x \in \Sigma(\kk)$ be fixed once and for all. Let $t$ be a local coordinate at $x$ and denote by $\Gr_G = \Gr_{G,x}$ the affine Grassmannian at $x$. 
We have
$$
\Gcirc[\Sigma-x] \times_{G[\Sigma-x]} G[\Sigma]
\simeq
\Gcirc[\Sigma-x] \times_{G\ppart} G[[t]]
\simeq
\Gcirc[\Sigma-x] \times_{\Gr_G} \pt,
$$
where the map $\pt \to \Gr_G$ in the rightmost expression is the inclusion of the unit point.

\sssec{}

By definition, $\Gcirc \simeq N^- \times T \times N$, so that 
$$
\Gcirc[\Sigma-x]
\simeq
N^-[\Sigma-x]
\times
T[\Sigma-x]
\times
N[\Sigma-x],
$$
$$
\Gcirc \ppart
\simeq
N^- \ppart
\times
T \ppart
\times
N \ppart.
$$
Consider the maps $T[\Sigma-x] \to \Gr_{T,x}$ and $T \ppart \to \Gr_{T,x}$.
Since, at the reduced level, the $T$-Grassmannian $\Gr_{T,x}$ is discrete and isomorphic to $\Lambda$, elements of $T[\Sigma-x]$ and $T \ppart$ have a well defined \virg{type} in $\Lambda$. For $\lambda \in \Lambda$, denote by 
$$
T[\Sigma-x]^{\lambda}
:=
T[\Sigma-x] 
\ustimes{\Gr_T}
\{t^\lambda\},
\hspace{.4cm}
T \ppart ^{\lambda}
:=
T \ppart 
\ustimes{\Gr_T}
\{t^\lambda\},
$$
the corresponding closed (as well as open) subschemes of $T[\Sigma-x]$ and $T \ppart$, respectively.
We also set 
\begin{equation} \label{eqn:lambda type for maps into Gcirc}
\Gcirc[\Sigma-x]^{\lambda} := N^-[\Sigma-x]
\times
T[\Sigma-x]^{\lambda}
\times
N[\Sigma-x],
\end{equation}
\begin{equation} \label{eqn:lambda type loops into Gcirc}
\Gcirc \ppart ^{\lambda} 
:= 
N^- \ppart
\times
T \ppart ^{\lambda}
\times
N \ppart.
\end{equation}
Since $\Gcirc[\Sigma-x] \simeq \bigsqcup_{\lambda} \Gcirc[\Sigma-x]^{\lambda} $,
 the following implies (in fact, it is equivalent to) Proposition \ref{prop:maps to G, gen in Gcirc, over disjoint locus - EASIER}.

\begin{prop} \label{prop:maps to G, gen in Gcirc, over disjoint locus - LAMBDA}
For any $\lambda \in \Lambda$, the dualizing sheaf of $
\Gcirc[\Sigma-x]^{\lambda} \times_{\Gr_{G}} \pt$ is infinitely connective.
\end{prop}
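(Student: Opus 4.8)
The plan is to show that the indscheme $Z_\lambda := \Gcirc[\Sigma-x]^\lambda \ustimes{\Gr_G} \pt$ admits $\A^\infty$ as a direct factor, whenever $G$ has semisimple rank $\geq 1$; the infinite connectivity of $\omega_{Z_\lambda}$ then follows exactly as for $\omega_{G^\circ[\Sigma]}$ in Section \ref{ssec:outline proof of thm D}, because the dualizing sheaf of any product $Y \times \A^\infty$ is infinitely connective (one has $\omega_{\A^\infty} \simeq \uscolim{m}(i_m)_{*,\dR}(\omega_{\A^m})$ with $\omega_{\A^m}$ increasingly connective and the transition pushforwards right t-exact, and exterior product with an infinitely connective object stays infinitely connective). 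Recall that $\Gcirc[\Sigma-x]^\lambda = N^-[\Sigma-x] \times T[\Sigma-x]^\lambda \times N[\Sigma-x]$ by \eqref{eqn:lambda type for maps into Gcirc}, and that the structure map to $\Gr_G = G[[t]]\backslash G\ppart$ sends a triple $(\gamma^-,\sigma,\gamma)$ to the coset of the germ at $x$ of $\gamma^-\sigma\gamma \in G[\Sigma-x]$; thus $Z_\lambda$ is the closed sub-indscheme cut out by the condition that this germ lie in $G[[t]]$.

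To produce the $\A^\infty$ factor, I would observe that $Z_\lambda$ is stable under two commuting free actions: $N[\Sigma]$ acts by right translation on the $N[\Sigma-x]$-coordinate and $N^-[\Sigma]$ acts by left translation on the $N^-[\Sigma-x]$-coordinate. Indeed the germ at $x$ of an element of $N[\Sigma]$ (resp.\ $N^-[\Sigma]$) lies in $N[[t]] \subseteq G[[t]]$ (resp.\ $N^-[[t]] \subseteq G[[t]]$), so right- (resp.\ left-)multiplying $\gamma^-\sigma\gamma$ by it preserves the defining condition. Since the semisimple rank of $G$ is $\geq 1$, both $N$ and $N^-$ are affine spaces of dimension $\geq 1$, and $\A^m[\Sigma] \simeq \A^\infty$ for $m \geq 1$ by Riemann--Roch, so the acting group $H := N^-[\Sigma] \times N[\Sigma]$ is, as a scheme, isomorphic to $\A^\infty$. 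Set $Y_\lambda := Z_\lambda/H$. As $N^\pm$ are successive extensions of $\GG_a$'s, the map $Z_\lambda \to Y_\lambda$ is a Zariski-locally trivial $H$-torsor, so that $Z_\lambda$ is Zariski-locally over $Y_\lambda$ a product with $\A^\infty$; since the t-structure on D-modules on an indscheme is Zariski-local (\cite[Lemma 7.8.7]{ker-adj}) and $\omega$ is compatible with restriction to opens, this reduces the proposition to the statement that $Y_\lambda$ is of ind-finite type.

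The remaining point — that $Y_\lambda$ is of ind-finite type — is the geometric heart of the argument and the step I expect to be the main obstacle. Choosing a splitting $\O(\Sigma-x) \simeq \O(\Sigma) \oplus t^{-1}\kk[t^{-1}]$ (again Riemann--Roch), one identifies $Y_\lambda$ with a locally closed subscheme of the product of $T[\Sigma-x]^\lambda$ with the spaces of ``polar parts at $x$'' of the $N^-$- and $N$-coordinates. Now $T[\Sigma-x]^\lambda$ is itself of ind-finite type, because the unit group of the coordinate ring of the affine curve $\Sigma - x$ is finitely generated modulo $\kk^\times$; and the condition that $\gamma^-\sigma\gamma$ have germ at $x$ in $G[[t]]$, with $\sigma$ of type $\lambda$, forces the pole orders at $x$ of $\gamma^-$ and of $\gamma$ to be bounded in terms of $\lambda$. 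This last assertion reduces to a standard finiteness fact on the affine Grassmannian: after the manipulations of \cite[Formula 2.6]{MV} and \cite[Proposition 2.3.3]{Xinwen}, the condition becomes that a suitable point of $\Gr_G$ lie in the intersection of a semi-infinite orbit with an opposite one, which is empty unless $\lambda \geq 0$ and finite-dimensional otherwise; one then checks that $Y_\lambda$ maps to this finite-dimensional locus with finite-dimensional fibers, whence $Y_\lambda$ is of ind-finite type. The delicate parts are making the pole bound explicit and uniform in $\lambda$ and verifying that the quotient of the indscheme $Z_\lambda$ by the infinite-dimensional group $H$ behaves as expected.
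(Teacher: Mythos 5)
Your approach is genuinely different from the one in the paper, and although the idea is appealing, the route you take has a gap that you yourself flag, and the paper's formulation is designed precisely to avoid it.

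You propose to act on $Z_\lambda = \Gcirc[\Sigma-x]^\lambda \times_{\Gr_G} \pt$ by the free commuting actions of $N^-[\Sigma]$ and $N[\Sigma]$, quotient to get $Y_\lambda = Z_\lambda/H$ with $H \simeq \A^\infty$, and conclude via Zariski-local triviality of the $H$-torsor that $Z_\lambda$ is Zariski-locally $Y_\lambda \times \A^\infty$, reducing the problem to $Y_\lambda$ being of ind-finite type. The observation that the two translation actions preserve $Z_\lambda$ is correct, and you correctly identify that the essential geometric input is a pole bound at $x$ for the $N^\pm$ components in terms of $\lambda$ (the content of Lemma~\ref{lemma:key-rep-thry}). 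The references to \cite[Formula~2.6]{MV} and \cite[Proposition~2.3.3]{Xinwen} are the right ones, and one can indeed see this bound as a manifestation of the finite-dimensionality of intersections of opposite semi-infinite orbits.

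The gap is in forming $Y_\lambda := Z_\lambda/H$. You quotient an indscheme by a free action of an ind-group scheme of infinite type; such a quotient need not exist as an indscheme of ind-finite type, and the claim that $Z_\lambda \to Y_\lambda$ is a Zariski-locally trivial $H$-torsor (over an ind-finite type base that you have not yet constructed) is asserted rather than proved. Saying that $N^\pm$ are iterated extensions of $\GG_a$'s is not by itself enough: $H = N^-[\Sigma]\times N[\Sigma]$ is not a pro-unipotent group scheme but an ind-group indscheme, and local triviality of torsors for such objects over indscheme bases requires a real argument. You call this "the main obstacle," and indeed it is; as stated, it leaves the proof incomplete.

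The paper sidesteps the quotient entirely. Using the pole bound, one first shows that the closed embedding
$\Gcirc\ppart^{\lambda,\leq e(\lambda)} \times_{G\ppart} G[[t]] \hto \Gcirc\ppart^{\lambda} \times_{G\ppart} G[[t]]$
is an isomorphism, so that the structure map to $\Gr_G$ factors through a genuine \emph{closed subscheme} $Y_\lambda \subset \Gr_G$ (automatically of finite type). This gives directly
$Z_\lambda \simeq \Gcirc[\Sigma-x]^{\lambda,\leq e(\lambda)} \times_{Y_\lambda} \pt \simeq (\A^\infty \times T[\Sigma-x]^\lambda) \times_{Y_\lambda} \pt$,
which is exactly the input required by Lemma~\ref{lem:Ainfty times something inf connective}. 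The paper's $Y_\lambda$ is thus a subscheme of $\Gr_G$, not a quotient of $Z_\lambda$, and its finite-typeness is essentially immediate once the pole bound is in place. If you wish to pursue your route you would need to produce $Z_\lambda/H$ as an indscheme of ind-finite type and verify local triviality; the fiber-product formulation is more economical and renders those questions moot.
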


\sssec{}

The remaining part of Section \ref{ssec:easier omega inf connective} is devoted to the proof of the above proposition. We first observe that
\begin{equation} \label{eqn:Lambda loops into Gcirc extendable across x}
\Gcirc[\Sigma-x]^{\lambda}
 \ustimes{\Gr_G} 
 \pt
\simeq
\Gcirc[\Sigma-x]^{\lambda}
\ustimes{\Gcirc \ppart^{\lambda}} 
\Bigt{
\Gcirc \ppart^\lambda 
\ustimes{G\ppart} 
G[[t]]
}.
\end{equation}
Now, let us represent elements of $\Gcirc[\Sigma-x]^{\lambda}$ as triples $\phi = (\phi^-, \phi^T, \phi^+)$ according to the isomorphism of \eqref{eqn:lambda type for maps into Gcirc}.
Obviously, $\phi^\pm = (\phi^-,\phi^+)$ can be viewed as a map $\Sigma-x \to \A^{|\sR|}$, where $|\sR|$ is the number of roots of $G$.
We say that $\phi^\pm$ has poles at $x$ bounded by $n$ (with $n \geq 0$) if each of the maps $\Sigma-x \to \A^1$ comprising $\phi^\pm$ is an element of $H^0(\Sigma, \O_\Sigma(nx))$.
We remark that this condition is about the pole at $x$, it has nothing to do with the poles of $\phi^{\pm}$ at the points at infinity of $\Sigma$.

\sssec{}

We will use a similar notation and terminology for elements of $\Gcirc \ppart ^\lambda$. Namely, we represent them as triples $\phi = (\phi^-, \phi^T, \phi^+)$ according to \eqref{eqn:lambda type loops into Gcirc} and say that $\phi$ has poles bounded by $n$ if so do all the Laurent series comprising $\phi^\pm$.

\begin{lem} \label{lemma:key-rep-thry}
For any $\lambda \in \Lambda$, there exists a number $e(\lambda) \in \NN$ with the following property: if $\phi = (\phi^-, \phi^T, \phi^+) \in \Gcirc \ppart^\lambda$ is contained in 
$\Gcirc \ppart^\lambda 
\times_{G\ppart} G[[t]]$, then $\phi^\pm$ has poles at $x$ bounded by $e(\lambda)$.
\end{lem}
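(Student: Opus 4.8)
The statement is a purely group-theoretic/representation-theoretic boundedness assertion: a loop $\phi=(\phi^-,\phi^T,\phi^+)\in\Gcirc\ppart^\lambda$ whose image in $\Gr_G$ is the unit point (i.e.\ $\phi$ extends to $G[[t]]$) must have its $N^-$- and $N$-components with poles at $x$ of order bounded by a quantity $e(\lambda)$ depending only on $\lambda$, not on $\phi$. The plan is to reduce the problem to a statement about the affine Grassmannian $\Gr_G$ and then to a finite-dimensional orbit estimate. First I would recall that $\phi$ lies in the unit $G(\OO)$-orbit of $\Gr_G$, i.e.\ $\phi\in G[[t]]$ as an element of $G\ppart$; the factorization $\phi=\phi^-\cdot t^\lambda\cdot(\text{stuff in }N\ppart)$ coming from the big-cell coordinates means the $G(\OO)$-coset of $\phi$ is entirely governed by the ``negative'' part $\phi^-$ together with $t^\lambda$, since $N\ppart$ and $T[[t]]$ already lie in $G[[t]]$. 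So the real constraint is: $\phi^-\in N^-\ppart$ with $\phi^-\,t^\lambda\in G[[t]]$ forces the polar part of $\phi^-$ to be bounded.

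\textbf{Key steps.} (1) Use the big-cell / Iwasawa-type decomposition to rewrite the condition $\phi\in \Gcirc\ppart^\lambda\times_{G\ppart}G[[t]]$ as a condition purely on the pair $(\phi^-,\lambda)$: namely $\phi^-\cdot t^\lambda\in G[[t]]\cdot(\text{something in }\Gcirc\ppart)$, hence $[\phi^- t^\lambda]=[\mathrm{unit}]$ in $\Gr_G=G\ppart/G[[t]]$ up to the $N\ppart$-factor which is harmless. (2) Observe that $N^-\ppart$ acts on $\Gr_G$ with the ``semi-infinite'' orbits $S^\mu$, and a point of $\Gr_G$ that is both in the unit $G(\OO)$-orbit and in the $N^-\ppart$-orbit of $t^\lambda$ pins down, via Mirković--Vilonen-type intersection estimates, the extent to which $\phi^-$ can have poles: the key finiteness is that $\Gr_G^{\leq\mu}\cap S^-_\nu$ is finite-dimensional, so only finitely many ``polar orders'' of $\phi^-$ are compatible with landing at the unit coset. (3) Make this quantitative: embed $G\hookrightarrow GL_n$, write $\phi^-$ as a matrix of Laurent polynomials in $t^{-1}$, and note that $\phi^- t^\lambda\in G[[t]]\subseteq GL_n[[t]]$ together with the bound on $\lambda$ (via its pairing with the weights of the standard representation) forces the entries of $\phi^-$ to have pole order at $x$ at most some explicit function of $\langle\lambda,\text{weights}\rangle$; set $e(\lambda)$ to be that bound. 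The cleanest route is probably to pick a faithful representation $V$ of $G$, use that $t^\lambda$ acts on weight spaces $V_\chi$ by $t^{\langle\lambda,\chi\rangle}$, and that $\phi^-$ is unipotent-upper-triangular-ish with respect to a weight filtration, so that the polar order of $\phi^-$ in the $(\chi,\chi')$-block is bounded by $\langle\lambda,\chi'\rangle-\langle\lambda,\chi\rangle$ (and hence by $2\max_\chi|\langle\lambda,\chi\rangle|$), which depends only on $\lambda$.

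\textbf{Main obstacle.} The delicate point is step (2)--(3): controlling $\phi^-$ uniformly over \emph{all} $\phi$ in the fiber, not just for a single $\phi$. A priori $\phi^T$ and $\phi^+$ could ``absorb'' arbitrarily large poles of $\phi^-$ while keeping the product in $G[[t]]$; one must show this cannot happen because the $T$- and $N$-parts already lie in the arc group and the big-cell decomposition is a \emph{direct product} of schemes, so there is no cancellation between the three factors at the level of pole orders. Making this rigorous amounts to an honest computation inside $N^-\ppart\cdot T\ppart\cdot N\ppart$ — essentially the statement that in the open cell of $G\ppart$, membership in $G[[t]]$ is detected block-by-block with respect to a weight filtration of a faithful representation. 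I expect the proof the author gives to do exactly this: fix $V$ faithful, use the weight decomposition, and read off $e(\lambda)$ from the weights of $V$ and the height of $\lambda$. Once Lemma~\ref{lemma:key-rep-thry} is in hand, it feeds directly into bounding the polar part of admissible maps $\Sigma-x\to\Gcirc$ of type $\lambda$ that extend across $x$, which is what is needed for Proposition~\ref{prop:maps to G, gen in Gcirc, over disjoint locus - LAMBDA}.
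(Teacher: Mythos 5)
Your step (3) and final paragraph correctly anticipate the paper's argument: choose a faithful representation $\rho\colon G \hookrightarrow GL(V)$ with an ordered basis of weight vectors so that $\rho$ carries $N^-$, $T$, $N$ into lower-triangular, diagonal, upper-triangular matrices (hence the big cell of $G$ into the big cell of $GL_{\dim V}$), and reduce to $GL_d$, which is settled by a direct matrix computation. So the core strategy matches the paper.

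But step (1) is wrong as written, and step (3) quietly inherits the error. You assert that ``$N\ppart$ and $T[[t]]$ already lie in $G[[t]]$'' and reduce the constraint to one on $\phi^- t^\lambda$ alone. The inclusion $N\ppart\subseteq G[[t]]$ is false: $\phi^+$ is a free Laurent parameter with poles at $x$, and the condition $\phi^-\phi^T\phi^+\in G[[t]]$ does not localize to $\phi^-t^\lambda$. Indeed, the whole content of the lemma is that all three factors can carry poles and one must rule out cancellation among them. The paper's $GL_2$ computation shows this explicitly: with $\phi^-=\left(\begin{smallmatrix}1&0\\f&1\end{smallmatrix}\right)$, $\phi^T=\left(\begin{smallmatrix}t^m&0\\0&t^n\end{smallmatrix}\right)$, $\phi^+=\left(\begin{smallmatrix}1&g\\0&1\end{smallmatrix}\right)$, it is the $(1,2)$-entry $gt^m$ of the product that bounds the poles of $g=\phi^+$, just as the $(2,1)$-entry $ft^m$ bounds those of $f=\phi^-$; you need the full product, not $\phi^-t^\lambda$ by itself. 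Your ``main obstacle'' paragraph correctly flags exactly this worry (could $\phi^T,\phi^+$ absorb poles of $\phi^-$?) but then dismisses it with the same false inclusion. The fix is simply to drop the reduction to $(\phi^-,\lambda)$ and carry all three factors through the block computation in the faithful representation — which is precisely what the paper does, with $e(\lambda)$ read off from the weights of $V$. Step (2)'s appeal to semi-infinite orbits and Mirković--Vilonen dimension estimates is an unnecessary detour; the elementary $GL_d$ matrix argument does everything.
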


\begin{proof}
For $G= GL_2$, this lemma is obvious. 
Indeed, letting $R$ be a test ring, choose $f,g \in R\ppart$ and $m,n \in \ZZ$ arbitrarily. If the element
$$
\left[
\begin{array}{cc}
1 & 0  \\
f & 1
\end{array}
\right]
\cdot
\left[
\begin{array}{cc}
t^m& 0  \\
0 & t^{n}
\end{array}
\right]
\cdot
\left[
\begin{array}{cc}
1 & g  \\
0 & 1
\end{array}
\right] = 
\left[
\begin{array}{cc}
t^m & g t^m   \\
f t^m & t^{n} + fg t^m
\end{array}
\right]
$$
belongs to $G(R[[t]])$, then $m \geq 0$ and the order of the poles of $f$ and $g$ must be bounded by $m$.

A similar computation, left to the reader, proves the lemma for $G=GL_d$, with $d \geq 3$. Now let $G$ be arbitrary; we will reduce to the case of $GL_d$ as follows. Pick a faithful representation $\rho: G \hto GL(V)$. By choosing an appropriate ordered basis of $V$ consisting of weight vectors, we can assume that $\rho(N)$ (respectively: $\rho(N^-)$, $\rho(T)$) is contained in the subgroup of upper triangular (respectively: lower triangular, diagonal) matrices of $GL(V) \simeq GL_{\dim(V)}$. 
In particular, $\rho$ sends the big cell of $G$ to the big cell of $GL_{\dim(V)}$. 

Now pick $\phi = (\phi^-, \phi^T, \phi^+) \in \Gcirc \ppart^\lambda 
\times_{G\ppart} G[[t]]$. The case of $GL_d$ implies that $\rho(\phi^{\pm})$ both have poles at $x$ bounded by some $e \in \bbN$. It follows easily that the same is true, possibly with a different bound $e' \in \bbN$, for the $\phi^\pm$ themselves.
\end{proof}


\sssec{}

Denote by $\Gcirc \ppart ^{\lambda,\leq n}$ the closed subspace of $\Gcirc \ppart^{\lambda}$ consisting of those $\phi$ for which the poles of $ \phi^\pm$ are bounded by $n$. 
The subspace $\Gcirc[\Sigma-x] ^{\lambda,\leq n}$ of $\Gcirc[\Sigma-x] ^{\lambda}$ is defined in the same way.
The above lemma implies that the closed embedding
$$
\Gcirc \ppart^{\lambda,\leq e(\lambda)}
\ustimes{G\ppart} 
G[[t]]
\hto
\Gcirc \ppart^\lambda 
\ustimes{G\ppart} 
G[[t]]
$$
is an isomorphism.

\sssec{}

On the other hand, the bound on the pole orders makes it obvious that the map
$$
\Gcirc \ppart^{\lambda,\leq e(\lambda)}
\to
G \ppart
\tto
\Gr_G
$$
factors through a closed \emph{subscheme} $Y_\lambda \subset \Gr_G$. 
Then:
$$
\Gcirc \ppart^{\lambda,\leq e(\lambda)}
\ustimes{G\ppart} 
G[[t]]
\simeq
\Gcirc \ppart^{\lambda,\leq e(\lambda)}
\ustimes{\Gr_G} 
\pt
\simeq
\Gcirc \ppart^{\lambda,\leq e(\lambda)}
\ustimes{Y_\lambda}
\bigt{
Y_\lambda
\ustimes{\Gr_G} 
\pt
}
\simeq\Gcirc \ppart^{\lambda,\leq e(\lambda)}
\ustimes{Y_\lambda} 
\pt.
$$

\sssec{}

Combining these two observations, we obtain a natural isomorphism
$$
\Gcirc \ppart^\lambda 
\ustimes{G\ppart} 
G[[t]]
\simeq
\Gcirc \ppart^{\lambda,\leq e(\lambda)}
\ustimes{Y_\lambda}
\pt,
$$
and thus, in view of \eqref{eqn:Lambda loops into Gcirc extendable across x}, a natural isomorphism
$$
\Gcirc[\Sigma-x]^{\lambda}
 \ustimes{\Gr_G} 
 \pt
\simeq
\Gcirc[\Sigma-x]^{\lambda}
\ustimes{\Gcirc \ppart^{\lambda}} 
\Bigt{
\Gcirc \ppart^{\lambda,\leq e(\lambda)}
\ustimes{Y_\lambda}
\pt
}
\simeq
\Gcirc[\Sigma-x]^{\lambda, \leq e(\lambda)}
\ustimes{Y_\lambda}
 \pt.
$$
Hence, to conclude the proof of Proposition \ref{prop:maps to G, gen in Gcirc, over disjoint locus - LAMBDA}, it remains to prove that the dualizing sheaf of the indscheme
$$
\Gcirc[\Sigma-x]^{\lambda, \leq e(\lambda)}
\ustimes{Y_\lambda}
 \pt
$$
is infinitely connective.

\sssec{}

Next, note that 
$$
\Gcirc[\Sigma-x]^{\lambda, \leq e(\lambda)}
\simeq
\A^{|\sR| }[\Sigma-x]^{\leq e(\lambda)}
\times
T[\Sigma-x]^{\lambda}.
$$
As usual, let $X$ be the smooth compactification of $\Sigma$ and $D_\infty$ the divisor at infinity, of cardinality $h \geq 1$.
We have:
$$
\A^{|\sR|}[\Sigma-x]^{\leq e(\lambda)}
\simeq
\colim_{m \gg 0} \,
H^0(X, e(\lambda) x + m D_\infty  )^{|\sR|}
\simeq
\A^\infty.
$$
In the same way, $T[\Sigma-x]^{\lambda}$ is an indscheme of ind-finite type, again realized as a colimit over the poset of natural numbers.
It remains to apply the following general result.

\begin{lem} \label{lem:Ainfty times something inf connective}
Let $\Y$ an indscheme of the form 
$$
\Y
=(\A^\infty \times \T) \times_Y \pt,
$$
for some scheme $Y$ of finite type and some indscheme $\T = \colim_{m \gg 0} T_m$ of ind-finite type. Then the dualizing sheaf of $\Y$ is infinitely connective.
\end{lem}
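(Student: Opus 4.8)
\textbf{Proof plan for Lemma \ref{lem:Ainfty times something inf connective}.}

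The plan is to reduce to the two features that make infinite connectivity obvious: the presence of an $\A^\infty$ factor, whose dualizing sheaf is itself infinitely connective by the elementary computation $\omega_{\A^n}\simeq\kk[2n]$ together with the fact that pushforward along $\A^n\hto\A^\infty$ is right t-exact, and the t-exactness of ind-coherent (hence D-module) pushforward along a closed embedding. Write $\Y=(\A^\infty\times\T)\times_Y\pt$ with $\T\simeq\colim_{m\gg 0}T_m$, so that, picking compatible presentations $\A^\infty\simeq\colim_n\A^n$, we get an indscheme presentation $\Y\simeq\colim_{n,m}\bigl((\A^n\times T_m)\times_Y\pt\bigr)$. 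Each of the closed embeddings in this colimit induces a right t-exact pushforward, so $\omega_\Y\simeq\colim_{n,m}(i_{n,m})_{*,\dR}(\omega_{(\A^n\times T_m)\times_Y\pt})$, and since infinite connectivity is preserved under such colimits it suffices to find, for each fixed $m$, a divergent sequence $(C_n)_{n}$ with $\omega_{(\A^n\times T_m)\times_Y\pt}\in\Dmod(\bigl(\A^n\times T_m\bigr)\times_Y\pt)^{\leq -C_n}$.

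First I would treat the factor $Z_{n,m}:=(\A^n\times T_m)\times_Y\pt$ as a closed subscheme of $\A^n\times T_m$ cut out by the equations pulled back along the map $\A^n\times T_m\to Y$: since $Y$ is of finite type, $Y$ embeds in some $\A^p$, so $Z_{n,m}$ is the derived zero locus of $p$ functions on the smooth-over-$T_m$ scheme $\A^n\times T_m$. The key point is that these $p$ equations are independent of $n$, while the ambient dimension grows with $n$. Concretely, one runs the same device as in Lemma \ref{lem:silly estimate}: the map $Z_{n,m}\to \A^n\times T_m$ is an iterated fiber product $\A^n\times T_m\times_{\A^1}\pt\times_{\A^1}\cdots\times_{\A^1}\pt$ ($p$ times), and by induction on $p$ — using the t-exactness of $j_{*,\dR}$ for $j$ affine, the t-exactness of closed pushforward, and the fiber sequence $j_{*,\dR}(\omega_U)[-1]\to i_{*,\dR}(\omega_Z)\to\omega_{Z'}$ — one gets $\omega_{Z_{n,m}}\in\Dmod(Z_{n,m})^{\leq -n+p+c_m}$, where $c_m$ is a constant depending only on $T_m$ (accounting for $\omega_{\A^n\times T_m}$ versus $\omega_{\A^n}$, i.e. roughly $-\dim T_m$ or the connectivity bound for $\omega_{T_m}$, which is finite since $T_m$ is of finite type). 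Setting $C_n:=n-p-c_m$, which diverges with $n$, completes the reduction.

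The one point requiring a little care — and the place I would expect to spend the most effort — is making precise the claim "$Z_{n,m}$ needs only $p$ equations inside a space of dimension growing with $n$" in the presence of the extra, possibly non-affine-space factor $T_m$. Lemma \ref{lem:silly estimate} is stated for $Z=\A^m\times_{\A^p}\pt$, so I would either (i) prove the mild generalization replacing the source $\A^m$ by $\A^n\times W$ for $W$ an affine scheme of finite type, tracking a finite correction term coming from $\omega_W$ (the induction on $p$ goes through verbatim, since smoothness of $\A^n$ over the base is all that was used, and affineness of the complementary open $j$ still holds), or (ii) reduce to the stated lemma by first embedding $T_m$ itself as a closed subscheme of some $\A^{q_m}$, so that $Z_{n,m}$ becomes $\A^{n+q_m}\times_{\A^{p+q_m}}\pt$ and Lemma \ref{lem:silly estimate} applies directly with bound $-(n+q_m)+(p+q_m)=-n+p$; option (ii) is cleanest and I would use it. Either way the outcome is the divergent bound $C_n$, and since $\Dmod(\Vect)$ — equivalently the target of each $(p_{Z_{n,m}})_{*}$ — has a left-complete t-structure, the colimit over $n$ lands in $\bigcap_{C}\Dmod(\Y)^{\leq -C}=\Dmod(\Y)^{\leq-\infty}$, which is the assertion.
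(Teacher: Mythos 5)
Your proof is correct but follows a genuinely different route from the paper's.  The paper proceeds in two modular steps: it first bounds $\omega_{\A^\infty\times\T}$ by re-indexing the double colimit to a cofinal single-index family $\colim_m(\A^{2N_m}\times T_m)$, where $N_m$ is increasing and divergent and satisfies $\omega_{T_m}\in\Dmod(T_m)^{\leq N_m}$, so that $\omega_{\A^{2N_m}}\boxtimes\omega_{T_m}$ lies in degrees $\leq -N_m$; only afterwards does it handle the factor $\times_Y\pt$, showing that $i^!$ along the closed embedding $i:\Y\hto\A^\infty\times\T$ is right t-exact up to a fixed shift (bounded Zariski-locally by the number of equations cutting $\pt$ out of $Y$, via the same hypersurface induction as in Lemma~\ref{lem:silly estimate}).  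You instead fold $\times_Y\pt$ into the per-term estimate: you realize each $Z_{n,m}=(\A^n\times T_m)\times_Y\pt$ as a complete intersection in $\A^{n+q_m}$ cut out by a number of equations that does not grow with $n$, apply Lemma~\ref{lem:silly estimate}, and feed this into the double colimit.  (A small slip in your option~(ii): if $T_m\hto\A^{q_m}$ is cut out by $r_m$ equations, then $Z_{n,m}\simeq\A^{n+q_m}\times_{\A^{p+r_m}}\pt$ and the bound is $-n+(p+r_m-q_m)$ rather than $-n+p$; this is still divergent in $n$ at fixed $m$, so nothing breaks.)  Your reduction to ``for each fixed $m$, divergence in $n$'' is legitimate because, for every $K$, the sub-poset of $(n,m)\in\bbN\times\bbN$ whose connectivity bound is $\geq K$ is filtered and cofinal --- at fixed $m$ one can always increase $n$.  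On balance, your version avoids the paper's re-indexing trick and treats the fiber product over $Y$ in the same breath as the $\A^\infty$ divergence, whereas the paper's first step has the mild advantage of being a standalone statement about $\A^\infty\times\T$ making no reference to $Y$.
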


\begin{proof}
We first show that $\omega_{\A^\infty \times \T}$ is infinitely connective. 
By assumption, each $T_m$ is a scheme of finite type. It follows that there exists some $N_m  \geq 0$ such that $\omega_{T_m} \in \Dmod(T_m)^{\leq N_m}$:  to prove this, recall that the t-structure is Zariski local, argue as in Lemma \ref{lem:silly estimate} and then use the quasi-compactness of $T_m$.
Up to replacing each $N_m$ with a larger number, let us assume that the sequence $m \mapsto N_m$ is increasing and divergent.
Then we have the indscheme presentation
$$
\A^\infty \times \T
\simeq
\colim_{m \gg 0}
\bigt{ \A^{2 N_m} \times T_m },
$$
so that
$$
\omega_{\A^\infty \times \T}
\simeq
\colim_{m \gg 0} \,
(i_m)_{*,\dR} (\omega_{\A^{2N_m} \times T_m}),
$$
where $i_m: \A^{2N_m} \times T_m \hto \A^\infty \times \T$ is the natural closed embedding.
Now recall the equivalence
$$
\Dmod(\A^{2N_m} \times T_m) \simeq
\Dmod(\A^{2N_m}) \otimes \Dmod(T_m)
$$
induced by exterior tensor product, which is valid because $\Dmod(\A^{2N_m})$ is dualizable. The t-structure on the LHS corresponds to the tensor product t-structure on the RHS (the latter is defined by declaring that connective objects of the tensor product are generated under colimits by tensor products of connective objects).
Since the dualizing sheaf $\omega_{\A^{2N_m} \times T_m}$ corresponds to $\omega_{\A^{2N_m}} \boxtimes \omega_{T_m}$ and $\omega_{\A^{2N_m}} \in \Dmod(\A^{2N_m})^{\leq - 2 N_m}$, we obtain that 
$$
\omega_{\A^{2N_m} \times T_m}
\in \Dmod({\A^{2N_m} \times T_m})^{\leq - N_m}.
$$
Recall that each $(i_m)_{*,\dR}$ is right t-exact by construction; then, as $N_m$ goes to infinity with $m$, we have proven that $\omega_{\A^\infty \times \T}$ is infinitely connective.

Now let $i: \Y \hto \A^\infty \times \T$ be the obvious closed embedding. 
In view of $\omega_\Y \simeq i^! (\omega_{\A^\infty \times \T})$, it suffices to show that $i^!$ is right t-exact up to a finite shift. This boils down to proving that, for any scheme $Z$ of finite type mapping to $Y$, the $!$-pullback along $Z \times_Y \pt \hto Z$ is right t-exact up to a shift that is independent of $Z$. This follows exactly as in Lemma \ref{lem:silly estimate}: first by the Zariski-local nature of the t-structure, we can replace $Y$ with an affine open $Y'$ containing $\pt$; then we write $\pt \in Y'$ as the zero locus of finitely many equations in $Y'$.
\end{proof}

\ssec{Moving points} \label{ssec:moving points}

Now we adapt the argument of Section \ref{ssec:easier omega inf connective} to prove Proposition \ref{prop:maps to G, gen in Gcirc, over disjoint locus}. The idea is the same, but the notation heavier.

\sssec{}

For $Y$ a scheme among $G,\Gcirc, T, N, N^-$, denote by $Y[\Sigma]^{\rat}_{\Sigma^{I,\disj}}$ the indscheme with $S$-points  given by the set of tuples
$$
(\x \in \Sigma^I(S), \phi: \Sigma_S - D_\x \to Y)
$$
such that the elements of $\x$ have pairwise disjoint graphs in $\Sigma_S$.

\begin{rem}

To see that $Y[\Sigma]^{\rat}_{\Sigma^{I,\disj}}$ is indeed an indscheme, it suffices to treat the case of $Y \simeq \A^1$. In this case, the discussion of \cite[Section 2.7]{contract} applies; in particular, we have
$$
\A^1[\Sigma]^{\rat}_{\Sigma^{I,\disj}}
\simeq
\colim_{d \geq 0}
\A^1[\Sigma]^{\rat, \leq d}_{\Sigma^{I,\disj}},
$$
with each $\A^1[\Sigma]^{\rat, \leq d}_{\Sigma^{I,\disj}}$ also the indscheme of rational maps $\Sigma \dasharrow \A^1$ with poles bounded by $d$ of $\Sigma$ (and unbounded at  points at infinity of $\Sigma$).
\end{rem}

\sssec{}

Denote by $Y(\A)_{\Sigma^{I,\disj}}$ the space of meromorphic jets into $Y$ parametrized by $|I|$ disjoint points of $\Sigma$. Precisely, the set of $S$-points of $Y(\A)^{\Sigma_{I,\disj}}$ is given by
$$
(\x \in \Sigma^I(S), \phi: \wh D_\x^\circ \to Y),
$$ 
where $\wh D_\x^\circ$ is the punctured tubular neighbourhood of $D_x$.
Define $Y(\OO)_{\Sigma^{I,\disj}}$ in the same way as above, but using the non-punctured tubular neighbourhood of $D_\x$.

\sssec{}

By construction, our indscheme of interest $
G[\Sigma]^{G^\circ\ggen}_{\Sigma^{I,\disj}}$ is isomorphic to
$$
\Gcirc[\Sigma]^{\rat}_{\Sigma^{I,\disj}}
\ustimes
{
\GA_{\Sigma^{I,\disj}}
}
\GO_{\Sigma^{I,\disj}}.
$$
We will use this expression as a fiber product to prove that the dualizing of $G[\Sigma]^{G^\circ\ggen}_{\Sigma^{I,\disj}}$ is infinitely connective.

\sssec{}

As in the previous case, elements of $T[\Sigma]^{\rat}_{\Sigma^{I,\disj}}$ have a \virg{type}, which is now an $I$-tuple of elements of $\Lambda$.
For each such $\ul\lambda \in \Lambda^I$, we denote by $T[\Sigma]^{\rat, \ul\lambda}_{\Sigma^{I,\disj}}$, $\Gcirc[\Sigma]^{\rat, \ul\lambda}_{\Sigma^{I,\disj}}$ and $T(\A)_{\Sigma^{I,\disj}}^{\ul \lambda}$ the corresponding subspaces.

It suffices to prove that the dualizing sheaf of 
\begin{equation} \label{eqn:big auxiliary}
\Gcirc[\Sigma]^{\rat, \ul\lambda}_{\Sigma^{I,\disj}}
\ustimes
{
\GA_{\Sigma^{I,\disj}}
}
\GO_{\Sigma^{I,\disj}}
\simeq
\Gcirc[\Sigma]^{\rat, \ul\lambda}_{\Sigma^{I,\disj}}
\ustimes
{\Gcirc(\A)^{\ul\lambda}_{\Sigma^{I,\disj}}}
\left( 
{\Gcirc(\A)^{\ul\lambda}_{\Sigma^{I,\disj}}}
\ustimes
{
\GA_{\Sigma^{I,\disj}}
}
\GO_{\Sigma^{I,\disj}}
\right)
\end{equation}
is infinitely connective.

\begin{lem}
For $ e\geq 0$, denote by $\Gcirc(\A)^{\rat, \ul\lambda, \leq e}_{\Sigma^{I,\disj}}$ the closed subspace of $\Gcirc(\OO)^{\rat, \ul\lambda}_{\Sigma^{I,\disj}}$ whose components $\phi^\pm$ have poles bounded by $e$ at the points $\x$.
There exists $e = e(\ul \lambda)$ such that the closed embedding 
$$
{\Gcirc(\A)^{\ul\lambda, \leq e}_{\Sigma^{I,\disj}}}
\ustimes
{
\GA_{\Sigma^{I,\disj}}
}
\GO_{\Sigma^{I,\disj}}
\hto
{\Gcirc(\A)^{\ul\lambda}_{\Sigma^{I,\disj}}}
\ustimes
{
\GA_{\Sigma^{I,\disj}}
}
\GO_{\Sigma^{I,\disj}}
$$
is an isomorphism.
\end{lem}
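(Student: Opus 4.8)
The plan is to obtain this exactly as in \lemref{lemma:key-rep-thry}, the single-point version, by running the latter's proof uniformly over the moving points of $\x$. Since the displayed map is a closed embedding of indschemes by construction, it is enough to show that it is surjective on $S$-points for every affine test scheme $S$. So fix an $S$-point $(\x,\phi)$ of the target: thus $\x\in\Sigma^I(S)$ has pairwise disjoint graphs, $\phi$ is a meromorphic jet of type $\ul\lambda$ along $D_\x$ valued in $\Gcirc$, which we write as $\phi=(\phi^-,\phi^T,\phi^+)$ under $\Gcirc\simeq N^-\times T\times N$, and the condition imposed by the fiber product with $\GO_{\Sigma^{I,\disj}}$ is that the associated $G$-jet extends regularly across $D_\x$. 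The task is to produce a bound $e=e(\ul\lambda)$, depending only on $\ul\lambda$, on the pole orders of the components $\phi^\pm$ at the points of $\x$.

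The key observation is that this is a local question along $D_\x$: since the graphs of the $|I|$ points are pairwise disjoint, $D_\x$ is the disjoint union of the graphs $\Gamma_{x_i}$, and, choosing (\'etale-locally on $S$) a relative coordinate at each $x_i$, we may analyze each $x_i$, $i\in I$, separately. Near $x_i$ the datum restricts to precisely the situation of \lemref{lemma:key-rep-thry}, the only difference being that the punctured formal neighbourhood of $x_i$ is now a family of punctured discs over the base rather than $\Spec\kk\ppart$. But the proof of \lemref{lemma:key-rep-thry} is insensitive to this: it reduces, through a faithful representation $\rho\colon G\hto GL(V)$ carrying $N$, $N^-$, $T$ into the upper-triangular, lower-triangular, diagonal subgroups, to an explicit matrix identity, and that identity is a polynomial identity valid over any commutative ring --- in particular over the ring of functions of the base, with $t$ the chosen coordinate at $x_i$. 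Therefore the poles of $\phi^\pm$ at $x_i$ are bounded by $e(\lambda_i)$, the number supplied by \lemref{lemma:key-rep-thry} for the coweight $\lambda_i\in\Lambda$. Setting $e(\ul\lambda):=\max_{i\in I}e(\lambda_i)$ --- which depends only on $\ul\lambda$ --- and noting that the pole-order subspaces $\Gcirc(\A)^{\rat,\ul\lambda,\leq e}_{\Sigma^{I,\disj}}$ are closed, so that membership in them is \'etale-local on $S$, we conclude that $(\x,\phi)$ lies in the left-hand side, proving surjectivity and hence the isomorphism.

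I expect the main obstacle to be not conceptual but a matter of careful bookkeeping: one must pin down the notion of pole order of $\phi^\pm$ at a moving point --- this is the content of the filtration of $\Gcirc(\A)^{\rat,\ul\lambda}_{\Sigma^{I,\disj}}$ by the closed subspaces $\Gcirc(\A)^{\rat,\ul\lambda,\leq e}_{\Sigma^{I,\disj}}$, in the spirit of the remark preceding the lemma and of \cite[Section 2.7]{contract} --- and then verify that splitting the points and choosing coordinates \'etale-locally does no harm and that the bound descends. No new representation theory is required; the input of that kind is entirely furnished by \lemref{lemma:key-rep-thry}.
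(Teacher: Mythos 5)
Your proof is correct and takes essentially the same approach as the paper: the paper reduces to the singleton case by explicitly invoking the factorization property of the spaces involved, then works étale-locally on $\Sigma$ to pin down a coordinate and reduce to Lemma \ref{lemma:key-rep-thry}; you carry out the same reduction by observing directly that the disjointness of the graphs $\Gamma_{x_i}$ lets one analyze each marked point independently, and that Lemma \ref{lemma:key-rep-thry} — being a matrix-level statement already formulated over an arbitrary test ring $R$ — persists in families of punctured discs. The one slight presentational difference is that the paper phrases the reduction through factorization isomorphisms compatible with the relevant maps, whereas you phrase it as checking surjectivity of a closed embedding on $S$-points; these are two expressions of the same mechanism.
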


\begin{proof}
It is immediate to see that each of the spaces appearing in the lemma, denoted generically by $\Y_{\Sigma^{I,\disj}} \to \Sigma^{I,\disj}$, is \emph{factorizable} in the sense that there exists a canonical isomorphism
$$
\Y_{\Sigma^{I,\disj}} 
\simeq
\left(
\Y_{\Sigma} \times \cdots \times \Y_{\Sigma}
\right)
\ustimes
{\Sigma^I}
\Sigma^{I,\disj}.
$$
See \cite{BD-chiral} for much more on the notion of factorization.
Moreover, these factorization isomorphisms are naturally compatible with the maps
$$
{\Gcirc(\A)^{\ul\lambda, \leq e}_{\Sigma^{I,\disj}}}
\hto
{\Gcirc(\A)^{\ul\lambda}_{\Sigma^{I,\disj}}}
\to
{G(\A)^{\ul\lambda}_{\Sigma^{I,\disj}}}
\leftto
{G(\OO)^{\ul\lambda}_{\Sigma^{I,\disj}}}.
$$
This shows it suffices to prove the lemma in the case $I$ is a singleton: we need to prove that the closed embedding
$$
{\Gcirc(\A)^{\ul\lambda, \leq e}_{\Sigma   }}
\ustimes
{
\GA_{\Sigma }
}
\GO_{\Sigma }
\hto
{\Gcirc(\A)^{\ul\lambda}_{\Sigma }}
\ustimes
{
\GA_{\Sigma}
}
\GO_{\Sigma}
$$
is an isomorphism. To check this, we can work \'etale-locally on $\Sigma$ and therefore assume, for the remainder of the proof, that $\Sigma \simeq \Spec(\kk[t])$. 
The global coordinate on $\Sigma$ allows to identify the graph of an $S$-point of $\Sigma$ with the graph of the constant map with value $0 \in \Spec(\kk[t])$.
This yields isomorphisms
$$
{\Gcirc(\A)^{\ul\lambda, \leq e}_{\Sigma   }}
\simeq
\Gcirc \ppart ^{\leq e} \times \Sigma,
\hspace{.3cm}
{\Gcirc(\A)^{\ul\lambda}_{\Sigma   }}
\simeq
\Gcirc \ppart \times \Sigma,
$$
$$
{G(\OO)^{\ul\lambda}_{\Sigma   }}
\simeq
G [[t]] \times \Sigma,
\hspace{.3cm}
{G(\A)^{\ul\lambda}_{\Sigma   }}
\simeq
G \ppart \times \Sigma,
$$
which are compatible with one another in the natural way.
Hence, the assertion reduces to that of Lemma \ref{lemma:key-rep-thry}.
\end{proof}

\sssec{}

In view of the above lemma, we can rewrite \eqref{eqn:big auxiliary} as
\begin{equation} \label{eqn:big-auxiliary-2}
\Gcirc[\Sigma]^{\rat, \ul\lambda, \leq e}_{\Sigma^{I,\disj}}
\ustimes
{
\Gr_{G,\Sigma^{I,\disj}}
}
\Sigma^{I,\disj},
\end{equation}
where
$$
\Gcirc[\Sigma]^{\rat, \ul\lambda, \leq e}_{\Sigma^{I,\disj}}
:=
\Gcirc[\Sigma]^{\rat, \ul\lambda}_{\Sigma^{I,\disj}}
\ustimes
{
{\Gcirc(\A)^{\ul\lambda}_{\Sigma^{I,\disj}}}
}
{\Gcirc(\A)^{\ul\lambda, \leq e}_{\Sigma^{I,\disj}}}.
$$
By construction, the map $\Gcirc[\Sigma]^{\rat, \ul\lambda, \leq e}_{\Sigma^{I,\disj}} \to \Gr_{G, \Sigma^{I,\disj}}$ factors as
$$
\Gcirc[\Sigma]^{\rat, \ul\lambda, \leq e}_{\Sigma^{I,\disj}}
\to 
Y^{\ul \lambda}_{\Sigma^{I,\disj}}
\hto
 \Gr_{G, \Sigma^{I,\disj}},
$$
where $Y^{\ul \lambda}_{\Sigma^{I,\disj}}$ is a closed subscheme of $\Gr_{G, \Sigma^{I,\disj}}$. Indeed, any point $(\x, \phi^\pm, \phi^T)$ of $\Gcirc[\Sigma]^{\rat, \ul\lambda, \leq e}_{\Sigma^{I,\disj}}$ has poles at $\x$ bounded by $e$.

\sssec{}

To conclude, we proceed as in the proof of Lemma \ref{lem:Ainfty times something inf connective}.
So, we need to make sure that the dualizing sheaf of $\Gcirc[\Sigma]^{\rat, \ul\lambda, \leq e}_{\Sigma^{I,\disj}}$ is infinitely connective. This boils down to proving that the same is true for $\A^1[\Sigma]^{\rat, \ul\lambda, \leq e}_{\Sigma^{I,\disj}}$.
By Riemann-Roch, the latter admits a presentation as a colimit of vector bundles over $\Sigma^{I,\disj}$ of rank growing to $\infty$. Hence, its dualizing sheaf is indeed infinitely connective.

\ssec{Ran spaces with marked points} \label{ssec:Ran marked}

In this section, we explain how Theorem \ref{thm:maps to G generically in big cell} follows from Proposition \ref{prop:maps to G, gen in Gcirc, over disjoint locus}.

\sssec{}

Denote by $X$ the smooth complete curve containing $\Sigma$. 
The complement is a finite set of \virg{points at infinity}, which we denote by $D_\infty$. We regard $D_\infty$ as a $\kk$-point of $X^\sA$, where $\sA$ is a (finite nonempty) set that has been put in bijection with $D_\infty$ once and for all.

\sssec{}

We also need the Ran space with marked points, for which we follow the discussion of \cite[Section 3.5]{contract}. Specifically, what we need is the prestack $\Ran_{X,D_\infty}$ that parametrizes the finite sets of $X$ that contain $D_\infty$.
To give a formal definition, let $\fset_\sA$ be the category whose objects are arrows of finite sets $[\sA \to I]$, 
and whose morphisms are surjections $I \tto J$ compatible with the maps from $\sA$.
The finite set $D_\infty$ gives rise to the following functor:
$$
\fset_\sA^\op \longto \Sch,
\hspace{.5cm}
[\sA \to I] 
\squigto
X^{[\sA \to I]} :=
X^I \times_{X^\sA} \{D_\infty\}.
$$
Then we have
$$
\Ran_{X,D_\infty}
\simeq
\uscolim{[\sA \to I] \in \fset_{\sA}^\op} 
\;
X^{[\sA \to I]}.
$$

\sssec{}

Now we are ready to introduce the variant of $G[\Sigma]^{\Gcirc \ggen}$ that accounts for the points where the rational map $\Sigma \dasharrow \Gcirc$ is not defined.
We will perform the construction in general: in place of $\Gcirc \subset G$, we consider an open embedding $U \subset Y$ with $Y$ an affine scheme. Alongside the indscheme $Y[\Sigma]^{U\ggen}$, we have the prestack $Y[\Sigma]^{U\ggen}_{\Ran}$ defined as follows: its set of $S$-points consists of those pairs 
$$
(\x \in \Ran_{X,D_\infty}(S), \phi: \Sigma_S \to Y)
$$ 
for which $\restr\phi{\Sigma_S - D_\x} \to Y$ factors through $U \subset Y$.

\sssec{}

Thus, $Y[\Sigma]^{U\ggen}_{\Ran}$ fibers over $\Ran_{X,D_\infty}$ and we tautologically have
$$
Y[\Sigma]^{U\ggen}_{\Ran}
\simeq
\uscolim{[\sA \to I] \in \fset_\sA^\op}
\;
Y[\Sigma]^{U\ggen}_{[\sA \to I]},
$$
where 
$$
Y[\Sigma]^{U\ggen}_{[\sA \to I]}
:=
Y[\Sigma]^{U\ggen}_{\Ran}
\ustimes{\Ran_{X,D_\infty}} X^{[\sA \to I]}.
$$

\begin{lem}
Assume that $U \subset Y$ is a basic open subset. Then, for each $[\sA \to I] \in \fset_\sA$, the forgetful map 
$$
\xi_{[\sA\to I]}: Y[\Sigma]^{U\ggen}_{[\sA \to I ]} \to Y[\Sigma]^{U\ggen}
$$
is ind-proper.
Moreover, the resulting map $\xi: Y[\Sigma]^{U\ggen}_{\Ran} \to Y[\Sigma]^{U\ggen}$ has homologically contractible fibers.
\end{lem}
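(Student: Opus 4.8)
The plan is to treat the two assertions separately, the first by a closed-embedding-into-a-product argument and the second by identifying the fibres with marked Ran spaces.

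First, for the ind-properness of $\xi_{[\sA\to I]}$, I would realize $Y[\Sigma]^{U\ggen}_{[\sA\to I]}$ as a closed sub-indscheme of $Y[\Sigma]^{U\ggen}\times X^{[\sA\to I]}$, with $\xi_{[\sA\to I]}$ the restriction of the projection onto the first factor. Writing $U=Y\setminus V(g)$ for a regular function $g$ on $Y$ (using that $U$ is basic open), an $S$-point $(\x,\phi)$ of the product lies in $Y[\Sigma]^{U\ggen}_{[\sA\to I]}$ exactly when $\phi$ restricted to $\Sigma_S-D_\x$ factors through $U$, i.e. when the vanishing locus $V(g\circ\phi)\subset\Sigma_S$ is contained set-theoretically in $D_\x$. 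By the genericity (universal density) condition on $\phi$, the function $g\circ\phi$ vanishes on no fibre of $\Sigma_S\to S$, so on each term of the standard indscheme presentation of $Y[\Sigma]^{U\ggen}$ (which bounds the poles of $\phi$, cf. \cite[Section 2.7]{contract}) the subscheme $V(g\circ\phi)$ is finite flat of uniformly bounded degree over $S$, and the containment in $D_\x$ is a closed condition. Granting this, since $X^{[\sA\to I]}=X^I\times_{X^\sA}\{D_\infty\}$ is a closed, hence proper, subscheme of $X^I$, the projection $Y[\Sigma]^{U\ggen}\times X^{[\sA\to I]}\to Y[\Sigma]^{U\ggen}$ is ind-proper, and therefore so is its restriction $\xi_{[\sA\to I]}$ to a closed sub-indscheme. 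Concretely, over each scheme $B_d$ of the presentation of the target, the fibre product $Y[\Sigma]^{U\ggen}_{[\sA\to I]}\times_{Y[\Sigma]^{U\ggen}}B_d$ is the scheme of tuples $\x$ whose associated divisor contains a fixed finite flat subscheme of degree $\le N(d)$, which is proper over $B_d$.

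Next, for the contractibility of the fibres of $\xi$, I would compute them directly. Fix a test affine scheme $S$ and a point $\phi\in Y[\Sigma]^{U\ggen}(S)$, with bad locus $Z:=V(g\circ\phi)\subset\Sigma_S$, finite over $S$ (on a given term of the presentation) and with universally dense complement. Unwinding the presentation $Y[\Sigma]^{U\ggen}_{\Ran}\simeq\uscolim{[\sA\to I]\in\fset_\sA^\op}Y[\Sigma]^{U\ggen}_{[\sA\to I]}$ together with the definition of each $Y[\Sigma]^{U\ggen}_{[\sA\to I]}$ as a fibre product over $\Ran_{X,D_\infty}$, one sees that the fibre $Y[\Sigma]^{U\ggen}_{\Ran}\times_{Y[\Sigma]^{U\ggen}}S$ is the prestack of $\x\in\Ran_{X,D_\infty}$ whose support contains $Z$; that is, it is the Ran space of $X$ relative to $S$ marked along the family $D_\infty\sqcup Z$. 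Since $\Sigma$ is affine, $D_\infty=X\setminus\Sigma$ is nonempty, so the marked set is nonempty on each geometric fibre, and the homological contractibility of a Ran space with a nonempty set of marked points is exactly the content of \cite[Section 3.5]{contract} (in its relative form over $S$). In particular, over a $\kk$-point $\phi$ with reduced bad locus $\{z_1,\dots,z_k\}\subset\Sigma$, the fibre is $\Ran_{X,\,D_\infty\cup\{z_1,\dots,z_k\}}$, which is homologically contractible.

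The step I expect to be the main obstacle is the indscheme bookkeeping in the first assertion: verifying that the condition ``$V(g\circ\phi)\subseteq D_\x$'' cuts out a genuine closed sub-indscheme of $Y[\Sigma]^{U\ggen}\times X^{[\sA\to I]}$. This requires knowing that $V(g\circ\phi)$ is relatively finite of uniformly bounded degree on each term of the presentation, and that set-theoretic containment of a relatively finite subscheme in a relative effective divisor on the family of curves $\Sigma_S$ is a closed condition. Both points are of the same nature as the analysis of spaces of rational maps carried out in \cite[Sections 2.7 and 3.5]{contract} and in \cite{Barlev}, and I would import them from there rather than redo them; everything else is formal.
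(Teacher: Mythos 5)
Your proposal is correct and takes essentially the same route as the paper: the paper quotes \cite[Lemma 3.5.6]{contract} together with \cite[Sections 4.5--4.6.4]{contract}, and the argument you sketch --- closed embedding of $Y[\Sigma]^{U\ggen}_{[\sA\to I]}$ into $Y[\Sigma]^{U\ggen}\times X^{[\sA\to I]}$ followed by properness of $X^{[\sA\to I]}$ for the first assertion, marked Ran spaces for the second --- is precisely the content of the cited material. One nice feature of your version is that it surfaces exactly where the basic-openness of $U\subset Y$ is used (choosing $g$ with $U=Y\setminus V(g)$); the paper notes only that this hypothesis is ``hidden in the proof of \cite[Lemma 4.5.6]{contract}.''

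There is one technical inaccuracy worth correcting. The scheme $V(g\circ\phi)\subset\Sigma_S$ is in general \emph{not} finite (nor even proper) over $S$: zeros of $g\circ\phi$ can escape to $D_\infty$ as one moves in the base. For instance, take $\Sigma=\A^1_t$, $S=\A^1_s$, $g\circ\phi=st-1$; then $V(g\circ\phi)=\{st=1\}\subset\A^2$, whose fiber runs off to infinity as $s\to 0$. What is finite flat of uniformly bounded degree over $S$, on a given term $B_d$ of the presentation of $Y[\Sigma]^{U\ggen}$, is the zero divisor of $g\circ\phi$ regarded as a section of $\O_{X_S}(mD_\infty)$ for a suitable $m=m(d)$ --- that is, the closure of $V(g\circ\phi)$ in $X_S$ together with a ``pole-deficit'' divisor supported on $D_\infty$. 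Since every $\x\in\Ran_{X,D_\infty}$ contains $D_\infty$ by construction, the closed condition that this bounded-degree divisor be set-theoretically contained in $D_\x$ (now a condition on $X_S$) is equivalent to the intended condition $|V(g\circ\phi)|\subseteq|D_\x\cap\Sigma_S|$, so after this replacement the argument goes through as you describe. The same correction applies to your identification of the fibers of $\xi$ in the contractibility step.
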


\begin{rem}
By definition, see \cite[Section 2.1.5]{contract}, a map $f: \X \to \Y$ of prestacks is \emph{ind-proper} (respectively: an ind-closed embedding) if it is ind-schematic and, for each scheme $S \to \Y$, the pullback $\X \times_\Y S$ admits an indscheme presentation $\X \times_\Y S \simeq \colim_{i \in \I} Z_i$ with each $Z_i$ proper over (respectively: a closed subscheme of) $S$. 
\end{rem}

\begin{rem}
The assumption that $U \subset Y$ be a basic open subset is important, although its role is hidden in the proof of \cite[Lemma 4.5.6]{contract} (the result on which the present lemma is based).
\end{rem}


\begin{proof}
As mentioned, this is a special case of a \cite[Lemma 3.5.6]{contract}; for the sake of completeness, and to match the notations, let us give more details.
Recall from \cite[Section 4.5]{contract} the definition of the prestack 
$\Maps(X, Y)^\rat_{X^J}$: it sends a test affine scheme $S$ to the set of pairs $(\x \in X^J(S), \phi: X_S-D_\x \to Y)$. 
This is a prestack (in fact, an indscheme) over $X^J$. It is related to our $Y[\Sigma]$ by setting $J = \sA$ and by observing that
$$
\Maps(X, Y)^\rat_{X^\sA}
\ustimes{X^\sA}
\{D_\infty\}
\simeq
Y[\Sigma].
$$
Now recall the prestack $\Maps(X,U \stackrel \gen \subset Y)^\rat_{X^{J}}$: it is the open subspace of $\Maps(X, Y)^\rat_{X^J}$ cut out by the condition that $\phi$ land generically inside $U$: precisely, we require that, for any geometric point $s \in S$, the map $\phi_s: X_s - \{\x_s\} \to Y$ land generically in $U$.
As above, setting $J = \sA$, we have:
$$
\Maps(X, U \stackrel \gen \subset Y)^\rat_{X^\sA}
\ustimes{X^\sA}
\{D_\infty\}
\simeq
Y[\Sigma]^{U\ggen}.
$$
Finally, let us introduce a different version of $\Maps(X,U \stackrel \gen \subset Y)^\rat_{X^{J}}$, where we control the locus where the map $\phi$ does land in $U$. Namely, for an arbitrary map $J \to I$ of finite nonempty sets, let $\Maps(X,U \stackrel \gen \subset Y)^\rat_{X^{J \to I}}$ be the prestack whose $S$-points are triples $(\x \in X^J(S), \y \in X^I(S), \phi: X_S - D_\x \to Y)$ with $D_\x \subseteq D_\y$ and such that $\phi$ restricts to a regular map $X_S-D_\y \to U$.

Letting $J = \sA$, we see that
$$
\Maps(X,U \stackrel \gen \subset Y)^\rat_{X^{\sA \to I}}
\ustimes{X^\sA}
\{D_\infty\}
\simeq
Y[\Sigma]^{U\ggen}_{\sA \to I}.
$$
Consider now the map
$$
f_{\on{source}}{(J \to I)}:
\Maps(X,U \stackrel \gen \subset Y)^\rat_{X^{J \to I}}
\longto
\Maps(X,U \stackrel \gen \subset Y)^\rat_{X^{J}}
$$
that forgets the datum of $\y$. In \cite[Section 4.6.4]{contract}, it is proven that such a map is ind-proper for any map $J \to I$ of nonempty finite sets.
Since our $\xi_{[\sA \to I]}$ is obtained from $f_{\on{source}}{(J \to I)}$ by setting $J = \sA$ and base-changing, it is ind-proper too.
The contractibility of the fibers of $\xi$ follows exactly as in \cite[Section 4.6]{contract}: it boils down to the contractibility of the Ran space of a smooth affine curve, which in turn is due to \cite{BD-chiral}.
\end{proof}

\sssec{}

Since each map $\xi_{[\sA \to I]}$ is ind-proper and $Y[\Sigma]^{U\ggen}$ is an indscheme, it follows that each ${Y[\Sigma]^{U\ggen}_{[\sA \to I]}}$ is an indscheme too (of ind-finite type). Alternatively, one can also give a direct proof.

In any case, let us endow $\Dmod \bigt{ {Y[\Sigma]^{U\ggen}_{[\sA \to I]}} }$ with the usual right t-structure present on any indscheme of ind-finite type.
The homological contractibility of the fibers of $\xi$ imples that
$$
\omega_{Y[\Sigma]^{U\ggen}}
\simeq
\xi_! 
\bigt{
\omega_
{Y[\Sigma]^{U\ggen}_{\Ran}}
},
$$
and thus
$$
\omega_{Y[\Sigma]^{U\ggen}}
\simeq
\uscolim{I \in \fset^\op}
\;
(\xi_{[\sA \to I]})_!
\Bigt{
\omega_
{Y[\Sigma]^{U\ggen}_{[\sA \to I]}}
}.
$$

\begin{lem}
If in the above situation each $\omega_
{Y[\Sigma]^{U\ggen}_{[\sA \to I]}}$ is infinitely connective, then so is $\omega_
{Y[\Sigma]^{U\ggen}}$.
\end{lem}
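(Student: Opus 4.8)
The plan is to reduce the statement, via the displayed presentation
$$
\omega_{Y[\Sigma]^{U\ggen}}
\simeq
\uscolim{I \in \fset^\op}
\;
(\xi_{[\sA\to I]})_!
\Bigt{
\omega_
{Y[\Sigma]^{U\ggen}_{[\sA\to I]}}
},
$$
to a statement about each term. For every $m \geq 0$ the subcategory $\Dmod\bigt{Y[\Sigma]^{U\ggen}}^{\leq -m}$ is closed under colimits, so it is enough to show that each object $(\xi_{[\sA\to I]})_!\bigt{\omega_{Y[\Sigma]^{U\ggen}_{[\sA\to I]}}}$ is infinitely connective: granting this, every term lies in $\Dmod(-)^{\leq -m}$ for all $m$, hence so does the colimit $\omega_{Y[\Sigma]^{U\ggen}}$.

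So fix $[\sA\to I]$ and abbreviate $\xi := \xi_{[\sA\to I]}$; it is ind-proper by the preceding lemma, so $\xi_!$ coincides with the renormalized de Rham pushforward $\xi_{*,\ren}$ and is in particular continuous and exact. The key point is that $\xi_!$ has bounded cohomological amplitude for the right t-structures: concretely, $\xi_!\bigt{\Dmod(-)^{\leq k}} \subseteq \Dmod(-)^{\leq k + |I|}$ for every $k$. Granting this and using that $\omega_{Y[\Sigma]^{U\ggen}_{[\sA\to I]}}$ is infinitely connective — that is, it lies in $\Dmod(-)^{\leq -N}$ for every $N$ — applying $\xi_!$ with $N := m + |I|$ places $(\xi_{[\sA\to I]})_!\bigt{\omega_{Y[\Sigma]^{U\ggen}_{[\sA\to I]}}}$ in $\Dmod(-)^{\leq -m}$ for every $m$, so it is infinitely connective, as wanted.

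It remains to establish the amplitude bound. By the definition of the right t-structure on an indscheme (Section \ref{defn t-structure}), $\Dmod\bigt{Y[\Sigma]^{U\ggen}_{[\sA\to I]}}^{\leq k}$ is generated under colimits by the objects $(\iota_W)_{*,\dR}\bigt{\ind_W(C)}$, where $\iota_W: W \hto Y[\Sigma]^{U\ggen}_{[\sA\to I]}$ runs over closed embeddings of finite-type schemes and $C \in \Coh(W)^{\leq k}$. Since $\xi_!$ is continuous and exact, it suffices to bound it on such generators. Now $\xi_! \circ (\iota_W)_{*,\dR} \simeq (g_W)_{*,\dR}$, where $g_W := \xi \circ \iota_W: W \to Y[\Sigma]^{U\ggen}$ is a proper morphism from a finite-type scheme; crucially, the fibers of $g_W$ are contained in those of $\xi$, which in turn are closed subschemes of $X^{[\sA\to I]}$ and hence have dimension $\leq |I|$. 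The functor $\ind_W$ is right t-exact, so $\ind_W(C) \in \Dmod(W)^{\leq k}$; and de Rham pushforward along a finite-type morphism whose fibers have dimension $\leq |I|$ has cohomological amplitude bounded above by $|I|$ for the right t-structures (a closed embedding is t-exact, an affine open embedding is right t-exact, and for the proper part one invokes the standard Bernstein-type estimate on the cohomological dimension of D-module pushforwards). Therefore $(g_W)_{*,\dR}\bigt{\ind_W(C)} \in \Dmod\bigt{Y[\Sigma]^{U\ggen}}^{\leq k + |I|}$, which gives the bound.

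The main obstacle is precisely this last amplitude estimate, and within it the fact that the bound is uniform in $W$: this is where it matters that the fibers of $\xi_{[\sA\to I]}$ are not merely proper (as bare ind-properness would give) but of dimension bounded by $|I|$, a fact one reads off from their being closed subschemes of $X^{[\sA\to I]}$. Everything else is formal manipulation of colimits together with the generation property defining the right t-structure.
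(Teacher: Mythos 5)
Your proof is correct, and after the common reduction step (showing each $(\xi_{[\sA\to I]})_!$ is right t-exact up to a shift by $|I|$) it diverges from the paper's. The paper factors $\xi_{[\sA\to I]}$ as an ind-closed embedding into $Y[\Sigma]^{U\ggen} \times X^{[\sA\to I]}$ followed by the projection, citing \cite[Section 4.6.4]{contract} for the ind-closed claim; the closed embedding contributes nothing and the projection, being pullback of $X^{[\sA\to I]} \to \pt$ with $X^{[\sA\to I]}$ smooth and proper of dimension $\leq |I|$, contributes a completely explicit shift (its pushforward is essentially tensoring with $H_*^{BM}(X^{[\sA\to I]})$). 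You instead work directly with the generators $(\iota_W)_{*,\dR}\bigl(\ind_W(C)\bigr)$ of the t-structure and bound the amplitude of $(g_W)_{*,\dR}$ through the observation that the fibers of $\xi_{[\sA\to I]}$ embed as closed subschemes of $X^{[\sA\to I]}$, hence have dimension $\leq |I|$. This trades the ind-closed-embedding input from \cite{contract} (you only use ind-properness, from the preceding lemma) for the amplitude estimate on proper pushforward in the right t-structure, which you invoke a bit casually: the cleanest justification there is not the perverse-t-structure Bernstein bound but rather the $\ICoh$ one, namely the isomorphism $g_{W,*,\dR}\circ\ind_W \simeq \ind \circ g_{W,*}^{\ICoh}$ together with the fact that $g_{W,*}^{\ICoh}$ has right amplitude bounded by the fiber dimension (and note the parenthetical case analysis closed/affine-open/proper is superfluous, since $g_W$ is already proper). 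Both routes are sound; the paper's factorization makes the amplitude bound trivial at the cost of an external citation, while yours is more hands-on and slightly more self-contained.
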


\begin{proof}
In view of the above formula, it suffices to verify that each functor $(\xi_{[\sA \to I]})_!$ is right t-exact up to a finite shift.
We can write $\xi_{[\sA \to I]}$ as the composition of two obviously defined maps:
$$
Y[\Sigma]^{U\ggen}_{[\sA \to I]}
\to
Y[\Sigma]^{U\ggen} \times X^{[\sA \to I]}
\tto
Y[\Sigma]^{U\ggen}.
$$
It is proven \cite[Section 4.6.4]{contract} that the leftmost map is an ind-closed embedding, hence it follows from the definition of the t-structure that the associated $!$-pushforward is right t-exact.
On the other hand, the $!$-pushforward along the second map is right t-exact up to a shift by $|I| = \dim(X^I)$.
\end{proof}

\sssec{}

Let us come back to our group case. The big cell $\Gcirc$ is a basic open subset of $G$: the explicit map realizing this is given in \cite{MO}. Thus, the above general results apply and, in particular, the following statement implies Theorem \ref{thm:maps to G generically in big cell}.

\begin{prop} \label{prop:maps to G generically in Gcirc, over I}
For each $[\sA \to  I] \in \fset_A$, the dualizing sheaf of $G[\Sigma]^{G^\circ\ggen}_{[\sA \to I]}$ is infinitely connective.
\end{prop}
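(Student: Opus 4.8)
The plan is to reduce Proposition \ref{prop:maps to G generically in Gcirc, over I} to Proposition \ref{prop:maps to G, gen in Gcirc, over disjoint locus}, which has already been established in Section \ref{ssec:moving points}. The point is that the indscheme $G[\Sigma]^{G^\circ\ggen}_{[\sA \to I]}$ parametrizes pairs $(\y \in X^{[\sA\to I]}(S), \phi:\Sigma_S \to G)$ with $\phi$ landing in $\Gcirc$ away from $D_\y$, where the marking $\y$ is allowed to have coincidences among its components and also contains $D_\infty$; whereas $G[\Sigma]^{G^\circ\ggen}_{\Sigma^{I,\disj}}$ uses markings $\x \in \Sigma^I$ with pairwise disjoint graphs. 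So the two differ in two respects: the marked points are allowed to collide (and to run over $X$ rather than $\Sigma$), and the marking includes the fixed divisor $D_\infty$.

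First I would deal with the disjointness issue. The scheme $X^{[\sA\to I]}$ admits a stratification (or, better, an \'etale cover) by locally closed subsets indexed by the equivalence relations on $I$ refining the one induced by $\sA$; a given stratum is isomorphic, after collapsing coinciding indices, to an open subset of some $X^{[\sA \to I']}$ on which the components are pairwise distinct. Since the t-structure on $\Dmod$ of an indscheme is Zariski local (\cite[Lemma 7.8.7]{ker-adj}), and since restriction of the dualizing sheaf along an open embedding is $\omega$ again, infinite connectivity over each such stratum suffices; and devissage along the (finitely many) strata, using that closed pushforwards are right t-exact and open $!$-pullbacks are t-exact, lets one assemble the estimate. (Alternatively, and more cleanly, one can note that the factorization structure used in Section \ref{ssec:moving points} already handles collisions formally via the diagonal maps, so only distinct-points data really occurs.) This reduces us to the case where the $I$ marked points are pairwise distinct; removing the components equal to the fixed points $D_\infty$, which are moreover disjoint from the others over a dense open, reduces us to a marking by the remaining $|I| - |\sA|$ points, all lying in $\Sigma$ and pairwise disjoint — exactly the situation of $G[\Sigma]^{G^\circ\ggen}_{\Sigma^{J,\disj}}$ for $J$ the complement. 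The only subtlety is that $D_\infty$ lies at the boundary, where $\Sigma$ has its holes, so $\phi$ is automatically allowed to have arbitrary behaviour there; but this is harmless, since the argument of Section \ref{ssec:easier omega inf connective} — bounding poles at the \emph{marked} points via Lemma \ref{lemma:key-rep-thry}, and then observing that $\A^1[\Sigma]^{\rat,\le e}$ has $\A^\infty$ as a factor by Riemann--Roch because of the (unbounded) poles at $D_\infty$ — uses precisely this freedom.

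Concretely, then, the steps are: (1) factor $\xi_{[\sA\to I]}$, or rather the defining fiber product, through the diagonal decomposition of $X^{[\sA\to I]}$, reducing to the distinct-points locus; (2) on that locus, identify $G[\Sigma]^{G^\circ\ggen}_{[\sA\to I]}$ with (an open in) $G[\Sigma]^{G^\circ\ggen}_{\Sigma^{J,\disj}}$ for an appropriate $J\in\fSet$, after discarding the fixed marking $D_\infty$; (3) invoke Proposition \ref{prop:maps to G, gen in Gcirc, over disjoint locus} together with the Zariski-locality of the t-structure and the right t-exactness of closed pushforwards / t-exactness of open pullbacks to conclude. The main obstacle I expect is step (1): one must be careful that the various auxiliary indschemes ($\Gcirc[\Sigma]^\rat$, $G(\A)$, $G(\OO)$ relative to $X^{[\sA\to I]}$) are genuinely compatible with restriction to the diagonal strata, i.e. that the fiber-product presentation \eqref{eqn:big auxiliary} behaves well under the stratification — but this is exactly the kind of compatibility packaged by the factorization structure exploited in the proof of the lemma in Section \ref{ssec:moving points}, so in the end it is bookkeeping rather than a new idea. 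Once the reduction is in place, Proposition \ref{prop:maps to G generically in Gcirc, over disjoint locus} does all the real work, and hence so do Propositions \ref{prop:maps to G, gen in Gcirc, over disjoint locus - EASIER} and \ref{prop:maps to G, gen in Gcirc, over disjoint locus - LAMBDA} and, ultimately, the representation-theoretic pole bound of Lemma \ref{lemma:key-rep-thry}.
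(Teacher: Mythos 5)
Your argument is the paper's: stratify $X^{[\sA\to I]}$ by the diagonals, identify each nontrivial stratum with some $G[\Sigma]^{G^\circ\ggen}_{\Sigma^{J,\disj}}$, and cite Proposition~\ref{prop:maps to G, gen in Gcirc, over disjoint locus}. The only case you omit is the deepest stratum, where all marked points lie in $D_\infty$ so that $J=\emptyset$; since $\fSet$ consists of \emph{nonempty} finite sets that proposition does not apply there, and the paper treats it separately by observing that this stratum is $\Gcirc[\Sigma]\simeq \A^\infty\times T[\Sigma]$, whose dualizing sheaf is infinitely connective by Lemma~\ref{lem:Ainfty times something inf connective}.
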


\sssec{}

Let us reduce this to Proposition \ref{prop:maps to G, gen in Gcirc, over disjoint locus}.
Considering the diagonal stratification of $X^{[\sA \to I]}$, it is easy to see that it suffices to prove the assertion for the dualizing sheaf of each stratum. 
The smallest stratum (that is, the one with $I \simeq\sA$) yields the space $\Gcirc[\Sigma]$, in which case the assertion is clear: indeed,
$$
\Gcirc[\Sigma]
\simeq
N^-[\Sigma] \times T[\Sigma] \times N[\Sigma]
\simeq
\A^\infty \times T[\Sigma],
$$
whose dualizing sheaf is infinitely connective in view of Lemma \ref{lem:Ainfty times something inf connective}.

\sssec{}

It remains to treat the other strata. Denoting by $\Sigma^{I,\disj} \subseteq \Sigma^I$ the open subscheme parametrizing $I$-tuples of distinct points in $\Sigma$, it is clear that these strata are isomorphic to
$$
G[\Sigma]^{G^\circ\ggen}_{\Sigma^{I,\disj}}
:=
G[\Sigma]^{G^\circ\ggen}_{\Sigma^I}
\ustimes{\Sigma^I}
\Sigma^{I,\disj},
$$
where $I \in \fset$ (in particular, $I \neq \emptyset$).
This proves that Proposition \ref{prop:maps to G, gen in Gcirc, over disjoint locus} implies Proposition \ref{prop:maps to G generically in Gcirc, over I}.

\end{document}